\newtheorem{theorem}{Theorem}[section]
\newtheorem{lemma}[theorem]{Lemma}
\newtheorem{corollary}[theorem]{Corollary}
\newtheorem{proposition}[theorem]{Proposition}
\theoremstyle{definition}
\newtheorem{definition}[theorem]{Definition}
\newtheorem{example}[theorem]{Example}
\newtheorem{assumptions}[theorem]{Assumptions}
\theoremstyle{remark}
\newtheorem{remark}[theorem]{Remark}
\numberwithin{equation}{section}
\algrenewcommand\algorithmicrequire{\makebox[32pt][l]{\textrm{input}}}
\algrenewcommand\algorithmicensure{\makebox[32pt][l]{\textrm{output}}}
\algrenewcommand\algorithmicfunction{\textrm{function}}
\algrenewcommand\algorithmicwhile{\textrm{while}}
\algrenewcommand\algorithmicdo{}
\algrenewcommand\algorithmicend{\textrm{end}}
\algrenewcommand\algorithmicforall{\textrm{for all}}
\algrenewcommand\algorithmicfor{\textrm{for}}
\algrenewcommand\algorithmicrepeat{\textrm{repeat}}
\algrenewcommand\algorithmicuntil{\textrm{until}}
\DeclareFontFamily{U}{mathx}{\hyphenchar\font45}
\DeclareFontShape{U}{mathx}{m}{n}{
      <5> <6> <7> <8> <9> <10>
      <10.95> <12> <14.4> <17.28> <20.74> <24.88>
      mathx10
      }{}
\DeclareSymbolFont{mathx}{U}{mathx}{m}{n}
\DeclareMathSymbol{\bigtimes}{1}{mathx}{"91}
\newcommand{\N}{\mathds{N}}
\newcommand{\R}{\mathds{R}}
\newcommand{\Z}{\mathds{Z}}
\newcommand{\C}{\mathds{C}}
\DeclareMathOperator{\supp}{supp}
\DeclareMathOperator*{\argmin}{arg\,min}
\DeclareMathOperator{\range}{range}
\DeclareMathOperator{\ops}{ops}
\newcommand{\id}{{\rm id}}
\newcommand{\spl}[1]{{\rm \ell}_{#1}}
\newcommand{\spH}[1]{{\rm H}^{#1}}
\newcommand{\spL}[1]{{\rm L}_{#1}}
\newcommand{\zinterval}[2]{\{{#1},\ldots,{#2}\}}
\newcommand{\Ocal}{{\mathcal{O}}}
\newcommand{\Hcal}{{\mathcal{H}}}
\newcommand{\Rcal}{{\mathcal{R}}}
\newcommand{\Ical}{{\mathcal{I}}}
\newcommand{\Acal}{{\mathcal{A}}}
\newcommand{\Ncal}{{\mathcal{N}}}
\newcommand{\Rank}{\Rcal}
\renewcommand{\Re}{\operatorname{Re}}
\renewcommand{\Im}{\operatorname{Im}}
\DeclareMathOperator{\rank}{rank}
\DeclareMathOperator{\apply}{\textsc{apply}}
\DeclareMathOperator{\coarsen}{\textsc{coarsen}}
\DeclareMathOperator{\rhs}{\textsc{rhs}}
\DeclareMathOperator{\recompress}{\textsc{recompress}}
\DeclareMathOperator{\solve}{\textsc{solve}}
\providecommand{\abs}[1]{\lvert#1\rvert}
\providecommand{\bigabs}[1]{\bigl\lvert#1\bigr\rvert}
\providecommand{\biggabs}[1]{\biggl\lvert#1\biggr\rvert}
\providecommand{\norm}[1]{\lVert#1\rVert}
\providecommand{\bignorm}[1]{\bigl\lVert#1\bigr\rVert}
\providecommand{\Bignorm}[1]{\Bigl\lVert#1\Bigr\rVert}
\providecommand{\biggnorm}[1]{\biggl\lVert#1\biggr\rVert}
\providecommand{\ceil}[1]{\lceil#1\rceil}
\newcommand{\UU}{\mathbb{U}}
\def\bu{{\bf u}}
\def\bU{{\bf U}}
\newcommand{\bv}{{\bf v}}
\newcommand{\bw}{{\bf w}}
\def\e2{\spl{2}(\nabla^d)}
\def\cA{{\mathcal{A}}}
\def\ga{\gamma}
\def\garatio{{\rho_\ga}}
\newcommand{\As}{{\Acal^s}}
\newcommand{\AH}[1]{{\Acal_\Hcal({#1})}}
\newcommand{\kk}[1]{{\mathsf{#1}}}
\newcommand{\rr}[1]{{\mathsf{#1}}}
\newcommand{\KK}[1]{{\mathsf{K}_{#1}}}
\newcommand{\Psvd}[2]{\operatorname{P}_{\UU({#1}),{#2}}}
\newcommand{\hatPsvd}[1]{\operatorname{\hat P}_{#1}}
\newcommand{\rsvd}{\operatorname{r}}
\newcommand{\Cctr}{\operatorname{C}}
\newcommand{\hatCctr}[1]{\operatorname{\hat C}_{#1}}
\newcommand{\Restr}[1]{\operatorname{R}_{#1}} 
\newcommand{\constsvd}{\kappa_{\rm P}}
\newcommand{\constcrs}{\kappa_{\rm C}}
\newcommand\eref[1]{(\ref{#1})}
\newcommand{\beqn}{\begin{equation}}
\newcommand{\eeqn}{\end{equation}}
\newcommand{\bA}{\mathbf{A}}
\newcommand{\bT}{\mathbf{T}}
\newcommand{\bbf}{\mathbf{f}}
\newcommand{\bg}{\mathbf{g}}
\newcommand{\cL}{{\mathcal{L}}}
\newcommand{\cD}{{\mathcal{D}}}
\newcommand{\cN}{{\mathcal{N}}}
\newcommand{\bB}{\mathbf{B}}
\newcommand{\hatbS}{\mathbf{\hat S}}
\newcommand{\bS}{\mathbf{S}}
\newcommand{\tbS}{{\tilde{\mathbf{S}}}}
\newcommand{\tbT}{{\tilde{\mathbf{T}}}}
\newcommand{\hdimtree}[1]{\mathcal{D}_{#1}}
\newcommand{\hroot}[1]{{0_{#1}}}
\newcommand{\leaf}[1]{\cL(\hdimtree{#1})}
\newcommand{\nonleaf}[1]{\cN(\hdimtree{#1})}
\newcommand{\leftchild}{{{\rm c}_1}}
\newcommand{\rightchild}{{{\rm c}_2}}
\newcommand{\child}[1]{{{\rm c}_{#1}}}
\newcommand{\hsum}[1]{\mathrm{\Sigma}_{#1}}
\newcommand{\dd}{{\rm rank}}
\newcommand{\om}[1]{{\omega_{#1}}}
\newcommand{\omi}[2]{{\hat\omega_{#1,#2}}}
\newcommand{\Sc}{{\bS}}
\newcommand{\Sci}[1]{{\hat\bS_{#1}}}
\newcommand{\Sr}{{\tbS}}
\newcommand{\Sa}[1]{\tbS_{#1}}
\title{Adaptive Low-Rank Methods: Problems on Sobolev Spaces\thanks{This work has been supported in
part by the DFG Special Priority Program 1324, by the DFG SFB-Transregio 40, by the DFG Research Group 1779,
the Excellence Initiative of the German Federal and State Governments (RWTH Aachen  Distinguished Professorship,
Graduate School AICES, GSC 111),
 and NSF grant DMS 1222390.}
}
\author{Markus Bachmayr and Wolfgang Dahmen}
\date{\today}
\begin{document}

\maketitle

\begin{abstract}
This paper is concerned with the development and analysis of an iterative solver for high-dimensional second-order elliptic problems 
based on subspace-based low-rank tensor formats. Both the subspaces giving rise to low-rank approximations and
corresponding sparse approximations of lower-dimensional tensor components are determined adaptively. A principal obstruction to
a simultaneous control of rank growth and accuracy turns out to be the fact that the underlying elliptic operator is an isomorphism 
only between spaces that are not endowed with cross norms. Therefore,
as central part of this scheme, we devise a method for preconditioning low-rank tensor representations of operators.
Under standard assumptions on the data, we establish convergence to the solution of the continuous problem with a guaranteed
error reduction. Moreover, for the case that the solution exhibits a certain low-rank structure and representation sparsity,
we derive bounds on the computational complexity, including in particular bounds on the tensor ranks that can arise during the iteration. We emphasize that
such assumptions on the solution do not enter in the formulation of the scheme, which in fact is shown to detect them
automatically.  Our findings are illustrated by numerical experiments that demonstrate the practical efficiency of the method in high 
spatial dimensions.

\textbf{Keywords:} Low-rank tensor approximation, adaptive methods, high-dimensional elliptic problems, preconditioning, computational complexity

\textbf{Mathematics Subject Classification (2000):} 41A46, 41A63, 65D99, 65J10, 65N12, 65N15
\end{abstract}

\section{Introduction}\label{sec:intro}

The approximate solution  of {\em high-dimensional linear diffusion problems} is not only of intrinsic interest, but occurs also frequently  as a subproblem in solvers for other classes of high-dimensional problems, e.g. via operator splitting.
Written as operator equations, such diffusion problems are of the form
\begin{equation}
\label{1.1}
A u = f,
\end{equation}
where the exact solution $u$ belongs to  some {\em energy space} $V$, comprised of functions of $d \gg 1$ variables, and $f$ is a given 
element in the normed dual $V'$ of $V$. 
A basic model problem of this type is the high-dimensional Poisson problem with $A= -\Delta$ and $V = H^1_0((0,1)^d)$.

Such spatially high-dimensional problems have been investigated in different communities from rather different perspectives. 
One can roughly distinguish the following groups:

(a) A rich theoretical foundation exists for methods based on variants of hyperbolic cross approximations and sparse grids, where approximability can indeed be directly related to the regularity of certain high-order mixed derivatives. Rigorous adaptive methods for this type of approximations are available, for instance the one proposed in \cite{Dijkema:09}.  However, such approaches turn out to be feasible only for moderate values of $d$.

(b) Very promising concepts of low-rank tensor approximation have been developed, for instance, in the works \cite{Andreev:12,Ballani:13,Beylkin:02,Beylkin:05-1,Hackbusch:05,Khoromskij:11,Khoromskij:11-1,Kressner:11}. These tools have been successfully applied in high-dimensional regimes. However, to our knowledge,
rigorous error and complexity bounds for relevant norms are not yet available in this context.

(c) The intrinsic {\em tractability} of high-dimensional diffusion problems has been addressed from the viewpoint of {\em Information Based
Complexity}, see  \cite{WW:14} and the literature cited there.
The central issue there is to determine under which circumstances  the {\em curse of dimensionality} can be 
broken, that is, whether one can find an algorithm whose complexity does {\em not} scale  exponentially in the spatial dimension $d$
when realizing a given target accuracy. In this latter case the problem is called {\em tractable}.
Favorable rigorous complexity bounds have been obtained for elliptic Neumann problems
under various assumptions on the right hand side which constrain the dependence on the different variables and ensure the availability of simple (diagonal) solution operators.  However,  it is not clear how to translate these findings into a realistic
computational scenario.

The present paper is an attempt to offer a synthesis between (a), (b) and (c). However, we emphasize from the start
that, in contrast to (c), our focus is on the the complexity of  the {\em inversion process}---diagonal operator representations {\em not} being available---to find approximations to the solution $u$, {\em given} appropriate approximations to the data $f$. The rationale is that
even for the simplest type of data, such as a constant function $f$, the inversion is completely infeasible for increasing $d$ when using standard
techniques under realistic regularity assumptions.

The approaches listed under (b) can be viewed as seeking  suitable {\em solution-dependent} but computationally accessible bases, with respect to which
the solution permits good approximations with relatively few terms. The identification of such bases becomes then part of the solution
process and the resulting parametrizations of approximate solutions are highly nonlinear, much more so than, for instance, 
best $n$-term approximations with respect to an {\em a priori} given fixed {\em background basis} as in (a).

The rationale in (b) as well as in the present work  for employing dictionaries with tensor structure   is that
the Laplacian is a sum of rank-one operators and   the problem  is formulated
on a product domain. Thus one hopes that functions with tensor structure can best exploit structural properties of $u$, while
separation of variables is known to help in computationally dealing with a large number of variables.
The adaptive method we put forward in this work iteratively finds  basis functions with tensor structure that are adapted to the approximand $u$. In the simplest case $d=2$, for instance, the algorithm yields univariate basis functions $U^{(1)}_k$, $U^{(2)}_k$ and coefficients $a_k$ such that
\begin{equation}\label{simpleexample} 
 u(x_1,x_2) \approx \sum_{k=1}^r a_k\, U^{(1)}_k(x_1) \,U^{(2)}_k(x_2)   
\end{equation}
where the value of $r$ is near-minimal---in a sense to be made precise later---for achieving a certain error tolerance in the $V$-norm by a tensor expansion of this form.
To achieve a similar result for large $d$, we build on recent progress in high-dimensional tensor representations, and find approximations in the \emph{hierarchical tensor format} \cite{Hackbusch:09-1}.
The iterative scheme used to find these approximations is based on a perturbed Richardson iteration that works directly on the continuous problem, but approximates all quantities by finite approximations with suitable error tolerances.
Our objective is to control the solution error in an appropriate norm---here, the $V$-norm---and at the same time to control the complexity of the complete numerical scheme.

\subsection{State of the Art and Main Obstructions}
A first question is why one would expect a substantial gain in making the additional effort of finding, as part of the solution process, a suitable dictionary for representing approximations. Indeed, many well-studied techniques for approximating  high-dimensional functions rely on {\em sparsity} with respect to a judiciously chosen but \emph{fixed} tensor product background basis for  the spatially high-dimensional space. However, under realistic 
assumptions the resulting methods usually cannot avoid an exponential scaling of the computational complexity in $d$.
For instance, the adaptive solver for certain problems of the type \eqref{1.1} constructed in \cite{Dijkema:09} builds on anisotropic tensor product wavelet bases, and is shown to have optimal complexity (also with respect to its $d$-dependence) in relation to the corresponding best $n$-term approximation of $u$. But, as the results for the Poisson problem given there demonstrate, even the best $n$-term approximations in such bases become infeasible in high dimensions. This  indicates that, in order to arrive at a feasible scheme under realistic regularity assumptions, one has to give up on
$n$-term approximations in terms of {\em  fixed} background bases and needs to modify the type of approximation.

As mentioned earlier, this is indeed the common theme in the works grouped under (b) above. However, an essential distinction
from the present work is that---except for \cite{BD}---all methods known to us require as a first step the a priori choice of a {\em fixed} discretization of the
continuous problem, and subsequently aim at solving this discrete problem approximately in an efficient way using tensor formats for high-dimensional Euclidean spaces. In many cases of interest, e.g.\ for the Laplacian, the corresponding discretizations of the underlying operator have simple explicit representations in such tensor formats. However, to motivate the subsequent developments, it is important to understand the shortcomings of such a strategy. 

First, accuracy considerations are detached from the 
underlying continuous problem. In fact, since accuracy is measured in terms of the Euclidean norm of discretization coefficients, it is unclear what this means
for the computed approximation in a function space norm such as the {\em energy norm}. Second, since the resolution is fixed for
each variable, even if the discretized problem was solved exactly, the spatial resolution of the tensor factors may
be insufficient for warranting a desired target accuracy. Furthermore, in the case of non-zero order operators such as the Laplacian, this cannot be controlled by a posteriori error indicators: due to the mapping properties of such operators, Euclidean residuals do not faithfully reflect solution accuracy. Moreover, refinement of the discretization renders the discrete problem more and more ill-conditioned. 

This also becomes apparent in the upper bounds for tensor approximation ranks for solutions of linear systems obtained in \cite{KressnerUschmajew}. These are applicable, in particular, to \emph{discretizations} of second-order elliptic operators, but \emph{not} to the corresponding continuous problems: although the bounds depend only weakly on $d$, they may grow strongly with discretization refinement due to the influence of condition numbers. Since this leads to gross overestimates of the increase of ranks relative to the total solution error (compared e.g.\ to the numerical results in Section \ref{sec:num-res}), this underscores the necessity of \emph{preconditioning} in the context of low-rank approximations.

Preconditioning means to approximate the inverse as a mapping from
$V'$ to $V$. Unfortunately, when $A$ has non-zero order neither $V$ nor $V'$ are endowed with \emph{cross norms}, that is, norms with the property that the norm of a rank-one function equals the product of the norms of the lower-dimensional factors.
As a mapping between such spaces $V$, $V'$ without simple tensor product structure, the inverse of $A$ has {\em infinite rank}, which intrinsically obstructs the control of rank growth 
when increasing accuracy.  As illustrated in Section \ref{ssec:example}, this is an inherent consequence of the spectral properties of such elliptic operators.

In the method studied in \cite{BNZ:14}, this problem manifests itself in applying the inverse of a certain Riesz map. However, again only the case of $V$ endowed with a cross norm, where both this Riesz map and its inverse are of rank one, is considered in detail. Although in other works, preconditioners for low-rank tensor methods for second-order problems have been proposed, e.g.\ in \cite{Khoromskij:09,Kressner:11,Ballani:13,Andreev:12}, these have not been analyzed in their overall effect on the complexity of the solution process. The central objective of the present work is to
put forward several new conceptual ingredients to address these intrinsic obstructions.  

\subsection{New Conceptual Ingredients}
To overcome the above obstructions one has to account for the following points.
First, to be able to achieve arbitrarily good approximations to the solution of the continuous problem, one has to intertwine finding good low-rank approximations with finding sufficiently accurate basis expansions for lower-dimensional tensor components. In the example \eqref{simpleexample} for $d=2$ this means to keep, for a given target accuracy $\varepsilon$,
the rank $r=r(\varepsilon)$ as small as possible, while the  involved low-dimensional tensor factors $U^{(i)}_k(x_i)$ need to be resolved with an increasingly better accuracy as well.
Second,  to properly balance both levels of approximation as well as monitor the deviation from the continuous solution,
we need to relate solution errors to {\em residuals}.
This inevitably requires taking into account that the operator $A$ is an isomorphism from $V$ onto its normed dual $V'$. 
Third, we need to use tensor formats with similar stability properties as the singular value decomposition, while respecting the norms imposed on us by the spaces $V$ and $V'$.

This has led to the framework proposed in \cite{BD}. With the aid of a suitable {\em background basis} such as a tensor product
wavelet basis on $\Omega =(0,1)^d$ the problem \eqref{1.1} is transformed into an {\em equivalent} problem on the infinite dimensional
sequence space $\ell_2(\nabla^d)$ with entries indexed by elements of the Cartesian product $\nabla^d$ of low-dimensional
wavelet index sets.  Hence, sequences can be viewed as {\em tensors of order $d$}, and the spectral theorem allows one to carry over
the results on stable tensor formats to $\ell_2(\nabla^d)$. Moreover, when $A$ is a zero-order operator or when $A$ acts on only 
a fixed small number of variables as an operator of nonzero order, as in the case of parametric PDEs, suitable spaces $V$
are tensor product Hilbert spaces with tensor product Riesz bases. 
As a consequence,  the wavelet representation $\bA$ still has low rank and the transformed problem is well-conditioned on
$\ell_2(\nabla^d)$, so that solution errors indeed become equivalent to residuals. It is shown in \cite{BD} how to formulate under these circumstances
an iterative scheme that approximates the true solution with near-optimal complexity. Note that the resulting tensor expansions as in (\ref{simpleexample}) can then still be interpreted as an expansion with respect to a tensor product wavelet basis $\{ \psi^{(1)}_{\nu_1} \otimes \cdots \otimes \psi^{(d)}_{\nu_d} \}$, but whereas, for example in \cite{Dijkema:09}, the coefficients for such a basis are represented directly as a sparse vector, in our setting these coefficients are now in turn expanded into sums of tensor products of sparse vectors.

In the present work we build on the concepts in \cite{BD}, but focus on the essential obstructions encountered when $V$ and $V'$
are {\em not} endowed with cross norms. Specifically, we consider second order elliptic equations as a prototypical scenario, but
remark that the results carry over to more general situations of analogous nature. In accordance with the previously mentioned
problems with preconditioning discretizations of elliptic operators, the necessary rescaling of an $L_2$-orthonormal tensor product wavelet basis
for  the corresponding representation $\bA$ to be well conditioned on $\ell_2(\nabla^d)$ causes $\bA$ to have {\em infinite rank}.
A major contribution of this work is an {\em adaptive rescaling scheme} embedded in a perturbed
Richardson iteration that, depending on the current approximate solution, causes only a moderate controllable rank growth.
It is based on a refined result on the relative accuracy of exponential sum approximations  derived from sinc quadrature for the function $t\mapsto t^{-1/2}$.
In particular, using the mapping properties of $A$ in this manner allows us to adjust error tolerances for the iteration in such a way that tensor ranks---which have a strong impact on numerical efficiency---grow only gradually as the scheme progresses.
We eventually arrive at a solver that performs well also for large $d$, and---under model assumptions that hold, in particular, for the high-dimensional Poisson problem---can be proven to produce approximate solutions with an overall complexity that grows sub-exponentially in $d$. 
We invest a considerable effort in analyzing the influence of the spatial dimension $d$, %
and a number of resulting findings  are perhaps of interest in their own right.
Our numerical experiments for a high-dimensional Poisson problem show that the complexity of the method exhibits  in fact only a low-degree polynomial growth in $d$.

The proposed scheme and its analysis apply also to problems with a more general structure than such Poisson problems, e.g.\ to elliptic operators with non-diagonal diffusion matrices. 
Even when considering finite-dimensional discretized problems, in such cases methods based on approximating the inverse by exponential sums as in \cite{Grasedyck:04}
are not applicable, since the operator then no longer has a suitable structure. In fact, since the variables are now coupled more strongly,
one expects a somewhat stronger rank growth with increasing accuracy. We quantify this by some first experiments.

The paper is organized as follows. In Section \ref{sec:road} we sketch a road map for the subsequent developments and explain in more detail
the issue of the interaction of mapping properties on Sobolev spaces and low-rank structure. In Section \ref{sec:prereq}, for the convenience of the reader we collect
some prerequisites needed for the remainder of the paper. This includes a short introduction
to the hierarchical Tucker format and near-optimal recompression and coarsening concepts, which are crucial for the iterative scheme
outlined already in Section \ref{sec:road}. Section \ref{sec:apply} is devoted to the central task, namely the adaptive application
of rescaled low-rank operators.  A precise formulation of the adaptive solver is given in Section \ref{sec:adalg} along with
the main convergence and complexity results. This theorem is proved in Section \ref{sec:proofs}. We conclude with some
numerical experiments in Section \ref{sec:num-res}.

We shall use the notation $a\lesssim b$  to express that $a$ is bounded by a constant times $b$, where this constant is independent of any parameters $a$ and $b$ may depend on, unless such dependencies are explicitly stated; moreover, $a\sim b$ means that $a\lesssim b$ and $b\lesssim a$.
\section{The Road Map}\label{sec:road}
In this section, we give an overview of our basic strategy. To this end, we also recapitulate for the convenience of the reader a few relevant facts from \cite{BD}.
\subsection{An Equivalent $\ell_2$-Problem}\label{ssec:l2}

We consider an operator equation 
\begin{equation}
\label{opeq}
Au =f,
\end{equation}
where $A: V\to V'$ is an isomorphism of some Hilbert space $V$ onto its dual $V'$.
We shall always assume that we have a Gelfand triplet
\begin{equation*}
V\subset  H \equiv H' \subset V',
\end{equation*}
in the sense of dense continuous embeddings, where we assume that $H$ is a {\em  tensor product Hilbert space}, that is,
\begin{equation}
\label{eq:htensorspace}
 H = H_1 \otimes \cdots \otimes H_d ,\quad \| g_1\otimes \cdots \otimes g_d\|_H = \prod_{i=1}^d\|g_j\|_{H_i},
\end{equation}
with lower-dimensional Hilbert spaces $H_i$. 
In this paper we focus on the case 
$$ 
H = \spL{2}(\Omega) = \spL{2}(\Omega_1)\otimes \cdots\otimes \spL{2}(\Omega_d)\,, 
$$
i.e., for $\Omega_i \subseteq \R^{d_i}$, for some $d_i\in\N$, the high-dimensional domain $\Omega$ is
a product domain $\Omega := \Omega_1\times \cdots\times \Omega_d$ and $\spL{2}(\Omega)$ is a
tensor product Hilbert space. When $A$ stands for an elliptic   operator of non-zero order the corresponding
{\em energy space} $V$ is typically of the form $V \subseteq \spH{s}(\Omega)$, $s\neq 0$, where the case of a 
strict subspace is given when certain essential  homogeneous boundary conditions are imposed on the trial space.
Note that for $s>0$,
\begin{equation*}
V=\spH{s}(\Omega) = \bigcap_{i=1}^d \spL{2}(\Omega_1)\otimes\cdots  \otimes \spH{s}(\Omega_i)\otimes \cdots \otimes \spL{2}(\Omega_d)\,,
\end{equation*}
and the norm on $\spH{s}(\Omega)$ is {\em not} a cross norm in the sense of 
\eqref{eq:htensorspace}.

It is well-known that the numerical solution of discrete approximations to \eqref{opeq} is severly hampered by the fact
that $A$ as a mapping from $H$ to $H$ is {\em unbounded}, and {\em preconditioning} exploits that $A$ as a mapping from
$V$ to $V'$ is boundedly invertible. Much of what follows results from   the conflict:
\begin{quote}
The topologies for which $A$ has favorable mapping properties are not ``tensor-friendly'';\\
for those topologies for which $A$ has a ``tensor-friendly'' structure, it has unfavorable mapping properties.
\end{quote}
In one way or the other one has to pay for this conflict. In \cite{BD} we have chosen to work
in topologies for which $A$ becomes an isomorphism, since this seems to be the only way to warrant a rigorous error analysis.

To implement this strategy our basic assumption is that we have Riesz bases for each component Hilbert space $H_i=\spL{2}(\Omega_i)$ (see \eqref{eq:htensorspace}), which we denote by $\{ \psi^{H_i}_\nu\}_{\nu\in\nabla^{H_i}}$.
We may assume without loss of generality that all $\nabla^{H_i}$ are identical, denoted by $\nabla$.
To simplify our discussion, we shall always call $d$ the spatial dimension, which amounts to the assumption that $d_i = 1$ for $i=1,\ldots,d$; indeed, everything that follows is applicable also in the case that the actual spatial dimension $d_1 + \ldots+d_d$ of $\Omega$ is larger than the tensor order $d$, but we will make only the dependence on $d$ explicit.

In principle, regardless of the structure of $\nabla$, one can transform \eref{opeq} into the equivalent 
infinite dimensional system
\beqn
\label{bT}
\bT \bu^\circ = \bg,\quad \mbox{where}\quad \bT = \big(\langle \Psi_\nu,A\Psi_\mu\rangle\big)_{\nu\in\nabla^d},\,\,
\bg := \big(
\langle \Psi_\nu,f\rangle\big)_{\nu\in\nabla^d},
\eeqn
where $\bu^\circ = \big(\langle \Psi_\nu,u\rangle\big)_{\nu\in\nabla^d}$ is the coefficient sequence of the solution
$u$ with respect to $\Psi$. Note that for $s>0$, the operator $\bT$ is unbounded. However, when the low-dimensional
basis functions 
  $\psi^{H_i}_\nu$ are chosen to be sufficiently regular wavelets, the {\em infinite-dimensional} operator  \eref{bT} can be conveniently {\em
  preconditioned}. In this case, one can specify the structure of $\nabla$   and for our purposes 
  it suffices to know that each $\nu =(j,k)$ encodes a dyadic level $j=\abs{\nu}$
and a spatial index $k=k(\nu)$.
The crucial point is that when $V=\spH{s}(\Omega)$ is a Sobolev space,
a simple {\em rescaling} of $\Psi_\nu := \psi^{H_1}_{\nu_1} \otimes \cdots\otimes \psi^{H_d}_{\nu_d}$ 
by a sequence $\{ \omega_\nu \}$ with $\omega_\nu \sim \norm{\Psi_\nu}_{V}$ yields a Riesz basis $\{  \omega_\nu^{-1} \Psi_\nu\}$ for $V\subseteq H$ as well. 

This will now be explained in more detail in the case $s=1$, which corresponds to second-order elliptic problems, and which is the main focus of this work. Furthermore, we shall assume from now on  that  $\{ \Psi_\nu \}_{\nu\in\nabla^d}$ is actually  an \emph{orthonormal} tensor product wavelet basis  of
$\spL{2}(\Omega)$ with $\Psi_\nu \in \spH{s}(\Omega)$ for some $s>1$. It is known that, as a consequence,  the family of rescaled basis functions 
\begin{equation*}
\biggl\{  \Bigl( \sum_{i=1}^d 2^{2 \abs{\nu_i}} \Bigr)^{-\frac{1}{2}} \Psi_\nu \biggr\}_{\nu\in\nabla^d}
\end{equation*} 
forms a Riesz basis of $\spH{1}(\Omega)$ with {\em dimension-independent} condition number \cite{Dijkema:09}.
What matters here are not the specific values appearing in the above scaling weights---slightly different scaling weights 
with a comparable asymptotic behavior would serve the same purpose---but their structure as 
the Euclidean norm of a vector
\beqn
\label{can-scaling}
\om{\nu} = \om{\nu_1,\ldots,\nu_d} = \Big(\sum_{i=1}^d (\omi{i}{\nu_i})^2\Big)^{1/2}.
\eeqn
We  refer to the corresponding {\em scaling operator} 
\beqn
\label{bS}
\Sc = \big(\om{\nu}\delta_{\nu,\mu}\big)_{\nu,\mu\in\nabla^d} \,,
\eeqn
 with $\om{\nu}$ given by \eref{can-scaling}, and where $\omi{i}{\nu_i}$ are chosen such that \begin{equation}\label{omiscale}
 \omi{i}{\nu_i} \sim 2^{\abs{\nu_i}}
 \end{equation}
 with uniform constants, as the {\em canonical scaling}.
In these terms the system \eref{bT} is equivalent to the preconditioned system
\begin{equation}
\label{eqsystem}
\bA_c \bu_c = \bbf_c, \quad  \bA_c := \Sc^{-1}\bT\Sc^{-1},\,\, \bbf_c := \Sc^{-1}\bg,\,\, \bu_c = \Sc \bu^\circ, %
\end{equation}
see e.g. \cite{actanum}.
Now we have
\beqn
\label{condition}
c \|\bv\|\leq \|\bA_c \bv\| \leq C\|\bv\|,\quad \bv\in\ell_2(\nabla^d),
\eeqn
where here and below we write for simplicity $\norm{\bv} = \norm{\bv}_{\spl{2}(\nabla^d)}=\big(\sum_{\nu\in\nabla^d}\abs{v_\nu}^2\big)^{1/2}$.
The constants $c =c(A,\Psi), C=C(A,\Psi)$ thus give an estimate $C/c$ for the condition number of $\operatorname{cond}_2(\bA_c)$.

While the canonical scaling $\Sc$ with appropriately chosen $\omi{i}{\nu_i}$ can ensure a favorable conditioning, which is addressed in more detail in Section \ref{ssec:problemclass}, we shall see that the structure \eref{can-scaling} is unfavorable concerning the control of ranks. It will therefore be important to exploit some flexibility in choosing the scaling by using substitute scaling operators
$\Sr = {\rm diag}(\tilde\omega_\nu)$, which are {\em equivalent} to the canonical scaling $\Sc$ in the sense that
\beqn
\label{equivS}
\|\Sc\Sr^{-1}\|\sim 1
\eeqn
with constants independent of $d$, but for which the yet equivalent system
\beqn
\label{final}
\bA \bu = \bbf,\quad \bA = \Sr^{-1}\bT \Sr^{-1},\,\, \bbf = \Sr^{-1}\bg,
\eeqn
while still well-conditioned, offers a better angle at controlling ranks. 

Clearly, finding the coefficient sequence $\bu$ in \eref{final} (for any $\Sr$ satisfying \eref{equivS} of our choice) is equivalent
to finding the solution $u$ of \eref{opeq}, and the algorithm put forward below aims at
solving the variant \eref{final} for a suitable $\Sr$.
This in turn will be based on the fact that in the transformed version \eref{final} or \eref{eqsystem}, due to \eref{condition}, errors and residuals 
are comparable
with respect to the {\em same norm}, that is,
\begin{equation*}
\| u - v\|_V \sim \|\bu-\bv\| \sim \|\bbf -\bA \bv\| \sim \|f- Av\|_{V'},\quad v\in V,
\end{equation*}
and for a suitable damping factor $\omega$, depending on $C/c$, 
the iteration
\beqn
\label{idealiter}
\bu_{k+1} = \bu_k + \omega (\bbf - \bA \bu_k),\quad k=0,1,2,\ldots
\eeqn
converges with a fixed error reduction per step, i.e., $\|\bu_{k+1}-\bu\|\leq \rho \|\bu_{k}-\bu\|$
holds for some fixed $\rho <1$, see \cite{Cohen:02}.

Note that it would be highly desirable to keep $\rho$, that is the error reduction, independent of $d$ which requires that ${\rm cond}_2(\bA)$
be independent of $d$. We will take this up again below in Section \ref{ssec:problemclass}.

Rather than exploiting this fixed error reduction by devising perturbed iterations in such a way that the iterates essentially match the rates of best $N$-term approximations 
with respect to the given background basis $\Psi$ (see e.g. \cite{Cohen:02,Dijkema:09}), we follow the approach in \cite{BD} 
which also
uses a perturbed version of the ideal iteration \eref{idealiter} but aims at generating approximations of {\em low ranks} in 
a stable tensor format where the tensors are not taken from a given dictionary but are {\em solution dependent} and
have to be found during the solution process. To this end, following \cite{BD}, we view each entry $u_\nu=u_{\nu_1,\ldots,\nu_d}$ of the coefficient sequence $\bu$ as the
entry of a tensor of order $d$. The perturbed iteration then takes the form
 \beqn
 \label{practicaliter}
 \bu_{k+1} = {\rm C}_{\varepsilon_2(k)}\big({\rm P}_{\varepsilon_1(k)}(\bu_k + \omega
 (\bbf - \bA \bu_k))\big), \quad k=0,1,2,\ldots,
 \eeqn
 where ${\rm P}_{\varepsilon_1(k) }$, ${\rm C}_{\varepsilon_2(k) }$ are
 certain {\em reduction} operators and the $\varepsilon_i(k)$, $i=1,2$, are
 suitable tolerances which decrease for increasing $k$ so as to still guarantee the convergence of the iterates in $\ell_2$.  
 
For such an iteration to produce low-rank approximants, it is of course important that the (approximate) application of $\bA$ does not increase the
ranks of $\bu_k$ too strongly. As we will explain next, it is this point where a price has to be paid for the discretization-independent convergence and rigorous error control ensured by preconditioning. Although we consider this directly for the continuous problem, analogous effects can be observed with fixed discretizations and different types of preconditioning, see \cite{Andreev:12}.

\subsection{A Scaling Trap}\label{ssec:scaling}
As a guiding example consider  $\Omega:=(0,1)^d$, $H= \spL{2}(\Omega)$, $V= \spH{1}_0(\Omega)$ and
\begin{equation}\label{eq:example_operator}
 A \colon \spH{1}_0(\Omega) \to \spH{-1}(\Omega)\,,\quad u \mapsto -\sum_{i,j=1}^d a_{ij} \partial_i\partial_j u \,,
\end{equation}
where $(a_{ij})\in \R^{d\times d}$ is symmetric positive definite; hence, $A$ is a symmetric elliptic operator.
In order to avoid adding another layer of technicality we assume for simplicity that the coefficients $a_{ij}$ in the diffusion matrix
are constants. Hence, its conservative representation $Au=-{\rm div}(a\nabla u)$, which is used in the weak formulation below
involves the same coefficients. Also, all subsequent results carry over to sufficiently smooth {\em variable} but separable coefficients 
$a_{ij}(x)=a_i(x_i)a_j(x_j)$.

The operator has a \emph{low-rank structure}, i.e., it is a relatively short sum
of tensor product operators. This is inherited
by its representation with respect to an $\spL{2}$-orthonormal basis $\Psi$ comprised of {\em separable} functions, i.e., of
rank-one tensors. For 
 $\bT$ given by \eref{bT}, %
 one obtains
\begin{equation} 
\label{eq:unrescaled_tuckersum}
 \bT = \sum_{1\leq n_1,\ldots,n_d\leq R} c_{n_1,\ldots,n_d} \bigotimes_i \bT^{(i)}_{n_i} \,,
\end{equation}
with a certain rank parameter $R$. In fact, 
  in this case we have
\begin{align} 
\label{T1}
  \bT^{(i)}_1 := \bT_1 &= \bigl( \langle \psi_\nu, \psi_\mu \rangle \bigr)_{\mu,\nu\in\nabla} = \id\,, &  
 \bT^{(i)}_2 := \bT_2 &:= \bigl( \langle \psi'_\nu, \psi'_\mu \rangle \bigr)_{\mu,\nu\in\nabla}  \,,\\
 \label{T2}
 \bT^{(i)}_3 := \bT_3 &:= \bigl( \langle \psi'_\nu, \psi_\mu \rangle \bigr)_{\mu,\nu\in\nabla}\,, &
 \bT^{(i)}_4 := \bT_4 &:= \bigl( \langle \psi_\nu, \psi'_\mu \rangle \bigr)_{\mu,\nu\in\nabla} =-\bT_3^*
\end{align}
i.e., $R=4$, where the  coefficients $c_{n_1,\ldots,n_d}$ are given by
\begin{gather} 
 c_{2,1,\ldots,1} = a_{11}, \; c_{1, 2,1,\ldots,1} = a_{22}, \;\ldots,\; c_{1,\ldots,1,2} = a_{dd} \,,\nonumber  \\
c_{3,4,1,\ldots,1} = c_{4,3,1,\ldots,1} = a_{12} ,\;\ldots,\; 
     c_{1,\ldots,1,3,4} = c_{1,\ldots,1,4,3} = a_{d-1,d} \nonumber\\
c_{3,1,4,1,\ldots,1} = c_{4,1,3,1,\ldots,1} = a_{13} ,\;\ldots,\; 
    c_{1,\ldots,3,1,4} = c_{1,\ldots,4,1,3} = a_{d-2,d} \,, \label{coefficients}  \\[3pt]
\ldots, \nonumber \\[3pt]
\ldots, \, c_{3,1\ldots,1,4} = c_{4,1\ldots,1,3} = a_{1d}\,,\nonumber
\end{gather}
and $c_\kk{n} = 0$ for all further $\kk{n}\in \N^d$. 
We use in what follows  for multiindices in $\N^t_0$, $t\in\N$, 
the notational convention $\kk{k} = (k_1,\ldots, k_t)$,
$\kk{n} = (n_1,\ldots,n_t)$, $\rr{r} = (r_1,\ldots,r_t)$, and so forth, and for convenience define
$$  \kk{R} := (R,\ldots, R) \in \N^d \,. $$
Moreover, defining for a given $\rr{r} \in\N_0^d$
\begin{equation*}
  \KK{d}(\rr{r}) := \left\{
  \begin{array}{ll}
   \bigtimes_{i=1}^d \zinterval{1}{{r}_i} & \text{if $\min \rr{r} >
  0$,}\\
\emptyset & \text{if $\min \rr{r} =
  0$} \,,
 \end{array} \right.
\end{equation*}
we see that the minimal value of 
 $R \in \N$   such that $c_\kk{n} = 0$ if $\kk{n} \notin \KK{d}(\kk{R})$ is in the above case $R=4$ in general, or $R=2$ when the matrix of diffusion coefficients is
 diagonal.  

Hence, applying $\bT$ to a rank-one tensor  $\bv = \bv_1\otimes \cdots \otimes \bv_d$ gives rise to a sequence
$$
\bT \bv = \sum_{\kk{n}\in \KK{d}(\kk{R})} c_\kk{n} \bigotimes_i \bT^{(i)}_{n_i} \bv_i
$$
which has {\em Tucker} or {\em multilinear rank} $\kk{R}$, see below for general definitions. This fact is also  heavily used in all previously known tensor methods
for discretized operator equations. 

However, as mentioned before, $\bT$ is an unbounded operator and its preconditioned version
$\bA$ is used in the iterations \eref{idealiter} and \eref{practicaliter}. Whether employing the canonical scaling
from \eref{can-scaling} or any other equivalent one (in the sense of \eref{equivS}), the scaling weights
are {\em not separable}, reflecting the fact that neither $V$ nor its dual $V'$ are endowed with tensor product norms.
\begin{remark}
\label{rem:infinite}
While $\bT$ has low rank in the sense of \eref{eq:unrescaled_tuckersum}, the rank of $\bA$ is
infinite.
\end{remark}
Hence, each application of $\bA$ in \eref{practicaliter} yields a tensor of infinite rank, again in a sense to be made precise below.
It is therefore a pivotal issue of this paper to develop and analyze low-rank approximations to $\bA$ that remain well-conditioned.
This is why finding a suitable substitute $\Sr$ for the canonical scaling $\Sc$ is crucial.

\subsection{A Simple Example}\label{ssec:example}
One might think that the pitfall expressed by Remark \ref{rem:infinite} is a particular feature of the background wavelet basis.
The following simple example shows that this is not the case, but that the problem is rather a direct consequence of the spectral properties 
of $A$. To see this,  note that for $D^2 : C^2(0,1) \to C(0,1)$ defined by $D^2g(t)= g''(t)$, a complete $L_2$-orthonormal system of eigenfunctions
is given by $e_n(x)= c_0\sin (\pi n x)$, $n\in\N$, where $c_0 = \sqrt{\frac2\pi}$. The corresponding eigenvalues are given by $\lambda_n= (\pi n)^2$, $n\in \N$.
One easily checks that then the rank-one tensors $e_{\kk{n}}(x):= c_0^d \sin (\pi n_1x_1)\cdots \sin (\pi n_dx_d)$ form
a complete system of eigenfunctions of the Laplacian
$$
-\Delta = -\sum_{i=1}^d \id_{x_1} \otimes \cdots \otimes \id_{x_{i-1}}\otimes  D^2_{x_i} \otimes \id_{x_{i+1}} \otimes \cdots \otimes \id_{x_d}
$$
with eigenvalues $\lambda_\nu = \lambda_{\nu_1}+ \cdots + \lambda_{\nu_d}$, $\nu\in \N^d$. Representing $-\Delta$ with
respect to this basis yields 
$$
\bT := \big(\langle e_\nu, (-\Delta) e_\mu\rangle\big)_{\nu,\mu\in \N^d} = \big(\lambda_\nu \delta_{\nu,\mu}\big)_{\nu,\mu\in \N^d}\,.
$$
The ideal scaling matrix turning $\bT$ into an operator with bounded spectral condition, in this particular case into the identity, 
is in analogy to the previous considerations $\Sc := \big(\lambda^\frac12_\nu\delta_{\nu,\mu}\big)_{\nu,\mu\in\N^d}$, because then
$\Sc^{-1}\bT\Sc^{-1} = \id$.
Thus, we face the same problem: $\lambda_\nu^{-\frac12} =(\lambda_{\nu_1}+\cdots + \lambda_{\nu_d})^{-\frac12}$ as an inverse of the square root of
a sum is not separable. In fact, 
suppose that 
$$
f(x)=\bigotimes_{i=1}^d \Big(\sum_{\nu_i\in \Gamma_i} f_{i,\nu_i}c_0\sin(\pi \nu_i x_i)\Big)
$$ 
is a rank-one tensor where each tensor factor $f_i(x_i)= \sum_{\nu_i\in \Gamma_i} f_{i,\nu_i}c_0\sin(\pi \nu_i x_i)$ is a 
finite linear combination of one-dimensional eigenfunctions.   Clearly, the solution  
$u$ of $-\Delta u= f$ is given by  %
\begin{equation*}
u= \sum_{\nu\in \N^d}\lambda_\nu^{-1}\langle f, e_{\nu}\rangle e_\nu = \sum_{\nu\in \times_{i=1}^d\Gamma_i} \lambda_\nu^{-1}\,
\Big(\prod_{i=1}^d f_{i,\nu_i}\Big)\, e_\nu = \sum_{\nu\in \times_{i=1}^d\Gamma_i}  u_\nu \,( \lambda_\nu^{-1/2 } e_\nu),
\end{equation*}
where $u_\nu :=  \lambda_\nu^{-1/2}\prod_{i=1}^d f_{i,\nu_i}$.
Here we have scaled the coefficients $u_\nu$ such that approximating $u$ in $\spH{1}$ by a restriction of the above expansion to any
finite set $S\subset \times_{i=1}^d\Gamma_i$ amounts to approximating the array $(u_\nu)_{\nu}$ in $\ell_2$. Due to the multiplication by 
$\lambda_\nu^{-1/2}$ neither are the $u_\nu$ any longer separable, nor do the $\lambda_\nu^{-1/2 } e_\nu$ have rank one, and the actual rank of the order-$d$ tensor $(u_\nu)$ in general depends on
the highest frequencies occurring in the sets $\Gamma_i$. Thus, it is a priori not clear whether $u$ can be approximated well
by low-rank tensor expansions. With the present choice of eigenfunction basis, even the separable function $f\equiv 1$ would have an infinite expansion.

A central objective of the remainder of this paper is to quantitatively approximate rescaled operators of the form \eref{final}
by low-rank operators, which can then be incorporated in an adaptive iteration of the form \eref{practicaliter}.

\subsection{Problem Class and ``Excess Regularity''}\label{ssec:problemclass}
Throughout the remainder of the paper we confine the discussion to operators of the form \eref{eq:example_operator}, i.e.,
$V= \spH{1}_0(\Omega)$, $s=1$. Moreover, we require that the diffusion matrix $(a_{ij})$ be \emph{diagonally dominant} with uniformly bounded diagonal elements, that is,
\beqn
\label{aij}
\sum_{j\neq i}\abs{a_{ij}} \leq \abs{a_{ii}}\leq C,\quad i=1,\ldots, d,
\eeqn
with $C$ independent of $d$. 

We emphasize that the restriction to this problem class is made to keep the presentation accessible, but is not essential for the subsequent developments. As shown in \cite{B}, different operators, for instance Coulomb potentials, can also be treated in this framework, but since this leads to additional technicalities---particularly in the interaction with the rescaling operator $\Sc$---this is not addressed here.

In addition we make an assumption regarding some additional \emph{coordinatewise regularity}, which concerns $\bbf$ and the regularity of the $\Psi_\nu$. 
To formulate these, we need two types of additional scaling operators that act on single coordinates.

For $\omi{i}{\nu_i}$ as in \eref{can-scaling}, for $\tau\in\R$ and for $i=1,\ldots,d$, we define on the one hand the coordinatewise scaling operators 
$\Sc^{\tau}_{i}:  \R^{\nabla^d} \to \R^{\nabla^d}$ by
\beqn
\label{Si}
\Sc^{\tau}_{i} \bv := \bigl( \omi{i}{\nu_i}^\tau v_\nu \bigr)_{\nu\in \nabla^d}
\quad  \text{and}\quad \Sc_{i} := \Sc_{i}^1
\eeqn
and on the other hand, the corresponding \emph{low-dimensional} scaling operators $\Sci{i}^\tau \colon \R^\nabla \to \R^\nabla$ by
\begin{equation}\label{eq:tensor_rescaling}   
\Sci{i}^\tau \mathbf{\hat v} := \bigl( \omi{i}{\nu_i}^\tau \hat v_{\nu_i} \bigr)_{\nu_i\in\nabla} \quad  \text{and}\quad \Sci{i} := \Sci{i}^1  \,.
\end{equation}

We now assume that there exists a $t > 0$ such that for $i=1,\ldots,d$, the operators
\begin{equation}
\label{eq:coordwise_regularity}
   \Sci{i}^{-1+t} \bT_2 \Sci{i}^{-1-t},\quad \Sci{i}^{t} \bT_3 \Sci{i}^{-1-t} ,\quad 
   \Sci{i}^{-1+t} \bT_4 \Sci{i}^{-t} \,.
\end{equation}
map $\spl{2}(\nabla)$ boundedly to itself, and that
\begin{equation}
\label{eq:coordwise_reg_f}
\norm{\Sc^t \bbf}^2  =  \sum_{i=1}^d  \norm{\Sc^t_i \bbf }^2 < \infty .
\end{equation}
We shall refer in what follows to the above assumptions \eqref{eq:coordwise_regularity} and \eqref{eq:coordwise_reg_f} as {\em excess regularity assumptions of order $t>0$}.
Here $t$ can be arbitrarily small but fixed, and is only used in the complexity estimates but not required for the computation, so that these assumptions are not very impeding.

\begin{remark}
The condition \eqref{eq:coordwise_regularity} holds if the wavelets $\Psi_\nu$ are sufficiently regular to satisfy, after rescaling, a norm equivalence also for $\spH{1+t}(\Omega)$ and, by our orthonormality requirement, also for the same range of dual spaces. The condition \eqref{eq:coordwise_reg_f} then means that $f$ needs to have Sobolev regularity slightly higher than $\spH{-1}(\Omega)$.
\end{remark}

When trying to assess the computational complexity of methods based on \eqref{practicaliter} for problems of the form \eqref{eq:example_operator} with an
eye on the role of the spatial dimension $d$, one has to take into account the $d$-dependence of $\operatorname{cond}_2(\bA_c) = \norm{\bA_c}\norm{\bA_c^{-1}}$, where $\bA_c = \Sc^{-1} \bT \Sc^{-1}$. To this end, note first that
since $\{2^{-\abs{\nu}} \psi_\nu\colon \nu\in\nabla \}$ is a Riesz basis of $\spH{1}_0(0,1)$ and because of \eqref{omiscale},
for each $i$ there exist $\underline{\lambda}_1^{(i)}, \overline{\lambda}_1^{(i)} > 0$ such that
\begin{equation}\label{deriv1cond}
  \underline{\lambda}_1^{(i)} \norm{\Sc_i \bv}^2  \leq  
     \Bignorm{\sum_{\nu\in\nabla^d} v_\nu\, \partial_i \Psi_\nu }^2_{\spL{2}(\Omega)} 
      \leq \overline{\lambda}_1^{(i)} \norm{\Sc_i \bv}^2  \,.
\end{equation}
Moreover, by our assumptions, $\underline{\lambda}_1 := \min_i \underline{\lambda}_1^{(i)} $ and $\overline{\lambda}_1 := \max_i \overline{\lambda}_1^{(i)}$ are independent of $d$.

The proof of the following proposition, based on the arguments in \cite[Section 2]{Dijkema:09}, is given for the convenience 
of the reader in Appendix \ref{app:cond}.
\begin{proposition}
\label{prop:cond}
Let $\underline{\lambda}_a$   and   $\overline{\lambda}_a$ denote the smallest and largest  eigenvalue of $(a_{ij})$, respectively.
Then, one has
\beqn
\label{cond1}  
    \operatorname{cond}_2(\bA_c) \leq \frac{\overline{\lambda}_a  \overline{\lambda}_1 }{\underline{\lambda}_a  \underline{\lambda}_1} \,,
\eeqn
i.e., this condition number can depend on $d$ only via $\overline{\lambda}_a / \underline{\lambda}_a = \operatorname{cond}_2(a_{ij})$.
Moreover, when $(a_{ij})$ is diagonal the choice $\omi{i}{\nu_i} \sim \sqrt{a_{ii} } 2^{\abs{\nu_i}}$ for the scaling weights yields
\beqn
\label{cond2}
\operatorname{cond}_2(\bA_c) \leq \frac{\overline{\lambda}_1}{\underline{\lambda}_1},
\eeqn
regardless of $\operatorname{cond}_2(a_{ij})$.
\end{proposition}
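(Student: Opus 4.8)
The plan is to pass to the symmetric bilinear form induced by $\bA_c$ and reduce everything to the univariate norm equivalence \eqref{deriv1cond} together with the additive structure \eqref{can-scaling} of the scaling weights. For $\bv\in\spl{2}(\nabla^d)$ set $\bw := \Sc^{-1}\bv$ and $g := \sum_{\nu\in\nabla^d} w_\nu\Psi_\nu = \sum_{\nu\in\nabla^d} v_\nu\,\om{\nu}^{-1}\Psi_\nu$; since $\{\om{\nu}^{-1}\Psi_\nu\}$ is a Riesz basis of $V=\spH{1}_0(\Omega)$, this $g$ lies in $V$. As $\bT$ is a real symmetric matrix and $\Sc$ is diagonal, $\bA_c=\Sc^{-1}\bT\Sc^{-1}$ is symmetric, and the weak form of $A=-\divergence(a\nabla\cdot\,)$ gives
\[
  \langle\bA_c\bv,\bv\rangle \;=\; \langle\bT\bw,\bw\rangle \;=\; \sum_{i,j=1}^d a_{ij}\,(\partial_i g,\partial_j g)_{\spL{2}(\Omega)} \;=\; \int_\Omega (\nabla g)^{\!\top}(a_{ij})(\nabla g)\,.
\]
Since $(a_{ij})$ is symmetric positive definite with extreme eigenvalues $\underline{\lambda}_a,\overline{\lambda}_a$, estimating the integrand pointwise yields
\[
  \underline{\lambda}_a\sum_{i=1}^d\norm{\partial_i g}^2_{\spL{2}(\Omega)} \;\le\; \langle\bA_c\bv,\bv\rangle \;\le\; \overline{\lambda}_a\sum_{i=1}^d\norm{\partial_i g}^2_{\spL{2}(\Omega)}\,.
\]

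Next I would bound each term $\norm{\partial_i g}_{\spL{2}(\Omega)}$ via \eqref{deriv1cond}, replace the coordinatewise constants by $\underline{\lambda}_1=\min_i\underline{\lambda}_1^{(i)}$ and $\overline{\lambda}_1=\max_i\overline{\lambda}_1^{(i)}$, and sum over $i$, obtaining
\[
  \underline{\lambda}_1\sum_{i=1}^d\norm{\Sc_i\bw}^2 \;\le\; \sum_{i=1}^d\norm{\partial_i g}^2_{\spL{2}(\Omega)} \;\le\; \overline{\lambda}_1\sum_{i=1}^d\norm{\Sc_i\bw}^2\,.
\]
The crucial step is then the identity
\[
  \sum_{i=1}^d\norm{\Sc_i\bw}^2 \;=\; \sum_{\nu\in\nabla^d}\abs{w_\nu}^2\sum_{i=1}^d(\omi{i}{\nu_i})^2 \;=\; \norm{\Sc\bw}^2 \;=\; \norm{\bv}^2\,,
\]
which is precisely the assertion that the weight $\om{\nu}$ in \eqref{can-scaling} is the Euclidean norm of the vector $(\omi{i}{\nu_i})_i$, combined with $\Sc\bw=\bv$. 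Chaining the last three displays gives $\underline{\lambda}_a\underline{\lambda}_1\norm{\bv}^2\le\langle\bA_c\bv,\bv\rangle\le\overline{\lambda}_a\overline{\lambda}_1\norm{\bv}^2$ for every $\bv$; as $\bA_c$ is symmetric and positive, its spectrum lies in $[\underline{\lambda}_a\underline{\lambda}_1,\overline{\lambda}_a\overline{\lambda}_1]$, so $\operatorname{cond}_2(\bA_c)\le\overline{\lambda}_a\overline{\lambda}_1/(\underline{\lambda}_a\underline{\lambda}_1)$, which is \eqref{cond1}; the claim about the $d$-dependence follows since $\underline{\lambda}_1,\overline{\lambda}_1$ do not depend on $d$.

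For \eqref{cond2}, when $(a_{ij})$ is diagonal the bilinear form has no cross terms and equals $\sum_{i=1}^d a_{ii}\norm{\partial_i g}^2_{\spL{2}(\Omega)}$. Taking $\omi{i}{\nu_i}\sim\sqrt{a_{ii}}\,2^{\abs{\nu_i}}$ amounts to replacing $\Sc_i$ by (a uniform constant multiple of) $\sqrt{a_{ii}}\,\Sc_i$; since \eqref{deriv1cond} is homogeneous under scalar rescaling of the weights, multiplying it by $a_{ii}$ and summing over $i$ gives $\underline{\lambda}_1\norm{\Sc\bw}^2\le\langle\bA_c\bv,\bv\rangle\le\overline{\lambda}_1\norm{\Sc\bw}^2$ with the new $\Sc$ and \emph{the same} constants $\underline{\lambda}_1,\overline{\lambda}_1$, the factors $a_{ii}$ having been absorbed into $\Sc$. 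With $\Sc\bw=\bv$ this is \eqref{cond2}.

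I do not expect a real obstacle — the argument is essentially bookkeeping of constants — but the one point that needs care is checking that $\underline{\lambda}_1,\overline{\lambda}_1$ are genuinely independent of $d$ (and, in the diagonal case, of the $a_{ii}$): this rests on \eqref{deriv1cond} being a purely univariate norm equivalence, using the same Riesz basis $\{2^{-\abs{\nu}}\psi_\nu\}$ of $\spH{1}_0(0,1)$ in every coordinate so that only a one-dimensional constant enters, together with the fact that the telescoping identity $\sum_i\norm{\Sc_i\bw}^2=\norm{\Sc\bw}^2$ introduces no $d$-dependent factor.
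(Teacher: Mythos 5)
Your proof is correct and follows essentially the same route as the paper's: bound the bilinear form by the extreme eigenvalues of $(a_{ij})$, apply the univariate norm equivalence \eqref{deriv1cond} coordinatewise, and use the identity $\sum_i\norm{\Sc_i\bw}^2=\norm{\Sc\bw}^2$ coming from the Euclidean-norm structure \eqref{can-scaling} of the weights, with the $a_{ii}$ absorbed into the weights in the diagonal case. The only difference is cosmetic (you substitute $\bw=\Sc^{-1}\bv$ at the outset rather than at the end), so there is nothing to add.
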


Note that the $L_2$-orthonormality of $\{ \psi_\nu\}$ that we have assumed from the outset is a crucial requirement here, since otherwise the condition numbers in \eqref{cond1}, \eqref{cond2} would necessarily exhibit an exponential dependence on $d$.

Working now towards formulating a numerically implementable version of \eref{practicaliter} and analyzing its complexity
requires two further essential prerequisites: On the one hand, we need to fix the specific tensor formats to be used in such iterations.
More importantly, we need to specify the concrete form of the reduction operators in terms of tensor recompression and coarsening
and characterize their precise approximation properties.
Here we build on known results on tensor calculus from the literature 
(see  e.g.\ \cite{Lathauwer:00,Tucker:64,Falco:10,Hackbusch:09-1,Oseledets:09,Oseledets:11,Grasedyck:13,Grasedyck:10,Hackbusch:12}).
The relevant results on the analysis of the reduction operators, restated for convenience in the following section, are taken from \cite{BD}. 
On the other hand, we need to formulate a procedure for the 
approximate application of a suitably preconditioned version $\bA$ of the representation $\bT$.
This requires some essentially new ingredients, which will be developed in Section \ref{sec:apply}.

\section{Some Prerequisites}\label{sec:prereq}
For the convenience of the reader we recall first  some basic facts about tensor formats and fix related notation.
We then proceed with  the precise formulation
of  recompression and coarsening operators along with establishing their {\em near-optimality} in a sense to be made precise.
These results are taken from \cite{BD}.

\subsection{Tensor Formats}

As indicated before, we regard $\bu$ as a tensor of order $d$ on $\nabla^d =
\bigtimes_{i=1}^d \nabla$. 
We begin with considering tensor representations of the form
\begin{equation}\label{eq:tuckerd}
  \mathbf{u} =  \sum_{k_1=1}^{r_1} \cdots \sum_{k_d=1}^{r_d}
    a_{k_1,\ldots,k_d} \,\mathbf{U}^{(1)}_{k_1} \otimes \cdots \otimes
    \mathbf{U}^{(d)}_{k_d} \, \,.
\end{equation}
Here the order-$d$ tensor $\mathbf{a}= (a_{k_1,\ldots,k_d})_{1\leq k_i\leq r_i:i=1,\ldots,d}$ is referred
to as \emph{core tensor}. The matrix $\mathbf{U}^{(i)}= \big(\bU^{(i)}_{\nu_i,  k_i }\big)_{\nu_i\in \nabla^{d_i},1\leq k_i\leq r_i}$ with
column vectors $\mathbf{U}^{(i)}_k\in\spl{2}(\nabla^{d_i})$, $k = 1,\ldots,r_i$,
is called the $i$-th \emph{mode frame}, where we admit $r_i=\infty, i=1,\ldots,d$. 
When writing sometimes for convenience $(\bU^{(i)}_k)_{k\in\N}$, although the $\bU^{(i)}_k$
may be specified through \eqref{eq:tuckerd} only for $k\leq r_i$, it will always be understood to mean $\bU^{(i)}_k=0$, for $k> r_i$.
Note that in a representation of the form \eqref{eq:tuckerd},  by modifying $\mathbf{a}$
accordingly, one can always
orthogonalize the columns of $\mathbf{U}^{(i)}$ so  as to obtain
$\langle\mathbf{U}^{(i)}_k, \mathbf{U}^{(i)}_l\rangle = \delta_{kl}$, $i =
1,\ldots,d
$.
We refer to $\mathbf{U}^{(i)}$ with the latter property as
\emph{orthonormal mode frames}.

If $\mathbf{a}$ is represented directly by its entries, \eref{eq:tuckerd} corresponds to
the so-called \emph{Tucker format} \cite{Tucker:64,Tucker:66} or \emph{subspace representation}.
The {\em hierarchical Tucker} format \cite{Hackbusch:09-1}, as well
as the special case of the {\em tensor train} format \cite{Oseledets:11}, correspond
to representations in the form \eqref{eq:tuckerd} as well, but use a further structured representation for the core tensor
$\mathbf{a}$.
To this end, %
$\hdimtree{d}$ will always denote a fixed binary dimension tree of order $d$,
singletons $\{ i\} \in \hdimtree{d}$ are referred to as \emph{leaves}, $\hroot{d} := \{1,\ldots,d\}$ as \emph{root}, and elements
of $\Ical(\hdimtree{d}) := \hdimtree{d}\setminus\bigl\{\hroot{d}
,\{1\},\ldots,\{d\} \bigr\}$ as \emph{interior nodes}.
The set of leaves is denoted by $\leaf{d}$, where we
additionally set $\nonleaf{d} :=
\hdimtree{d}\setminus\mathcal{L}(\mathcal{D}_d) =
\Ical(\hdimtree{d})\cup\{ \hroot{d}\}$. 
The functions
$$
\child{i}: \cD_d\setminus \cL(\cD_d)\to \cD_d\setminus \{\hroot{d}\},\quad 
\child{i}(\alpha):= \alpha_i\,,\qquad i=1,2\,,
$$
produce the ``left'' and ``right'' children of a non-leaf node $\alpha \in \mathcal{N}(\cD_d)$.

For a family of matrices $\mathbf{B}^{(\alpha,k)}\in \spl{2}(\N\times\N)$ for $\alpha\in\nonleaf{d}$, $k\in\N$, we denote
by $\hsum{\hdimtree{d}}(\{ \mathbf{B}^{(\alpha,k)} \}) \in \spl{2}(\N^d)$ the corresponding core tensor $\mathbf{a}$ which is represented in hierarchical form by the $\mathbf{B}^{(\alpha,k)}$, or explicitly,
$$   
\mathbf{a}= \Bigl( \hsum{\hdimtree{d}}\bigl(\{
\mathbf{B}^{(\alpha,k)}\}\bigr) \Bigr)_{(k_\beta)_{\beta\in\mathcal{L}(\hdimtree{d})}}
\\
:= \sum_{(k_\gamma)_{\gamma\in 
    \Ical(\hdimtree{d})} }
       \prod_{\delta\in \nonleaf{d}}
      B^{(\delta,k_\delta)}_{(k_{\leftchild(\delta)},k_{\rightchild(\delta)})}
      \,.         
$$
Considering for each node $\alpha$ in the given (fixed) dimension tree the corresponding \emph{matricization}
$T^{(\alpha)}_\bu$, obtained by rearranging the entries of the tensor into an infinite matrix representation of a Hilbert-Schmidt operator using the indices in $\nabla^\alpha$ as row indices, the dimensions of the ranges of these
operators yield the \emph{hierarchical ranks}
$\dd_\alpha(\bu) := \dim \range T^{(\alpha)}_\bu$ for $\alpha \in \cD_d$. Except for $\alpha=\hroot{d}$, where we always have $\dd_{\hroot{d}}(\bu)=1$, these are collected in the \emph{hierarchical rank vector} $
\rank (\bu) =  {\rank_{\hdimtree{d}}(\bu)} := (\rank_\alpha (\bu) )_{\alpha\in\hdimtree{d}\setminus\{\hroot{d}\}} 
$
and give rise to the hierarchical tensor classes
\begin{equation*}
  \Hcal(\rr{r}) := \bigl\{ \mathbf{u} \in
  \spl{2}(\nabla^d) \colon  \rank_\alpha(\mathbf{u}) \leq {r}_\alpha \text{
  for all $\alpha \in \hdimtree{d}\setminus\{\hroot{d}\}$} \bigr\} \,.
\end{equation*}
In the case of singletons $\{i\}\in \hdimtree{d}$, we use the simplified notation $\rank_i (\bu) := \rank_{\{i\}}(\bu)$. We denote by $\mathcal{R} \subset (\N_0 \cup\{\infty\})^{\hdimtree{d}\setminus\{\hroot{d}\}}$ the set of hierarchical rank vectors for which $ \Hcal(\rr{r})$ is nonempty.

There is an analogous format for operators. In fact, \eref{eq:unrescaled_tuckersum} represents the second order operator
in \eref{eq:example_operator} in the Tucker format.  To apply such an operator efficiently to a tensor in hierarchical representation, we additionally need an analogous hierarchical structure for the core tensor $\mathbf{c}$ in the representation of the operator as in \eref{eq:unrescaled_tuckersum}, that is,
\begin{equation}
\label{eq:htucker_operator_core}
 \mathbf{c} = \hsum{\hdimtree{d}}\bigl(\{ \mathbf{C}^{(\alpha,\nu)}\colon \alpha\in
 \nonleaf{d},\, \nu=1,\ldots, R_\alpha\} \bigr)  
\end{equation}
for suitable $R_\alpha$.
We now give two examples  of such decompositions.
In both examples, we consider the linear dimension tree
\begin{equation}
\label{eq:lindimtree}
\hdimtree{d} = \bigl\{  \{ 1,\ldots,d\}, \{2,\ldots,d\}, \ldots, \{d-1,d\}, \{1\}, \ldots,\{d\}   \bigr\} \,.
\end{equation}
\begin{example}\label{ex:laplace}
When the diffusion matrix $(a_{i,j})_{i,j=1}^d$ in \eref{eq:example_operator}  is the identity matrix, i.e., 
the operator is the Laplacian, we obtain a hierarchical decomposition
with
$$  \mathbf{C}^{(\hroot{d},1)} = \begin{pmatrix} 0 & 1 \\ 1 & 0 \end{pmatrix}\,,\quad
  \mathbf{C}^{(\alpha,1)} = \begin{pmatrix} 1 & 0 \\ 0 & 0 \end{pmatrix}\,,\;
   \mathbf{C}^{(\alpha,2)} = \begin{pmatrix} 0 & 1 \\ 1 & 0 \end{pmatrix}\,,\; \alpha\in \Ical(\hdimtree{d})\,. 
$$
Thus, the Laplacian can be represented in hierarchical format with rank bounded by two for each node, which coincides with the value $R=2$ in \eqref{eq:unrescaled_tuckersum} for this case.
\end{example}

\begin{example}\label{ex:tridiag}
A slightly more involved example is the tridiagonal diffusion matrix with values $2$ on the main diagonal and $-1$ on the two secondary diagonals, where $R = 4$ in \eqref{eq:unrescaled_tuckersum} arising in kinetic models for dilute polymers, see \cite{BS:2007}. 
In this case, one has the following hierarchical decomposition: for the root node,
$$
   \mathbf{C}^{(\hroot{d},1)} = \Bigl( 2(\delta_{(i,j),(1,2)} + \delta_{(i,j),(2,1)}) - (\delta_{(i,j),(3,4)} + \delta_{(i,j),(4,3)} + \delta_{(i,j),(1,5)})  \Bigr)_{\substack{i=1,\ldots,4\\ j=1,\ldots,5}} \,.
$$
For each $\alpha \in \Ical(\hdimtree{d}) \setminus\{ \{d-1,d\} \}$, we have $ \mathbf{C}^{(\alpha,\nu)}\in\R^{4\times 5}$ for $\nu = 1,\ldots,5$, with values in $\{0,1\}$, where the value $1$ occurs at the following positions: entry $(1,1)$ of $\mathbf{C}^{(\alpha,1)}$, entries $(1,2)$, $(2,1)$ of $\mathbf{C}^{(\alpha,2)}$, $(3,1)$ of $\mathbf{C}^{(\alpha,3)}$, $(4,1)$ of $\mathbf{C}^{(\alpha,4)}$, and $(3,4)$, $(4,3)$, $(1,5)$ of $\mathbf{C}^{(\alpha,5)}$.
For $\alpha = \{d-1,d\}$, the matrices $\mathbf{C}^{(\{d-1,d\},\nu)}\in\R^{4\times 4}$ are defined in the same manner, but with each last column dropped\footnote{Note that for homogeneous Dirichlet boundary conditions, an additional simplification is possible, since then $\bT_3 \otimes \bT_4 = \bT_4\otimes \bT_3$.}.
\end{example}

As can be seen in the second example, the representation ranks $R_{\alpha}$ for interior nodes $\alpha \in \Ical(\hdimtree{d})$ may be larger than $R$. 
\subsection{Recompression, Contractions, and Coarsening}\label{ssec:RCC}
We proceed describing next the coarsening and recompression operators appearing in \eref{practicaliter}.
\paragraph{Near-optimal Recompression.}
Essential advantages offered by subspace based tensor formats like the hierarchical Tucker format
are that best approximations of given rank always exist, and  that {\em near-best approximations}
from the classes $\Hcal(\rr{r})$ are realized by truncation of a 
{\em hierarchical singular value decomposition} ($\Hcal$SVD), cf.\ \cite{Grasedyck:10}.
As in \cite{BD}, for a given $\bv\in \ell_2(\nabla^d)$ we denote by $\Psvd{\mathbf{v}}{\rr{r}} \mathbf{v}$ the result of truncating a $\Hcal$SVD of $\bv$ to ranks $\rr{r}$.

Moreover, we have computable error bounds $\lambda_{\rr{r}}(\bv)$ for this truncation. See \cite{Grasedyck:10} for a proof of the following result and \cite{BD} for a detailed discussion tailored to the present needs. 
\begin{remark}
\label{rem:near-best}
For any  rank vector $\rr{r}\leq \rank(\bv)$, $\rr{r}\in \Rank$, one has
\begin{equation*}
   \norm{\mathbf{v} -  \Psvd{\mathbf{v}}{\rr{r}} \mathbf{v}} 
\leq  {  \lambda_\rr{r} (\bv) }\leq
\constsvd \min_{\rank(\mathbf{w})\leq \rr{r}} \norm{\mathbf{u} - \mathbf{w}} ,   
 \quad \constsvd =\sqrt{2d-3}\,.
\end{equation*}
 \end{remark}
In order to quantify what we mean by {\em tensor sparsity}, for $r\in\N_0$ let
 $$
 {\sigma_{r}(\bv) = \sigma_{r,\Hcal}(\bv):= \inf\,\bigl\{\norm{ \bv- \bw} \,:\; \bw \in
\Hcal(\rr{r}) \text{ with $\rr{r}\in\Rank$, $\abs{\rr{r}}_\infty \leq r$} \}\,. }
$$
This allows us to consider corresponding {\em approximation classes}. 
To this end, giving 
  a positive, strictly
increasing {\em growth sequence} $\ga = \bigl(\ga(n)\bigr)_{n\in \N_0}$ with $\ga(0)=1$
and $\ga(n)\to\infty$, as $n\to\infty$,
we define
$$
\Acal(\ga)= \AH{\ga}:=  { \bigl\{\bv\in {\e2} : \sup_{r\in\N_0} %
\ga({r})\,
\sigma_{r,\Hcal}(\bv)=:\abs{\bv}_{\AH{\ga}} { <\infty }\bigr\} }
$$ 
and
$\norm{\bv}_{\AH{\ga}}:= \norm{\bv} + \abs{\bv}_{\AH{\ga}}$.
We call the growth sequence $\ga$ \emph{admissible} if
$$
\garatio:= \sup_{n\in\N} \ga(n)/\ga(n-1)<\infty \,,
$$
which corresponds to a restriction to at most exponential growth.

Rather than seeking (near-)best approximations for a given rank vector, we ask for approximations meeting a given target accuracy with (near-)minimal maximum ranks.
\begin{remark}
\label{rem:howtoread}
In this regard we have the following 
way of reading $\bv\in \AH{\ga}$ in mind: a given target accuracy $\varepsilon$ can
be realized at the expense of
ranks of the size $\gamma^{-1}(\abs{\bv}_{\AH{\ga}}/\varepsilon)$ so that a rank bound
of the form $\gamma^{-1}(C\abs{\bv}_{\AH{\ga}}/\varepsilon)$, where $C$ is a  constant, marks
a near-optimal performance. 
\end{remark}

Evaluating the bounds $ \lambda_\rr{r} (\bv)$ allows one to determine near-minimal ranks
$$
\rsvd(\bu,\eta) \in \argmin\bigl\{|\rr{r}|_\infty: \rr{r}\in \Rank,\;  {  \lambda_{\rsvd(\mathbf{u}, \eta)}(\bu) } \leq \eta\bigr\},
$$
that ensure the validity of  a given accuracy tolerance $\eta>0$. Given $\bv\in \ell_2(\nabla^d)$, this, in turn, gives rise to
a computable near-minimal rank approximation
\begin{equation*}
\hatPsvd{\eta} \bv:= \Psvd{\bv}{\rsvd(\bv,\eta)}\bv\,,
\end{equation*}
from $\Hcal(\rr{r})$. In fact,
we have by definition
\begin{equation*}
\norm{\bv- \hatPsvd{\eta}\bv}\leq  \lambda_{\rsvd(\bv,\eta)} (\bv) \leq \eta,\qquad
\abs{\rank(\hatPsvd{\eta}\bv)}_\infty = \abs{\rsvd(\bv,\eta)}_\infty.
\end{equation*}

\paragraph{Coarsening and Contractions.}
In addition to such a near-optimal tensor recompression operator we need in addition a mechanism to
approximate the columns in a given mode frame by finitely supported sequences, again in a way that
preserves a given accuracy tolerance. To this end, we define for any $\hat d \in \N$ and $\Lambda \subset \nabla^{\hat d}$
the restriction of a given $\bv\in \ell_2(\nabla^{\hat d})$ to the index set $\Lambda$ by
\begin{equation*}
  \Restr{\Lambda} \mathbf{v} := \mathbf{v} \odot \chi_\Lambda\,,\quad \mathbf{v}\in
  \spl{2}(\nabla^{\hat d}) \,,
\end{equation*} 
i.e., all entries with index $\nu\not\in \Lambda$ are replaced by zero. 
For each $N\in\N_0$, the errors of best $N$-{\em term approximation} are then given by
\begin{equation*}
  \sigma_N(\mathbf{v}) := \inf_{\substack{\Lambda\subset\nabla^{\hat d}\\
  \#\Lambda\leq N}} \norm{\mathbf{v} - \Restr{\Lambda} \mathbf{v}} \,.
\end{equation*}
The compressibility of $\bv$ can again be described through   {\em approximation classes}.
For $s >0$, we denote by $\As(\nabla^{\hat d})$ the set of
$\mathbf{v}\in\spl{2}(\nabla^{\hat d})$ such that
\begin{equation*}
  \norm{\mathbf{v}}_{\As(\nabla^{\hat d})} := \sup_{N\in\N_0} (N+1)^s
  \sigma_N(\mathbf{v}) < \infty \,.
\end{equation*}
Endowed with this (quasi-)norm, $\As(\nabla^{\hat d})$ becomes a (quasi-)Banach space.
When no confusion can arise, we shall suppress the index set dependence 
and write $\As = \As(\nabla^{\hat d})$.

The following concept, which allows us to relate a hidden low-dimensional sparsity 
of $\bv\in \ell_2(\nabla^d)$ to the {\em joint sparsity} of associated mode frames, was introduced first in \cite{B}, see also \cite{BD}.
To this end, for any vector $\kk{x} = (x_i)_{i=1,\ldots,d}$ 
and for $i\in\zinterval{1}{d}$, we employ the notation
\begin{equation}\label{eq:delentry}
  \kk{\check x}_i := (x_1, \ldots, x_{i-1},
 x_{i+1},\ldots,x_d)  \,, \quad
  \kk{\check x}_i|_y := (x_1, \ldots, x_{i-1}, y,
  x_{i+1},\ldots,x_d)
\end{equation}
to refer to the corresponding vector with entry $i$ deleted or
entry $i$ replaced by $y$, respectively.
 In a slight abuse of terminology we define for $\bu\in \ell_2(\nabla^d)$ and
for $i\in\zinterval{1}{d}$, using the notation \eqref{eq:delentry},
\begin{equation}
\label{contractions}
 \pi^{(i)}(\mathbf{u}) = \bigl( \pi^{(i)}_{\nu_i} (\mathbf{u})
 \bigr)_{\nu_i\in\nabla} 
  :=\biggl(  \Bigl(\sum_{\check{\nu}_i} \abs{u_{\nu}}^2 \Bigr)^{\frac{1}{2}}  \biggr)_{\nu_i \in\nabla} \in \ell_2(\nabla) \,,
\end{equation}
briefly referred to in what follows as $i$th {\em contraction}.  

For later purposes we record some basic facts from \cite{BD}. 
Let $\bu,\bv \in\spl{2}(\nabla^d)$, $\nu \in\nabla$ and $i\in\{1,\ldots,d\}$. Then we have $\norm{\bu} = \norm{\pi^{(i)}(\bu)}$ as well as
\begin{equation}
\label{triangleeq}
  \pi^{(i)}_\nu ( \bu + \bv ) \leq
      \pi^{(i)}_\nu (\bu) + \pi^{(i)}_\nu(\bv),
\end{equation}
and for each $\eta >0$,
\begin{equation}
\label{Ppieta}
  \pi^{(i)}_\nu(\hatPsvd{\eta} \bu) \leq \pi^{(i)}_\nu(\mathbf{u}).
\end{equation}
The contractions can easily be computed using the hierarchical singular value decomposition: let in addition $\mathbf{U}^{(i)}$ be mode frames of an $\Hcal$SVD of $\bu$, and let $(\sigma^{(i)}_k)$ be the corresponding sequences of singular values of the matricizations $T^{(\{i\})}_\bu$, then
\begin{equation*}
 \pi^{(i)}_\nu (\mathbf{u}) = \Bigl( \sum_{k}
 \bigabs{\mathbf{U}^{(i)}_{\nu, k}}^2 \bigabs{\sigma^{(i)}_{k}}^2
 \Bigr)^{\frac{1}{2}}  \,.
\end{equation*}

To quantify the actual number of nonzero entries on components of tensor representations, the notation
$$
 \supp_i(\bu) := \supp\big(\pi^{(i)}(\bu)\big) 
$$
will be useful. 

As a first important application of the sequences \eqref{contractions}, we identify next {\em near-best} $N$-term approximations to an order-$d$ tensor
without considering all entries, but using instead only its contractions.
To this end, consider a non-increasing rearrangement
\beqn
\label{dim-sorting}
\pi^{(i_1)}_{ \nu^{i_1,1}}(\bu)\geq \pi^{(i_2)}_{ \nu^{i_2,2}}(\bu)\geq \cdots \geq \pi^{(i_j)}_{ \nu^{i_j,j}}(\bu)\geq \cdots,\quad \nu^{i_j,j}\in \nabla^{}, 
\eeqn
of the entire set of contractions for all tensor modes,
$
 \bigl\{\pi^{(i)}_\nu(\bu) : \nu\in \nabla,\, i=1,\ldots,d \bigr\}.
$
 Next, retaining only the $N$ largest from the latter total ordering  {\eqref{dim-sorting}} and redistributing them to the {respective}
dimension bins
$\Lambda^{(i)}(\bu;N):= \bigl\{\nu^{i_j,j}: i_j= i,\, j=1,\ldots, N\bigr\}$, %
$i=1,\ldots, d$,
the product set
$$
\Lambda(\mathbf{u};N) := \bigtimes_{i=1}^d
\Lambda^{(i)}(\mathbf{u};N)
$$
can be obtained at a cost that is roughly $d$ times the analogous low-dimensional cost.
By construction, one has
$$
\sum_{i=1}^d \# \Lambda^{(i)}(\mathbf{u};N) \leq N
$$
and
\begin{equation}\label{dim-sorting-optimality}
\sum_{i=1}^d\sum_{\nu\in \nabla^{}\setminus \Lambda^{(i)}(\bu;N)} |\pi^{(i)}_\nu(\bu)|^2
= \min_{\hat \Lambda}\Big\{\sum_{i=1}^d\sum_{\nu\in \nabla^{}\setminus \hat \Lambda^{(i)} } |\pi^{(i)}_\nu(\bu)|^2\Big\},
\end{equation}
where $\hat\Lambda $ ranges over all product sets
$ \bigtimes_{i=1}^d \hat\Lambda^{(i)}$ with $\sum_{i=1}^d \#\hat
\Lambda^{(i)} \leq N$.

\begin{proposition}[cf.\ \cite{B,BD}]
\label{prp:tensor_coarsening_est}
For any  $\mathbf{u} \in \spl{2}(\nabla^d)$ one has
\begin{equation}
\label{eq:tensor_coarsening_errest}  
{ \norm{\mathbf{u} - \Restr{\Lambda(\mathbf{u};N)}\bu} \leq \Bigl( \sum_{i=1}^d\sum_{\nu\in \nabla^{}\setminus \Lambda^{(i)}(\bu;N)}
\bigabs{\pi^{(i)}_\nu(\mathbf{u})}  \Bigr)^{\frac12} =: \mu_N(\bu) \,,  }
\end{equation}
and for any $\hat\Lambda =
\bigtimes_{i=1}^d \hat\Lambda^{(i)}$ with $\Lambda^{(i)} \subset\nabla^{}$
satisfying $\sum_{i=1}^d \#\hat
\Lambda^{(i)} \leq N$, one has
\begin{equation*}
 \norm{\mathbf{u} - \Restr{\Lambda(\mathbf{u};N)}\bu} \leq \mu_N(\bu) \leq \sqrt{d} \norm{\mathbf{u} -
 \Restr{\hat\Lambda}\mathbf{u}} \,.
\end{equation*}
\end{proposition}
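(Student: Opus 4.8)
The plan is to derive the whole statement from one elementary counting estimate exploiting the product structure $\Lambda(\mathbf{u};N)=\bigtimes_{i=1}^d\Lambda^{(i)}(\mathbf{u};N)$ together with the defining identity $\sum_{\check\nu_i}\abs{u_\nu}^2=\bigabs{\pi^{(i)}_{\nu_i}(\mathbf{u})}^2$ built into \eqref{contractions}. The key observation is that, for any product set $\hat\Lambda=\bigtimes_{i=1}^d\hat\Lambda^{(i)}\subset\nabla^d$, an index $\nu=(\nu_1,\ldots,\nu_d)$ lies outside $\hat\Lambda$ precisely when $\nu_i\notin\hat\Lambda^{(i)}$ for at least one $i$, which gives the two-sided indicator bound
$$
 \chi_{\nabla^d\setminus\hat\Lambda}(\nu)\;\leq\;\sum_{i=1}^d\chi_{\nabla\setminus\hat\Lambda^{(i)}}(\nu_i)\;\leq\;d\,\chi_{\nabla^d\setminus\hat\Lambda}(\nu)\,.
$$

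To obtain \eqref{eq:tensor_coarsening_errest} I would take $\hat\Lambda=\Lambda(\mathbf{u};N)$, multiply the left inequality of this display by $\abs{u_\nu}^2$, sum over $\nu\in\nabla^d$, and interchange the order of summation on the right. Since $\norm{\mathbf{u}-\Restr{\Lambda}\mathbf{u}}^2=\sum_{\nu\notin\Lambda}\abs{u_\nu}^2$ for every $\Lambda$, the left-hand side becomes $\norm{\mathbf{u}-\Restr{\Lambda(\mathbf{u};N)}\mathbf{u}}^2$, while the right-hand side becomes
$$
 \sum_{i=1}^d\;\sum_{\nu_i\in\nabla\setminus\Lambda^{(i)}(\mathbf{u};N)}\;\Bigl(\sum_{\check\nu_i}\abs{u_\nu}^2\Bigr)
 \;=\;\sum_{i=1}^d\;\sum_{\nu_i\in\nabla\setminus\Lambda^{(i)}(\mathbf{u};N)}\bigabs{\pi^{(i)}_{\nu_i}(\mathbf{u})}^2
 \;=\;\mu_N(\mathbf{u})^2\,.
$$
Taking square roots yields \eqref{eq:tensor_coarsening_errest}, in particular $\norm{\mathbf{u}-\Restr{\Lambda(\mathbf{u};N)}\mathbf{u}}\leq\mu_N(\mathbf{u})$.

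For the remaining inequality $\mu_N(\mathbf{u})\leq\sqrt d\,\norm{\mathbf{u}-\Restr{\hat\Lambda}\mathbf{u}}$ for an arbitrary admissible $\hat\Lambda=\bigtimes_i\hat\Lambda^{(i)}$ with $\sum_i\#\hat\Lambda^{(i)}\leq N$, I would first invoke \eqref{dim-sorting-optimality}: by the construction of the bins $\Lambda^{(i)}(\mathbf{u};N)$ via the global reordering \eqref{dim-sorting}, the quantity $\mu_N(\mathbf{u})^2=\sum_{i=1}^d\sum_{\nu_i\notin\Lambda^{(i)}(\mathbf{u};N)}\bigabs{\pi^{(i)}_{\nu_i}(\mathbf{u})}^2$ equals the minimum of $\sum_{i=1}^d\sum_{\nu_i\notin\tilde\Lambda^{(i)}}\bigabs{\pi^{(i)}_{\nu_i}(\mathbf{u})}^2$ over all product sets with $\sum_i\#\tilde\Lambda^{(i)}\leq N$, hence $\mu_N(\mathbf{u})^2\leq\sum_{i=1}^d\sum_{\nu_i\notin\hat\Lambda^{(i)}}\bigabs{\pi^{(i)}_{\nu_i}(\mathbf{u})}^2$. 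Expanding each $\bigabs{\pi^{(i)}_{\nu_i}(\mathbf{u})}^2$ again via \eqref{contractions} and now applying the \emph{right} inequality of the indicator display (multiplied by $\abs{u_\nu}^2$ and summed over $\nu$) bounds the right-hand side by $d\sum_{\nu\notin\hat\Lambda}\abs{u_\nu}^2=d\,\norm{\mathbf{u}-\Restr{\hat\Lambda}\mathbf{u}}^2$; one more square root finishes the argument.

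I do not expect a genuine obstacle here, since the argument is purely combinatorial. The only points requiring care are the bookkeeping in the two-sided indicator estimate — in particular that each index surviving outside a product set is over-counted at most $d$ times across the modes — and the justification that $\mu_N(\mathbf{u})^2$ really is the minimal coordinatewise tail, i.e.\ that greedily distributing the $N$ globally largest contractions among the $d$ modes is optimal; but this is precisely what \eqref{dim-sorting-optimality} asserts and may be used verbatim.
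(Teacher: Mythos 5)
Your proof is correct and is essentially the argument behind the cited result in \cite{B,BD}: the two-sided indicator bound for the complement of a product set, combined with the definition \eqref{contractions} of the contractions and the optimality property \eqref{dim-sorting-optimality} of the greedy bin distribution, is exactly what is needed, and both square-root steps are carried out properly. Note only that you have (correctly) read the inner summand in \eqref{eq:tensor_coarsening_errest} as $\bigabs{\pi^{(i)}_\nu(\mathbf{u})}^2$; the missing exponent there is a typographical slip in the statement, as the comparison with \eqref{dim-sorting-optimality} and the $\sqrt{d}$ bound confirm.
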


Again we switch from near-best approximations for a given budget (here $N$) to approximations realizing a given {\em target accuracy} with near-minimal cost.
To this end, we define
 {$N(\bv,\eta) := \min\bigl\{ N\colon \mu_N(\bv) \leq \eta
  \bigr\}$, where $\mu_N$ is defined   in \eqref{eq:tensor_coarsening_errest},}
as well as the {\em thresholding} procedure
\begin{equation*}
\hatCctr{\eta} (\mathbf{v}) :=  \Restr{\Lambda(\mathbf{u};N(\bv;\eta))}.
\mathbf{v}\,,
\end{equation*}
  As a consequence of \eqref{eq:tensor_coarsening_errest},
we have
$$
  \norm{\mathbf{v} - \Cctr_{\mathbf{v},N} \mathbf{v}} 
  {\leq  \mu_N(\bv) } \leq \constcrs  \min_{\sum_i \# \supp_i(\mathbf{w})\leq N} \norm{\mathbf{u} - \mathbf{w}},
  \quad \constcrs =\sqrt{d}.
$$

In \cite{BD}, we have obtained the following result concerning a combined reduction technique, both with respect to ranks as well as
sparsity of the mode frames, with near-optimal performance.

\begin{theorem}
\label{lmm:combined_coarsening}
Let $\mathbf{u}, \mathbf{v} \in \spl{2}(\nabla^d)$ with
$\mathbf{u}\in\AH{\ga}$, $\pi^{(i)}(\mathbf{u}) \in \cA^s$ for
$i=1,\ldots,d$, and $\norm{\mathbf{u}-\mathbf{v}} \leq \eta$. Let 
$\constsvd = \sqrt{2d-3}$ and $\constcrs = \sqrt{d}$. Then, for any fixed $\alpha >0$,
\begin{equation*}
\mathbf{w}_{\eta} := \hatCctr{\constcrs
  (\constsvd+1)(1+\alpha)\eta} \bigl(\hatPsvd{\constsvd
(1+\alpha)\eta} (\mathbf{v}) \bigr) \,,
\end{equation*}
satisfies
\begin{equation}
\label{eq:combinedcoarsen_errest}
  \norm{\bu - \bw_\eta} \leq C(\alpha,\constsvd,\constcrs)\, \eta \,,
\end{equation}
where $ C(\alpha,\constsvd,\constcrs) :=  \bigl( 1 + \constsvd(1+\alpha) + \constcrs (\constsvd+1)(1+\alpha)\bigr)$, as well as
\begin{equation}\label{eq:combinedcoarsen_rankest}
 \abs{\rank(\bw_\eta)}_\infty \leq \ga^{-1}\bigl(\garatio
 \norm{\bu}_{\AH{\ga}}/(\alpha \eta)\bigr)\,,\qquad \norm{\bw_\eta}_{\AH{\ga}}
 \leq C_1 \norm{\bu}_{\AH{\ga}} ,
\end{equation}
with $C_1 = (\alpha^{-1}(1+\constsvd(1+\alpha)) + 1)$ and
\begin{equation}\label{eq:combinedcoarsen_suppest}
\begin{aligned}
 \sum_{i=1}^d \#\supp_i (\mathbf{w}_\eta) &\leq {2 \eta^{-\frac{1}{s}} d\, \alpha^{-\frac{1}{s}} } \Bigl(
 \sum_{i=1}^d \norm{ \pi^{(i)}(\bu)}_{\As} \Bigr)^{\frac{1}{s}}  \,, \\
   \sum_{i=1}^d \norm{\pi^{(i)}(\bw_\eta)}_{\As} &\leq C_2 \sum_{i=1}^d
   \norm{\pi^{(i)}(\bu)}_{\As} ,
 \end{aligned}
\end{equation}
with $C_2 = 2^s(1+ {3^s}) + 2^{{4s}} { \alpha^{-1} \bigl( 1+  \constsvd(1+\alpha) + \constcrs  (\constsvd + 1)(1+\alpha) \bigr)  }  d^{\max\{1,s\}}$.
\end{theorem}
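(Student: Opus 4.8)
The plan is to analyze the two reduction steps in $\bw_\eta$ separately: write $\eta_1:=\constsvd(1+\alpha)\eta$, $\eta_2:=\constcrs(\constsvd+1)(1+\alpha)\eta$, $\bv_1:=\hatPsvd{\eta_1}(\bv)$, so that $\bw_\eta=\hatCctr{\eta_2}(\bv_1)$, and use the accuracy guarantees built into these operators, namely $\norm{\bv-\bv_1}\le\eta_1$ and $\norm{\bv_1-\bw_\eta}\le\eta_2$, together with the structural facts that $\hatPsvd{}$ returns a truncated $\Hcal$SVD while $\hatCctr{}$ restricts the tensor to a product index set $\Lambda(\bv_1;N)$ with $N:=N(\bv_1,\eta_2)$ and $\sum_i\#\Lambda^{(i)}(\bv_1;N)\le N$. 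The error bound \eqref{eq:combinedcoarsen_errest} is then immediate from the triangle inequality, $\norm{\bu-\bw_\eta}\le\norm{\bu-\bv}+\norm{\bv-\bv_1}+\norm{\bv_1-\bw_\eta}\le\eta+\eta_1+\eta_2=C(\alpha,\constsvd,\constcrs)\,\eta$.

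For the rank estimate \eqref{eq:combinedcoarsen_rankest}, I would first note that restricting a tensor to a product index set turns every matricization into a submatrix, so no hierarchical rank increases and $\abs{\rank(\bw_\eta)}_\infty\le\abs{\rank(\bv_1)}_\infty=\abs{\rsvd(\bv,\eta_1)}_\infty$. To bound the latter, set $r:=\ga^{-1}\bigl(\abs{\bu}_{\AH{\ga}}/(\alpha\eta)\bigr)$; since $\bu\in\AH{\ga}$ there is a best-rank approximant $\bw^\ast$ with $\abs{\rank(\bw^\ast)}_\infty\le r$ and $\norm{\bu-\bw^\ast}=\sigma_{r,\Hcal}(\bu)\le\abs{\bu}_{\AH{\ga}}/\ga(r)\le\alpha\eta$, and Remark \ref{rem:near-best} then gives $\lambda_{\rank(\bw^\ast)}(\bv)\le\constsvd\norm{\bv-\bw^\ast}\le\constsvd(\norm{\bv-\bu}+\alpha\eta)\le\constsvd(1+\alpha)\eta=\eta_1$, whence $\abs{\rsvd(\bv,\eta_1)}_\infty\le r\le\ga^{-1}\bigl(\garatio\norm{\bu}_{\AH{\ga}}/(\alpha\eta)\bigr)$ using $\garatio\ge1$. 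For the companion norm bound, restriction also cannot increase $\sigma_{k,\Hcal}$, so $\abs{\bw_\eta}_{\AH{\ga}}\le\abs{\bv_1}_{\AH{\ga}}$; since $\abs{\rank(\bv_1)}_\infty\le r$, only ranks $k<r$ contribute, and there $\ga(k)\sigma_{k,\Hcal}(\bv_1)\le\ga(k)\norm{\bu-\bv_1}+\abs{\bu}_{\AH{\ga}}$ with $\ga(k)\le\ga(r-1)<\abs{\bu}_{\AH{\ga}}/(\alpha\eta)$ by minimality of $r$, which gives $\abs{\bw_\eta}_{\AH{\ga}}\le C_1\abs{\bu}_{\AH{\ga}}$; combining this with $\norm{\bw_\eta}\le\norm{\bv}\le\norm{\bu}+\eta$ and separating off the trivial case in which $\eta$ is already so large that $\hatPsvd{\eta_1}$ returns $0$ then yields $\norm{\bw_\eta}_{\AH{\ga}}\le C_1\norm{\bu}_{\AH{\ga}}$.

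The sparsity bounds \eqref{eq:combinedcoarsen_suppest} carry the real work. Since $\sum_i\#\supp_i(\bw_\eta)\le N$, it suffices to bound $N=N(\bv_1,\eta_2)$, which is \emph{minimal} with $\mu_N(\bv_1)\le\eta_2$. Testing the estimate $\mu_{N-1}(\bv_1)\le\sqrt d\,\norm{\bv_1-\Restr{\hat\Lambda}\bv_1}\le\sqrt d\bigl(\norm{\bv_1-\bu}+\norm{\bu-\Restr{\hat\Lambda}\bu}\bigr)$ from Proposition \ref{prp:tensor_coarsening_est} against the product set $\hat\Lambda$ that assigns to bin $i$ the $\floor{(N-1)/d}$ largest entries of $\pi^{(i)}(\bu)$, using $\pi^{(i)}(\bu)\in\As$ and the gap $\eta_2-\constcrs\norm{\bu-\bv_1}\ge\sqrt d\,\alpha\eta$---which is precisely where the prescribed tolerances are tuned to each other---forces $N-1<d\,\alpha^{-1/s}\eta^{-1/s}\bigl(\sum_i\norm{\pi^{(i)}(\bu)}_{\As}\bigr)^{1/s}$, hence the first line of \eqref{eq:combinedcoarsen_suppest}. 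For the $\As$-norm bound I would adapt the classical coarsening argument to the contractions: with $N_i:=\#\supp_i(\bw_\eta)$ one has $\pi^{(i)}(\bw_\eta)$ dominated pointwise by the best $N_i$-term approximation of $\pi^{(i)}(\bv_1)$, so for $k<N_i$, $\sigma_k(\pi^{(i)}(\bw_\eta))\le\norm{\pi^{(i)}(\bu)-\pi^{(i)}(\bv_1)}+\sigma_k(\pi^{(i)}(\bu))\le\norm{\bu-\bv_1}+\norm{\pi^{(i)}(\bu)}_{\As}(k+1)^{-s}$, while $\sigma_k(\pi^{(i)}(\bw_\eta))=0$ for $k\ge N_i$; multiplying by $(k+1)^s\le N_i^s$, summing over $i$, inserting the bound on $N=\sum_i N_i$ and controlling $\sum_i N_i^s$ by $d^{\max\{1,s\}}\bigl(\sum_i\norm{\pi^{(i)}(\bu)}_{\As}\bigr)$ via superadditivity (for $s\ge1$) resp.\ concavity (for $s<1$) of $t\mapsto t^s$, and once more using the minimality of $N$ to absorb the $\norm{\bu-\bv_1}$-contribution into a multiple of $\norm{\pi^{(i)}(\bu)}_{\As}(k+1)^{-s}$ on the relevant range of $k$, produces the second line of \eqref{eq:combinedcoarsen_suppest}.

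I expect the genuine obstacle to be this last $\As$-norm estimate for the contractions: the classical coarsening lemma---keep a near-minimal number of the largest coefficients to reach a tolerance comparable to the perturbation, whereupon both the support and the $\As$-norm of the result are controlled by the $\As$-norm of the target---has to be run \emph{simultaneously over all $d$ modes} through the joint sorting implicit in $\hatCctr{}$, and the delicate point is to organize the bookkeeping---in particular for modes $i$ for which $\norm{\pi^{(i)}(\bu)}_{\As}$ is small relative to $\sum_j\norm{\pi^{(j)}(\bu)}_{\As}$, where only few mode-$i$ entries survive the global threshold---so that the constant in \eqref{eq:combinedcoarsen_suppest} grows only like $d^{\max\{1,s\}}$ and not like a larger power of $d$. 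The error and rank estimates, by contrast, reduce to the triangle inequality, Remark \ref{rem:near-best}, and the monotonicity of hierarchical ranks and of $\sigma_{r,\Hcal}$ under restriction to product index sets, and are essentially routine.
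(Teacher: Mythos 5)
Your argument is correct and follows the same route as the source: the paper states this theorem without proof, importing it from \cite{BD}, where it is established exactly as you do, by combining the near-optimality of the $\Hcal$SVD truncation (Remark \ref{rem:near-best}) with the near-optimality of the contraction-based coarsening (Proposition \ref{prp:tensor_coarsening_est}), the tolerances being tuned precisely so that the gap $\eta_2-\constcrs\norm{\bu-\bv_1}\geq\constcrs\,\alpha\eta$ drives both the support bound and the $\As$-norm bound. Your derivation of all four estimates is sound — including the monotonicity of hierarchical ranks and of $\sigma_{r,\Hcal}$ under restriction to product sets, and the $d^{\max\{1,s\}}$ factor obtained from $\sum_i N_i^s\leq d^{\max\{0,1-s\}}N^s$ — and in places yields slightly sharper constants than those quoted in the statement.
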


\begin{remark}\label{rem:CRcomplexity}
Both $\hatPsvd{\eta}$ and $\hatCctr{\eta}$ require a hierarchical singular value decomposition of their inputs.
For a compactly supported $\bv$ given in hierarchical format, the number of operations required for obtaining such a decomposition is bounded, up to a fixed multiplicative constant, by $d\abs{\rank(\bv)}_\infty^4 + \abs{\rank(\bv)}_\infty^2 \sum_{i=1}^d \# \supp_i\bv$.
\end{remark}

\section{Adaptive Application of Rescaled Low-Rank Operators}\label{sec:apply}
The remaining crucial issue for a numerical realization of the iteration \eref{practicaliter} is 
the {\em adaptive application} of  a suitably rescaled version  $\bA$  of a given   operator $\bT$ of finite hierarchical rank. 
Throughout the remainder of the paper we concentrate on $\bT$ given by \eref{eq:unrescaled_tuckersum} 
with low-dimensional components given by
 \eref{T1} and \eref{T2}.
Specifically,
we wish to construct for a given $\bv\in \ell_2(\nabla^d)$ with  finite hierarchical ranks and
 any target tolerance $\eta >0$ an approximation
$\bw_\eta \in \ell_2(\nabla^d)$, satisfying $\|\bw_\eta - \bA \bv\|\leq \eta$, where $\bw_\eta$  has as low hierarchical ranks and as small lower-dimensional supports $\supp_i\bw_\eta$ as possible.

We have already pointed out that scaling operators of the form $\Sc$ with weights from \eref{can-scaling}
cause the preconditioned operator to have infinite rank and obstruct the understanding of low-rank approximations.
The first major issue is therefore to identify equivalent scalings (in the sense of \eref{equivS}) that better support finding such approximations in a quantifiable sense.

The second issue is {\em representation sparsity} of the generated mode frames which will be addressed by 
directly exploiting known results for low-dimensional
wavelet methods. In this context we continue employing at times  the canonical scaling $\Sc$,
since it allows us to use corresponding low-dimensional results on matrix  compression in the most convenient way.

\subsection{Near-Separable Scaling Operators}\label{ssec:nearsep}
The central objective of this section is to identify a scaling operator $\tilde\bS$ which is  {\em equivalent}
to the canonical scaling $\bS$ in the   sense of \eref{equivS}, but can be approximated by separable operators in an
efficient and  quantifiable way.
The main tool is the following result, whose proof  is deferred to Section \ref{sec:proofs}.

\begin{theorem}\label{thm:expsum_relerr}
Let $\alpha(x) := \ln^2(1+e^x)$, $w(x) :=  2 \pi^{-1/2}(1 + e^{-x})^{-1}$. For an arbitrary but fixed $\delta \in (0,1)$
choose some
\begin{equation*}
  h \in \biggl(0, \frac{\pi^2}{5(\abs{\ln(\delta/2)} + 4)}\biggr]  \,,  
\end{equation*}
and set
\beqn
\label{n+}
n^+ = n^+(\delta) := \ceil{ h^{-1} \max\{4 \pi^{-\frac12}, \sqrt{\abs{\ln (\delta/2)}}\} }.
\eeqn
Then, defining
\begin{equation}
\label{eq:expapprox_def}   
   \varphi_{h,n}(t) :=  \sum_{k=-n}^{n^+} h\, w(kh)\, e^{-\alpha(kh)\, t}  \,, \quad \varphi_{h,\infty}(t) := \lim_{n\to\infty} \varphi_{h,n}(t)\,,
\end{equation}
one has 
\begin{equation}\label{eq:expapprox_sinc_est_0}
   \biggabs{ \frac{1}{\sqrt{t}} -   \varphi_{h,\infty}(t) } 
  \leq \frac{\delta}{\sqrt{t}} \quad \text{for all $t\in[1,\infty)$.} 
\end{equation}
For any $\eta > 0$ and $T>1$, provided that
$n \geq \ceil{h^{-1}(\ln 2\pi^{-\frac12} +\abs{\ln (\min\{\delta/2,\eta\})} + \textstyle\frac12\displaystyle \ln T)}$, one has in addition
\begin{equation}\label{eq:expapprox_sinc_est}
\bigabs{ t^{-\frac12} -   \varphi_{h, n }(t) } \leq \frac{\delta}{\sqrt{t}}\,\; \text{ and }\,
\quad 
 \bigabs{ \varphi_{h, \infty}(t) -   \varphi_{h, n }(t) } 
  \leq \frac{\eta}{\sqrt{t}} \quad \text{for all $t\in[1,T]$.} 
\end{equation}
 \end{theorem}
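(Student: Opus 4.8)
The statement is an approximation-theory result about approximating $t \mapsto t^{-1/2}$ by exponential sums obtained from a sinc quadrature applied to an integral representation, so the plan is to follow the classical Stenger-type analysis of sinc quadrature, but carried through with explicit constants and with a \emph{relative} error bound in mind. First I would establish the integral representation: starting from $t^{-1/2} = \pi^{-1/2} \int_{\R} e^{-t e^{x}} e^{x/2}\dif x$ (or an equivalent Gaussian-type identity), I would perform the substitution $e^{x} \rightsquigarrow \alpha(x) = \ln^2(1+e^x)$ that moves the integration variable so that the integrand decays doubly exponentially as $x \to +\infty$ while remaining controlled as $x \to -\infty$; this substitution is exactly what produces the weight $w(x) = 2\pi^{-1/2}(1+e^{-x})^{-1}$ and the exponent $\alpha(x)$ in \eqref{eq:expapprox_def}. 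I would verify that with these choices one gets $t^{-1/2} = \int_{\R} w(x) e^{-\alpha(x) t}\dif x$ for $t > 0$, so that $\varphi_{h,\infty}$ in \eqref{eq:expapprox_def} is precisely the trapezoidal-rule (sinc) approximation of this integral with step $h$.

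The core estimate \eqref{eq:expapprox_sinc_est_0} then splits into the standard two pieces. For the \emph{discretization error} $|\int_{\R} F_t - h\sum_{k\in\Z} F_t(kh)|$ where $F_t(x) = w(x)e^{-\alpha(x)t}$, I would invoke the Poisson summation / contour-shift bound: if $F_t$ is analytic and suitably bounded in a strip $|\Im z| < \mathrm{d}$, the trapezoidal error is $O(e^{-2\pi \mathrm{d}/h})$ times the strip norm of $F_t$. The key technical work is to check that $x \mapsto \alpha(x) = \ln^2(1+e^x)$ extends analytically to a strip and that $\Re \alpha$ stays bounded below there (so $e^{-\alpha t}$ does not blow up), and to track the $t$-dependence so that the bound comes out proportional to $t^{-1/2}$ rather than merely a constant — this is where ``relative accuracy'' enters and is the part where I expect to spend the most care. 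For the \emph{truncation error} (replacing $\Z$ by $\{-n,\ldots,n^+\}$) I would bound the left tail $\sum_{k < -n}$ using that $w(kh) \lesssim e^{kh/2}$ for $kh \to -\infty$ (geometric decay, giving the $\sqrt{|\ln(\delta/2)|}$ and $4\pi^{-1/2}$ thresholds after optimizing in $h$), and the right tail $\sum_{k > n^+}$ using the double-exponential decay $\alpha(kh) \sim (kh)^2$, which is what makes $n^+$ only need to satisfy \eqref{n+}; here the restriction $t \geq 1$ is used to pull $e^{-\alpha(kh)t} \leq e^{-\alpha(kh)}$ and keep everything $\lesssim t^{-1/2}$. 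Combining and choosing $h$ as in the hypothesis (the interval $(0, \pi^2/(5(|\ln(\delta/2)|+4))]$ is exactly what forces $e^{-2\pi\mathrm{d}/h} + (\text{tails}) \le \delta$) yields \eqref{eq:expapprox_sinc_est_0}.

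For the second half of the theorem, \eqref{eq:expapprox_sinc_est}, I would note that $\varphi_{h,\infty} - \varphi_{h,n}$ is purely a truncation error of the \emph{same} sum, now needing to hold only on the bounded interval $[1,T]$ against a smaller tolerance $\eta$. The left-tail bound is unchanged in form, but on $[1,T]$ we may be less aggressive: using $e^{-\alpha(kh)t} \le 1$ and the geometric decay of $w$, the left tail from index $-n$ down is $\lesssim e^{-nh/2}$; I would then solve $2\pi^{-1/2} e^{-nh/2} \cdot (\text{const}) \le \eta\, t^{-1/2}$, and since $t \le T$ we have $t^{-1/2} \ge T^{-1/2}$, which is exactly the source of the $\frac12 \ln T$ term in the stated lower bound on $n$; the right tail is already absorbed by the choice of $n^+$. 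Taking $\min\{\delta/2,\eta\}$ in the threshold guarantees that the first inequality in \eqref{eq:expapprox_sinc_est} (the absolute-to-$t^{-1/2}$ bound with constant $\delta$) holds simultaneously. The main obstacle, as flagged, is the analyticity-and-uniform-lower-bound analysis of $\alpha(z)$ in a horizontal strip together with the careful bookkeeping needed to keep every error term proportional to $t^{-1/2}$ uniformly in $t$; the tail estimates and the final choice of parameters are then essentially a matter of elementary but careful calculus.
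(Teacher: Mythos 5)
Your plan follows essentially the same route as the paper: the identity $t^{-1/2}=\int_\R w(x)e^{-\alpha(x)t}\,dx$ (obtained from $t^{-1/2}=\tfrac{2}{\sqrt\pi}\int_0^\infty e^{-ty^2}dy$ via $y=\ln(1+e^x)$), Stenger's sinc-quadrature bound in a horizontal strip, and the split into discretization error plus two tails; you also correctly identify the crux, namely the lower bound on $\Re\,\alpha(x\pm i\zeta)$ in the strip (the paper proves $\Re\ln^2(1+e^{x\pm i\zeta})\ge\tfrac14 x^2$ for $x\ge 0$ and $\ge\tfrac18 e^{2x}$ for $x\le 0$ when $\abs{\zeta}\le\pi/10$), which is what makes the strip norm $N_1\lesssim t^{-1/2}$ and hence the quadrature error \emph{relative}.

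Two steps in your tail analysis are off as written. First, the left tail: $w(x)=2\pi^{-1/2}(1+e^{-x})^{-1}\le 2\pi^{-1/2}e^{x}$, not $\lesssim e^{x/2}$, so the tail $\sum_{k<-n}h\,w(kh)e^{-\alpha(kh)t}$ is bounded by $2\pi^{-1/2}e^{-nh}$ rather than $e^{-nh/2}$; with your weaker rate the threshold for $n$ would come out as $2h^{-1}(\ldots)$ instead of the $h^{-1}(\ldots)$ asserted in the theorem, so the constant in the statement would not be recovered. Second, and more substantively, your right-tail device ``use $t\ge 1$ to pull $e^{-\alpha(kh)t}\le e^{-\alpha(kh)}$'' produces a bound that is \emph{independent of $t$}, which cannot imply the required relative bound $\le\tfrac{\delta}{2}t^{-1/2}$ uniformly on the unbounded interval $[1,\infty)$ in \eqref{eq:expapprox_sinc_est_0}. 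You must keep $t$ in the exponent: the paper majorizes the tail by $2\pi^{-1/2}h\int_{n^+}^\infty e^{-t(xh)^2}dx$ and substitutes $u=xh\sqrt t$ to extract the factor $t^{-1/2}$ explicitly, using $t\ge 1$ only afterwards to make the remaining Gaussian integral $t$-independent (alternatively one can use $\sup_{t\ge1}\sqrt{t}\,e^{-\alpha t/2}\le(e\alpha)^{-1/2}$ together with $\alpha(kh)\ge\alpha(n^+h)>0$). Both issues are repairable within your framework, but as stated the right-tail step would fail to give the relative estimate.
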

\vspace*{2mm}
\newcommand{\omin}{\omega_\mathrm{min}}
\newcommand{\hatomin}{\hat{\omega}_\mathrm{min}}

To define the modified scaling operator and its approximations the values
 $\delta\in(0,1)$,  $h$, $n^+=n^+ (\delta)$ will be kept fixed according to Theorem \ref{thm:expsum_relerr}.  Furthermore, let
\begin{equation*}
\hatomin :=  \min_{\nu\in\nabla}\min_{i} \omi{i}{\nu}\,,\quad
\omin := \min_{\nu\in\nabla^d} \omega_\nu \geq  \sqrt{d}\, \hatomin \,.
\end{equation*}
For any $n\in\N$, we define now
$$
\Sa{n} \bv 
    = \big( \tilde\omega_{n, \nu } \; v_\nu \big)_{\nu\in\nabla^d} \,, \quad \mbox{where}
    \quad   \tilde\omega_{n, \nu } := \omin \bigl[ \varphi_{h,n}\bigl( (\om{\nu} / \omin)^2 \bigr) \bigr]^{-1},
$$
where the $\om{\nu}$ are defined by \eref{bS}.

\begin{remark}
\label{rem:consequence}
As a consequence of this definition the operator $\Sa{n}^{-1}$ can be represented as a sum of $1 + n^+ (\delta) + n$ {\em separable} terms. 
In the limit $n\to \infty$, we obtain the {\em reference scaling}
\beqn
\label{tilde-limit}   
\Sr\bv := \big( \tilde\omega_{\nu } \; v_\nu \big)_\nu \quad \text{where}\quad 
\tilde\omega_\nu := \lim_{n\to\infty} \tilde\omega_{n, \nu } = \omin \bigl[ \varphi_{h,\infty}\bigl( (\om{\nu} / \omin )^2 \bigr) \bigr]^{-1}   \,. 
\eeqn
\end{remark}

We can now rephrase the statement \eqref{eq:expapprox_sinc_est} in terms of the approximations $\Sa{n}$.
Since the role of $t$ is played by $(\omega_\nu/ \omin)^2$, it will be important to identify the set of indices in $\nabla^d$ for which 
\eqref{eq:expapprox_sinc_est} applies, namely 
\beqn
\label{LambdaT}
\Lambda_T := \bigl\{\nu\in\nabla^d : (\om{\nu})^2 \leq  (\omin)^2  T \bigr\}.
\eeqn
Moreover, the larger $T$ and hence the scale of indices covered by $\Lambda_T$, the more summands are needed to replace
the reference scaling by a finite expansion with a desired relative precision. More precisely, let
 \beqn
\label{Nsc}
\begin{aligned}
  M(\eta;T) &:= \ceil{h^{-1}(\ln 2\pi^{-\frac12} + \abs{\ln (\min\{\delta/2, \eta\})} + \textstyle\frac12\displaystyle \ln T)} \,,\\[2mm]
   M_0(T) &:= M(\delta/2,T)\,.
 \end{aligned}
\eeqn
Then, whenever $n\geq M(\eta;T)$,  one has for any $\eta>0$ and $T>1$  
 both $\abs{\om{\nu}(\om{\nu}^{-1} - \tilde\omega_{n,\nu}^{-1})} \leq \delta$ and $\abs{\om{\nu}(\tilde\omega_{\nu}^{-1} - \tilde\omega_{n,\nu}^{-1})} \leq \eta$ for $\nu\in\Lambda_T$.  In other words,
\beqn
\label{tilde-diff}
  \norm{\bS (\bS^{-1} - \tbS^{-1}_n) \Restr{\Lambda_T} } \leq \delta
 \quad\text{and}\quad \norm{\bS (\tbS^{-1} - \tbS^{-1}_n) \Restr{\Lambda_T} } \leq \eta \,.
\eeqn
Note furthermore that as an immediate consequence of \eqref{eq:expapprox_sinc_est_0},
$$
 1-\delta \leq \tilde\omega_{\nu}^{-1} \omega_\nu \leq 1+ \delta,\quad \nu\in \nabla^d.
$$
Since by definition,
\beqn
\label{smaller}
\tilde\omega_{n,\nu}^{-1}\leq \tilde\omega_\nu^{-1},\quad n\in \N,\,\,\nu\in \nabla^d,
\eeqn
we also obtain $\tilde\omega_{n,\nu}^{-1}\omega_\nu \leq 1+ \delta$, $n\in\N$, $\nu\in \nabla^d$.
The lower estimate $1-\delta \leq \tilde\omega_{n,\nu}^{-1}\omega_\nu$, however, holds only under additional restrictions: by \eqref{eq:expapprox_sinc_est},
\beqn
\label{low-up}
1-\delta \leq \tilde\omega_{n,\nu}^{-1}\omega_\nu \leq 1+ \delta,\quad \mbox{whenever}\quad \nu\in \Lambda_T,\,\, n\geq M_0(T),
\eeqn
For later record we summarize these observations as follows.
\begin{remark}
\label{rem:StildeS}
For the diagonal operators $\Sc, \Sr, \Sa{n}$, we have
\beqn
\label{SStilde}
\|\bS\tbS_n^{-1}\|,\, \|\bS\tbS^{-1}\|\leq 1+ \delta,\quad n\in\N,\quad \qquad \|\tbS\bS^{-1}\| \leq (1-\delta)^{-1} ,
\eeqn
and in particular, the spectral condition of $\tilde\bS\bS^{-1}$ is bounded by $(1+\delta)/(1-\delta)$. Moreover, for any $T>1$ and $n\geq M_0(T)$,
$$
(1-\delta)\|\bS^{-1}\bv\| \leq \|\tilde\bS_n^{-1}\bv\| \leq (1+\delta)\|\bS^{-1}\bv\| \quad \mbox{when } \,\, {\rm supp}\,(\bv)\subseteq \Lambda_T.
$$
\end{remark}

Low-rank approximations based on sinc quadrature have been constructed previously e.g.\ in \cite{Hackbusch:06-1}. 
Theorem \ref{thm:expsum_relerr}, however, has two new features that are particularly useful for our purposes here.
First, our choice of parameters yields a \emph{relative} error estimate, which leads to a substantially better dependence on 
the range parameter $T$ than with standard constructions. Second, adjustments of the finite rank scalings $\tilde\bS_n^{-1}$
can be done by simply {\em adding terms} to the expansion.

In fact, keeping $\delta \in (0,1)$ and a corresponding $h$  fixed, for any given finitely supported $\bv$  and any target accuracy $\eta >0$,
we can determine the number $n$ of terms in the series $\varphi_{h,\infty}$ so that the finite rank scaling $\tilde\bS_n^{-1}$
replaces, for this $\bv$, the reference scaling $\tilde\bS^{-1}$ with accuracy $\eta$ in the sense of \eref{tilde-diff}.
To determine this $n$ we adjust $T$ so that $\supp \bv \subseteq \Lambda_T$, which via \eref{Nsc} yields a lower bound for $n$.
We shall see in Section \ref{sec:proofs} that under certain minimal Sobolev regularity assumptions, this requires $\ln T \sim \max_{\nu\in\supp\bv} \max_i \abs{\nu_i}$. Roughly speaking, this means that for solution accuracy $\varepsilon$, we need $\ln T \sim \abs{\ln\varepsilon}$ and hence a number of terms proportional to $\abs{\ln \varepsilon}$.
Using known exponential sum approximations as in \cite{Hackbusch:06-1} or \cite{Braess:09}  would instead  lead to a number of terms growing like $\abs{\ln \varepsilon}^2$. 

\begin{remark}
A related problem with preconditioning for a fixed discretization in the context of tensor representations is addressed in \cite{Andreev:12}, 
where a BPX-type preconditioner is approximated in the hierarchical tensor format.
There, approximations of a rescaling sequence similar to $\omega_\nu^{-1}$ are constructed numerically in a preparation step, based on direct evaluation and subsequent approximation based on Remark \ref{rem:near-best}, or alternatively based on heuristic black-box approximation. Unfortunately, these approaches do not seem to offer any direct control of relative errors and resulting condition numbers, and are therefore not
suitable for our purposes. The numerical results given in \cite[Table 3.1]{Andreev:12}, however, are consistent with maximum ranks scaling linearly in the maximum discretization level, analogously to our construction.
\end{remark}

\subsection{Low-Rank Adaptive Operator Compression}\label{sssec:apply}
We wish to solve the variant \eref{final} with $\Sr$ given by \eref{tilde-limit}.
What keeps us from applying the results from \cite{BD} directly is the lack of a concrete low-rank approximation
of $\bA$. The objective of this section is to devise such a low-rank approximation based on the operators $\Sa{n}$
introduced above.
 
Aside from controlling ranks we have to exploit the near-sparsity of the preconditioned versions $\bA$ to eventually ensure representation sparsity
of the mode frames.
For low-rank operators this has been done in \cite{BD}. Again the non-separability of the scaling operators
requires additional new concepts.

Nevertheless, a central idea is to exploit the fact that appropriately rescaled versions of the low-dimensional components  $\bT^{(i)}_{n_i}$ of $\bT$
are {\em compressible} in the following sense.  
\begin{definition}\label{def:scompressibility}
Let $\Lambda$ be a countable index set and let $s^* > 0$. An operator
$\mathbf{B}\colon \spl{2}(\Lambda)\to \spl{2}(\Lambda)$  is called
\emph{$s^*$-compressible} if for any $0 < s < s^*$, there exist 
summable positive sequences $(\alpha_j)_{j\geq 0}$, $(\beta_j)_{j\geq 0}$ 
and for each $j\geq 0$, there exists $\mathbf{B}_j$ with at most $\alpha_j 2^j$
nonzero entries per row and column, such that $\norm{\mathbf{B} - \mathbf{B}_j}
\leq \beta_j 2^{-s j} $. 
For a given $s^*$-compressible operator $\mathbf{B}$, we denote the
corresponding sequences by $\alpha(\mathbf{B})$, $\beta(\mathbf{B})$.
Furthermore, we say that  the $\mathbf{B}_j$ have \emph{level decay} if there exists $\gamma >0$ such that $\abs{\abs{\nu} - \abs{\mu}} > \gamma j$ implies $B_{j,\nu\mu} = 0$.
\end{definition}

Note that one can always scale down one of the two sequences $\alpha(\mathbf{B})$, $\beta(\mathbf{B})$
at the expense of the other one. It will be convenient to always assume in what follows that 
\beqn
\label{betascale}
\norm{\beta(\mathbf{B})}_{\ell_1}\leq \norm{\bB}.
\eeqn
Standard wavelet representations of many operators relevant in applications are known to be $s^*$-compressible for some $s^* > 0$, 
see \cite{Cohen:01,Cohen:02,Stevenson:02}. The level decay property is satisfied for each of these examples. For our model problem, we shall rely in particular on the construction and analysis for spline wavelets given in \cite{Stevenson:02} where $s^*$ is shown to exceed the order of the
trial functions.

Let us briefly recall from \cite{Cohen:01} how $s^*$-compressibility is used in the low-dimensional regime.
Suppose that $J\in \N$ and that $\{\Lambda_j\}_{j=0}^{J+1}$ is
{\em any} partition of the index set $\Lambda$. Then, one has for any $\bv\in \ell_2(\Lambda)$
\begin{align}
\label{motivate}
\bB\bv &= \sum_{j=0}^{J+1}\bB \Restr{\Lambda_j}\bv = \sum_{j=0}^{J} \bB_{J-j}\Restr{\Lambda_j}\bv + 
\sum_{j=0}^{J}(\bB- \bB_{J-j})\Restr{\Lambda_j}\bv + \bB \Restr{\Lambda_{J+1}}\bv \nonumber\\
&=: \tilde\bB_J \bv + \mathbf{E}_J \bv.
\end{align}
Since for any $s<s^*$ one has $\norm{ \mathbf{E}_J \bv }  \leq \sum_{j=0}^{J}\beta_j(\bB)2^{-sj}\norm{ \Restr{\Lambda_j}\bv} + \norm{\bB
\Restr{\Lambda_{J+1}}\bv }$
one obtains
$$
\| \mathbf{E}_J \bv\|\leq 2^{s+1} 2^{-sJ} \norm{\bv}_{\As}\|\beta(\bB)\|_{\ell_1} + 2^{-Js} \|\bB\|\,\norm{\bv}_{\As}
\leq (2^{s+1}+1) 2^{-Js} \|\bB\|\,\norm{\bv}_{\As},
$$
provided that $\Lambda_j = {\rm supp}\,(\bv_{2^j}-\bv_{2^{j-1}})$, $j\leq J$, where $\bv_k$ is a best $k$-term approximation to $\bv$,
and $\Lambda_{J+1}:= \Lambda\setminus \Lambda_J$.

Rather then applying this principle directly to $\bA$, as in \cite{Dijkema:09}, we apply it to each component $\bT^{(i)}_{n_i}$
in \eref{eq:unrescaled_tuckersum} and 
consider first approximations $\tilde\bT_J$ to $\bT$, given by \eqref{eq:unrescaled_tuckersum}, in the form
$$ 
  \tilde{\bT} = \tilde\bT_J := \sum_{\kk{n} \in \KK{{d}}(\kk{R})} c_{\kk{n}} \bigotimes_i \tilde{\bT}^{(i)}_{n_i} \,,
$$
where the $\tilde{\bT}^{(i)}_{n_i}$ will be specified next via the concept of compressibility for
the specific cases $\bT^{(i)}_{n_i} = \bT_{n_i}$ from \eref{T1}, \eref{T2}. 
Recall, however, that compressibility
of such low dimensional operators is only known for properly {\em scaled} counterparts. 

In fact,
for sufficiently regular $\psi_\nu$, and with the low-dimensional scaling matrices $ \Sci{i}$  defined in \eref{eq:tensor_rescaling}, the operators
\begin{equation}\label{eq:1dscmatdef} 
 {\mathbf{A}}^{(i)}_2 := \hatbS_i^{-1} \bT_2 \hatbS_i^{-1}\,,\quad 
  {\mathbf{A}}^{(i)}_3 := \bT_3 \hatbS_i^{-1} \,,\quad 
  {\mathbf{A}}^{(i)}_4 := \hatbS_i^{-1} \bT_4 \,,     
  \end{equation}
are bounded on $\spl{2}(\nabla)$ and $s^*$-compressible for some $s^* >0$.
Note that $\norm{ {\mathbf{A}}^{(i)}_4} = \norm{ {\mathbf{A}}^{(i)}_3}$.
This means that for any fixed $s<s^*$ 
\begin{equation}\label{eq:diff_scompr}
\begin{aligned}
\norm{\hatbS_i^{-1} (\bT_{2} -  \bT_{2,j}) \hatbS_i^{-1}} &\leq \beta_j( {\mathbf{A}}^{(i)}_2) \,2^{-sj} \,,  \\
 \norm{ (\bT_{3} -  \bT_{3,j}) \hatbS_i^{-1}} &\leq \beta_j( {\mathbf{A}}^{(i)}_3)\, 2^{-sj} \\
 \norm{\hatbS_i^{-1} (\bT_{4} -  \bT_{4,j})} &\leq \beta_j( {\mathbf{A}}^{(i)}_4)\, 2^{-sj} \,, \end{aligned}
 \end{equation}
where $\mathbf{T}_{n,j}:= \hatbS_i \mathbf{A}^{(i)}_{n,j} \hatbS_i$ and $\mathbf{A}^{(i)}_{n,j}$ 
is the $j$th compression of $\mathbf{A}^{(i)}_{n}$, $n=2,3,4$, according to Definition \ref{def:scompressibility}. 
Therefore, we construct  for any given $J\in \N$ the $\tilde\bT^{(i)}_{n_i}=  \tilde\bT^{(i)}_{n_i,J}$
by the principle \eref{motivate}. In fact, for a given partition $\Lambda^{(i)}_{n_i,[p]}$, $p=0,\ldots,J+1$, we set
\beqn
\label{tildeTni}
\tilde{\bT}^{(i,J)}_{n_i} = \tilde{\bT}^{(i)}_{n_i} := \sum_{p=0}^{J+1} {\bT}^{(i)}_{n_i,[p]} \Restr{\Lambda^{(i)}_{n_i,[p]} }
\eeqn
where as in \eref{motivate}
$$
\mathbf{T}^{(i)}_{n_i,[p]} :=   \mathbf{T}_{n_i,J-p}, \quad p=0,\ldots,J, \,\quad  \mathbf{T}^{(i)}_{n_i,[J+1]} := 0.%
$$
 In fact, as above, the sets $\Lambda^{(i)}_{n_i,[p]}$ 
 provide the vehicle for adaptivity and will depend on a given input sequence
 $\bv\in \ell_2(\nabla^d)$ as follows. 
 For each $i\in \{1,\ldots,d\}$ and for $j\in\N$, we choose  $\bar\Lambda^{(i)}_{j}$ as the
support of the best $2^j$-term approximation $(\pi^{(i)}(\mathbf{v}))_{2^j}$ of $\pi^{(i)}(\mathbf{v})$ so that, in particular,
$\bar\Lambda^{(i)}_p \subset \bar\Lambda^{(i)}_{p+1}$.
If $\mathbf{T}^{(i)}_{n_i} = \id$, we simply set $\mathbf{\tilde T}^{(i)}_{n_i}
= \id$.  If $\mathbf{T}^{(i)}_{n_i} \neq \id$,  we  set $\bar\Lambda^{(i)}_{-1}:=\emptyset$ and
\begin{equation}
\label{eq:supps}
\Lambda^{(i)}_{n_i,[p]}(\bv) = \Lambda^{(i)}_{n_i,[p]} := \Lambda^{(i)}_{[p]} := \left\{
\begin{array}{ll}
\bar\Lambda^{(i)}_{p} \setminus  \bar\Lambda^{(i)}_{p-1}, & p=0,\ldots,J  ,\\
\nabla^{d_i} \setminus \bar\Lambda^{(i)}_J, & p=J+1,\\
\emptyset, & p>J+1,
\end{array}\right.
\end{equation}
As an immediate consequence one has
$$
\norm{\Restr{\Lambda^{(i)}_{n_i,[p]}}\pi^{(i)}(\bv)} = \norm{ (\pi^{(i)}(\mathbf{v}))_{2^p}- (\pi^{(i)}(\mathbf{v}))_{2^{p-1}} } \leq (1+2^s)2^{-ps} \norm{\pi^{(i)}(\bv)}_{\As},
$$
i.e., with increasing $p$ the successively coarser approximations ${\bT}^{(i)}_{n_i,[p]}$ are applied to finitely supported
vectors of successively smaller norms.

Of course we will not apply $\tbT_J$ but the rescaled version
$
\tilde\bA_J := \tbS^{-1}\tbT_J\tbS^{-1}
$
which, however, still has unbounded rank and hence requires a further low-rank approximation $\tbS_n$ of $\tbS$. Here $n$ depends
on  the support of the input vector $\bv$ in such a way that, in an appropriate sense, $\tbS_n$ and $\tbS$
are equivalent on $\supp \bv$. To make this precise,
given any finitely supported $\bv\in \ell_2(\nabla^d)$ and any $J\in \N$, let 
\beqn
\label{Tv}
T(J;\bv):= {\rm argmin}\,\{T' >0 :  \supp \bv \cup \supp \tbT_J \bv \subseteq \Lambda_{T'}\}.%
\eeqn
Moreover, define %
\begin{multline}
\label{eJv}
e_J(\bv) :=  \sum_{i=1}^d C^{(i)}_\bA  \Bigl[  
     \sum_{p=0}^J \Bigl(\sum_{n=2}^R \beta_{J-p}(\bA^{(i)}_n)  \Bigr) 2^{-s(J-p)}   \norm{\Restr{\Lambda^{(i)}_{[p]}} \pi^{(i)}(\bv)} 
\\
  +  \sum_{n=2}^R \norm{{\bA}^{(i)}_n} %
\,  \norm{\Restr{\Lambda^{(i)}_{[J+1]}} \pi^{(i)}(\bv)} \Bigr] ,
\end{multline}
where 
\beqn
\label{eq:maxsequences-0}  
    C^{(i)}_\bA := \max \Bigl\{  \abs{a_{ii}}  ,2
      \sum_{j\neq i}  \norm{{\bA}^{(j)}_3} 
     \abs{a_{ij}} , 
   2  \sum_{j\neq i}  \norm{{\bA}^{(j)}_4} 
          \abs{a_{ij}}  \Bigr\} 
           \leq \max\bigl\{ 1,2 \max_{\substack{j\neq i\\ n=3,4}} \norm{\bA^{(j)}_n} \bigr\} \abs{a_{ii}}. 
\eeqn
In the last inequality we have used that $(a_{ij})$ is diagonally dominant.
In view of \eref{aij}, $ C^{(i)}_\bA $ is thus in particular independent of $d$. Note that for a given finitely supported $\bv$, the a posteriori quantity
$e_J(\bv)$ can be evaluated. It clearly decreases when $J$ increases. This decay is faster, the faster the errors of $2^p$-term approximation of the contractions $\pi^{(i)}(\bv)$ decay.

With these prerequisites at hand, for any given tolerance  $\eta >0$, which we will always assume to satisfy
$\eta \leq 2 \norm{\bA}\norm{\bv}$ -- which is natural, since otherwise $\bA\bv$ can be approximated by zero -- %
we set
\begin{equation}
\label{etaJ}
J(\eta) :=  {\rm argmin}\,\{J\in\N: (1+\delta)^2e_J(\bv)\leq \eta/4\},\qquad
 c(\bv) \,\eta  :=    
\frac{\eta(1-\delta)}{4 \|\bA\|\|\bv\|} ,
\end{equation}
and
\beqn
\label{nsize}
m(\eta;\bv) := M(c(\bv)\eta\,; \,T(J(\eta);\bv)) ,
\eeqn
where $M$ is defined in \eqref{Nsc},
to define the procedure 
$\apply(\bv;\eta): \bv \to \bw_\eta$ by
\beqn
\label{weta-def}
\bw_\eta := \tbS^{-1}_{m(\eta;\bv)}\tbT_{J(\eta)}\tbS^{-1}_{m(\eta;\bv)}\bv.
\eeqn

\begin{remark}
The smallest $T''$ for which $\supp \tbT_J \bv \subseteq \Lambda_{T''}$ is usually larger than the smallest $T'$ 
for which $\supp \bv \subseteq \Lambda_{T'}$. Thus, the number $n'$ of terms needed in a viable scaling of the input vector $\bv$
in \eqref{weta-def} need not be equal to $m(\eta;\bv)$ but can typically be chosen as a smaller integer.
This should be exploited in a numerical realization, but for ease of exposition we work with the above ``symmetric'' version.
\end{remark}

\begin{proposition}
\label{prop:weta}
The finitely supported $\bw_\eta$ defined by \eref{weta-def} satisfies
$\norm{\bA\bv -\bw_\eta} \leq \eta$.
\end{proposition}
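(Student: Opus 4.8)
The plan is to bound the total error $\norm{\bA\bv - \bw_\eta}$ by splitting it according to the three sources of perturbation that were introduced in passing from $\bA = \Sr^{-1}\bT\Sr^{-1}$ to $\bw_\eta = \Sa{m}^{-1}\tbT_{J}\Sa{m}^{-1}\bv$: replacing $\tbT$ by its compressed version $\tbT_{J(\eta)}$, and replacing each of the two occurrences of $\Sr^{-1}$ by the finite-rank scaling $\Sa{m(\eta;\bv)}^{-1}$. So I would first write
\begin{align*}
 \bA\bv - \bw_\eta &= \Sr^{-1}(\bT - \tbT_{J})\Sr^{-1}\bv \\
 &\quad + \Sr^{-1}\tbT_{J}(\Sr^{-1} - \Sa{m}^{-1})\bv \\
 &\quad + (\Sr^{-1} - \Sa{m}^{-1})\tbT_{J}\Sa{m}^{-1}\bv ,
\end{align*}
with $J = J(\eta)$ and $m = m(\eta;\bv)$, and estimate the three terms separately, aiming for $\eta/4$, $\eta/4$, $\eta/4$ (the choices in \eref{etaJ}, \eref{nsize} are clearly calibrated to give exactly these, with a factor to spare).

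For the first term, I would insert $\bS\bS^{-1}$ around $\bT - \tbT_J$ and use $\norm{\Sr^{-1}\bS}\le(1-\delta)^{-1}$ from Remark \ref{rem:StildeS} together with $\norm{\bS^{-1}\bv} \ge (1+\delta)^{-1}\norm{\tbS^{-1}\bv}$ type bounds; the point is to reduce to estimating $\norm{\bS^{-1}(\bT-\tbT_J)\bS^{-1}\bv}$. Here I expand $\bT - \tbT_J = \sum_{\kk n} c_{\kk n}\bigotimes_i \bT^{(i)}_{n_i} - \sum_{\kk n} c_{\kk n}\bigotimes_i \tilde\bT^{(i)}_{n_i}$ into a telescoping sum over coordinates $i$, so that each summand changes only one tensor factor from $\bT^{(i)}_{n_i}$ to $\tilde\bT^{(i)}_{n_i}$; this is the standard device for estimating perturbations of tensor-product operators. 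For each coordinate $i$ and each $n_i\in\{2,3,4\}$ the error on that factor is controlled by the $s^*$-compressibility estimates \eref{eq:diff_scompr} applied on the partition blocks $\Lambda^{(i)}_{[p]}$, exactly as in the scalar motivation \eref{motivate}, giving a bound of the form $\sum_p (\sum_{n=2}^R\beta_{J-p}(\bA^{(i)}_n))2^{-s(J-p)}\norm{\Restr{\Lambda^{(i)}_{[p]}}\pi^{(i)}(\bv)} + (\text{tail term})$. Summing the contributions over $i$, with the combinatorial coefficients bounded by the $C^{(i)}_\bA$ of \eref{eq:maxsequences-0} (using diagonal dominance \eref{aij} to absorb the off-diagonal $a_{ij}$), reproduces precisely the quantity $e_J(\bv)$ from \eref{eJv}, up to the factor $(1+\delta)^2$ coming from the two outer rescalings — hence this term is $\le \eta/4$ by the definition of $J(\eta)$.

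For the second and third terms I would use the relative-error control of the scaling approximation. By construction $T(J(\eta);\bv)$ in \eref{Tv} was chosen so that both $\supp\bv$ and $\supp\tbT_{J(\eta)}\bv \subseteq \Lambda_{T(J;\bv)}$, and $m(\eta;\bv) = M(c(\bv)\eta; T(J;\bv))$, so \eref{tilde-diff} gives $\norm{\bS(\tbS^{-1}-\Sa{m}^{-1})\Restr{\Lambda_{T(J;\bv)}}} \le c(\bv)\eta$. For the second term I write $\Sr^{-1} - \Sa{m}^{-1} = \bS^{-1}\cdot\bS(\tbS^{-1}-\Sa{m}^{-1})$ acting on $\bv$ (which is supported in $\Lambda_{T(J;\bv)}$), then move $\Sr^{-1}\tbT_{J}$ through: here I must check that $\Sr^{-1}\tbT_J\bS^{-1}$ is bounded by roughly $(1-\delta)^{-1}(1+\delta)\norm{\bA}$, which follows from $\norm{\tbT_J - \tbT} \le e_0(\bv)$-type bounds or more simply from $\norm{\Sr^{-1}\tbT\Sr^{-1}} = \norm{\bA}$ plus the first-term estimate — so this term is bounded by a constant multiple of $\norm{\bA}\norm{\bv}\cdot c(\bv)\eta$, and $c(\bv)$ was defined in \eref{etaJ} precisely as $\eta(1-\delta)/(4\norm{\bA}\norm{\bv})$ divided by $\eta$ so that the product is $\le\eta/4$. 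For the third term, $\Sa{m}^{-1}\bv$ is again supported in $\Lambda_{T(J;\bv)}$ (scaling is diagonal), $\tbT_J$ maps it into the same set by the definition \eref{Tv} of $T(J;\bv)$ via $\supp\tbT_J\bv$ — one needs that $\supp\tbT_J(\Sa{m}^{-1}\bv)\subseteq\supp\tbT_J\bv$, which holds because $\Sa{m}^{-1}$ is diagonal and supported on $\supp\bv$ — so again \eref{tilde-diff} applies and, using $\norm{\Sa{m}^{-1}\tbT_J}\lesssim\norm{\bA}$ from Remark \ref{rem:StildeS} and the first estimate, this term is also $\le\eta/4$.

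The main obstacle I anticipate is the careful bookkeeping in the first term: one must verify that the telescoping-over-coordinates expansion of $\bT-\tbT_J$, combined with the interaction of the remaining unperturbed factors $\bT^{(j)}_{n_j}$ (which include the unbounded second-derivative blocks $\bT_2$) with the scaling $\bS^{-1}$ on the \emph{other} coordinates, produces exactly the coefficients $C^{(i)}_\bA$ and not something worse — in particular that the scalings on the untouched coordinates combine correctly with $\bS^{-1}$ so that factors like $\hatbS_j^{-1}\bT_{2}\hatbS_j^{-1} = \bA^{(j)}_2$ appear in bounded form and the "half-scalings" in $\bA^{(j)}_3, \bA^{(j)}_4$ match up across the tensor product. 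This is essentially the content that was pre-packaged into the definitions \eref{eq:1dscmatdef}, \eref{eJv}, \eref{eq:maxsequences-0}, so the proof should amount to unwinding those definitions, but getting the constants and the diagonal-dominance argument exactly right is the delicate part. The remaining two terms are comparatively routine once one has Remark \ref{rem:StildeS} and the support identities from \eref{Tv}.
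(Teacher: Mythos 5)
Your proposal is correct and follows essentially the same route as the paper: the identical three-way splitting into the compression error $\Sr^{-1}(\bT-\tbT_J)\Sr^{-1}\bv$ (controlled by $(1+\delta)^2 e_J(\bv)\le\eta/4$ via the telescoping/coordinatewise compressibility argument of Lemma \ref{thm:opapprox_basic} together with Remarks \ref{rem:equivsystems} and \ref{rem:allthesame}) and the two scaling-approximation errors (controlled via \eref{tilde-diff} and the support condition \eref{Tv}, with the calibration $c(\bv)\eta$ from \eref{etaJ}); the paper packages the latter two terms in Lemma \ref{lem:finite-n}, but the decomposition and the constants are the same. The only point to be careful about, which you essentially flag, is that the bound on $\Sr^{-1}\tbT_J\Sr^{-1}$ applied to the perturbed vectors is not an operator-norm bound but again an $e_J(\cdot)$-type bound evaluated at those vectors, which works because the perturbations are diagonal with small entries on $\supp\bv$.
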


We defer the proof of this fact and a further analysis of the procedure  $\apply(\bv;\eta)$, in particular regarding the sparsity of the corresponding mode frames
and the resulting ranks, to Section \ref{sec:proofs}. 

In order to control the ranks of the numerical approximations we shall make use of the excess regularity discussed in
Section \ref{ssec:problemclass}. We shall exploit this through the following strengthened notion of compressibility.

\begin{definition}\label{def:sobolev_compr}
We say that $\bB \colon \spl{2}(\nabla) \to \spl{2}(\nabla)$ is $s^*$-compressible with \emph{Sobolev stability of order $t>0$}, if there exists $C_t>0$ such that  $\norm{\Sci{i}^{t}(\bB - \bB_j)\Sci{i}^{-t}}\leq  C_t \beta_j(\bB)$ for $i=1,\ldots,d$.
\end{definition}

\section{An Adaptive Algorithm and its Complexity}\label{sec:adalg}
\subsection{Formulation of the Algorithm}
As already mentioned in Section \ref{ssec:example}, for the exact right hand side $\bbf$ both the $\supp_i \bbf$ and $\abs{\rank(\bbf)}_\infty$ may be unbounded. In a practical realization of \eqref{practicaliter}, we therefore need to work with approximations, that is, with a procedure $\rhs $ which generates for a fixed given $\mathbf{f}$
and any positive tolerance $\eta >0$ an approximation $\rhs(\eta)$ to $\mathbf{f}$  in the hierarchical Tucker format that satisfies
\beqn
\label{rhs}
\| \mathbf{f}-\rhs(\eta)\|\leq \eta.
\eeqn
In our complexity results, we focus on the costs for constructing a solution $\bu$ for given $\bbf$; we thus assume sufficient knowledge of the data for the explicit construction of a routine $\rhs$ of suitable complexity, which we make more precise in Section \ref{ssec:main} and Appendix \ref{app:rhs}.

Furthermore, we denote by $\coarsen(\cdot;\eta)$ and $\recompress(\cdot;\eta)$   numerical realizations of $\hatCctr{\eta}$ and $\hatPsvd{\eta}$, respectively,
see also \cite{BD}. 
These routines, together with the scheme $\apply$ defined above, are the core ingredients of a numerical realization of the iteration \eref{practicaliter}.

 The following adaptive scheme---Algorithm \ref{alg:tensor_opeq_solve}---has been proposed in essence in \cite{BD}, see also  \cite{B} for a predecessor.
The main difference in the present work lies in the formulation of the routine $\apply$ which, due to the scaling problem discussed in Section \ref{ssec:scaling}, poses severe additional difficulties regarding the complexity and,
in particular, concerning tensor rank bounds for the iterates.

\begin{algorithm}[!ht]
\caption{$\quad \mathbf{u}_\varepsilon = \solve(\mathbf{A},
\mathbf{f}; \varepsilon)$} \begin{algorithmic}[1]
\Require $\Bigg\{$\begin{minipage}{12cm}$\omega >0$ and $\rho\in(0,1)$ such that
$\norm{\id - \omega\mathbf{A}} \leq \rho$,\\
$c_\bA \geq \norm{\bA^{-1}}$, $\varepsilon_0 \geq c_\bA \norm{\mathbf{f}}$, \\
$ \kappa_1, \kappa_2, \kappa_3 \in (0,1)$ with $\kappa_1 +
\kappa_2 + \kappa_3 \leq 1$, and $\beta_1 \geq 0$, $\beta_2 > 0$.\end{minipage}
\Ensure $\mathbf{u}_\varepsilon$ satisfying $\norm{\mathbf{u}_\varepsilon -
\mathbf{u}}\leq \varepsilon$.
\State $\mathbf{u}_0 := 0$
\State $k:= 0$, $I := \min\{ j \colon \rho^j (1 + (\omega + \beta_1 + \beta_2) j) \leq
\textstyle \frac12 \displaystyle\kappa_1\}$\label{alg:jchoice} 
\While{$2^{-k} \varepsilon_0 > \varepsilon$}
\State $\mathbf{w}_{k,0}:=\mathbf{u}_k$, $j \gets 0$
\Repeat
\State $\eta_{k,j} := \rho^{j+1} 2^{-k} \varepsilon_0$
\State $\mathbf{r}_{k,j} := \apply( \mathbf{w}_{k,j} ; \frac{1}{2}\eta_{k,j})
- \rhs(\frac{1}{2}\eta_{k,j})$ 
\State $\mathbf{w}_{k,j+1} := \coarsen\bigl(\recompress(\mathbf{w}_{k,j} - \omega \mathbf{r}_{k,j} ;
\beta_1 \eta_{k,j}); \beta_2 \eta_{k,j} \bigr)$ \label{alg:tensor_solve_innerrecomp}
\State $j\gets j+1$.
\Until{($j \geq I \quad \vee \quad c_\bA \rho
\norm{\mathbf{r}_{k,j-1}} + (c_\bA \rho  + \omega + \beta_1 + \beta_2 )
\eta_{k,j-1} \leq \kappa_1 2^{-(k+1)} \varepsilon_0$)} \label{alg:cddtwo_looptermination_line}
\State $\mathbf{u}_{k+1} := \coarsen\bigl(\recompress(\mathbf{w}_{k,j};
\kappa_2 2^{-(k+1)} \varepsilon_0) ; \kappa_3 2^{-(k+1)}
\varepsilon_0\bigr)$\label{alg:cddtwo_coarsen_line} 
\State $k \gets k+1$
\EndWhile
\State $\mathbf{u}_\varepsilon := \mathbf{u}_k$ 
\end{algorithmic}
\label{alg:tensor_opeq_solve}
\end{algorithm}

The following fact follows exactly as in \cite{BD}. It holds for any fixed choice of the parameters $\kappa_i$ for i=1,2,3 and $\beta_1, \beta_2$
subject to the stated constraints. 
These parameters will later be further specified for a quantitative  complexity analysis.

\begin{proposition}
\label{prp:tensor_iteration_opeq_convergence}
Let the damping factor $\omega >0$ in Algorithm \ref{alg:tensor_opeq_solve} satisfy
$\norm{\id - \omega\mathbf{A}} \leq \rho < 1$.
Then the intermediate steps $\mathbf{u}_{k}$ of Algorithm
\ref{alg:tensor_opeq_solve} satisfy $\norm{\mathbf{u}_k - \mathbf{u}} \leq
2^{-k}\varepsilon_0$, and in particular,
the output $\mathbf{u}_\varepsilon$ of Algorithm \ref{alg:tensor_opeq_solve}
satisfies $\norm{\mathbf{u}_\varepsilon - \mathbf{u}} \leq \varepsilon$.
\end{proposition}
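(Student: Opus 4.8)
This is a standard convergence statement for a perturbed Richardson iteration, and it is obtained by the same reasoning as the corresponding result in \cite{BD}; the plan is to prove by induction on the outer counter $k$ that $\norm{\mathbf{u}_k - \mathbf{u}} \leq 2^{-k}\varepsilon_0$, from which the claim for $\mathbf{u}_\varepsilon$ follows at once via the termination condition $2^{-k}\varepsilon_0 \leq \varepsilon$ of the outer loop. The base case $k=0$ is immediate from $\mathbf{u}_0 = 0$ and the input requirement $\varepsilon_0 \geq c_\bA \norm{\mathbf{f}} \geq \norm{\mathbf{A}^{-1}}\,\norm{\mathbf{f}} \geq \norm{\mathbf{u}}$.

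For the inductive step I would first quantify the effect of one pass through the repeat-until loop body at index $j$. By Proposition \ref{prop:weta} applied to $\apply(\mathbf{w}_{k,j};\tfrac12\eta_{k,j})$ and by \eqref{rhs} applied to $\rhs(\tfrac12\eta_{k,j})$, the computed quantity $\mathbf{r}_{k,j}$ satisfies $\norm{\mathbf{r}_{k,j} - (\mathbf{A}\mathbf{w}_{k,j}-\mathbf{f})} \leq \eta_{k,j}$. Using $\mathbf{A}\mathbf{u}=\mathbf{f}$ one has for the exact damped step $(\id-\omega\mathbf{A})\mathbf{w}_{k,j}+\omega\mathbf{f}-\mathbf{u} = (\id-\omega\mathbf{A})(\mathbf{w}_{k,j}-\mathbf{u})$, whose norm is bounded by $\rho\norm{\mathbf{w}_{k,j}-\mathbf{u}}$ since $\norm{\id-\omega\mathbf{A}}\leq\rho$; adding the error $\omega\eta_{k,j}$ from replacing the exact residual by $\mathbf{r}_{k,j}$ and the errors $\beta_1\eta_{k,j}$, $\beta_2\eta_{k,j}$ introduced by $\recompress$ and $\coarsen$ in line \ref{alg:tensor_solve_innerrecomp}, I obtain the one-step recursion
\[
  \norm{\mathbf{w}_{k,j+1} - \mathbf{u}} \;\leq\; \rho\,\norm{\mathbf{w}_{k,j}-\mathbf{u}} + (\omega+\beta_1+\beta_2)\,\eta_{k,j}, \qquad \eta_{k,j} = \rho^{j+1}2^{-k}\varepsilon_0 .
\]
Since $\mathbf{w}_{k,0}=\mathbf{u}_k$ and $\norm{\mathbf{u}_k-\mathbf{u}}\leq 2^{-k}\varepsilon_0$ by the induction hypothesis, a short induction on $j$ gives $\norm{\mathbf{w}_{k,j}-\mathbf{u}} \leq \rho^{j}\bigl(1+(\omega+\beta_1+\beta_2)j\bigr)2^{-k}\varepsilon_0$.

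Next I would examine the two ways the inner loop can terminate, at some index $\bar\jmath$. If it terminates because $\bar\jmath = I$, the definition of $I$ in line \ref{alg:jchoice} gives directly $\norm{\mathbf{w}_{k,I}-\mathbf{u}} \leq \tfrac12\kappa_1 2^{-k}\varepsilon_0 = \kappa_1 2^{-(k+1)}\varepsilon_0$. If instead the residual-based test in line \ref{alg:cddtwo_looptermination_line} is met, I combine $\norm{\mathbf{A}\mathbf{w}_{k,\bar\jmath-1}-\mathbf{f}} \leq \norm{\mathbf{r}_{k,\bar\jmath-1}} + \eta_{k,\bar\jmath-1}$ with the a posteriori bound $\norm{\mathbf{w}_{k,\bar\jmath-1}-\mathbf{u}} \leq c_\bA\norm{\mathbf{A}\mathbf{w}_{k,\bar\jmath-1}-\mathbf{f}}$ and one further application of the one-step recursion to get $\norm{\mathbf{w}_{k,\bar\jmath}-\mathbf{u}} \leq c_\bA\rho\norm{\mathbf{r}_{k,\bar\jmath-1}} + (c_\bA\rho+\omega+\beta_1+\beta_2)\eta_{k,\bar\jmath-1}$, which is exactly the left-hand side of the stopping test and hence $\leq \kappa_1 2^{-(k+1)}\varepsilon_0$. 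Thus in either case $\norm{\mathbf{w}_{k,\bar\jmath}-\mathbf{u}} \leq \kappa_1 2^{-(k+1)}\varepsilon_0$. Finally, $\mathbf{u}_{k+1}$ is produced from $\mathbf{w}_{k,\bar\jmath}$ in line \ref{alg:cddtwo_coarsen_line} by $\recompress$ with tolerance $\kappa_2 2^{-(k+1)}\varepsilon_0$ followed by $\coarsen$ with tolerance $\kappa_3 2^{-(k+1)}\varepsilon_0$, so $\norm{\mathbf{u}_{k+1}-\mathbf{w}_{k,\bar\jmath}} \leq (\kappa_2+\kappa_3)2^{-(k+1)}\varepsilon_0$, and the triangle inequality together with $\kappa_1+\kappa_2+\kappa_3\leq 1$ closes the induction.

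The only genuinely delicate points — and the main obstacle, such as it is — are the loop bookkeeping (identifying which iterate is $\mathbf{w}_{k,\bar\jmath}$, noting that the quantities in the stopping test carry index $\bar\jmath-1$, and that the hard cap $\bar\jmath\leq I$ makes the inner loop unconditionally finite so that no separate termination argument is needed) and checking that the residual-based stopping quantity has been chosen precisely so as to match the bound obtained by feeding the a posteriori estimate $\norm{\mathbf{v}-\mathbf{u}}\leq c_\bA\norm{\mathbf{A}\mathbf{v}-\mathbf{f}}$ through one more perturbed step. Everything else is bookkeeping with the prescribed tolerances.
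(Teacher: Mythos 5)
Your proof is correct and follows exactly the argument that the paper (deferring to \cite{BD}) has in mind: induction on $k$, the one-step recursion $\norm{\mathbf{w}_{k,j+1}-\mathbf{u}}\leq\rho\norm{\mathbf{w}_{k,j}-\mathbf{u}}+(\omega+\beta_1+\beta_2)\eta_{k,j}$ yielding the bound $\rho^j(1+(\omega+\beta_1+\beta_2)j)2^{-k}\varepsilon_0$, the two termination cases matched respectively to the definition of $I$ and to the residual test via $\norm{\mathbf{v}-\mathbf{u}}\leq c_\bA\norm{\mathbf{A}\mathbf{v}-\mathbf{f}}$, and the final $\kappa_1+\kappa_2+\kappa_3\leq1$ bookkeeping. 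You have also correctly handled the index shift in the until-test (quantities carrying index $\bar\jmath-1$) and the unconditional finiteness of the inner loop.
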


\begin{remark}
The scheme produces an approximation $\bu_\varepsilon \approx \Sr \bu^\circ$, with $u^\circ_\nu = \langle \Psi_\nu,u\rangle$ as in \eqref{bT}. Recovering $\bu^\circ$, the coefficients with respect to the original tensor product orthonormal basis $\{\Psi_\nu\}$ of $\spL{2}(\Omega)$, therefore requires an additional approximate application of $\Sr^{-1}$ based on Theorem \ref{thm:expsum_relerr}.
\end{remark}

\subsection{The Main Result}\label{ssec:main}

We shall now formulate the main result of this paper, which roughly states the following:
if the data $f$ satisfy certain conditions on tensor structure and representation sparsity, and if the exact solution
satisfies similar conditions, then the computed approximation $\bu_\varepsilon$ also exhibits  
a near-optimal low-rank tensor structure and representation sparsity.  Most importantly, the algorithm does not make use of any 
a priori information on such approximability properties. Instead these features---referred to as {\em benchmark assumptions}---of the problem and the exact solution, though {\em not known} explicitly,
 will be shown to  be automatically inherited by the numerical approximation.
 
We formulate next the assumptions under which the main result holds. We begin with conditions on the data $\bA, \bbf$
which are natural for low-rank approximate solutions with sparse factors can be expected to exist.

\begin{assumptions}
\label{ass:A-f}
Concerning the scaled matrix representation $\bA$ given by \eref{final} and the right hand side $\bbf$ we require the following properties for some fixed $s^*, t > 0$:
\begin{enumerate}[{\rm(i)}]
 \item The lower-dimensional component operators $\bA^{(i)}_{n_i}$ as defined in \eqref{eq:1dscmatdef} are $s^*$-compressible with the level decay property and with Sobolev stability of order $t$. 
 \item The number of operations required for evaluating each entry in the approximations $\bT_{n,j}$ as in \eqref{eq:diff_scompr} is uniformly bounded.
\item $\bA$ has a bounded condition, i.e., $\norm{\bA}, \norm{\bA^{-1}} < \infty$.
\item We have an estimate $c_\bA = \norm{\bA^{-1}}$, and the initial error estimate $\varepsilon_0$ overestimates the true value of $\norm{\bA^{-1}}\norm{\bbf}$ only up to some absolute multiplicative constant, i.e., $\varepsilon_0  \lesssim  \norm{\bA^{-1}}\norm{\bbf}$.
\item
The contractions of $\bbf$ are compressible, i.e., $\pi^{(i)}(\bbf)\in \As$, $i=1,\ldots,d$, for any $s$ with $0 < s
 <s^*$.
\item
The problem \eqref{opeq} has excess regularity $t$ as in \eqref{eq:coordwise_regularity}, \eqref{eq:coordwise_reg_f}.
\end{enumerate}
\end{assumptions}

We state next the assumptions concerning the procedure $\rhs$ for approximating the right hand side $\bbf$ that will be used
in the subsequent complexity analysis. We refer to the appendix for scenarios where these assumptions can be realized.

\begin{assumptions}
\label{ass:rhs}
The procedure $\rhs$ is assumed to have  the following properties:
\begin{enumerate}[{\rm(i)}]
\setcounter{enumi}{6}
 \item \label{ass:rhsapprox}There exists an approximation $\bbf_\eta := \rhs(\eta)$ such that \eref{rhs} and
  \begin{gather*}
  \norm{\pi^{(i)}(\bbf_\eta)}_\As \leq C^\text{{\rm sparse}} \norm{\pi^{(i)}(\bbf)}_{\As}    ,\\
   \sum_i \#\supp_i(\bbf_\eta) \leq C^{\text{{\rm supp}}} \,d \,\eta^{-\frac1s}\, \Bigl(\sum_i \norm{\pi^{(i)}(\bbf)}_{\As}\Bigr)^{\frac1s}, \\  \abs{\rank(\bbf_\eta)}_\infty \leq   C_\bbf^{\text{{\rm rank}}}\, \abs{\ln \eta}^{b_\bbf} \,,
  \end{gather*}
hold,  where $C^\text{{\rm sparse}},C^{\text{{\rm supp}}},C_\bbf^{\text{{\rm rank}}} > 0$,  $b_\bbf \geq 1$ are  independent of $\eta$, and $C^\text{{\rm sparse}}$, $C^{\text{{\rm supp}}}$ are independent of $\bbf$.
 \item \label{ass:rhsops}The number of operations required for evaluating $\rhs(\eta)$ is bounded, with a constant $C^\text{{\rm ops}}_\bbf(d)$, by
  $\ops(\bbf_\eta) \leq C^\text{{\rm ops}}_\bbf(d) \bigl[ \abs{\ln\eta}^{3b_\bbf} 
    + \abs{\ln \eta}^{b_\bbf}  \eta^{-\frac1s} \bigr]  $.
 \item $\rhs$ preserves the excess regularity of the problem, that is, there exists $C^\text{{\rm reg}}_\bbf  >0$ independent of $\eta$ such that
 \begin{equation}
  \label{eq:rhsreg}
    \norm{\Sc_{i}^{t} \bbf_\eta} \leq C^\text{{\rm reg}}_\bbf \norm{\Sc_{i}^{t} \bbf} \,.
 \end{equation}
\end{enumerate}
\end{assumptions}

\begin{remark}
Recalling that $\bbf = \Sc^{-1} \bg$, we can obtain \eqref{ass:rhsapprox} and \eqref{ass:rhsops} from Proposition \ref{prop:rhs} in Appendix \ref{app:rhs}, where $b_\bbf = b_\bg + 1$; in particular, if $b_\bg$ is independent of $d$, so is $b_\bbf$.
\end{remark}

Under the above conditions on the data and their processing we are primarily interested to see now whether the adaptive algorithm produces
in a quantifiable way low-rank sparse approximate solutions if the exact solution permits such approximations.
We state now our precise {\em benchmark assumptions} on the solution $\bu$.

\begin{assumptions}\label{ass:approximability}
Concerning the approximability of the solution $\bu$, we   assume:
\begin{enumerate}[{\rm(i)}]
\setcounter{enumi}{9}
\item \label{ass:uapprox}$\bu \in \AH{\ga_\bu}$ with $\ga_\bu(n) = e^{d_\bu n^{1/b_\bu}}$
for some $d_\bu>0$, $b_\bu \geq 1$.
\item $\pi^{(i)}(\bu) \in \As$ for $i=1,\ldots,d$, for any $s$ with $0 < s
 <s^*$.
\end{enumerate}
\end{assumptions}

The rationale of Assumptions \ref{ass:approximability}\eqref{ass:uapprox} is to assess the performance of the highly nonlinear scheme in situations
where the solution does admit low-rank approximations, quantified here by a poly-logarithmic growth of ranks given by $\gamma_\bu^{-1}$,
see Remark \ref{rem:howtoread}.

In order to analyze the dimension-dependence of the complexity of our algorithm, we would ideally need  a {\em reference family} of problems 
exhibiting the same level of difficulty for each $d$. 
Although this is not quite possible, there are problem elements that can be compared for different values of $d$, such as for instance
the structure of the Laplacian. It is therefore important to state next exactly how the relevant quantifies relate to the spatial dimension $d$. 

\begin{assumptions}\label{ass:dim}
In our comparison of problems for different values of $d$, we assume:
\begin{enumerate}[{\rm(i)}]
\setcounter{enumi}{11}
 \item The following are independent of $d$: the constants $d_\bu$, $b_\bu$, $C^\text{{\rm sparse}}$, $C^{\text{{\rm supp}}}$, $C_\bbf^{\text{{\rm rank}}}$; the excess regularity index $t$, and $C^\text{{\rm reg}}_\bbf$ in \eqref{eq:rhsreg}.
  \item The following quantities remain bounded independently of $d$: $\norm{\bA}$ and $\norm{\bA^{-1}}$, see Proposition \ref{prop:cond}; the maximum hierarchical representation rank $\max_\alpha R_\alpha$ of $\bT$;
  the quantities $\norm{\pi^{(i)}(\bu) }_\As$ in the benchmark assumptions, $\norm{\pi^{(i)}(\bbf)}_{\As}$ in Assumptions \ref{ass:approximability}\eqref{ass:rhsapprox}, and the values $\norm{\Sc_{i}^{t} \bbf}$, each for $i=1,\ldots,d$.
\item  In addition, we assume that  $C^\text{{\rm ops}}_\bbf(d)$ as in Assumptions \ref{ass:approximability}\eqref{ass:rhsops} grows at most polynomially as $d\to \infty$.
 \end{enumerate}
 \end{assumptions}

\begin{remark}
 \label{rem:damping}
 As a consequence of the $d$-independent bound on $\norm{\bA}\norm{\bA^{-1}}$, the reduction rate $\rho$ is independent of $d$ and hence the damping parameter $\omega$ can be chosen independently of $d$.
 \end{remark}

We have already seen in Proposition \ref{prp:tensor_iteration_opeq_convergence}  that Algorithm \ref{alg:tensor_opeq_solve}
terminates without any additional assumptions on $\bu$ and in that sense converges. The following main result of this work concerns the complexity of the scheme
when $\bu$ satisfies the benchmark assumptions.

\begin{theorem}
\label{thm:complexity}
Suppose that  Assumptions \ref{ass:A-f}, \ref{ass:rhs} hold and that Assumptions \ref{ass:approximability} are valid for the solution $\bu$  of $\mathbf{A}\bu = \mathbf{f}$.
Let $\alpha > 0$ and let $\constsvd, \constcrs$ be as in Theorem
\ref{lmm:combined_coarsening}.
Let the constants $\kappa_1,\kappa_2,\kappa_3$ in
Algorithm \ref{alg:tensor_opeq_solve} be chosen as
\begin{gather*}
  \kappa_1 = \bigl(1 + (1+\alpha)(\constsvd + \constcrs +
  \constsvd\constcrs)\bigr)^{-1}\,, \\
  \kappa_2 = (1+\alpha)\constsvd \kappa_1\,,\qquad 
  \kappa_3 = \constcrs(\constsvd + 1)(1+\alpha)\kappa_1 \,,
\end{gather*}
and let $\beta_1 \geq 0$, $\beta_2 >0$ be arbitrary but fixed.
Then the approximate solution $\bu_\varepsilon$ produced by Algorithm \ref{alg:tensor_opeq_solve} for $\varepsilon < \varepsilon_0$
satisfies
\begin{gather}
 \label{eq:complexity_rank} 
 \abs{\rank(\bu_\varepsilon)}_\infty 
   \leq \, \bigl( d_\mathbf{\bu}^{-1}
   \ln\bigl[ 2(\alpha \kappa_1)^{-1} \rho_{\ga_\bu}
   \,\norm{\bu}_{\AH{\ga_\mathbf{\bu}}}\,\varepsilon^{-1}\bigr] \bigr)^{ b_\mathbf{\bu}}
    \lesssim (\abs{\ln \varepsilon} + \ln d)^{b_\bu} \,,
   \\
 \label{eq:complexity_supp} \sum_{i=1}^d \#\supp_i(\bu_\varepsilon) \lesssim
     d^{1 + s^{-1}} \, \Bigl(\sum_{i=1}^d \norm{ \pi^{(i)}(\bu)}_{\As} \Bigr)^{\frac{1}{s}}
           \varepsilon^{-\frac{1}{s}} \,,
\end{gather}
as well as
\begin{gather}
  \label{eq:complexity_ranknorm} 
  \norm{\bu_\varepsilon}_{\AH{\ga_\mathbf{\bu}}}
  \lesssim \sqrt{d}\,
      \norm{\bu}_{\AH{\ga_\mathbf{\bu}}}    \,,   \\
  \label{eq:complexity_sparsitynorm} \sum_{i=1}^d \norm{
  \pi^{(i)}(\bu_\varepsilon)}_{\As} \lesssim d^{1 + \max\{1,s\}} 
      \sum_{i=1}^d \norm{ \pi^{(i)}(\bu)}_{\As}  \,.
\end{gather}
The multiplicative constant in \eqref{eq:complexity_ranknorm} depends only on $\alpha$,
those in \eqref{eq:complexity_supp} and \eqref{eq:complexity_sparsitynorm} depend only on
$\alpha$ and $s$.

If in addition, Assumptions \ref{ass:dim} hold, then for the number of required operations $\ops(\bu_\varepsilon)$, we have the estimate
\begin{equation} 
\label{eq:complexity_totalops}
\ops(\bu_\varepsilon) \leq Cd^a \,d^{c s^{-1} \ln d} d^{24 c \ln \ln d}
   \abs{\ln \varepsilon}^{c s^{-1} \ln d + 2 \max\{b_\bu, b_\mathbf{f}\}}\,
       \varepsilon^{-\frac{1}{s}} \,,
\end{equation}
where $C,a$ are constants independent of $\varepsilon$ and $d$, and $c$ is the smallest $d$-independent value such that $I \leq c \ln d$ for $I$ as in line \ref{alg:jchoice} of Algorithm \ref{alg:tensor_opeq_solve}. In particular,  $c$   does not depend on $\varepsilon$ and $s$.
\end{theorem}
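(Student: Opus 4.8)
The plan is to build on Proposition~\ref{prp:tensor_iteration_opeq_convergence}, which already yields $\norm{\bu_k-\bu}\leq 2^{-k}\varepsilon_0$ and hence $\norm{\bu_\varepsilon-\bu}\leq\varepsilon$ without any structural hypothesis on $\bu$; the whole content of the theorem is that the benchmark approximability of $\bu$ in Assumptions~\ref{ass:approximability}, which never enters Algorithm~\ref{alg:tensor_opeq_solve}, is automatically reproduced by the iterates up to mildly $d$-dependent constants. I would split the argument into three stages: (A) propagating the classes $\AH{\ga_\bu}$ and $\As$ along the entire run, which gives the output estimates \eqref{eq:complexity_rank}--\eqref{eq:complexity_sparsitynorm}; (B) bounding, for each call of $\apply$ and $\rhs$ on an iterate obeying the stage-(A) bounds, its rank, support and operation count; (C) summing the per-step costs over all outer and inner iterations to obtain \eqref{eq:complexity_totalops}.

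For stage (A), I would induct over the outer index $k$ and, inside each sweep, over the inner index $j$. The engine is Theorem~\ref{lmm:combined_coarsening}: every tensor produced by a composition $\coarsen(\recompress(\cdot\,;\cdot)\,;\cdot)$ in line~\ref{alg:tensor_solve_innerrecomp} or line~\ref{alg:cddtwo_coarsen_line} has an input which, by the inner error recursion respectively the termination test of line~\ref{alg:cddtwo_looptermination_line} (using $c_\bA=\norm{\bA^{-1}}$ and $\norm{\id-\omega\bA}\leq\rho$), lies within its recompression budget $\eta$ of the exact solution $\bu$; hence \eqref{eq:combinedcoarsen_errest}, \eqref{eq:combinedcoarsen_rankest}, \eqref{eq:combinedcoarsen_suppest} apply with $\bu$ as comparison element. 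The norm parts of \eqref{eq:combinedcoarsen_rankest} and \eqref{eq:combinedcoarsen_suppest} give the invariants $\norm{\cdot}_{\AH{\ga_\bu}}\lesssim\sqrt d\,\norm{\bu}_{\AH{\ga_\bu}}$ and $\sum_i\norm{\pi^{(i)}(\cdot)}_{\As}\lesssim d^{1+\max\{1,s\}}\sum_i\norm{\pi^{(i)}(\bu)}_{\As}$, that is, \eqref{eq:complexity_ranknorm} and \eqref{eq:complexity_sparsitynorm}; evaluating the rank and support parts at the last coarsening, where $\eta\sim\varepsilon$, and inserting $\ga_\bu^{-1}(x)=(d_\bu^{-1}\ln x)^{b_\bu}$, yields \eqref{eq:complexity_rank} and \eqref{eq:complexity_supp}. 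The factors $\sqrt d$ and $d^{1+\max\{1,s\}}$ are precisely the $\constsvd=\sqrt{2d-3}$, $\constcrs=\sqrt d$ dependence of the constants in Theorem~\ref{lmm:combined_coarsening}, and the $\ln d$ inside \eqref{eq:complexity_rank} enters only through $\varepsilon_0\lesssim\norm{\bA^{-1}}\norm{\bbf}$ and the resulting bound $\lesssim\abs{\ln\varepsilon}+\ln d$ on the number of outer sweeps.

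For stage (B), the new object is $\apply(\bv;\eta)=\tbS_{m}^{-1}\tbT_{J}\tbS_{m}^{-1}\bv$ of \eqref{weta-def}. Here $\tbT_{J}$ keeps the fixed hierarchical representation rank $\max_\alpha R_\alpha$ of $\bT$, whereas each one-sided factor $\tbS_{m}^{-1}$ is, by Remark~\ref{rem:consequence}, a sum of $1+n^+(\delta)+m$ separable terms and therefore inflates the hierarchical ranks by that factor, so $\abs{\rank(\apply(\bv;\eta))}_\infty\lesssim(1+m)^2\,\abs{\rank(\bv)}_\infty$ with $m=m(\eta;\bv)$ from \eqref{nsize}. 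The role of Theorem~\ref{thm:expsum_relerr} is the estimate $M(\eta;T)\lesssim\abs{\ln\eta}+\ln T$ from \eqref{Nsc} together with the fact that $\ln T(J(\eta);\bv)$ is controlled by the maximal wavelet level occurring in $\supp\bv\cup\supp\tbT_{J(\eta)}\bv$, which the excess regularity (Definition~\ref{def:sobolev_compr}, Assumptions~\ref{ass:A-f}) and the $\As$-decay of $\pi^{(i)}(\bv)$ bound by $\lesssim s^{-1}\abs{\ln\eta}+\ln d$; hence $m(\eta;\bv)\lesssim s^{-1}\abs{\ln\eta}+\ln d$. The support and cost of $\apply$ then come from the componentwise telescoping \eqref{motivate} with the adaptive partition \eqref{eq:supps}: this simultaneously gives $\norm{\bA\bv-\bw_\eta}\leq\eta$ (Proposition~\ref{prop:weta}) and, by $s^*$-compressibility with the level-decay property, keeps the matrix-vector work proportional to $\sum_i\#\supp_i(\bv)$ plus an $\eta^{-1/s}$-term, so that $\sum_i\#\supp_i(\apply(\bv;\eta))$ and $\ops(\apply(\bv;\eta))$ are bounded by the expressions of Theorem~\ref{lmm:combined_coarsening} times a polynomial in $d$, $\abs{\rank(\bv)}_\infty$ and $m$. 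The analogous budgets for $\rhs$ are exactly Assumptions~\ref{ass:rhs}.

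For stage (C), I would multiply: there are $\lesssim\abs{\ln\varepsilon}+\ln d$ outer sweeps and, by line~\ref{alg:jchoice}, at most $I\leq c\ln d$ inner steps per sweep---genuinely $\sim\ln d$ since the present choice of $\kappa_1$ is $\sim d^{-1}$. Within a sweep the rank-inflation factor $(1+m)^2$ of stage (B) may compound at each inner step before the next recompression, so that ranks reach $\lesssim(\text{base rank})\cdot(1+m)^{2I}$; with $m\lesssim s^{-1}\abs{\ln\varepsilon}+\ln d$ and $I\leq c\ln d$ this is what produces the $d^{cs^{-1}\ln d}$ and $\abs{\ln\varepsilon}^{cs^{-1}\ln d}$ factors, the quartic rank dependence of the $\Hcal$SVD cost (Remark~\ref{rem:CRcomplexity}) propagated through the $\lesssim c\ln d$ inner steps gives the $d^{24c\ln\ln d}$ factor, and the $\varepsilon^{-1/s}$ together with $\abs{\ln\varepsilon}^{2\max\{b_\bu,b_\bbf\}}$ come from the support budgets of $\apply$, $\rhs$, $\coarsen$ and from squaring the rank bounds $\abs{\ln\eta}^{b_\bu}$, $\abs{\ln\eta}^{b_\bbf}$, while Assumptions~\ref{ass:dim} absorb everything else into the prefactor $Cd^a$. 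The main obstacle, located in stages (B)--(C), is twofold: first, showing that the non-separable reference scaling $\tbS$ can be replaced, on the a priori unknown support of the running iterate, by a separable $\tbS_{m}$ with only $m\lesssim s^{-1}\abs{\ln\varepsilon}+\ln d$ terms rather than $\abs{\ln\varepsilon}^2$---which rests on the \emph{relative}-error refinement of Theorem~\ref{thm:expsum_relerr} and on bounding $T(J(\eta);\bv)$ through the excess-regularity hypothesis---and second, the careful bookkeeping showing that re-injecting the rank-inflated output of $\apply$ into $\recompress$/$\coarsen$ re-establishes the stage-(A) invariants and that the rank-, support- and $\As$-norm-dependent per-step costs compound only quasi-polynomially over the $\lesssim c\ln d$ inner iterations, reproducing exactly the exponents in \eqref{eq:complexity_totalops}.
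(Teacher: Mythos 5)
Your proposal is correct and follows essentially the same route as the paper: \eqref{eq:complexity_rank}--\eqref{eq:complexity_sparsitynorm} via Theorem \ref{lmm:combined_coarsening} applied at the termination test of line \ref{alg:cddtwo_looptermination_line}, per-step rank/support/cost budgets via Theorem \ref{thm:apply} with the relative-error sinc approximation controlling $m(\eta;\bv)$ linearly in $\abs{\ln\eta}$, and compounding over $I\leq c\ln d$ inner steps combined with the quartic $\Hcal$SVD cost to get \eqref{eq:complexity_totalops}. The only quantitative slip is the bound $m\lesssim s^{-1}\abs{\ln\eta}+\ln d$: the paper's Lemma \ref{lmm:lvlbound} together with the growth $\gamma^{kI+j}$ of the excess-regularity norms in \eqref{w_reg_bound} yields $\hat m\lesssim (\ln d)^2\abs{\ln\eta}+(\ln d)^3$, which is still linear in $\abs{\ln\eta}$ and so does not change the structure of the final estimate.
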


Note that the operation count in \eref{eq:complexity_totalops} is essentially of the form
$$
\ops(\bu_\varepsilon)\lesssim d^{C_1\ln d}|\ln \varepsilon|^{C_2 \ln d + 2 \max\{b_\bu, b_\mathbf{f}\}}\,
       \varepsilon^{-\frac{1}{s}},
$$
where $C_1,C_2$ are constants independent of $d$ and $\varepsilon$.
\section{Complexity Analysis and Proof of Theorem \ref{thm:complexity}}\label{sec:proofs}

\subsection{Analysis of Scaling Operators}\label{ssec:proofs-scaling}

\begin{theorem}
\label{thm:expsum_relerr_2}
Let $\delta_0 \in (0,1)$ and
\begin{equation}\label{eq:expapprox_h_2} 
  h \in \biggl(0, \frac{\pi^2}{5(\abs{\ln\delta_0} + 4)}\biggr]  \,.
\end{equation}
Then with $\alpha$, $w$  defined as in Theorem \ref{thm:expsum_relerr}, and $\varphi_{h,n}$ and $\varphi_{h,\infty}$ as in \eqref{eq:expapprox_def} with $n^+  \geq  \ceil{ h^{-1} \max\{4\pi^{-\frac12}, \sqrt{\abs{\ln \delta_0}}\} }$, we have
\begin{equation*}%
   \biggabs{ \frac{1}{\sqrt{t}} -   \varphi_{h,\infty}(t) } 
  \leq \frac{\delta_0}{\sqrt{t}} \quad \text{for all $t\in[1,\infty)$.} 
\end{equation*}
Moreover, for any $\varepsilon > 0$ and  
for all $n\geq \ceil{h^{-1}(\ln 2\pi^{-\frac12} + \abs{\ln\varepsilon})}$, one has
\begin{equation*}
   \bigabs{ \varphi_{h,\infty}(t) -   \varphi_{h,n}(t) }  
  \leq \varepsilon 
  \quad \text{for all $t\in[1,\infty)$.} 
\end{equation*}
\end{theorem}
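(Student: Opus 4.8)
The plan is to realize $\varphi_{h,\infty}$ as a truncated trapezoidal-rule (sinc) quadrature of the integral representation
\[
   \frac{1}{\sqrt t}\;=\;\frac{2}{\sqrt\pi}\int_0^\infty e^{-ts^2}\,ds\;=\;\int_{-\infty}^\infty w(y)\,e^{-t\alpha(y)}\,dy\,,
\]
which follows from the substitution $s=\ln(1+e^y)$ (so $ds=(1+e^{-y})^{-1}\,dy$), recovering exactly the functions $\alpha(y)=\ln^2(1+e^y)$ and $w(y)=2\pi^{-1/2}(1+e^{-y})^{-1}$ from the statement. Setting $F(y):=w(y)\,e^{-t\alpha(y)}$, the full trapezoidal sum $h\sum_{k\in\Z}F(kh)$ approximates $\int_\R F=t^{-1/2}$, while $\varphi_{h,\infty}(t)=h\sum_{k\le n^+}F(kh)$ (the limit defining $\varphi_{h,\infty}$ exists since the terms with $k\to-\infty$ decay like $e^{kh}$). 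Thus the first claim will follow from a \emph{relative} bound on the quadrature error of the full sum together with a bound on the upper tail $\sum_{k>n^+}hF(kh)$, and the second claim from a bound on the lower tail $\sum_{k\le-n-1}hF(kh)$.

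The heart of the argument---and the step I expect to be the main obstacle---is a sharp analytic estimate on $F$ in a horizontal strip that produces the decisive factor $t^{-1/2}$. Fix $\gamma\in(0,\pi/4)$, e.g.\ $\gamma=\pi/10$ so that $2\pi\gamma=\pi^2/5$. In $\{|\Im y|<\pi/4\}$ both $w$ and $\alpha$ are analytic (their singularities lie on the lines $\Im y\in\pi(2\Z+1)$), and on each line $\Im y=\gamma'$ with $0\le\gamma'<\pi/4$ I claim $\Re\alpha\ge0$: parametrizing by $s=e^{\Re y}>0$ one has $\alpha=(\ln\rho(s)+i\theta(s))^2$ with $\rho(s)=|1+se^{i\gamma'}|$ and $\theta(s)=\arg(1+se^{i\gamma'})\in[0,\gamma')$, and the elementary identity $\frac{d}{ds}\bigl(\ln\rho(s)-\theta(s)\bigr)=\rho(s)^{-2}\bigl(s+\cos\gamma'-\sin\gamma'\bigr)>0$ for $\gamma'<\pi/4$, together with $\ln\rho(0)-\theta(0)=0$, gives $\ln\rho(s)\ge\theta(s)\ge0$, hence $\Re\alpha=(\ln\rho)^2-\theta^2\ge0$. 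Therefore $e^{-t\Re\alpha}\le1$ for $t\ge1$; moreover, since $\Re\alpha(s)=s^2\cos(2\gamma')+O(s^3)$ near $s=0$ while $\Re\alpha$ stays positive and grows like $(\ln s)^2$ for large $s$, splitting the integral
\[
   \int_{-\infty}^\infty|F(u+i\gamma)|\,du\;=\;\frac{2}{\sqrt\pi}\int_0^\infty\frac{e^{-t\Re\alpha(s)}}{\rho(s)}\,ds
\]
at a fixed $s_0>0$ yields a bound $C_\gamma\,t^{-1/2}$, uniformly for $t\ge1$ (the same holds on $\Im y=-\gamma$ by conjugation symmetry). Inserting this into the classical error estimate for the trapezoidal rule on $\R$ gives
\[
   \Bigl|\,t^{-1/2}-h\!\sum_{k\in\Z}F(kh)\,\Bigr|\;\le\;\frac{2C_\gamma}{\sqrt t\,\bigl(e^{2\pi\gamma/h}-1\bigr)}\,,
\]
and requiring the prefactor to be at most $\tfrac34\delta_0$ produces precisely a constraint of the form $h\le\pi^2\big/\bigl(5(|\ln\delta_0|+4)\bigr)$, with the additive constant $4$ absorbing $\ln C_\gamma$ and the passage from $e^{2\pi\gamma/h}-1$ to $e^{2\pi\gamma/h}$.

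The two tails are then elementary and uniform in $t\ge1$. For $k>n^+$ one uses $w(kh)\le2\pi^{-1/2}$ and $\alpha(kh)=\ln^2(1+e^{kh})\ge(kh)^2$ to get $\sum_{k>n^+}hF(kh)\le2\pi^{-1/2}\int_{n^+h}^\infty e^{-tx^2}\,dx\le\frac{e^{-t(n^+h)^2}}{\sqrt\pi\,(n^+h)\,t}$; under the stated lower bound on $n^+$ (so that $\sqrt\pi\,n^+h\ge4$ and $(n^+h)^2\ge|\ln\delta_0|$) this is at most $\tfrac14\delta_0\,t^{-1/2}$, which combined with the quadrature bound gives the first assertion. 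For the lower tail, $k\le-n-1$ forces $kh<0$, whence $w(kh)=2\pi^{-1/2}(1+e^{-kh})^{-1}\le2\pi^{-1/2}e^{kh}$ and $e^{-t\alpha(kh)}\le1$, so
\[
   |\varphi_{h,\infty}(t)-\varphi_{h,n}(t)|\;\le\;\frac{2h}{\sqrt\pi}\sum_{k\le-n-1}e^{kh}\;=\;\frac{2}{\sqrt\pi}\,\frac{h\,e^{-h}}{1-e^{-h}}\,e^{-nh}\;<\;\frac{2}{\sqrt\pi}\,e^{-nh}\,,
\]
using $h/(e^h-1)<1$; this is $\le\varepsilon$ as soon as $nh\ge\ln(2\pi^{-1/2})+|\ln\varepsilon|$, exactly the stated condition on $n$. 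Collecting the quadrature bound and the two tail bounds yields both assertions of the theorem.
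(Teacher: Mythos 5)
Your overall strategy coincides with the paper's: the same integral representation $t^{-1/2}=\frac{2}{\sqrt{\pi}}\int_{\R}(1+e^{-x})^{-1}e^{-t\ln^2(1+e^x)}\,dx$, a trapezoidal/sinc quadrature error bound obtained from analyticity in a horizontal strip of half-width $\pi/10$ so as to produce a \emph{relative} error proportional to $t^{-1/2}$, and the two elementary tail estimates, which you carry out correctly and which match the paper's almost verbatim (the paper splits the error budget as $\tfrac12\delta_0+\tfrac12\delta_0$ rather than your $\tfrac34+\tfrac14$, which is immaterial). Where you genuinely diverge is in the strip estimate. The paper proves explicit pointwise lower bounds on $r_\zeta(x):=\Re\ln^2(1+e^{x\pm i\zeta})$, namely $r_\zeta(x)\ge\frac14x^2$ for $x\ge0$ and $r_\zeta(x)\ge\frac18e^{2x}$ for $x\le0$ when $\abs{\zeta}\le\pi/10$, and from these derives the explicit bound $N_1(g,\mathcal{D}_\zeta)\le 8(1+\pi^{-1/2})\,t^{-1/2}$ entering Stenger's theorem. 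You instead establish only the qualitative fact $\Re\alpha\ge0$ on the shifted lines, via the identity $\frac{d}{ds}(\ln\rho-\theta)=\rho^{-2}(s+\cos\gamma'-\sin\gamma')$ together with $\ln\rho(0)=\theta(0)=0$ --- a clean and correct argument, arguably more transparent than the paper's case analysis --- and then appeal to local ($\Re\alpha\sim s^2\cos2\gamma'$) and asymptotic ($\Re\alpha\sim(\ln s)^2$, $\rho\sim s$) behavior to conclude a bound $C_\gamma\,t^{-1/2}$ with an \emph{unspecified} constant $C_\gamma$.

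This leaves one genuine deficiency: the theorem asserts the conclusion for the specific admissible range $h\le\pi^2/(5(\abs{\ln\delta_0}+4))$, and the additive constant $4$ can only be checked once $C_\gamma$ is explicit. Your remark that the $4$ ``absorbs $\ln C_\gamma$'' is an assertion, not a proof; as written you have proved the statement only with $4$ replaced by some unquantified constant. Moreover the margin is slim: one needs (in your normalization) roughly $\ln(8C_\gamma/3)\le4$, while the paper's explicit computation yields $N_1\le8(1+\pi^{-1/2})t^{-1/2}\approx12.5\,t^{-1/2}$ and then $\ln\bigl(32(1+\pi^{-1/2})\bigr)\approx3.91<4$, so the stated constant barely survives and cannot be waved through. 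To close the gap, quantify the two regimes you identify --- e.g.\ a uniform lower bound $\Re\alpha(s)\ge c_1s^2$ on $[0,s_0]$ with an explicit $c_1$, and for $s\ge s_0$ a bound exploiting $\Re\alpha\ge(\ln\rho)^2-\gamma^2$ together with $\rho(s)\ge s$ --- or adopt the paper's pointwise bounds on $r_\zeta$, and then propagate the resulting explicit $C_\gamma$ through the condition on $h$.
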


An immediate consequence of Theorem \ref{thm:expsum_relerr_2} can be formulated as follows.

\begin{corollary}\label{cor:expsum_relerr}
Under the assumptions of Theorem \ref{thm:expsum_relerr}, let in addition $\delta_1 > 0$ such that
$\delta:=\delta_0 + \delta_1 < 1$, and let $T>1$. Then for $\varphi_{h,\infty}$ and $\varphi_{h,n}$ with $n^+$ as in Theorem \ref{thm:expsum_relerr_2} and
\begin{equation*}
  n \geq \ceil{h^{-1}(\ln 2\pi^{-\frac12} + \abs{\ln \delta_1} + \textstyle\frac12\displaystyle \ln T)}
\end{equation*}
we have
\begin{equation*} %
   \bigabs{ \varphi_{h,\infty}(t) -   \varphi_{h,n}(t) } \leq \frac{\delta_1}{\sqrt{t}}
  \,,\quad
   \bigabs{ t^{-\frac12} -   \varphi_{h,n}(t) }  \,
    \leq \,\frac{\delta}{\sqrt{t}}
   \quad \text{for all $t\in[1,T]$.} 
\end{equation*}
\end{corollary}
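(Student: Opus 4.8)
The plan is to obtain the corollary directly from Theorem~\ref{thm:expsum_relerr_2}, the point being a simple trade-off: Theorem~\ref{thm:expsum_relerr_2} provides an \emph{absolute} approximation bound $\bigabs{\varphi_{h,\infty}(t)-\varphi_{h,n}(t)}\le\varepsilon$ valid on all of $[1,\infty)$, and on the bounded interval $[1,T]$ such an absolute bound converts into a \emph{relative} one because $t^{-1/2}\ge T^{-1/2}$ there. Combining this with the relative bound for $t^{-1/2}-\varphi_{h,\infty}$ already supplied by the first assertion of Theorem~\ref{thm:expsum_relerr_2} then yields the second inequality of the corollary by the triangle inequality.

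Concretely, I would apply the second assertion of Theorem~\ref{thm:expsum_relerr_2} with tolerance $\varepsilon := \delta_1 T^{-1/2}$. The only thing to verify is that the lower bound on $n$ assumed in the corollary implies the one required by the theorem for this $\varepsilon$. Since $T>1$ gives $\tfrac12\ln T\ge 0$, the triangle inequality yields
\[
  \abs{\ln\varepsilon}=\bigabs{\ln\delta_1-\tfrac12\ln T}\le\abs{\ln\delta_1}+\tfrac12\ln T ,
\]
hence $\ln 2\pi^{-1/2}+\abs{\ln\varepsilon}\le \ln 2\pi^{-1/2}+\abs{\ln\delta_1}+\tfrac12\ln T$; multiplying by $h^{-1}>0$ and using monotonicity of $\ceil{\cdot}$ shows $n\ge\ceil{h^{-1}(\ln 2\pi^{-1/2}+\abs{\ln\varepsilon})}$. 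Theorem~\ref{thm:expsum_relerr_2} then gives $\bigabs{\varphi_{h,\infty}(t)-\varphi_{h,n}(t)}\le\varepsilon=\delta_1 T^{-1/2}$ for all $t\in[1,\infty)$, and restricting to $t\in[1,T]$, where $\sqrt t\le\sqrt T$ and thus $\delta_1 T^{-1/2}\le\delta_1 t^{-1/2}$, gives the first claimed inequality. For the second, the hypotheses on $h$ and $n^+$ carried over in the corollary are exactly those guaranteeing the first assertion of Theorem~\ref{thm:expsum_relerr_2}, so $\bigabs{t^{-1/2}-\varphi_{h,\infty}(t)}\le\delta_0 t^{-1/2}$ on $[1,\infty)$; a two-term triangle inequality combining this with the first inequality of the corollary gives $\bigabs{t^{-1/2}-\varphi_{h,n}(t)}\le(\delta_0+\delta_1)t^{-1/2}=\delta t^{-1/2}$ on $[1,T]$.

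There is no genuine obstacle here: the corollary is a bookkeeping consequence of Theorem~\ref{thm:expsum_relerr_2}. The only step requiring the slightest care is matching the lower bound on $n$, i.e.\ the elementary estimate $\bigabs{\ln\delta_1-\tfrac12\ln T}\le\abs{\ln\delta_1}+\tfrac12\ln T$ (which in fact holds with equality under the present hypotheses, since $0<\delta_1<1$ forces $\ln\delta_1<0$ and $T>1$ forces $-\tfrac12\ln T<0$); everything else is monotonicity of the ceiling function together with the triangle inequality.
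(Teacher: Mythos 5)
Your proposal is correct and is exactly the argument the paper intends (the paper labels the corollary "an immediate consequence" of Theorem \ref{thm:expsum_relerr_2} without writing out the details): apply the second assertion of that theorem with $\varepsilon=\delta_1 T^{-1/2}$, note that the corollary's lower bound on $n$ matches the theorem's requirement because $\abs{\ln(\delta_1 T^{-1/2})}=\abs{\ln\delta_1}+\tfrac12\ln T$, convert the absolute bound to a relative one on $[1,T]$ via $t^{-1/2}\ge T^{-1/2}$, and conclude the second inequality by the triangle inequality with the first assertion of the theorem.
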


Choosing $\delta_0 = \delta_1= \delta/2$ in Corollary \ref{cor:expsum_relerr} provides the proof of Theorem \ref{thm:expsum_relerr}.
For the proof of Theorem \ref{thm:expsum_relerr_2}, we need the following definition and approximation estimate from \cite{Stenger:93}.
\newcommand{\zd}{\zeta}

\begin{definition}
\label{def:sincquad_hardynorm}
For $\zd>0$, let $\mathcal{D}_\zd = \{  z \in \C \colon \, \abs{\Im z} < \zd
\}$ and for $0 < \varepsilon < 1$,
\[  
\mathcal{D}_\zd(\varepsilon) = \{  z \in \C \colon \, \abs{\Re z} <
\varepsilon^{-1} ,\, \abs{\Im z} < \zd(1-\varepsilon) \}\,.   
\] 
For $v$ analytic in $\mathcal{D}_\zd$ let
$  
N_1(v,\mathcal{D}_\zd) =
\lim_{\varepsilon \to 0} \int_{\partial \mathcal{D}_\zd(\varepsilon)} \abs{v(z)}\,\abs{dz} \,.   
$
\end{definition}

\begin{theorem}[{cf.\ \cite{Stenger:93}, Theorem 3.2.1}]
\label{thm:sincquad_stenger}
Let $g$ be analytic in $\mathcal{D}_\zd$ with $N_1(g,\mathcal{D}_\zd) <\infty$, then
\begin{equation*}
  \biggl| \int_\R g(x)\,dx  - h \sum_{k \in \Z} g(kh)  \biggr|  
  \leq  \frac{e^{-\pi \zd/h}}{2 \sinh(\pi \zd/h)} N_1(g,\mathcal{D}_\zd)  \,.
\end{equation*}
\end{theorem}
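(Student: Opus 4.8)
The plan is to prove Theorem~\ref{thm:sincquad_stenger}, the classical trapezoidal-rule error estimate for functions analytic in a horizontal strip, by combining the Poisson summation formula with a contour shift that converts analyticity in $\mathcal{D}_\zd$ into exponential decay of the Fourier transform.

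First I would set $\hat g(\xi) := \int_\R g(x)\,e^{-i\xi x}\,dx$. Once $\hat g$ is known to decay exponentially (established in the next step), the Poisson summation formula is legitimate and gives $h\sum_{k\in\Z} g(kh) = \sum_{m\in\Z}\hat g(2\pi m/h)$. Since $\hat g(0)=\int_\R g(x)\,dx$, subtracting yields the identity $\int_\R g - h\sum_{k} g(kh) = -\sum_{m\neq 0}\hat g(2\pi m/h)$, so the task reduces to bounding $\sum_{m\neq 0}|\hat g(2\pi m/h)|$.

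Next, for $\xi>0$ and $\zd'\in(0,\zd)$ I would shift the integration line in $\hat g(\xi)$ from $\R$ down to $\Im z=-\zd'$. Cauchy's theorem handles the translation; the contributions of the two vertical edges of the rectangle $[-R,R]\times[-\zd',0]$ vanish along a suitable sequence $R\to\infty$, which is possible because $N_1(g,\mathcal{D}_\zd)<\infty$ makes the $L^1$-norms of $g$ on vertical segments summable. This gives $|\hat g(\xi)|\le e^{-\zd'\xi}\int_\R|g(x-i\zd')|\,dx$, and letting $\zd'\uparrow\zd$ yields $|\hat g(\xi)|\le e^{-\zd\xi}N_1^-$, where $N_1^-$ is the contribution of the lower boundary of the strip to $N_1(g,\mathcal{D}_\zd)$; shifting upward instead gives $|\hat g(\xi)|\le e^{\zd\xi}N_1^+$ for $\xi<0$, with $N_1^+ + N_1^- = N_1(g,\mathcal{D}_\zd)$. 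Summing the two one-sided geometric tails,
\[
\Bigl| \int_\R g - h\sum_{k\in\Z} g(kh) \Bigr|
\;\le\; (N_1^- + N_1^+)\sum_{m\ge 1} e^{-2\pi m\zd/h}
\;=\; N_1(g,\mathcal{D}_\zd)\,\frac{e^{-2\pi\zd/h}}{1-e^{-2\pi\zd/h}}
\;=\; \frac{e^{-\pi\zd/h}}{2\sinh(\pi\zd/h)}\,N_1(g,\mathcal{D}_\zd),
\]
which is exactly the claimed estimate. An alternative that avoids Poisson summation altogether is to integrate $g$ against the kernels $(e^{\mp 2\pi i z/h}-1)^{-1}$ over contours hugging $\R$ from above and from below and to collect the residues at $z=kh$; the same boundary estimates then reproduce the same constant.

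The hard part is the contour-shift step. One must (i) justify the deformation inside the \emph{open} strip using only the integrated bound $N_1<\infty$ rather than any pointwise decay of $g$, controlling the vertical edges as $R\to\infty$; and (ii) split $N_1$ cleanly into its upper- and lower-boundary parts so that the tails over $m>0$ and $m<0$ are charged to disjoint pieces of $N_1$. This accounting is exactly what makes the relevant series $\sum_{m\ge1} e^{-2\pi m\zd/h}$ rather than $\sum_{m\neq0}e^{-2\pi|m|\zd/h}$, and hence produces the $2\sinh(\pi\zd/h)$ in the denominator with no spurious factor of two. Checking the hypotheses of Poisson summation is a minor additional point, subsumed once the exponential decay of $\hat g$ is in hand.
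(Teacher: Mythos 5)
This theorem is not proved in the paper at all: it is imported verbatim from Stenger's book (Theorem 3.2.1) as a black box, so there is no internal proof to compare against. Your argument is correct and is one of the two standard routes to this estimate. The Poisson-summation path works: the identity $h\sum_{k}g(kh)=\sum_m \hat g(2\pi m/h)$ reduces everything to the tails $m\neq 0$, the contour shift gives $\abs{\hat g(\xi)}\le e^{-\zd\abs{\xi}}N_1^{\mp}$ with the sign matched to $\sgn\xi$, and your split $N_1 = N_1^+ + N_1^-$ is exactly what charges the two geometric tails to disjoint pieces of $N_1$ and produces $\sum_{m\ge1}e^{-2\pi m\zd/h} = e^{-\pi\zd/h}/(2\sinh(\pi\zd/h))$ rather than a spurious factor $2$. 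The ``alternative'' you mention in passing is in fact Stenger's own proof: integrating $g$ against $(e^{\mp 2\pi i z/h}-1)^{-1}$ over the two horizontal boundary components collects the residues $h\,g(kh)$ and the term $\int_\R g$, and the pointwise bound $\abs{e^{\pm 2\pi i z/h}-1}^{-1}\le (e^{2\pi\zd/h}-1)^{-1}=e^{-\pi\zd/h}/(2\sinh(\pi\zd/h))$ on $\Im z=\mp\zd$ gives the constant directly; that route is more self-contained because it never needs the hypotheses of Poisson summation. Two small points you should make explicit if you write this out: (i) deducing $g|_\R\in L^1(\R)$, the absolute convergence of $\sum_k g(kh)$, and the vanishing of the vertical edges along a sequence $R\to\infty$ from $N_1<\infty$ alone requires the standard log-convexity (subharmonicity) of $y\mapsto\int_\R\abs{g(x+iy)}\,dx$, which bounds the interior line integrals by the boundary ones; and (ii) your decomposition $N_1=N_1^++N_1^-$ tacitly assumes the vertical parts of $\partial\mathcal{D}_\zd(\varepsilon)$ contribute nothing in the limit — if they do, you only get $N_1^++N_1^-\le N_1$, which still yields the claimed inequality.
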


\begin{proof}[Proof of Theorem \ref{thm:expsum_relerr_2}]
Our starting point is the representation (cf.\ \cite{Hackbusch:06-1})
$$  
\frac{1}{\sqrt{t}} = \frac{2}{\sqrt{\pi}} \int_\R \frac{e^{-t \ln^2(1+e^x)}}{1+e^{-x}} \,dx \,. 
$$
The integrand is analytic, in particular, in the strip $\{ x+iy \colon x\in\R, \abs{y}\leq \pi/10\}$, and in order to apply Theorem \ref{thm:sincquad_stenger}, we need to estimate the quantity
$$
  N_1(g, \mathcal{D}_\zd) = \int_\R \abs{g(x+i\zd)}  \,dx
    + \int_\R \abs{g(x-i\zd)} \,dx
    \,,
$$
where $g(z):= \frac{2}{\sqrt{\pi}} \frac{e^{-t \ln^2(1+e^z)}}{1+e^{-z}}$.
Note first that $\abs{1 + e^{x\pm i\zd}}^2 \geq 1 + e^{2x} \geq \frac12 (1 + e^x)^2$ for $x\in\R$.
Let
$$ 
r_\zd(x) := \Re \ln^2(1+e^{x\pm i\zd}) = \frac14 \ln^2(1 + 2 e^{x} \cos \zd + e^{2x}) 
      - \biggl( \arctan \frac{\sin \zd}{ \cos \zd + e^{-x}}  \biggr)^2  \,. 
$$

For $\abs{\zd} \leq \frac{\pi}{10}$, we now prove that
$r_\zd(x)\geq \frac{1}{4} x^2$ for $x\geq 0$ and $r_\zd(x) \geq \frac{1}{8} e^{2x}$ for $x\leq 0$.
We first consider $x\leq 0$. Using that $\ln(1+y)\geq \frac{1}{2}y$ for any $y\in[0,2]$, we obtain
$$  
\frac14 \ln^2(1 + 2 e^{x} \cos \zd + e^{2x})  \geq \frac14 (e^x \cos \zd)^2 \,,
$$
and furthermore
$$
  \biggl( \arctan \frac{\sin \zd}{ \cos \zd + e^{-x}} \biggr)^2 \leq \biggl(  \frac{\sin \zd}{ \cos \zd + e^{-x}} \biggr)^2  \leq \zd^2 e^{2x} \,, \quad x\in\R\,.   
$$
Hence $r_\zd(x) \geq \frac14 (e^x \cos \zd)^2 - \zd^2 e^{2x}$, and the estimate 
$ \cos \zd \geq (\textstyle\frac{1}{2}\displaystyle + 4\zd^2)^{\frac12} $,
which holds for $\abs{\zd} \leq \frac\pi{10}$,
yields $\frac14 (e^x \cos \zd)^2 - \zd^2 e^{2x} \geq \frac18 e^{2x}$ for $x\leq 0$, as claimed. 

We now consider $x >0$, where we shall repeatedly use
$$   
\biggabs{\arctan \frac{\sin \zd}{ \cos \zd + e^{-x}}} \leq \abs{\zd} \,, \quad x\in\R\,.   
$$
To see that $r_\zd(x)\geq \frac{1}{4} x^2$ for $x\in (0,1)$, we observe first that $\frac14 \ln^2(1 + 2\cos \zd + 1) -\zd^2 \geq \frac14$ holds for $\zd = \pi/10$, and hence also for $\abs{\zd} \leq \pi/10$ by monotonicity. Consequently, for $x\in(0,1)$, one has
$$ 
   r_\zd(x) \geq \frac14 \ln^2(1 + 2e^x \cos \zd + e^{2x}) - \zd^2  
     > \frac14 \ln^2 ( 1 + 2e^0 \cos \zd + e^{0}) - \zd^2 \geq \frac14 > \frac14 x^2 \,.
$$
In the remaining case $x\geq 1$, we use the estimate $\ln(1+e^{2x})\geq 2x$ to obtain
$$  
\frac14 \ln^2(1 + 2 e^{x} \cos \zd + e^{2x}) \geq \frac14 \ln^2(1 + e^{2x}) \geq
  \frac{1}{4} (2x)^2 = x^2\,, 
$$
and thus $r_\zd(x) \geq x^2 - \zd^2$. Consequently, $r_\zd(x)\geq \frac14 x^2$ follows, since in the latter case $\zd^2 < \frac{3}{4} \leq \frac34 x^2$.

In summary, for $\abs{\zd}\leq \pi/10$, we obtain
$$ 
 \int_{\R^+} \biggabs{ \frac{e^{-t \ln^2(1+e^{x\pm i\zd})}}{1+e^{-(x\pm i\zd)}} } \,dx
   \leq 2  \int_{\R^+} \frac{ e^{-t \,r_\zd(x)} }{1 + e^{-x}} \,dx
    \leq 2 \int_{\R^+}  e^{-\frac{t}{4}  x^2} \,dx
   = 2\sqrt{\pi} \, t^{-\frac12}  $$
as well as
\begin{equation*}
    \int_{\R^-} \biggabs{ \frac{e^{-t \ln^2(1+e^{x\pm i\zd})}}{1+e^{-(x\pm i\zd)}} } \,dx 
    \leq 2 \int_{\R^+} \frac{e^{-\frac{t}{8} e^{-2x}}}{1+e^{x}}\,dx \\
  = 2\int_0^1 \frac{e^{-\frac{t}{8} \xi^2}}{(1+\xi^{-1})\xi} \,d\xi \leq 2 t^{-\frac12}    \,,
\end{equation*}
where we have used the substitution $x=-\ln \xi$. 

Theorem \ref{thm:sincquad_stenger} now yields
\begin{align}
\biggabs{ \frac{1}{\sqrt{t}} - 
   \sum_{k\in\Z} h\, \omega(kh) e^{-\alpha(kh)\, t} } 
   &\leq 8(1 + \pi^{-\frac12}) t^{-\frac12} \frac{e^{-\pi \zd/h}}{2 \sinh(\pi \zd/h)} \nonumber\\
   &\leq 16(1 + \pi^{-\frac12}) \,t^{-\frac12} e^{-\pi^2 /(5h)} \,, \label{stenger_final_est}
\end{align}
where we have used $\zd = \pi/10$ and that $h\leq \pi^2/(5 \ln 2)$ by our assumption on $h$, which in turn implies $e^{-\pi \zd/h}/(2 \sinh(\pi \zd/h)) \leq 2 e^{-2\pi \zd / h}$. Again by the choice of $h$ as in \eqref{eq:expapprox_h_2}, the right hand side in \eqref{stenger_final_est} is bounded by $\frac12 t^{-\frac12} \delta_0$.

The estimates for $n^+$ and  $n$ follow from the decay of the integrand on $\R$: on the one hand, we have 
$$  
\sum_{k> n^+} h\, \omega(kh)  e^{-\alpha(kh)\, t} \leq 2\pi^{-\frac12} h \int_{n^+}^\infty  e^{-t(xh)^2}\, dx \leq t^{-\frac12} \,  2\pi^{-\frac12} \int_{n^+ h\sqrt{t}}^\infty e^{-x^2}\,dx  \,, 
$$
and furthermore
$$    
2\pi^{-\frac12} \int_{n^+ h\sqrt{t}}^\infty e^{-x^2}\,dx
  \leq   2\pi^{-\frac12}  \int_{n^+ h}^\infty  \frac{2x e^{-x^2}}{n^+ h} \,dx
    \leq   2\pi^{-\frac12} \frac{e^{-(n^+ h )^2}}{n^+ h}  \,.
$$
The expression on the right hand side is bounded by $\frac12 \delta_0$ for $n^+ \geq \max\{ 4\pi^{-\frac12} h^{-1},  h^{-1} \sqrt{\abs{\ln\delta_0}}  \}$, which leads to the condition on $n^+$ stated in the assertion.
On the other hand, 
$$
  \sum_{k < -n} h\, \omega(kh)  e^{-\alpha(kh)\, t} 
    \leq   2\pi^{-\frac12} \int_{n h}^\infty e^{-x} \,dx  
      \leq 2\pi^{-\frac12} e^{-n h}\,,
$$
and the expression on the right hand side is bounded by $\varepsilon$ for all $t\in[1,T]$ for 
$n \geq h^{-1}(\ln 2\pi^{-\frac12} + \abs{\ln\varepsilon})$.
\end{proof}

We record next some consequences of Theorem \ref{thm:expsum_relerr} and the related definitions from Section \ref{ssec:nearsep} that will be required later.
First  we quantify the equivalence between the two systems \eqref{eqsystem} and \eqref{final}.
\begin{remark}
\label{rem:equivsystems}
For any $\bB\in \R^{\nabla^d\times\nabla^d}$ and $\bv\in \ell_2(\nabla^d)$, 
\beqn
\label{tildeswitch}
(1-\delta)\|\bS^{-1}\bB\bS^{-1} (\bS \tbS^{-1} \bv)\| \leq \|\tbS^{-1}\bB\tbS^{-1}\bv\|\leq (1+\delta)\|\bS^{-1}\bB\bS^{-1} (\bS \tbS^{-1} \bv)\|.
\eeqn
\end{remark}
\begin{proof}
We infer from Remark \ref{rem:StildeS} that
\begin{equation*}
  \norm{ \tbS^{-1}\bB \tbS^{-1} \bv} =
     \norm{ (\tbS^{-1} \bS) \bS^{-1} \bB \bS^{-1} (\bS \tbS^{-1} \bv)} 
    \leq (1 + \delta) \norm{\bS^{-1} \bB \bS^{-1} (\bS \tbS^{-1} \bv)} \,.
\end{equation*}
The lower bound follows  from Remark \ref{rem:StildeS} in an analogous fashion.
\end{proof}

The significance of \eref{tildeswitch} becomes clear when taking $\bB = \bT -\tilde\bT$ where $\tilde\bT$
is an approximation for $\bT$. Here $\tilde\bT$ stands for a ``compressed'' version of $\bT$.
Recall that matrix compression is usually done for the energy scaled version $\bA$, not for the $L_2$ representation
$\bT$. However, since the process of discarding matrix entries and scaling 
commutes and since, in view of \eref{tensorscale}, we can make use of existing results
for the lower-dimensional  canonical scaling, we can compare the corresponding variants.

\begin{lemma}
\label{lem:finite-n}
Let $\bv \in \spl{2}(\nabla^d)$ and $T>0$ such that 
\beqn
\label{supp-cond}
\supp \bv \subseteq \Lambda_T, \quad \supp (\bS^{-1} \tbT \bS^{-1}\bv) \subseteq \Lambda_T,
\eeqn
and define $\tilde{\bf D} := \tbS^{-1}(\tbT - \bT)\tbS^{-1}$.
Then whenever $n\geq M(\eta;T)$,
 one has
\begin{eqnarray}
\label{alt-bound}
\norm{( \tbS^{-1}\bT\tbS^{-1} -  \tbS^{-1}_n\tbT\tbS^{-1}_n)\bv } & \leq &  
\|\tilde{\bf D}\bv\|+ \|\tilde{\bf D}(\id - \tbS\tbS^{-1}_n)\bv\| \nonumber\\
&& + \frac{\eta}{1-\delta}\|\tilde{\bf D}(\tbS\tbS_n^{-1}\bv)\|
+\frac{2 \eta}{1-\delta}\|\bA\|\|\bv\|.\qquad
\end{eqnarray}
\end{lemma}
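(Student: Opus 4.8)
The plan is to reduce the estimate to the elementary algebra of the diagonal operators $\bS,\tbS,\tbS_n$ together with the quantitative bounds of Section~\ref{ssec:nearsep}. Put $\mathbf{E}:=\bS(\tbS^{-1}-\tbS_n^{-1})$. Since all operators in sight are diagonal, one has $\tbS^{-1}-\tbS_n^{-1}=\bS^{-1}\mathbf{E}=(\id-\tbS\tbS_n^{-1})\tbS^{-1}$, $\ \id-\tbS\tbS_n^{-1}=\tbS\bS^{-1}\mathbf{E}$, and $(\tbS^{-1}-\tbS_n^{-1})\bv=\tbS^{-1}(\id-\tbS\tbS_n^{-1})\bv$, while from \eref{final} and the definition of $\tilde{\bf D}$ one has the operator identity $\tbS^{-1}\tbT\tbS^{-1}=\bA+\tilde{\bf D}$. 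The quantitative inputs are $\norm{\mathbf{E}\Restr{\Lambda_T}}\leq\eta$ for $n\geq M(\eta;T)$ (the second inequality in \eref{tilde-diff}), $\norm{\tbS\bS^{-1}}\leq(1-\delta)^{-1}$ (Remark~\ref{rem:StildeS}), and $\norm{\tbS\tbS_n^{-1}}\leq 1$ (\eref{smaller}).

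First I would peel off the term $\norm{\tilde{\bf D}\bv}$ by inserting $\tbS^{-1}\tbT\tbS^{-1}\bv$, writing $\tbS^{-1}\bT\tbS^{-1}\bv-\tbS_n^{-1}\tbT\tbS_n^{-1}\bv=-\tilde{\bf D}\bv+\bigl(\tbS^{-1}\tbT\tbS^{-1}\bv-\tbS_n^{-1}\tbT\tbS_n^{-1}\bv\bigr)$, and then split the remaining ``pure scaling error'' of the compressed operator by the telescoping $\tbS^{-1}\tbT\tbS^{-1}\bv-\tbS_n^{-1}\tbT\tbS_n^{-1}\bv=\tbS^{-1}\tbT(\tbS^{-1}-\tbS_n^{-1})\bv+(\tbS^{-1}-\tbS_n^{-1})\tbT\tbS_n^{-1}\bv$. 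In the first telescoped term, substituting $(\tbS^{-1}-\tbS_n^{-1})\bv=\tbS^{-1}(\id-\tbS\tbS_n^{-1})\bv$ and $\tbS^{-1}\tbT\tbS^{-1}=\bA+\tilde{\bf D}$ yields $\bA(\id-\tbS\tbS_n^{-1})\bv+\tilde{\bf D}(\id-\tbS\tbS_n^{-1})\bv$; the second summand is exactly the second term on the right of \eref{alt-bound}, and the first is bounded by $\tfrac{\eta}{1-\delta}\norm{\bA}\norm{\bv}$ by writing $\id-\tbS\tbS_n^{-1}=\tbS\bS^{-1}\mathbf{E}$ and using $\norm{\mathbf{E}\bv}\leq\eta\norm{\bv}$ (valid because $\supp\bv\subseteq\Lambda_T$) together with $\norm{\tbS\bS^{-1}}\leq(1-\delta)^{-1}$.

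For the second telescoped term I write $\tbS^{-1}-\tbS_n^{-1}=(\id-\tbS\tbS_n^{-1})\tbS^{-1}$ and $\tbS^{-1}\tbT\tbS_n^{-1}=\tbS^{-1}\tbT\tbS^{-1}(\tbS\tbS_n^{-1})=(\bA+\tilde{\bf D})(\tbS\tbS_n^{-1})$, so it becomes $(\id-\tbS\tbS_n^{-1})\bigl[\bA(\tbS\tbS_n^{-1}\bv)+\tilde{\bf D}(\tbS\tbS_n^{-1}\bv)\bigr]$. The bracketed vector equals $\tbS^{-1}\tbT\tbS_n^{-1}\bv$ and hence has the same support as $\tbT\tbS_n^{-1}\bv$; since $\tbT\tbS_n^{-1}$ and $\bS^{-1}\tbT\bS^{-1}$ share the same sparsity pattern (the outer diagonal factors being nowhere vanishing), \eref{supp-cond} forces this support into $\Lambda_T$. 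Therefore the estimate $\norm{(\id-\tbS\tbS_n^{-1})\bw}\leq\tfrac{\eta}{1-\delta}\norm{\bw}$ (again from $\id-\tbS\tbS_n^{-1}=\tbS\bS^{-1}\mathbf{E}$) applies with $\bw=\bA(\tbS\tbS_n^{-1}\bv)+\tilde{\bf D}(\tbS\tbS_n^{-1}\bv)$, and combined with $\norm{\tbS\tbS_n^{-1}}\leq1$ (so $\norm{\bA(\tbS\tbS_n^{-1}\bv)}\leq\norm{\bA}\norm{\bv}$) this gives $\tfrac{\eta}{1-\delta}\norm{\bA}\norm{\bv}+\tfrac{\eta}{1-\delta}\norm{\tilde{\bf D}(\tbS\tbS_n^{-1}\bv)}$. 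Summing $\norm{\tilde{\bf D}\bv}$ and the two telescoped bounds yields precisely \eref{alt-bound}, the two occurrences of $\tfrac{\eta}{1-\delta}\norm{\bA}\norm{\bv}$ combining into $\tfrac{2\eta}{1-\delta}\norm{\bA}\norm{\bv}$.

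The step I expect to need the most care is the bookkeeping of the three interrelated diagonal scalings: each estimate must be routed through the one product ($\mathbf{E}\Restr{\Lambda_T}$, $\tbS\bS^{-1}$, or $\tbS\tbS_n^{-1}$) for which a uniform bound is available, and one must verify the localization hypothesis $\supp(\tbT\tbS_n^{-1}\bv)\subseteq\Lambda_T$ that licenses replacing $\id-\tbS\tbS_n^{-1}$ by $(\id-\tbS\tbS_n^{-1})\Restr{\Lambda_T}$. Once the scalings are organized correctly, the remainder is a routine chain of triangle inequalities.
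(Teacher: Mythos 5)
Your proof is correct and follows essentially the same route as the paper's: peel off $\tilde{\bf D}\bv$, telescope the remaining scaling error into $\tbS^{-1}\tbT(\tbS^{-1}-\tbS_n^{-1})\bv$ and $(\tbS^{-1}-\tbS_n^{-1})\tbT\tbS_n^{-1}\bv$, rewrite $\tbS^{-1}\tbT\tbS^{-1}=\bA+\tilde{\bf D}$, and control the diagonal factors via \eref{tilde-diff}, \eref{SStilde}, \eref{smaller} and the support hypotheses \eref{supp-cond}. Your explicit bookkeeping with $\mathbf{E}=\bS(\tbS^{-1}-\tbS_n^{-1})$ and the localization of $\supp(\tbT\tbS_n^{-1}\bv)$ just makes transparent what the paper does implicitly.
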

\begin{proof}
Note  that
\begin{equation}
\label{eq:scalingapprox_basic}
 \norm{(\tbS^{-1} \bT \tbS^{-1} - \tbS^{-1}_n \tbT \tbS^{-1}_n)\bv} \\
 \leq \norm{ \tbS^{-1} (\bT - \tbT) \tbS^{-1} \bv} + 
  \norm{\tbS^{-1} \tbT \tbS^{-1} \bv - \tbS^{-1}_n \tbT \tbS^{-1}_n \bv } \,.
\end{equation}
The second term corresponds to the deviation of the finite-rank operator $\tbS^{-1}_n$ from the reference $\tbS^{-1}$.
Here we obtain
\begin{align}
\label{secondpart}
  \norm{(\tbS^{-1} \tbT \tbS^{-1} - \tbS^{-1}_n \tbT \tbS^{-1}_n)\bv} &\leq
    \norm{\tbS^{-1} \tbT (\tbS^{-1} - \tbS^{-1}_n) \bv} 
     + \norm{(\tbS^{-1} - \tbS^{-1}_n) \tbT \tbS^{-1}_n \bv}.%
\end{align}
To bound the second summand on the right hand side of \eqref{eq:scalingapprox_basic},
 we estimate the first summand on the right hand side of \eref{secondpart} by
\begin{eqnarray*}
 \norm{\tbS^{-1} \tbT (\tbS^{-1} - \tbS^{-1}_n) \bv} & = &\norm{\tbS^{-1} \tbT \tbS^{-1}(\id - \tbS\tbS^{-1}_n)\bv}\\
 &\leq & \norm{\bA (\id - \tbS\tbS^{-1}_n)\bv} +\norm{\tbS^{-1}( \tbT  - \bT)\tbS^{-1}(\id - \tbS\tbS^{-1}_n)\bv}.
 \end{eqnarray*}
Now note that, whenever $\supp \bv \subseteq \Lambda_T$, $n\geq M(\eta;T)$, we infer from Remark \ref{rem:StildeS} that
\beqn
\label{I-S}
\big|\big(\id - \tbS\tbS^{-1}_n\big)_\nu\big| = \bigabs{\tilde\omega_\nu (\tilde\omega_\nu^{-1} - \tilde\omega_{n,\nu}^{-1})} \leq (1-\delta)^{-1}
\bigabs{\omega_\nu (\tilde\omega_\nu^{-1} - \tilde\omega_{n,\nu}^{-1})} \leq (1-\delta)^{-1} \eta.
\eeqn
Hence we obtain
$$
 \norm{\tbS^{-1} \tbT (\tbS^{-1} - \tbS^{-1}_n) \bv}\leq \frac{\eta}{1-\delta} \|\bA\|\,\|\bv\| + \|\tbS^{-1}(\bT-\tbT)\tbS^{-1}(\id - \tbS\tbS^{-1}_n)\bv\|.
$$
As for the second summand on the right hand side of \eref{secondpart}, we  
argue as above, now using the second relation in \eqref{supp-cond}, to conclude that 
\begin{eqnarray*}
\norm{(\tbS^{-1} - \tbS^{-1}_n) \tbT \tbS^{-1}_n \bv} 
  &=& \norm{\Restr{\Lambda_T} (\id- \tbS\tbS^{-1}_n)(\tbS^{-1}\tbT\tbS^{-1})(\tbS\tbS^{-1}_n)\bv}\\  
 &\leq&  \frac{\eta}{1-\delta} \norm{(\tbS^{-1}\tbT\tbS^{-1})(\tbS\tbS^{-1}_n)\bv}\\
&\leq & \frac{\eta}{1-\delta}\big(\norm{\tbS^{-1}(\bT-\tbT)\tbS^{-1}(\tbS \tbS^{-1}_n\bv)} + \norm{\bA}\,\norm{\bv}\big) , 
\end{eqnarray*}
where we have also used \eqref{SStilde} and \eref{smaller}. Combining both estimates confirms the assertion \eref{alt-bound}.
\end{proof}

 {As will be seen later the estimates \eref{alt-bound} can benefit from the fact that the compressed version $\tbT$ of $\bT$
depends on the given $\bv$ so that the quantities $ \|\tbS^{-1}(\bT-\tbT)\tbS^{-1}\bv\|$ are small and controlled
by a posteriori bounds.}

We conclude this section interrelating the compressibility of  the contractions
of solutions to the systems \eqref{eqsystem} and \eref{final} which differ only by the rescaling.
\begin{remark}
\label{rem:allthesame}
As before let $\sigma_N(\hat\bv)$ denote the error of best $N$-term approximation of $\hat\bv\in \ell_2(\nabla)$ and
let $\tilde\bv := \bS\tbS^{-1}\bv$ for any given $\bv\in \ell_2(\nabla^d)$. Then one has
\beqn
\label{sigmaNtilde}
\sigma_N(\pi^{(i)}(\tilde\bv)) \leq (1+\delta)\sigma_N(\pi^{(i)}(\bv)),
\eeqn
and 
\beqn
\label{sigmaNtilde-2}
\sigma_N(\pi^{(i)}(\bv)) \leq (1-\delta)^{-1}\sigma_N(\pi^{(i)}(\tilde\bv)).
\eeqn
Hence we have in particular
 \beqn
\label{Asame}
\|\pi^{(i)}(\tilde\bv)\|_{\mathcal{A}^s}\leq (1+\delta) \|\pi^{(i)}(\bv)\|_{\mathcal{A}^s}   %
\leq \frac{1+\delta}{1-\delta} \|\pi^{(i)}(\tilde\bv)\|_{\mathcal{A}^s},\quad    \bv \in \ell_2(\nabla^d),\, i=1,\ldots,d.
\eeqn
Moreover, for $\tilde\bv := \bS\tbS_n^{-1}\bv$, \eref{sigmaNtilde} holds again for all $\bv\in\ell_2(\nabla^d)$, while \eref{sigmaNtilde-2}
holds in this case only for $\supp \bv\subseteq \Lambda_T$ when $n\geq M_0(T)$.
 \end{remark}
\subsection{Analysis of the procedure {\rm\textsc{apply}}}\label{ssec:apply-analysis}
\newcommand{\bD}{\mathbf{D}}
\newcommand{\bC}{\mathbf{C}}

The following main result of this section    collects  the relevant
properties of  the procedure $\apply$.

\begin{theorem}
\label{thm:apply}
Given $\eta>0$,  and any  finitely supported $\bv\in \ell_2(\nabla^d)$,  let $\bw_\eta$ be defined by \eref{weta-def}.
Then the following statements hold:
\begin{enumerate}[{\rm(i)}]

\item We have the estimates 
\begin{align} 
\label{eq:approx-eta}
\norm{\mathbf{A}\mathbf{v} - \mathbf{\bw_\eta}} & \leq \eta\,,   \\
   \#  \supp_i (\bw_\eta)  &\leq 
  \|\hat\alpha\|_{\ell_1}  \eta^{-\frac{1}{s}}
  \Big(2^4(2^{s}+2) R^{1+s} \sum_{i=1}^d C^{(i)}_\bA \max_{n>1} \norm{\bA^{(i)}_n}\,   \norm{\pi^{(i)}(\bv)}_{\As}
   \Big)^{\frac1s},
   \label{eq:tensor_apply_support} 
\end{align}
where $\hat\alpha :=(\hat\alpha_k)_{k\in\N}$ and $\hat\alpha_k := \max_{i\in\{1,\ldots,d\}} \max_{n>1} \alpha_k({\bA}^{(i)}_n)$.
\item
The outputs of $\apply$ are sparsity-stable in the sense that for $ i\in\{1,\ldots,d\}$,
\beqn
\label{sparsity-stable}
\norm{\pi^{(i)}( \bw_\eta )}_\As \leq \Bigl( \check{C}^{(i)}_\bA  + \frac{2^{3s+2}}{2^s - 1} \norm{\hat{\alpha}}_{\spl{1}}^s 
\max_{n > 1} \norm{\bA^{(i)}_n}\, 
C^{(i)}_\bA \Bigr) R^s (1+\delta)^2 \,  \norm{\pi^{(i)}(\bv)}_\As \,,
\eeqn
 where $C^{(i)}_\bA$ is defined in \eqref{eq:maxsequences-0} and 
\beqn
\label{checkAi-bound}
\check{C}^{(i)}_\bA  :=  12\, (d-1) \max_{j\neq i} \abs{a_{jj}}\,  \bigl(\max_{i,n_i} \norm{\bA^{(i)}_{n_i}}\bigr)^2
 \,.
\eeqn
\item
For the hierarchical ranks of  $\bw_\eta$, we have the bounds
\begin{equation}
 {\rank_\alpha (\bw_\eta  )} \leq \bigl(\hat m(\eta;\bv)\bigr)^2 R_\alpha \rank_\alpha(\mathbf{v}),\quad \alpha \in \cD_d \,,
   \label{eq:tensor_apply_ranks}
\end{equation}
with $R_\alpha$ as in \eqref{eq:htucker_operator_core}, where \begin{equation}
\label{scrankstotal}
\hat m(\eta;\bv) := 1 +n^+(\delta) + m(\eta;\bv), \end{equation}
with $n^+(\delta)$ given by \eref{n+} in Section \ref{ssec:nearsep}, and $m(\eta;\bv)$ defined in \eqref{nsize}.
\item
The number ${\ops}(\bw_\eta)$ of floating point operations required to compute $\bw_\eta$ in the 
hierarchical Tucker format for a given $\bv$ with ranks $\rank_\alpha(\bv)= r_\alpha$, $\alpha \in \cD_d\setminus \{0_d\}$, and $r_{\hroot{d}}=1$, scales like
\begin{multline}
\label{eq:flops}
{\ops}(\bw_\eta)\lesssim  \sum_{\alpha\in \Ncal(\hdimtree{d})}  \bigl(\hat m(\eta;\bv)\bigr)^6 R_\alpha r_\alpha \prod_{q=1}^2
R_{c_q(\alpha)}r_{c_q(\alpha)} \\
 +  \eta^{-1/s} \sum_{i=1}^d \|\hat\alpha\|_{\ell_1}  \bigl(\hat m(\eta;\bv)\bigr)^2 R r_i \Big(\sum_{j=1}^d C^{(j)}_\bA R\|\pi^{(j)}(\bv)\|_{\As}\Big)^{1/s},
\end{multline}
where the constant is independent of $\eta, \bv$, and $d$.
\item
Assume in addition that the approximations $\bT_{n,j}$ have the level decay property (see Definition \ref{def:scompressibility}). Denoting by $L(\bv)$ the largest coordinatewise level appearing in $\bv$, the scaling ranks $\hat m(\eta;\bv)$ as defined in \eqref{scrankstotal} can be bounded by
\beqn
\label{metav}
\hat m(\eta;\bv) \le C(\delta, s,\bA)\,\Big[1 + L(\bv)  +  \abs{\ln \eta } +   \ln \Big(\sum_{i=1}^d
 \norm{\pi^{(i)}(\bv)}_\As\Big) \Big].
\eeqn
\end{enumerate}
\end{theorem}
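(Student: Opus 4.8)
The plan is to unfold every quantity feeding into $\hat m(\eta;\bv)=1+n^+(\delta)+m(\eta;\bv)$ from \eqref{scrankstotal}. Since $n^+(\delta)$ and the fixed step size $h$ depend only on $\delta$ (Theorem~\ref{thm:expsum_relerr}), it suffices to bound $m(\eta;\bv)=M\bigl(c(\bv)\eta;\,T(J(\eta);\bv)\bigr)$, and by the definition of $M$ in \eqref{Nsc},
\[
  m(\eta;\bv)\ \le\ 1+h^{-1}\Bigl(\ln 2\pi^{-\frac12}+\bigabs{\ln\min\{\delta/2,c(\bv)\eta\}}+\tfrac12\ln T(J(\eta);\bv)\Bigr).
\]
Thus the task splits into (a) a bound on $\bigabs{\ln(c(\bv)\eta)}$, (b) a bound on $\ln T(J(\eta);\bv)$, which will in turn need (c) a bound on $J(\eta)$. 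Part (a) is immediate: by the standing hypothesis $\eta\le 2\norm{\bA}\norm{\bv}$ made before \eqref{etaJ}, we have $c(\bv)\eta=\eta(1-\delta)/(4\norm{\bA}\norm{\bv})<1$, so $\bigabs{\ln\min\{\delta/2,c(\bv)\eta\}}\le\ln(2/\delta)+\ln\!\bigl(4\norm{\bA}/(1-\delta)\bigr)+\abs{\ln\eta}+\ln\norm{\bv}$; and using the recorded identity $\norm{\bv}=\norm{\pi^{(i)}(\bv)}\le\norm{\pi^{(i)}(\bv)}_{\As}$ gives $\ln\norm{\bv}\le\ln\bigl(\sum_{i}\norm{\pi^{(i)}(\bv)}_{\As}\bigr)$, which is the $\bv$-dependent term appearing in \eqref{metav}.

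For (c), I would insert into the definition \eqref{eJv} of $e_J(\bv)$ the estimate $\norm{\Restr{\Lambda^{(i)}_{[p]}}\pi^{(i)}(\bv)}\le(1+2^s)2^{-ps}\norm{\pi^{(i)}(\bv)}_{\As}$ recorded after \eqref{eq:supps} (for $p\le J$), together with $\norm{\Restr{\Lambda^{(i)}_{[J+1]}}\pi^{(i)}(\bv)}=\sigma_{2^J}(\pi^{(i)}(\bv))\le 2^{-Js}\norm{\pi^{(i)}(\bv)}_{\As}$. The product $2^{-s(J-p)}2^{-ps}=2^{-Js}$ is independent of $p$, so the double sum in \eqref{eJv} collapses, and the normalization \eqref{betascale} gives $\sum_{p=0}^J\sum_{n=2}^R\beta_{J-p}(\bA^{(i)}_n)\le\sum_{n=2}^R\norm{\beta(\bA^{(i)}_n)}_{\ell_1}\le(R-1)\max_{n>1}\norm{\bA^{(i)}_n}$. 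This yields $e_J(\bv)\le C(s,R)\,2^{-Js}\,\Sigma$ with $\Sigma:=\sum_i C^{(i)}_\bA\max_{n>1}\norm{\bA^{(i)}_n}\norm{\pi^{(i)}(\bv)}_{\As}$, where $C^{(i)}_\bA$ is from \eqref{eq:maxsequences-0}. Hence the defining inequality $(1+\delta)^2 e_J(\bv)\le\eta/4$ in \eqref{etaJ} holds as soon as $2^{Js}\ge 4(1+\delta)^2C(s,R)\Sigma/\eta$, so $J(\eta)\le 1+(s\ln2)^{-1}\bigl(\abs{\ln\eta}+\ln\Sigma+\ln(4(1+\delta)^2C(s,R))\bigr)$; splitting $\ln\Sigma\le\ln\bigl(\max_i C^{(i)}_\bA\max_{n>1}\norm{\bA^{(i)}_n}\bigr)+\ln\bigl(\sum_i\norm{\pi^{(i)}(\bv)}_{\As}\bigr)$ gives a bound on $J(\eta)$ of the desired form $\lesssim_{s,\bA}1+\abs{\ln\eta}+\ln\bigl(\sum_i\norm{\pi^{(i)}(\bv)}_{\As}\bigr)$.

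For (b), the crux, I would first observe from \eqref{Tv} and \eqref{LambdaT} that $T(J;\bv)=\max\{(\om\nu/\omin)^2:\nu\in\supp\bv\cup\supp\tbT_J\bv\}$. By \eqref{can-scaling} and \eqref{omiscale}, $\om\nu^2\sim\sum_i 2^{2\abs{\nu_i}}$ with $d$-independent constants, whence $\om\nu^2\lesssim d\,2^{2\max_i\abs{\nu_i}}$ while $\omin^2=\min_{\mu\in\nabla^d}\sum_i(\omi{i}{\mu_i})^2\gtrsim d\,2^{2 j_0}$ with $j_0:=\min_{\mu\in\nabla}\abs\mu$ the coarsest one-dimensional level; therefore $(\om\nu/\omin)^2\lesssim 2^{2(\max_i\abs{\nu_i}-j_0)}$ and $\ln T(J;\bv)\lesssim 1+\max\{\abs{\nu_i}:\nu\in\supp\bv\cup\supp\tbT_J\bv\}$. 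For $\nu\in\supp\bv$ this maximum is $\le L(\bv)$. For $\nu\in\supp\tbT_J\bv$, write $\tbT_J=\sum_{\kk n}c_{\kk n}\bigotimes_i\tilde\bT^{(i)}_{n_i}$ as in \eqref{tildeTni}, where $\tilde\bT^{(i)}_{n_i}=\id$ if $n_i=1$ and $\tilde\bT^{(i)}_{n_i}=\sum_{p=0}^{J}\bT_{n_i,J-p}\Restr{\Lambda^{(i)}_{[p]}}$ if $n_i>1$ (the $p=J+1$ term vanishing). A nonzero entry $(\tbT_J\bv)_\nu$ forces, for some admissible $\kk n$ and some $\mu\in\supp\bv$, all factors $(\tilde\bT^{(i)}_{n_i})_{\nu_i\mu_i}$ to be nonzero: when $n_i=1$ this gives $\abs{\nu_i}=\abs{\mu_i}\le L(\bv)$; when $n_i>1$, the level-decay property of $\bA^{(i)}_{n_i}$ (Definition~\ref{def:scompressibility}), which is inherited by $\bT_{n_i,j}=\hatbS_i\bA^{(i)}_{n_i,j}\hatbS_i$ because the positive diagonal rescaling $\hatbS_i$ does not alter the zero pattern, forces $\abs{\abs{\nu_i}-\abs{\mu_i}}\le\gamma(J-p)\le\gamma J$ for the contributing $p\le J$, with $\gamma$ the (finite, $d$-uniform) maximum of the decay parameters over $i$ and $n>1$. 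Hence $\abs{\nu_i}\le L(\bv)+\gamma J$ in all cases, so $\ln T(J(\eta);\bv)\lesssim 1+L(\bv)+\gamma J(\eta)$. Combining this with the bound on $J(\eta)$ from (c) and the term from (a), and absorbing $1+n^+(\delta)$, $h^{-1}$, $\gamma$, $R$, $\norm{\bA}$, $\norm{\bA^{(i)}_n}$ and $C^{(i)}_\bA$ into a single constant $C(\delta,s,\bA)$, yields \eqref{metav}. The main obstacle is precisely this last step: one must verify that the adaptive, $\bv$-dependent truncation $\tbT_J$ still enjoys level decay with a $d$-uniform parameter---i.e. that neither the coordinatewise rescalings $\hatbS_i$ nor the restrictions $\Restr{\Lambda^{(i)}_{[p]}}$ spoil the band structure---and then to convert the resulting bound on coordinatewise levels into a bound on the genuinely non-separable ratio $\om\nu/\omin$, which is where the scaling difficulty of the whole paper is confronted.
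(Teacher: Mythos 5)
Your proposal establishes only part (v) of the theorem. Parts (i)--(iv) are not addressed at all, and they are not trivial corollaries of what you wrote: the error bound in (i) is Proposition \ref{prop:weta} (proved separately from Lemma \ref{lem:finite-n}), and the support bound \eqref{eq:tensor_apply_support} requires combining the a priori compression estimate \eqref{eq:full_opapprox_est} and the support count \eqref{eq:full_opapprox_support} of Lemma \ref{thm:opapprox_basic} with the bound on $J(\eta)$; part (ii) is the technical core of the theorem and in the paper rests on passing from $\Sa{n}^{-1}\tbT_J\Sa{n}^{-1}\bv$ to $\Sc^{-1}\tbT_J\Sc^{-1}\tilde\bv$ via \eqref{low-up}, decomposing the contraction $\pi^{(1)}(\Sc^{-1}\tbT_J\Sc^{-1}\tilde\bv)$ into the terms $D_{n,\nu_1}$ of \eqref{eq:opapprox_As_1}, controlling the identity-block via Lemma \ref{lem:tensor-scaling} and diagonal dominance (which is where $\check C^{(i)}_\bA$ with its factor $(d-1)$ comes from), and invoking the $\As$-stability result of \cite[Theorem 8]{BD} for the one-dimensional compressed applications; parts (iii) and (iv) follow from the rank calculus of \cite{BD} together with the observation that each application of $\Sa{m}^{-1}$ multiplies hierarchical ranks by $\hat m(\eta;\bv)$. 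None of this appears in your write-up, so as a proof of the stated theorem it has a large gap.

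For the part you do treat, your argument is correct and is essentially the paper's: you bound $\abs{\ln(c(\bv)\eta)}$ using $\norm{\bv}=\norm{\pi^{(i)}(\bv)}\le\norm{\pi^{(i)}(\bv)}_{\As}$, you re-derive the a priori bound \eqref{barJeta} on $J(\eta)$ by collapsing the double sum in $e_J(\bv)$ (the paper does this inside Lemma \ref{thm:opapprox_basic}, estimate \eqref{AJest}), and you control $\ln T(J(\eta);\bv)$ through $L(\tbT_{J}\bv)\le L(\bv)+\gamma J$ using the level-decay property and the two-sided comparison $\omin\ge\sqrt d\,\hatomin$, $\om{\nu}\le c\sqrt d\,\max_i 2^{\abs{\nu_i}}$. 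Your explicit verification that the diagonal rescalings $\hatbS_i$ and the column restrictions $\Restr{\Lambda^{(i)}_{[p]}}$ preserve the band structure is a detail the paper leaves implicit, and is a welcome addition; but it does not compensate for the missing four fifths of the theorem.
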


The proof of Theorem \ref{thm:apply} is based on several auxiliary results. We begin with some 
useful facts concerning scaling of tensor product operators. 

For later reference, we recall the simple fact that for a rank-one operator $\bB = \bB^{(1)} \otimes \bB^{(2)}\otimes \cdots \otimes \bB^{(d)}$, one has 
\begin{multline}
\label{Btensor}
\bB = \big( \bB^{(1)}\otimes \cdots \otimes \bB^{(i-1)}\otimes \id_i\otimes \bB^{(i+1)}\otimes\cdots\otimes \bB^{(d)} \big) \\
\big(\id_1\otimes \cdots\otimes \id_{i-1}\otimes \bB^{(i)}\otimes \id_{i+1}\otimes\cdots\otimes \id_d\big)  
\end{multline}
with the canonical interpretation when $i=1,d$.

\begin{lemma}
\label{lem:tensor-scaling}
For $\bB,  \mathbf{C}\in \R^{\nabla\times\nabla}$ one has
\beqn %
\label{tensorscale}
\begin{aligned}
\bignorm{ \Sc^{-1}\big[\bB \otimes \id_2 \otimes \cdots\otimes \id_d \big]\Sc^{-1} } &\leq \bignorm{\Sci{1}^{-1}\bB \Sci{1}^{-1} }  \\[2mm]
\bignorm{ \Sc^{-1}\big[\bB \otimes \mathbf{C}\otimes \id_3 \otimes \cdots\otimes \id_d \big]\Sc^{-1} } &\leq 
\min \bigl\{ \bignorm{\bB \Sci{1}^{-1}} \bignorm{ \Sci{2}^{-1} \mathbf{C}} ,\,
     \bignorm{\Sci{1}^{-1} \bB } \bignorm{  \mathbf{C} \Sci{2}^{-1}} \bigr\},
\end{aligned}
\eeqn
and permuting the variables, analogous relations hold for $\bB$ at the $i$-th and $\mathbf{C}$ at the $j$-th position, with $\Sci{1}$ and $\Sci{2}$ replaced by $\Sci{i}$ and $\Sci{j}$, respectively.
\end{lemma}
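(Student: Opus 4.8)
The plan is to observe that, once one freezes the coordinates on which only the identity acts, each operator on the left-hand side of \eqref{tensorscale} becomes block diagonal, so that the bound reduces to a one- or two-dimensional estimate in which the frozen coordinates contribute only an additive constant $c\geq 0$ to the \emph{square} of the scaling weight.

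First I would fix the remaining coordinates: in the first line of \eqref{tensorscale}, fix $\check\nu=(\nu_k)_{k\geq 2}$ and set $c:=\sum_{k\geq 2}(\omi{k}{\nu_k})^2\geq 0$; in the second line, fix $(\nu_k)_{k\geq 3}$ and set $c:=\sum_{k\geq 3}(\omi{k}{\nu_k})^2$. Since $\Sc^{-1}$ is diagonal, $\Sc^{-1}(\bB\otimes\id\otimes\cdots\otimes\id)\Sc^{-1}$ has no entries coupling different values of the frozen coordinates, so on the corresponding block of $\spl{2}(\nabla^d)$ it acts on $\spl{2}(\nabla)$ as $\tilde\Sci{c}^{-1}\,\bB\,\tilde\Sci{c}^{-1}$, where $\tilde\Sci{c}$ is the diagonal operator with weights $((\omi{1}{\nu_1})^2+c)^{1/2}$; in the second line it acts on $\spl{2}(\nabla\times\nabla)$ as $\tilde\Sci{c}^{-1}(\bB\otimes\bC)\tilde\Sci{c}^{-1}$, with $\tilde\Sci{c}$ now carrying the weight $((\omi{1}{\nu_1})^2+(\omi{2}{\nu_2})^2+c)^{1/2}$. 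Summing squares over all blocks, it then suffices to bound these block operators in norm, uniformly in $c\geq 0$, by the respective right-hand sides.

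The key for the block bounds is a factorization of the $c$-perturbed scaling: with $\Sci{i}$ as in \eqref{eq:tensor_rescaling} and $K_c:=\tilde\Sci{c}^{-1}\Sci{1}$ one has $\tilde\Sci{c}^{-1}=K_c\Sci{1}^{-1}=\Sci{1}^{-1}K_c$, and $K_c$ is a diagonal \emph{contraction}, since its entries $\omi{1}{\nu_1}((\omi{1}{\nu_1})^2+c)^{-1/2}$ lie in $(0,1]$ (here I use $\omi{i}{\nu_i}\sim 2^{\abs{\nu_i}}>0$ from \eqref{omiscale}). For the first line this immediately gives $\tilde\Sci{c}^{-1}\bB\tilde\Sci{c}^{-1}=K_c(\Sci{1}^{-1}\bB\Sci{1}^{-1})K_c$, hence block norm at most $\norm{\Sci{1}^{-1}\bB\Sci{1}^{-1}}$, as claimed. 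For the second line I would factor the left copy of the two-coordinate scaling so as to peel off $\id\otimes\Sci{2}^{-1}$ and the right copy so as to peel off $\Sci{1}^{-1}\otimes\id$, each leaving behind a diagonal contraction. Moving those contractions to the outside --- which is legitimate since they only ever have to be commuted past \emph{other diagonal operators}, never past $\bB$ or $\bC$ --- and using $(\id\otimes\Sci{2}^{-1})(\bB\otimes\bC)=\bB\otimes(\Sci{2}^{-1}\bC)$ together with $(\bB\otimes\bC)(\Sci{1}^{-1}\otimes\id)=(\bB\Sci{1}^{-1})\otimes\bC$, I obtain the block operator in the form $G\bigl[(\bB\Sci{1}^{-1})\otimes(\Sci{2}^{-1}\bC)\bigr]H$ with $\norm{G},\norm{H}\leq 1$, so that its norm is at most $\norm{\bB\Sci{1}^{-1}}\,\norm{\Sci{2}^{-1}\bC}$ via $\norm{X\otimes Y}=\norm{X}\norm{Y}$. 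Peeling off $\Sci{1}^{-1}\otimes\id$ on the left and $\id\otimes\Sci{2}^{-1}$ on the right instead produces the other term in the minimum, and the permuted statements follow verbatim by freezing all coordinates except the $i$-th (and $j$-th).

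The hard part will be bookkeeping discipline rather than any real difficulty: because $\Sci{i}^{-1}$ is unbounded, the crude estimate $\norm{\Sci{1}^{-1}\bB\Sci{1}^{-1}}\leq\norm{\Sci{1}^{-1}}^2\norm{\bB}$ is useless, so throughout the argument the perturbed scaling must stay glued to $\bB$ (and, in the rank-two case, split between $\bB$ and $\bC$), with only the harmless contractions $K_c,G,H$ ever extracted; checking that every rearrangement swaps a diagonal operator only with another diagonal operator is the one place that needs care.
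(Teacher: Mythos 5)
Your proposal is correct and is essentially the paper's own argument: the paper's proof consists of the single observation that $\Sc^{-1}(\Sci{1}\otimes\id_2\otimes\cdots\otimes\id_d)$ is a diagonal contraction (entries $\omi{1}{\nu_1}/\om{\nu}\leq 1$), which is exactly your operator $K_c$ (respectively $G$, $H$) assembled over all blocks, after which one sandwiches $\Sci{1}^{-1}\bB\Sci{1}^{-1}\otimes\id\otimes\cdots\otimes\id$ (respectively $(\bB\Sci{1}^{-1})\otimes(\Sci{2}^{-1}\bC)\otimes\id\otimes\cdots$) between contractions. Your block-diagonal reduction is just a more explicit presentation of the same factorization, and all the steps check out.
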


\begin{proof}
From the observation that $\norm{\Sc^{-1}(\Sci{1} \otimes\id_2\otimes\cdots\otimes\id_d)}\leq 1$, the first relation in \eqref{tensorscale} is clear. The second inequality follows by an analogous argument.
\end{proof}

We proceed now analyzing the adaptive application of rescaled versions of the operator $\bT$ first for the canonical scaling $\Sc$,
because this allows us most conveniently to tap results on matrix compression in the univariate case.
To this end, we define   the approximation
\beqn
\label{eq:sc_approx_operator}
  \mathbf{\tilde A}_{c,J} := \bS^{-1}\tilde\bT_J \bS^{-1}, \quad \tbT_J = \sum_{\kk{n}\in\KK{d}(\kk{R})} c_\kk{n} 
       \bigotimes_{i=1}^d  \mathbf{\tilde T}^{(i)}_{n_i}  .
\eeqn
In order to simplify notation in the following error estimates, in analogy to \eqref{eq:1dscmatdef}, we introduce the abbreviations
\begin{equation} \label{eq:tilde1dscmatdef}
\tilde{\bA}^{(i)}_{2} :=  \hatbS_i^{-1}\mathbf{\tilde T}^{(i)}_{n_i}\hatbS_i^{-1} \,,
\quad \tilde{\bA}^{(i)}_3 :=  \mathbf{\tilde T}^{(i)}_{3}\hatbS_i^{-1}  \,,
\quad \tilde{\bA}^{(i)}_4 :=  \hatbS_i^{-1}\mathbf{\tilde T}^{(i)}_{4}\,,\quad  i=1,\ldots, d, 
\end{equation}
for the compressed versions of the properly scaled lower dimensional components of $\bT$.
Note that by Definition \ref{def:scompressibility} and \eref{betascale}, we have the uniform bounds
\beqn
\label{eq:approxfactor_unifest}
  \norm{\tilde{\bA}^{(i)}_{n_i}} \leq 2 \norm{{\bA}^{(i)}_{n_i}} , \quad n_i\leq R, \,\, i=1,\ldots, d.
\eeqn

The next result, although still formulated for the canonical scaling $\Sc$, will serve as a first step towards an adaptive application of 
$\bA$ defined by \eref{final}. Although similar in spirit to a comparable result in \cite{BD}, the presence of the scaling operator $\bS$ 
requires a slightly different treatment.

\begin{lemma}
\label{thm:opapprox_basic}
Let  $\bA_c =\bS^{-1}\bT\bS^{-1}$ be defined by \eqref{eqsystem} and assume that \eqref{eq:diff_scompr} holds for $s<s^*$. Moreover, let $\bv \in \spl{2}(\nabla^d)$ with $\pi^{(i)}(\bv)\in\As$, $i=1,\ldots,d$. Then for each $J\in\N$ and $\tilde\bA_{c,J}$, defined by \eref{eq:sc_approx_operator} with the $\bv$-dependent partitions 
\eref{eq:supps}, one has the 
a posteriori bound
\beqn
\label{aposteriori}
   \norm{\bA_c \bv - \mathbf{\tilde A}_{c,J} \bv} \leq 
   e_J(\bv),
\eeqn
where $e_J(\bv)$ is defined by \eqref{eJv}, as well as the
a priori estimate
\beqn
\label{eq:full_opapprox_est}
   \norm{\bA_c \bv - \mathbf{\tilde A}_{c,J} \bv} \leq 
        2^{-sJ} (2^{s}+2) \sum_{i=1}^d C^{(i)}_\bA \Bigl( \sum_{n=2}^R \norm{\bA^{(i)}_n} \Bigr)   \norm{\pi^{(i)}(\bv)}_{\As},
\eeqn
where the constants $C^{(i)}_\bA $ have already been defined in \eqref{eq:maxsequences-0}.
Moreover,  one has  the support estimate
\beqn
\label{eq:full_opapprox_support}
  \#\supp_i\mathbf{\tilde A}_{c,J} \bv \leq R\norm{\hat\alpha}_{\spl{1}} 2^J \,,\quad i=1,\ldots, d.
\eeqn
\end{lemma}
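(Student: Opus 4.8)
The plan is to exploit the tensor product structure of $\tbT_J - \bT$ together with the one-dimensional compressibility estimates \eqref{eq:diff_scompr}, reducing everything to low-dimensional contractions via Lemma \ref{lem:tensor-scaling}. First I would write $\bA_c\bv - \tilde\bA_{c,J}\bv = \bS^{-1}(\bT - \tbT_J)\bS^{-1}\bv = \sum_{\kk{n}\in\KK{d}(\kk{R})} c_\kk{n}\,\bS^{-1}\bigl(\bigotimes_i \bT^{(i)}_{n_i} - \bigotimes_i \tilde\bT^{(i)}_{n_i}\bigr)\bS^{-1}\bv$. Since $\bT^{(i)}_1 = \tilde\bT^{(i)}_1 = \id$, each nonzero term in the coefficient list \eqref{coefficients} has at most two indices $n_i$ different from $1$; I would split the difference of the two tensor products using a telescoping identity of the form \eqref{Btensor}, so that in each resulting summand the operator difference $\bT^{(i)}_{n_i} - \tilde\bT^{(i)}_{n_i}$ acts in a single coordinate $i$ (with the remaining non-identity factor, if present, acting in one other coordinate $j$). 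Then Lemma \ref{lem:tensor-scaling} bounds the operator norm of $\bS^{-1}[\cdots]\bS^{-1}$ by a product of one-dimensional norms: one factor of the type $\|\hatbS_i^{-1}(\bT_{n_i} - \tilde\bT_{n_i})\hatbS_i^{-1}\|$ or $\|(\bT_3 - \tilde\bT_3)\hatbS_i^{-1}\|$, etc., and one factor $\|\bA^{(j)}_{n_j}\|$ for the other coordinate.

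Next I would insert the definition \eqref{tildeTni} of $\tilde\bT^{(i)}_{n_i}$ via the partition \eqref{eq:supps}: writing $\bT^{(i)}_{n_i} - \tilde\bT^{(i)}_{n_i} = \sum_{p=0}^{J}(\bT_{n_i} - \bT_{n_i,J-p})\Restr{\Lambda^{(i)}_{[p]}} + \bT_{n_i}\Restr{\Lambda^{(i)}_{[J+1]}}$, and applying \eqref{eq:diff_scompr} level by level, the contribution of level $p$ is controlled by $\beta_{J-p}(\bA^{(i)}_{n_i})\,2^{-s(J-p)}\,\|\Restr{\Lambda^{(i)}_{[p]}}\pi^{(i)}(\bv)\|$ (using that the restriction to $\Lambda^{(i)}_{[p]}$ in a single coordinate, after passing to the contraction $\pi^{(i)}$, sees only the $p$-th dyadic ring of the best-term approximation of $\pi^{(i)}(\bv)$). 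Grouping the two-index coefficients by the ``active'' coordinate $i$ and using the diagonal-dominance bound that defines $C^{(i)}_\bA$ in \eqref{eq:maxsequences-0} to absorb the sum over $j\neq i$ of $|a_{ij}|\,\|\bA^{(j)}_n\|$, I obtain exactly the a posteriori bound \eqref{aposteriori} with $e_J(\bv)$ as in \eqref{eJv}. For the a priori estimate \eqref{eq:full_opapprox_est}, I would further estimate $\|\Restr{\Lambda^{(i)}_{[p]}}\pi^{(i)}(\bv)\| \leq (1+2^s)2^{-ps}\|\pi^{(i)}(\bv)\|_{\As}$ (the dyadic-ring bound recalled just before \eqref{Tv}), use $\beta_{J-p}(\bA^{(i)}_{n_i}) \leq \|\bA^{(i)}_{n_i}\|$ from \eqref{betascale}, sum the geometric-type series $\sum_{p=0}^J 2^{-s(J-p)}2^{-ps} = 2^{-sJ}\sum_{p=0}^J 1 \cdot$ — more carefully, $\sum_{p=0}^{J}2^{-s(J-p)-ps}$ which after summing gives a factor $\lesssim 2^{-sJ}$ — and treat the tail term $p=J+1$ by $\|\Restr{\Lambda^{(i)}_{[J+1]}}\pi^{(i)}(\bv)\| \leq 2^{-Js}\|\pi^{(i)}(\bv)\|_{\As}$, yielding the constant $(2^s+2)$.

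Finally, the support estimate \eqref{eq:full_opapprox_support} follows by tracking the index set hit by $\mathbf{\tilde A}_{c,J}\bv$ in coordinate $i$: each $\tilde\bT^{(i)}_{n_i}$ applied to a vector supported on $\bar\Lambda^{(i)}_J$ (the support of the $2^J$-term approximation of $\pi^{(i)}(\bv)$) spreads the support by the bandwidth of $\bT^{(i)}_{n_i,J-p}$ on ring $p$, which by Definition \ref{def:scompressibility} has at most $\alpha_{J-p}(\bA^{(i)}_{n_i})\,2^{J-p}$ nonzeros per row; summing $\sum_{p=0}^{J}\alpha_{J-p}\,2^{J-p}\,\#\Lambda^{(i)}_{[p]}$ and using $\#\Lambda^{(i)}_{[p]} \leq 2^p$ gives $\leq \|\hat\alpha\|_{\ell_1}2^J$ per value of $n_i$, and there are at most $R$ relevant values, hence the factor $R\|\hat\alpha\|_{\ell_1}2^J$. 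The main obstacle I anticipate is bookkeeping the telescoping split cleanly for the two-index coefficient terms so that each operator difference is isolated in one coordinate while the companion factor's norm is correctly identified as $\|\bA^{(j)}_{n_j}\|$ (not $\|\tilde\bA^{(j)}_{n_j}\|$, which would cost an extra factor $2$ via \eqref{eq:approxfactor_unifest}) — this requires choosing the order of the telescoping so that the \emph{uncompressed} factor carries the companion scaling, and it is where the asymmetry in Lemma \ref{lem:tensor-scaling} (the min over which side the $\Sci{}^{-1}$ lands) must be used judiciously.
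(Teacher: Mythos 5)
Your proposal follows essentially the same route as the paper's proof: the telescoping split of $\bigotimes_i\bT^{(i)}_{n_i}-\bigotimes_i\tilde\bT^{(i)}_{n_i}$ (exploiting that at most two factors differ from the identity), reduction to the one-dimensional compression estimates \eqref{eq:diff_scompr} via Lemma \ref{lem:tensor-scaling}, the level-wise bound $\|\Restr{\Lambda^{(i)}_{[p]}}\pi^{(i)}(\bv)\|\le(1+2^{s})2^{-ps}\|\pi^{(i)}(\bv)\|_{\As}$ together with \eqref{betascale} for the a priori estimate, and the row-support count for \eqref{eq:full_opapprox_support}. One clarification: the ``main obstacle'' you anticipate is a non-issue, and your proposed resolution would not work anyway --- no ordering of the telescoping can make \emph{every} companion factor uncompressed (for each pair of non-identity positions, one of the two difference terms necessarily carries a compressed companion $\tilde\bT^{(j)}_{n_j}$), and the paper simply absorbs the resulting factor $2$ from \eqref{eq:approxfactor_unifest} into the definition \eqref{eq:maxsequences-0} of $C^{(i)}_\bA$, so the stated bound $e_J(\bv)$ already accommodates it.
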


\begin{proof}
Noting that
$$
\bigotimes_{i=1}^d \bB^{(i)} - \bigotimes_{i=1}^d \bC^{(i)} = \sum_{j=1}^d \bigotimes_{i=1}^{j-1}\bB^{(i)} \otimes(\bC^{(j)}
-\bB^{(j)})\bigotimes_{i=j+1}^d \bC^{(i)},
$$
again with the canonical interpretation for $j=1,d$,
 we can write
\begin{align*}
 &\bS^{-1}(\bT - \tbT)\bS^{-1} \\
\qquad &= \sum_{\kk{n}\in \KK{d}(\kk{R})} c_{\kk{n}}\bS^{-1}\Big(\bigotimes_{i=1}^d \bT^{(i)}_{n_i} - \bigotimes_{i=1}^d \tilde\bT^{(i)}_{n_i}\Big)\bS^{-1}\\
&=  \sum_{n_1\leq R, \,p\in \N} 
\bS^{-1} \,(\bT^{(1)}_{n_1} - \tilde{\bT}^{(1)}_{n_1,[p]} ) \Restr{\Lambda^{(1)}_{n_1,[p]}} 
    \otimes  \Bigl( \sum_{\kk{\check{n}}_1} c_{\kk{n}} \bT^{(2)}_{n_2} \otimes \cdots\otimes \bT^{(d)}_{n_d} \Bigr)\, \bS^{-1}\\[3mm]
& \quad + \cdots + 
 \sum_{n_d\leq R,\,p \in\N}\bS^{-1}\,  \Bigl( \sum_{\kk{\check{n}}_d} c_{\kk{n}} \tilde{\bT}^{(1)}_{n_1} \otimes \cdots\otimes \tilde{\bT}^{(d-1)}_{n_{d-1}} \Bigr) \otimes (\bT^{(d)}_{n_d} - \tilde{\bT}^{(d)}_{n_d,[p]} ) \Restr{\Lambda^{(d)}_{n_d,[p]}}\, \bS^{-1}.\qquad
\end{align*}
Using the triangle inequality and recalling that $\|\bv\|_{\ell_2(\nabla^d)} = \|\pi^{(i)}(\bv)\|_{\ell_2(\nabla)}$, we obtain
\begin{equation}
\label{eq:basic_approx_est}
 \norm{\bS^{-1} (\bT  - \tilde{\bT})\bS^{-1} \bv}  \leq
  \sum_{n_1,p} \varepsilon^{(1)}_{n_1,p}
      \norm{\Restr{\Lambda^{(1)}_{n_1,[p]}} \pi^{(1)}(\bu) } +\ldots 
    + \sum_{n_d,p} \varepsilon^{(d)}_{n_d,p} \norm{\Restr{\Lambda^{(d)}_{n_d,[p]}} \pi^{(d)}(\bv) }
\end{equation}
where
\begin{align*}
 \varepsilon^{(1)}_{n_1,p} &:= \Bignorm{\bS^{-1} \,(\bT^{(1)}_{n_1} - \tilde{\bT}^{(1)}_{n_1,[p]} ) \Restr{\Lambda^{(1)}_{n_1,[p]}} 
    \otimes  \Bigl( \sum_{\kk{\check{n}}_1} c_{\kk{n}} \bT^{(2)}_{n_2} \otimes \cdots\otimes \bT^{(d)}_{n_d} \Bigr)\, \bS^{-1}}  \\
    & \vdots \\
  \varepsilon^{(d)}_{n_d,p} &:= 
    \Bignorm{\bS^{-1}\,  \Bigl( \sum_{\kk{\check{n}}_d} c_{\kk{n}} \tilde{\bT}^{(1)}_{n_1} \otimes \cdots\otimes \tilde{\bT}^{(d-1)}_{n_{d-1}} \Bigr) \otimes (\bT^{(d)}_{n_d} - \tilde{\bT}^{(d)}_{n_d,[p]} ) \Restr{\Lambda^{(d)}_{n_d,[p]}}\, \bS^{-1}}  \,.
\end{align*}
To derive specific  bounds for the quantities  $\varepsilon^{(i)}_{n_i,p}$ we exploit  the structure of $\bT$ and how the global scaling operator
$\bS$ relates to the low-dimensional factors $\bT^{(i)}_{n_i}$. In fact, note that by \eqref{coefficients}, in each summand at most two tensor factors 
are different (up to scaling) from the identity so that we are in the situation of Lemma \ref{lem:tensor-scaling}.
Specifically, for $t=0$ we infer from \eqref{tensorscale} that
\begin{gather*} 
\bignorm{\bS^{-1}\bigl[ (\bT_2 - \mathbf{\tilde T}^{(1)}_2) \otimes \id_2 \otimes \cdots\otimes\id_d  \bigr] \bS^{-1} } \leq \bignorm{ \hatbS^{-1}_1(\bT_2 - \mathbf{\tilde T}^{(1)}_2)\hatbS^{-1}_1 }  \,, \\
\bignorm{\bS^{-1}\bigl[ (\bT_3 - \mathbf{\tilde T}^{(1)}_3) \otimes \bT_4\otimes\id_3 \otimes \cdots\otimes\id_d  \bigr] \bS^{-1} } \leq \bignorm{ (\bT_3 - \mathbf{\tilde T}^{(1)}_3)\hatbS^{-1}_1} \bignorm{\hatbS^{-1}_2 \bT_4 }  \,.
\end{gather*}
Using these estimates with suitable permutations of coordinates, we obtain
\beqn %
\label{epsest}
\begin{array}{c}
 \varepsilon^{(i)}_{1,p} = 0\,,\quad 
  \varepsilon^{(i)}_{2,p} \leq \abs{a_{ii}} \bignorm{\hatbS_i^{-1} (\bT_2 - \bT_{2,J-p}) \hatbS_i^{-1}}  \,,  \\[2mm]
  \varepsilon^{(i)}_{3,p} \leq \sum_{j\neq i} \abs{a_{ij}}   
      \max\bigl\{ \bignorm{\hatbS_j^{-1} \bT_4 } , 
           \bignorm{\hatbS_j^{-1} \mathbf{\tilde T}^{(j)}_{4} } \bigr\}  \, \bignorm{ (\bT_3 - \bT_{3,J-p}) \hatbS^{-1}_i} \,,
\end{array} %
\eeqn
as well as an analogous estimate for $\varepsilon^{(i)}_{4,p}$.
Now, recall that by \eqref{eq:approxfactor_unifest} $ \bignorm{\hatbS_j^{-1} \mathbf{\tilde T}^{(j)}_{4} }\le 2
 \bignorm{\hatbS_j^{-1} \bT_4 }$ which is used in the definition of \eref{eq:maxsequences-0}. We then
combine \eref{epsest} and \eref{eq:supps} with \eqref{eq:diff_scompr} to infer from\eqref{eq:basic_approx_est} 
that   %
\begin{multline*}     
  \quad\norm{\bA_c\bv - \mathbf{\tilde A}_{c,J}\bv} 
\leq 
    \sum_{i=1}^d C^{(i)}_\bA  \Bigl[  
     \sum_{p=0}^J \Bigl(\sum_{n=2}^R \beta_{J-p}(\bA^{(i)}_n)  \Bigr) 2^{-s(J-p)}   \norm{\Restr{\Lambda^{(i)}_{[p]}} \pi^{(i)}(\bv)}\nonumber \\
      +  \sum_{n=2}^R \norm{{\bA}^{(i)}_n} 
         \norm{\Restr{\Lambda^{(i)}_{[J+1]}} \pi^{(i)}(\bv)} \Bigr] = e_J(\bv) ,  \quad
 \end{multline*}
 which, in view of\eref{eJv}, confirms
  the bound \eqref{aposteriori}.
Moreover, on account of the choice of the sets $\Lambda^{(i)}_{[p]}$ and since $\pi^{(i)}(\bv)\in \cA^s$, we have $\norm{\Restr{\Lambda^{(i)}_{[p]}} \pi^{(i)}(\bv)} \leq
(1+2^{-s})2^{-s(p-1)} \norm{\pi^{(i)}(\bv)}_{\As}$, which gives 
\begin{align}  
\label{AJest}     
  \norm{\bA_c\bv - \mathbf{\tilde A}_{c,J}\bv}   &\leq 
    \sum_{i=1}^d C^{(i)}_\bA  \left\{  
     \sum_{p=0}^J  \Bigl(\sum_{n=2}^R \beta_{J-p}(\bA^{(i)}_n)  \Bigr) 2^{-s(J-p)} (1+2^{-s})2^{-s(p-1)} \norm{\pi^{(i)}(\bv)}_{\As}\right.\nonumber \\
    &\left.\quad  +  \sum_{n=1}^R  2^{-s J} \norm{{\bA}^{(i)}_n}  %
         \norm{\pi^{(i)}(\bv)}_{\As}  \right\}  \nonumber \\
      &\leq 2^{-sJ} (2^{s}+2) \sum_{i=1}^d C^{(i)}_\bA \Bigl( \sum_{n=2}^R \norm{\bA^{(i)}_n} \Bigr)   \norm{\pi^{(i)}(\bv)}_{\As} \,.
\end{align}
Finally, as in \cite{BD}, the estimate
$$   \#\supp_i \mathbf{\tilde A}_{c,J} \mathbf{v} 
    \leq  \sum_{n=1}^{R} (\hat\alpha^{(i)}_{J} 2^{J} 2^0
     + \hat\alpha^{(i)}_{J-1} 2^{J-1} 2^1 
     + \ldots + \hat\alpha^{(i)}_0 2^0 2^J)   $$
yields  \eqref{eq:full_opapprox_support}.
\end{proof}

The next  step is to infer compressibility of $\bA$ from compressibility of  $\bA_c$. 

\begin{proof}[Proof of Proposition \ref{prop:weta}]
Let for a given $\bv\in\ell_2(\nabla^d)$ the compressed operator $\tbT=\tbT_J(\bv)$ be defined by \eref{eq:sc_approx_operator}.
Using \eref{tildeswitch}
with $\bB=\bT - \tbT_J$, we obtain 
\begin{eqnarray}
\label{compare}
\norm{\tbS^{-1}(\bT - \tbT_J)\tbS^{-1}\bv} &\leq & (1+\delta)\norm{\bS^{-1}(\bT - \tbT)\bS^{-1}(\bS\tbS^{-1}\bv)} \nonumber\\
&= &(1+\delta)\norm{(\bA_c  - \tilde\bA_{c,J})
(\bS\tbS^{-1}\bv)}.
\end{eqnarray}
Since for $\tilde\bv := \bS\tbS^{-1}\bv$, by Remark \ref{rem:allthesame}, one has $\norm{\Restr{\Lambda^{(i)}_{[p]}} \pi^{(i)}(\tilde\bv)} \leq (1+\delta)\norm{\Restr{\Lambda^{(i)}_{[p]}}
\pi^{(i)}(\bv)}$, we conclude that for
\beqn
\label{tildecompress}
\tilde\bA_J := \tbS^{-1}\tbT_J\tbS^{-1}
\eeqn
one has
\beqn
\label{compare2}
\norm{\bA\bv - \tilde\bA_J\bv}\leq \tilde e_J(\bv) := (1+\delta)^2 e_J(\bv),
\eeqn
where $e_J(\bv)$ is the bound from \eref{aposteriori}, defined in \eref{eJv}. Thus, the same a-posteriori bounds as in the case of canonical scalings can be used to 
make $\norm{\bA\bv - \tilde\bA_J\bv}$ as small as necessary by increasing $J$.

As $\tilde\bA_J$ still has infinite rank, the next step is to replace $\tbS$ by $\tbS_n$, where $n$ depends on the support of $\bv$.
Specifically, given the target accuracy $\eta >0$, we fix 
\beqn
\label{defs}
J=J(\eta), \quad T=T(J(\eta);\bv), \quad n =m(\eta;\bv)=M(\zeta; T),\quad \zeta :=c(\bv)\eta,
\eeqn
defined in \eqref{Tv}, \eqref{etaJ}, \eqref{nsize}.
Invoking Lemma \ref{lem:finite-n}, \eref{alt-bound} with $\eta$ replaced by $\zeta$ yields
\begin{eqnarray*}
\norm{\bA \bv - \tbS^{-1}_n\tbT_J\tbS^{-1}_n\bv}&\leq & \norm{(\bA- \tilde\bA_{J}) \bv}
+ \norm{(\bA- \tilde\bA_{J})(\id - \tbS\tbS_n^{-1}) \bv}\nonumber\\
&& + \frac{\zeta}{1-\delta}  \|(\bA - \tilde\bA_J)(\tbS\tbS^{-1}_n\bv)\| +\frac{2\zeta}{1-\delta} \|\bA\|\,\|\bv\| \nonumber \\ %
&\leq & (1+\delta)^2\Big( e_J(\bv) + e_J\big((\id - \tbS\tbS_n^{-1}) \bv\big) +\frac{\zeta}{1-\delta}e_J(\tbS\tbS^{-1}_n\bv)
\Big) \nonumber\\
&& + \frac{2\zeta}{1-\delta} \|\bA\|\,\|\bv\|,
\end{eqnarray*}
where we have used \eref{compare2} in the last step.
Since for $T$ as in \eqref{defs}, $(\tbS\tbS^{-1}_n)_\nu\leq 1$, $\nu\in \Lambda_{T}$ and recalling \eqref{I-S},
 we conclude that for $n=m(\eta;\bv), J=J(\eta), \zeta =c(\bv)\eta$, defined by \eqref{etaJ}, \eqref{nsize},
\beqn
\label{compare4}
\norm{\bA \bv - \tbS^{-1}_n\tbT_J\tbS^{-1}_n\bv}\leq  (1+\delta)^2  e_J(\bv)\Big(1+ \frac{2\zeta}{1-\delta}\Big)
+  \frac{2\zeta}{1-\delta} \|\bA\|\,\|\bv\|.
\eeqn
Note that whenever $\zeta \leq (1-\delta)/2$, which holds by the condition $\eta\leq 2\norm{\bA}\norm{\bv}$ required prior to \eqref{etaJ}, 
we infer from the definition of $J=J(\eta)$ that
the first summand on the right hand side of \eqref{compare4} is bounded by $\eta/2$.
By definition of $\zeta$ in \eqref{defs} and \eqref{etaJ}, the second summand is also bounded by $\eta/2$,
which completes the proof of Proposition \ref{prop:weta}.
\end{proof}

For the following proof, we introduce additional auxiliary notation, complementing $\Sci{i}$ defined in \eqref{Si}: we denote by $\check{\bS}_i\colon \spl{2}(\nabla^{d-1})\to \spl{2}(\nabla^{d-1})$ the rescaling operator with the $i$-th coordinate omitted, that is, 
\begin{equation}\label{eq:checkSi}
\bigl(\check{\bS}_i \bv \bigr)_\nu  = \Bigl( \sum_{j < i} (\omi{i}{\nu_i})^2 + \sum_{j > i} (\omi{i}{\nu_{i-1}})^2\Bigr)^{\frac12} \,v_\nu  \qquad \text{for $\bv\in \R^{\nabla^{d-1}}$, $\nu\in \nabla^{d-1}$.}
\end{equation}

\begin{proof}[Proof of Theorem \ref{thm:apply}]
The first claim  \eqref{eq:approx-eta} of Theorem \ref{thm:apply} has already been established above.
To verify \eref{eq:tensor_apply_support} we make use of \eref{eq:full_opapprox_est} and the fact that
the support of $\bw_\eta$ is independent of the particular scaling and hence is given by
  $\tilde\bA_{J(\eta)} \bv$. Clearly, in view of \eref{AJest} and  \eref{etaJ}, one has
  $J(\eta)\leq \bar J(\eta)$ with
\begin{equation*}
\bar J(\eta):= {\rm argmin}\,\biggl\{J\in \N: 
2^{-sJ} (2^{s}+2) R \sum_{i=1}^d C^{(i)}_\bA \max_{n>1} \norm{\bA^{(i)}_n}\,   \norm{\pi^{(i)}(\bv)}_{\As} \leq \frac\eta{4(1+\delta)^2} \biggr\} \,,
\end{equation*}
which yields
\beqn
\label{barJeta}
 J(\eta) \leq \left\lceil \log_2\Big( \eta^{-1/s}\Big(4(1+\delta)^2  (2^{s}+2) R \sum_{i=1}^d C^{(i)}_\bA \max_{n>1} \norm{\bA^{(i)}_n}\,   \norm{\pi^{(i)}(\bv)}_{\As}
 \Big)^{\frac1s}\Big)
\right\rceil .
\eeqn
Inserting this into  \eref{eq:full_opapprox_support} yields \eref{eq:tensor_apply_support}.

To prove \eqref{sparsity-stable} we reduce the problem to the setting considered in \cite{BD}
by appropriate estimates for the rescaling operators $\bS^{-1}$. 
It suffices  to discuss the case $i = 1$. 
Note first that for $n$, $J$ as in \eqref{compare4}, as a consequence of \eqref{low-up}, for $\tilde\bv := \Sc \Sa{n}^{-1} \bv$ and $\nu_1 \in \supp_1 \bw_\eta$ we have
\begin{equation}\label{eq:piscalingchange}  \pi^{(1)}_{\nu_1} (\bw_\eta) = \pi^{(1)}_{\nu_1}(\Sa{n}^{-1} \tilde \bT_J \Sa{n}^{-1} \bv) \leq (1+\delta) \pi^{(1)}_{\nu_1}(\Sc^{-1} \tilde \bT_J \Sc^{-1} \tilde\bv)\,. 
\end{equation}
We exploit again the specific structure of $\bT$ given by
\eqref{coefficients}, which in particular means that $\bT^{(i)}_1=\id$ and in each summand  at most two factors are different from the identity.
Recalling the notation \eqref{eq:delentry} and $\check{\bS}^{-1}_1$ as introduced in \eqref{eq:checkSi}, we obtain in view of \eref{Btensor} and \eref{triangleeq}, 
\begin{multline}
\label{eq:opapprox_As_1}
  \pi^{(1)}_{\nu_1}(\Sc^{-1} \tilde \bT_J \Sc^{-1}\tilde\bv)  \leq  \pi^{(1)}_{\nu_1} \Bigl( \id \otimes \check{\bS}^{-1}_1 \sum_{\kk{n}\in{\KK{{d}}}(1,R,\ldots,R)} c_{\kk{n}} \bigotimes_{i=2}^d  \mathbf{\tilde T}^{(i)}_{n_i} \bS^{-1} \tilde\bv \Bigr) \\
   +   \sum_{n=2}^R  \pi^{(1)}_{\nu_1} \Bigl(\bS^{-1}\mathbf{\tilde T}^{(1)}_{n} \otimes \sum_{\kk{n}\in{\KK{{d}}}(1,R,\ldots,R)} c_{\kk{\check n}_1|_n} \bigotimes_{i=2}^d  \mathbf{\tilde T}^{(i)}_{n_i} \bS^{-1} \tilde\bv  \Bigr) =: D_{1,\nu_1} + \sum_{n=2}^R D_{n,\nu_1} \,.
\end{multline}
To bound $D_{1,\nu_1}$ we estimate  
\begin{align*}
 \biggnorm{\check{\bS}^{-1}_1 \sum_{\kk{n}\in{\KK{{d}}}(1,R,\ldots,R)} c_{\kk{n}} \bigotimes_{i=2}^d  \mathbf{\tilde T}^{(i)}_{n_i} \check{\bS}^{-1}_1}
&\leq    \sum_{\kk{n}\in{\KK{d}}(1,R,\ldots,R)} \abs{c_{\kk{n}}}
\bignorm{ \check{\bS}^{-1}_1 \bigotimes_{i=2}^d  \mathbf{\tilde T}^{(i)}_{n_i} \check{\bS}^{-1}_1} \,, \nonumber
\intertext{and recall from \eqref{coefficients} that at most two factors in the tensor products on the right hand side differ from the identity.
Invoking again Lemma \ref{lem:tensor-scaling}, and bearing
  \eqref{eq:approxfactor_unifest} in mind, we conclude that}
  \biggnorm{\check{\bS}^{-1}_1 \sum_{\kk{n}\in{\KK{{d}}}(1,R,\ldots,R)} c_{\kk{n}} \bigotimes_{i=2}^d  \mathbf{\tilde T}^{(i)}_{n_i} \check{\bS}^{-1}_1} &\leq  
4 \max_{i,n_i}\norm{\bA^{(i)}_{n_i}}^2
  \sum_{\kk{n}\in{\KK{{d}}}(1,R,\ldots,R)} \abs{c_{\kk{n}}}  \leq
 \check{C}^{(1)}_\bA \,,
 \end{align*}
where in the last step we have used that $(a_{ij})$ is diagonally dominant. Hence, by \eqref{Btensor} and since the entries of the diagonal operators $(\id\otimes\check{\bS}_1 ) \bS^{-1}$ are bounded by one, one has
\begin{equation*}  
 D_{1,\nu_1} \leq  \check{C}^{(1)}_\bA \pi^{(1)}_{\nu_1} \bigl((\id \otimes\check{\bS}_1 )\bS^{-1}\tilde\bv\bigr)  
 \leq    \check{C}^{(1)}_\bA  \pi^{(1)}_{\nu_1} (\tilde\bv) \,.
\end{equation*}
Regarding $D_{n,\nu_1}$ for $n>1$, we have the estimates
\begin{equation*}
 D_{n,\nu_1} \leq \begin{cases}
  \displaystyle a_{11} \,\pi^{(1)}_{\nu_1} \bigl([ \tilde \bA^{(1)}_{2}  \otimes \id\otimes\cdots\otimes\id] (\Sc_1 \Sc^{-1} \tilde\bv)\bigr),  & n =2\, ,\\[6pt]
  \displaystyle \sum_{j>1} \abs{a_{1j}} \,\pi^{(1)}_{\nu_1} \bigl( [ \tilde\bA^{(1)}_{3}\otimes\id\otimes\cdots\otimes\id \otimes \tilde\bA^{(j)}_{4} \otimes\id\otimes\cdots\otimes\id ] (\Sc_1 \Sc^{-1} \tilde\bv) \bigr) , & n =3\,, \\[9pt]
  \displaystyle \sum_{j>1} \abs{a_{1j}}\, \pi^{(1)}_{\nu_1} \bigl( [\tilde\bA^{(1)}_4 
  \otimes\id\otimes\cdots\otimes\id \otimes  \tilde\bA^{(j)}_{3} \otimes\id\otimes\cdots\otimes\id] (\Sc_j \Sc^{-1} \tilde\bv)  \bigr) , & n =4 \,.
 \end{cases}
\end{equation*}
By \eqref{Btensor}, we obtain for $j=2,\ldots,d$,
\begin{multline*}
\pi^{(1)}_{\nu_1} \bigl( [ \tilde\bA^{(1)}_{3}\otimes\id\otimes\cdots\otimes\id \otimes \tilde\bA^{(j)}_{4} \otimes\id\otimes\cdots\otimes\id ] (\Sc_1 \Sc^{-1} \tilde\bv) \bigr)  \\
  \leq \norm{\tilde\bA^{(j)}_{4} }  \,
  \pi^{(1)}_{\nu_1} \bigl( [ \tilde\bA^{(1)}_{3}\otimes\id\otimes\cdots\otimes\id ] (\Sc_1 \Sc^{-1} \tilde \bv) \bigr) 
\end{multline*}
as well as
\begin{multline*}
  \pi^{(1)}_{\nu_1} \bigl( [\tilde\bA^{(1)}_4 
   \otimes\id\otimes\cdots\otimes\id \otimes  \tilde\bA^{(j)}_{3} \otimes\id\otimes\cdots\otimes\id] (\Sc_j \Sc^{-1} \tilde\bv)  \bigr) \\
 \leq  \norm{ \tilde\bA^{(j)}_{3}} \, \pi^{(1)}_{\nu_1} \bigl( [\tilde\bA^{(1)}_4 
      \otimes\id\otimes\cdots\otimes\id] (\id \otimes \check{\bS}_1) \Sc^{-1} \tilde \bv  \bigr).
\end{multline*}

Note next that the entries of the diagonal operators $\Sc_1 \bS^{-1}$ are bounded by one as well, and therefore $\pi^{(1)}_\nu(\Sc_1 \bS^{-1}\tilde\bv) \leq \pi^{(1)}_\nu(\tilde\bv)$ for all $\nu\in\nabla$.
We can thus follow the lines of the proof of \cite[Theorem 8]{BD} to infer that, in particular,
\begin{equation}
\label{eq:opapprox_compAs}
\norm{\pi^{(1)}\big((\tilde{\bA}^{(1)}_n \otimes \id\otimes \cdots\otimes \id) (\Sc_1 \bS^{-1}\tilde\bv)\big)}_\As
   \leq  \frac{2^{3s+2}}{2^s - 1} \norm{\hat{\alpha}}_{\spl{1}}^s \bigl( 2 %
   \norm{\bA^{(1)}_n}  \bigr)
   \norm{\pi^{(1)}( \tilde\bv)}_\As  
\end{equation}
for $n=2,3,4$, where we have made use of \eqref{eq:approxfactor_unifest}. Moreover, \eqref{eq:opapprox_compAs} holds with $\Sc_1 \bS^{-1}\tilde\bv$ replaced by $(\id \otimes\check{\bS}_1 )\bS^{-1}\tilde\bv$ as well. For $n=2$, $2 \norm{\bA^{(1)}_2}$ appears as a single factor
in the bound for $D_{2,\nu_1}$, while for $n=3,4$ two such factors arise.  Recalling the definition of $C^{(i)}_\bA$ in \eqref{eq:maxsequences-0}, we cover all cases by the bound
\begin{equation} \label{eq:Dnest}
  \norm{D_{n,\cdot}}_\As \leq  C^{(i)}_\bA \frac{2^{3s+2}}{2^s - 1} \norm{\hat{\alpha}}_{\spl{1}}^s \max_{m> 1}
  \norm{\bA^{(1)}_m}  
   \norm{\pi^{(1)}( \tilde\bv)}_\As    \,,\quad n=2,3,4 \,.
\end{equation}
 We now use \eref{eq:opapprox_As_1} to combine these estimates, obtaining
$$
   \norm{\pi^{(1)}_{\nu_1} (\Sc^{-1} \tilde \bT_J \Sc^{-1} \bv)}_\As \leq 
      R^s\sum_{n=1}^R \norm{D_{n,\cdot}}_\As .
$$
Finally, $\norm{\pi^{(1)}( \tilde\bv)} \leq (1+\delta) \norm{\pi^{(1)}( \bv)}$ by Remark \ref{rem:allthesame}, which we use in \eqref{eq:Dnest}, and with \eqref{eq:piscalingchange} we arrive at the desired bound for $i=1$.
Analogous bounds for $i=2,\ldots,d$, are obtained in the same way, confirming \eqref{sparsity-stable}.

The rank bound \eqref{eq:tensor_apply_ranks} follows from \cite[Theorem 8, (99)]{BD}, taking into account
that the ranks $\rank_\alpha(\bv)$ of $\tbS^{-1}_{m(\eta;\bv)}\bv$ can be bounded by $m(\eta;\bv)\rank_\alpha$, $\alpha
\in \cD_d$, and that the application of $\tbS^{-1}_{m(\eta;\bv)}$ to $\tbT_{J(\eta)}\tbS^{-1}_{m(\eta;\bv)}\bv$ causes another
multiplication by  $m(\eta;\bv)$. Likewise, the estimate \eqref{eq:flops} of the computational complexity follows from
the previous observation combined with \eqref{eq:tensor_apply_support} and \cite[Remark 12]{BD}.

To prove \eqref{metav}, we need to estimate $L(\tbT_{J(\eta)}\tilde\bv)$, where $\tilde\bv := \tbS^{-1}_{m(\eta;\bv)} \bv$. Note that $L(\tilde\bv) = L(\bv)$, and by the level decay property of the approximations of lower-dimensional component operators, we thus obtain $L(\tbT_{J(\eta)}\tilde\bv) \leq L(\bv) + C_1(\bA,s) J(\eta)$. From \eqref{barJeta} we know that 
$$
J(\eta)\leq \frac 1s\Big( \abs{\log_2\eta} + \ln \big(C_2(\bA,s)\sum_{i=1}^d \norm{\pi^{(i)}(\bv)}_\As\big)\Big).
$$
Moreover, we have 
$$\om{\nu} \leq \sqrt{d} \max_{i=1,\ldots,d} \omi{i}{\nu_i} \leq c \sqrt{d} \max_{i=1,\ldots,d} 2^{\abs{\nu_i}}\,, $$ 
where $c = \max_{\nu\in\nabla^d} \max_{i} 2^{-\abs{\nu_i}}\,\omi{i}{\nu_i}$. Hence, for an index $\nu\in \nabla^d$ to belong
to $\Lambda_{T }$ as in \eqref{LambdaT}, since $\omega_\text{min} \geq \sqrt{d}\, \hatomin$, it is sufficient that 
$   c^2 \max_{i} 2^{2\abs{\nu_i}} \leq   T  (\hatomin)^2 $.
Consequently, $\Lambda_T$ contains $\tilde\bT_{J(\eta)}\tilde\bv$ if 
$$L(\bv) + C_1(\bA,s) J(\eta) \leq  \frac12 \log_2 T + \log_2 c^{-1} \hatomin .$$
The assertion \eqref{metav} now follows from \eqref{nsize}, which in turn uses \eqref{Nsc}.
In the latter, it thus remains to estimate $\abs{\ln (\min \{ \delta/2,c(\bv)\eta  \})}$, where $c(\bv)\eta = \frac12(1-\delta) \min\{1, \eta/(2\norm{\bA}\norm{\bv})\}$. Hence $\abs{\ln c(\bv)\eta} \leq C_3(\bA, \delta) +  \abs{\ln \eta} + \max\{ 0, \ln \norm{\bv} \} $, where $\norm{\bv} = \norm{\pi^{(i)}(\bv)} \leq \norm{\pi^{(i)}(\bv)}_\As$ for $i=1,\ldots,d$, providing \eqref{metav}. 
\end{proof}

\subsection{Control of Rank Growth}

The ranks arising in the procedure for applying operators introduced in the previous section depend on the range of values that the approximate scaling sequence needs to cover. In the case of wavelet bases, this is directly related to the maximum currently active wavelet level.

The following lemma gives a bound for the maximum possible active level  that can occur in the output of $\coarsen(\bv;\varepsilon)$. It depends both on some additional higher regularity (expressed by a bound on the quantities $\norm{\Sci{i}^t \pi^{(i)}(\bv))}$) and on the sizes of the lower-dimensional supports $\supp_i(\bv)$. This bound will subsequently be used in conjunction with Theorem \ref{thm:apply}(v).

\begin{lemma}
\label{lmm:lvlbound}
For given $\bv\in\spl{2}(\nabla^d)$, we consider $\mathbf{p} := (\pi^{(i)}_{\nu}(\bv))_{(i,\nu)}$ as a vector on $\Ical := \{1,\ldots,d\}\otimes \nabla$. 
Assume that
$$ 
   \#\supp \mathbf{p} = \sum_{i=1}^d \#  \supp_i(\bv) < \infty
$$ 
and that for some $t>0$ one has $\norm{\Sci{i}^t \pi^{(i)}(\bv))} <\infty$ for all $i=1,\ldots,d$. Let $\varepsilon > 0$ and let $\mathbf{p}_\varepsilon$ be the vector of minimal support in $\Ical$ such that $\norm{\mathbf{p} - \mathbf{p}_\varepsilon}_{\spl{2}(\Ical)}\leq\varepsilon$. 
Let $C_\omega^{(i)} := \sup_{\mu\in\nabla} \omi{i}{\mu}^{-t}\, 2^{t\abs{\mu}} $.
Then for all $(i,\nu)\in \supp \mathbf{p}_\varepsilon$ one has
$$   
\abs{\nu} \leq t^{-1} \log_2 \Bigl[ \varepsilon^{-1} \,C^{(i)}_\omega \,\norm{\Sci{i}^t \pi^{(i)}(\bv))} \sqrt{\#\supp \mathbf{p}}\Bigr] .
$$
\end{lemma}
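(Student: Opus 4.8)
The plan is to combine an upper bound on the contraction entries $\pi^{(i)}_\nu(\bv)$ for large levels $\abs\nu$, coming from the coordinatewise regularity of $\bv$, with a lower bound on those entries of $\mathbf{p}=(\pi^{(i)}_\nu(\bv))_{(i,\nu)}$ that survive in the minimal-support approximation $\mathbf{p}_\varepsilon$.

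For the upper bound I would observe that, by the definition of the low-dimensional scaling $\Sci{i}^{t}$ in \eqref{eq:tensor_rescaling} and since each $\pi^{(i)}_\nu(\bv)\ge 0$,
\[
   \omi{i}{\nu}^{t}\,\pi^{(i)}_\nu(\bv) \;\le\; \norm{\Sci{i}^{t}\pi^{(i)}(\bv)}
   \qquad\text{for every }\nu\in\nabla ,
\]
because the left-hand side is a single entry of the sequence whose $\ell_2$-norm appears on the right. Dividing by $\omi{i}{\nu}^{t}$ and using $\omi{i}{\nu}^{-t}\le C_\omega^{(i)}\,2^{-t\abs\nu}$ — which holds by the very definition of $C_\omega^{(i)}$, this supremum being finite on account of \eqref{omiscale} — gives
\[
   \pi^{(i)}_\nu(\bv) \;\le\; C_\omega^{(i)}\,2^{-t\abs\nu}\,\norm{\Sci{i}^{t}\pi^{(i)}(\bv)} .
\]

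For the lower bound I would use that $\mathbf{p}_\varepsilon$ is the greedy minimal-support approximant, i.e.\ $\mathbf{p}_\varepsilon=\Restr{\Lambda}\mathbf{p}$ with $\Lambda$ indexing the $N$ entries of $\mathbf{p}$ of largest value, where $N:=\min\{k:\sigma_k(\mathbf{p})\le\varepsilon\}$ (this is precisely the thresholding performed by $\coarsen$). If $N=0$ the assertion is vacuous, so assume $N\ge 1$; then by minimality of $N$ one has $\sigma_{N-1}(\mathbf{p})>\varepsilon$. Writing $p^*_1\ge p^*_2\ge\cdots$ for the non-increasing rearrangement of the (finitely many) nonzero entries of $\mathbf{p}$, the squared error $\sigma_{N-1}(\mathbf{p})^2=\sum_{k\ge N}(p^*_k)^2$ is a sum of at most $\#\supp\mathbf{p}$ nonnegative terms, each bounded by $(p^*_N)^2$; hence $\varepsilon^2<\#\supp\mathbf{p}\cdot(p^*_N)^2$, so the smallest retained value satisfies $p^*_N>\varepsilon\,(\#\supp\mathbf{p})^{-1/2}$. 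Consequently every $(i,\nu)\in\supp\mathbf{p}_\varepsilon$ has $\pi^{(i)}_\nu(\bv)\ge p^*_N>\varepsilon\,(\#\supp\mathbf{p})^{-1/2}$.

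Combining the two bounds yields, for each $(i,\nu)\in\supp\mathbf{p}_\varepsilon$,
\[
   \varepsilon\,(\#\supp\mathbf{p})^{-1/2} \;<\; C_\omega^{(i)}\,2^{-t\abs\nu}\,\norm{\Sci{i}^{t}\pi^{(i)}(\bv)} ,
\]
and taking $\log_2$, rearranging, and dividing by $t$ gives the claimed estimate. The only genuinely delicate point is the lower bound: it relies on reading $\mathbf{p}_\varepsilon$ in its greedy (largest-entries) form and on the elementary counting step that bounds the number of discarded entries by $\#\supp\mathbf{p}$ — this is exactly where the factor $\sqrt{\#\supp\mathbf{p}}$ in the statement originates.
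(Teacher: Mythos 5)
Your proposal is correct and follows essentially the same route as the paper's proof: the entrywise bound $\pi^{(i)}_\nu(\bv)\leq C_\omega^{(i)}2^{-t\abs{\nu}}\norm{\Sci{i}^t\pi^{(i)}(\bv)}$ combined with the observation that any entry retained by the minimal-support approximant must exceed $\varepsilon/\sqrt{\#\supp\mathbf{p}}$, since the discarded tail consists of at most $\#\supp\mathbf{p}$ terms each dominated by the smallest retained one. The paper merely phrases the second step as a contradiction (dropping a too-small retained entry would still meet the tolerance, violating minimality of support), whereas you argue it directly via $\sigma_{N-1}(\mathbf{p})>\varepsilon$; the content is identical.
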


\begin{proof}
Let $C_i:= C_\omega^{(i)} \norm{\hatbS^t_i \pi^{(i)}(\bv))}$ and $N:= \#\supp \mathbf{p}$.
Suppose that $(i,\mu) \in\supp \mathbf{p}_\varepsilon$ and $\abs{\mu} > t^{-1} (\log_2 C_i\sqrt{N} - \log_2 \varepsilon)$.
It follows that
$$
\abs{\pi^{(i)}_\mu(\bv)} \leq \norm{\hatbS^t_i \pi^{(i)}(\bv))} \,\omi{i}{\mu}^{-t} \leq C_i 2^{-t \abs{\mu}} < C_i (C_i\sqrt{N})^{-1} \varepsilon = \frac{\varepsilon}{\sqrt{N}}\,.$$
Let $\hat\Lambda := \supp \mathbf{p} \setminus \supp\mathbf{p}_\varepsilon$. Then necessarily, $\abs{\pi^{(j)}_\nu(\bv)}\leq \abs{\pi^{(i)}_\mu(\bv)}$
holds  for all $(j,\nu) \in\hat\Lambda$ and thus
$$   \sum_{(j,\nu)\in\hat\Lambda\cup \{ (i, \mu)\}} \abs{\pi^{(j)}_\nu(\bv)}^2  <   N \frac{\varepsilon^2}{N} \leq \varepsilon^2\,,    
$$
  contradicting the definition of $\mathbf{p}_\varepsilon$.
\end{proof}

We shall apply the above lemma to the result of line \ref{alg:tensor_solve_innerrecomp} in Algorithm \ref{alg:tensor_opeq_solve}. 
There the value of $\varepsilon$ in the lemma corresponds to $\eta_{k,j} = \rho^{j+1}2^{-k} \delta$ and $\mathbf{p}_\varepsilon$ in the lemma is the result of $\coarsen$ in the algorithm.
We note that, as a consequence of \eqref{dim-sorting} and \eqref{dim-sorting-optimality}, this routine indeed yields $\mathbf{p}_\varepsilon$ with precisely the properties required in Lemma \ref{lmm:lvlbound}. In order to obtain the desired bounds for the maximum active wavelet levels in our iterates $\bw_{k,j}$, we   still need suitable bounds for $\norm{\Sci{i}^t \pi^{(i)}(\bw_{k,j})}$.

\subsection{Control of Higher Regularity}

\begin{lemma}\label{lmm:crs_stab}
For any $t>0$ and $\eta >0$, we have
\begin{equation}\label{eq:crs_stab_i}
   \norm{\hatbS^t_i \pi^{(i)}( \hatCctr{\eta} \bv) } \leq \norm{ \hatbS^t_i \pi^{(i)}(\bv)}\,,\quad
     \norm{\hatbS^t_i \pi^{(i)}( \hatPsvd{\eta} \bv) } \leq \norm{ \hatbS^t_i \pi^{(i)}(\bv)}\,,\quad i = 1,\ldots, d \,,
\end{equation}
for any $\bv \in \e2$.
\end{lemma}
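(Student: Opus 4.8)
The plan is to reduce the two bounds in \eqref{eq:crs_stab_i} to a single elementary fact, namely that each of the reduction operators $\hatCctr{\eta}$ and $\hatPsvd{\eta}$ does not increase any contraction pointwise: for $i=1,\ldots,d$ and all $\nu\in\nabla$,
\begin{equation}
\label{eq:crs_stab_contr}
 \pi^{(i)}_\nu(\hatCctr{\eta}\bv)\leq \pi^{(i)}_\nu(\bv)
 \qquad\text{and}\qquad
 \pi^{(i)}_\nu(\hatPsvd{\eta}\bv)\leq \pi^{(i)}_\nu(\bv)\,.
\end{equation}
Once \eqref{eq:crs_stab_contr} is available, the assertion follows immediately from the monotonicity of the weighted $\spl{2}(\nabla)$-norm associated with the diagonal, nonnegative operator $\hatbS^t_i = \Sci{i}^t$, whose multipliers are $\omi{i}{\nu}^t \geq 0$ (see \eqref{eq:tensor_rescaling}): for nonnegative sequences $\mathbf{a},\mathbf{b}$ on $\nabla$ with $a_\nu\leq b_\nu$ for all $\nu$ one has $\norm{\hatbS^t_i\mathbf{a}}^2=\sum_{\nu}\omi{i}{\nu}^{2t}a_\nu^2\leq\sum_{\nu}\omi{i}{\nu}^{2t}b_\nu^2=\norm{\hatbS^t_i\mathbf{b}}^2$; one then applies this with $\mathbf{b}=\pi^{(i)}(\bv)$ and $\mathbf{a}=\pi^{(i)}(\hatCctr{\eta}\bv)$, respectively $\mathbf{a}=\pi^{(i)}(\hatPsvd{\eta}\bv)$, invoking \eqref{eq:crs_stab_contr}.

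It thus remains to establish \eqref{eq:crs_stab_contr}, which I would do separately for the two operators. The bound for $\hatPsvd{\eta}$ is exactly \eqref{Ppieta}, so nothing has to be done there. For $\hatCctr{\eta}$, the point is that $\hatCctr{\eta}\bv=\Restr{\Lambda}\bv$ is obtained from $\bv$ merely by replacing some entries by zero; reading off the definition \eqref{contractions}, $\pi^{(i)}_\nu(\Restr{\Lambda}\bv)^2$ is then a sum of a subset of the nonnegative summands whose total equals $\pi^{(i)}_\nu(\bv)^2$, so $\pi^{(i)}_\nu(\Restr{\Lambda}\bv)\leq\pi^{(i)}_\nu(\bv)$. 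Combining the two halves with the weighted-norm monotonicity above completes the argument.

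I do not anticipate any genuine obstacle here: the recompression half of \eqref{eq:crs_stab_contr} is quoted verbatim from \eqref{Ppieta}, and the coarsening half is just the elementary observation that zeroing out entries cannot enlarge an $\spl{2}$-norm taken over a coordinate slice (in particular the product structure of the coarsening set plays no role for this lemma). The only point that calls for a moment's care is bookkeeping — making sure that on both sides of \eqref{eq:crs_stab_contr} the contraction is taken over the same remaining $d-1$ coordinates — which is immediate from \eqref{contractions}.
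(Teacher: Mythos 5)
Your proposal is correct and follows the same route as the paper: the coarsening bound is the elementary observation that zeroing out entries cannot increase any contraction pointwise, and the recompression bound is exactly the componentwise estimate \eqref{Ppieta}, after which the weighted-norm monotonicity is immediate. You have merely spelled out the details that the paper's one-line proof leaves implicit.
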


\begin{proof}
The first inequality in \eqref{eq:crs_stab_i} is clear, the second is an immediate consequence of the componentwise estimate \eqref{Ppieta} for $\pi^{(i)}(\bv)$.
\end{proof}

We now consider the evolution of $\norm{\Sci{i}^t \pi^{(i)}( \bw_{k,j})}$, with $\bw_{k,j}$ defined in Algorithm \ref{alg:tensor_opeq_solve}. 
Note that by our excess regularity assumptions on $\bA$ and $\bbf$, we know that $\max_i\norm{\Sc_{i}^t  \bbf} < \infty$ as well as 
\begin{equation}\label{excessxi}
    \xi := \max_{\substack{i=1,\ldots,d\\ n=2,3,4}} \norm{\Sci{i}^t \bA^{(i)}_{n} \Sci{i}^{-t}}  <  \infty \,.
\end{equation}

\begin{proposition}
Under the assumptions of Theorem \ref{thm:complexity}, the iterates $\bw_{k,j}$ of Algorithm \ref{alg:tensor_opeq_solve} satisfy
\begin{equation}
\label{w_reg_bound}
   \norm{\Sci{i}^{t} \pi^{(i)}(\bw_{k,j})} \leq \frac{\gamma^{kI + j+1} - 1}{\gamma - 1} \bar C_\bbf \,,
\end{equation}
where
$$
\gamma := 1 + \omega (1+\delta)^2 \Bigl[ \check C^{(i)}_\bA + C^{(i)}_\bA R\, \bigl(\xi + C_t \norm{\bA^{(i)}_n} \bigr)    \Bigr]  \,,\quad
\bar C_\bbf := \omega \,C^{\rm reg}_\bbf  \max_{i} \norm{ \Sc_i^t \bbf } \,.
$$
\end{proposition}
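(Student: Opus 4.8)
The plan is to reduce \eqref{w_reg_bound} to a single one-step inequality of the form $\norm{\Sci{i}^{t}\pi^{(i)}(\bw_{k,j+1})}\le\gamma\,\norm{\Sci{i}^{t}\pi^{(i)}(\bw_{k,j})}+\bar C_\bbf$, plus the observation that the passage from one outer stage to the next cannot increase this quantity, and then to unfold the recursion starting from $\bw_{0,0}=\bu_0=0$. (All iterates are finitely supported, since $\rhs$, $\apply$, $\coarsen$ and $\recompress$ produce finitely supported outputs, so every quantity below is finite.) Three elementary facts are used throughout: since $\Sci{i}^{t}$ has nonnegative entries and $\pi^{(i)}_\nu(\mathbf a+\mathbf b)\le\pi^{(i)}_\nu(\mathbf a)+\pi^{(i)}_\nu(\mathbf b)$ by \eqref{triangleeq}, one has the triangle inequality $\norm{\Sci{i}^{t}\pi^{(i)}(\mathbf a+\mathbf b)}\le\norm{\Sci{i}^{t}\pi^{(i)}(\mathbf a)}+\norm{\Sci{i}^{t}\pi^{(i)}(\mathbf b)}$; one has the identity $\norm{\Sci{i}^{t}\pi^{(i)}(\mathbf v)}=\norm{\Sc^{t}_i\mathbf v}$, because $\pi^{(i)}(\Sc^{t}_i\mathbf v)=\Sci{i}^{t}\pi^{(i)}(\mathbf v)$ entrywise; and by Lemma \ref{lmm:crs_stab}, neither $\coarsen$ nor $\recompress$ increases $\norm{\Sci{i}^{t}\pi^{(i)}(\cdot)}$. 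In view of the last fact and lines \ref{alg:tensor_solve_innerrecomp} and \ref{alg:cddtwo_coarsen_line} of Algorithm \ref{alg:tensor_opeq_solve}, it suffices to bound $\norm{\Sci{i}^{t}\pi^{(i)}(\bw_{k,j}-\omega\mathbf r_{k,j})}$, and to note that $\norm{\Sci{i}^{t}\pi^{(i)}(\bw_{k+1,0})}=\norm{\Sci{i}^{t}\pi^{(i)}(\bu_{k+1})}\le\norm{\Sci{i}^{t}\pi^{(i)}(\bw_{k,j^{*}})}$, where $\bw_{k,j^{*}}$, $1\le j^{*}\le I$, is the iterate from which $\bu_{k+1}$ is formed. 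Writing $\bw_{k,j}-\omega\mathbf r_{k,j}=\bw_{k,j}-\omega\,\apply(\bw_{k,j};\tfrac12\eta_{k,j})+\omega\,\rhs(\tfrac12\eta_{k,j})$ and applying the triangle inequality gives
\[
  \norm{\Sci{i}^{t}\pi^{(i)}(\bw_{k,j}-\omega\mathbf r_{k,j})}\le\norm{\Sci{i}^{t}\pi^{(i)}(\bw_{k,j})}+\omega\,\norm{\Sci{i}^{t}\pi^{(i)}(\apply(\bw_{k,j};\tfrac12\eta_{k,j}))}+\omega\,\norm{\Sc^{t}_i\,\rhs(\tfrac12\eta_{k,j})},
\]
and the last term is $\le\omega\,C^{\rm reg}_\bbf\norm{\Sc^{t}_i\bbf}\le\bar C_\bbf$ by Assumptions \ref{ass:rhs}, in particular \eqref{eq:rhsreg}. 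So everything hinges on the $\apply$-stability estimate
\[
  \norm{\Sci{i}^{t}\pi^{(i)}(\apply(\mathbf v;\eta))}\le(1+\delta)^{2}\bigl[\check C^{(i)}_\bA+C^{(i)}_\bA R(\xi+C_t\norm{\bA^{(i)}_n})\bigr]\,\norm{\Sci{i}^{t}\pi^{(i)}(\mathbf v)},
\]
uniformly in $\eta$ and in finitely supported $\mathbf v$; combined with the preceding display this yields the one-step recursion with exactly the $\gamma$ and $\bar C_\bbf$ of the statement.

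To prove the $\apply$-stability estimate I would follow the proof of Theorem \ref{thm:apply}(ii), inserting the weight $\Sci{i}^{t}$. For fixed $\mathbf v$ and $\eta$ we have $\apply(\mathbf v;\eta)=\tbS^{-1}_{m}\tbT_{J}\tbS^{-1}_{m}\mathbf v$ with $m=m(\eta;\mathbf v)$, $J=J(\eta)$, so by the identity above it is equivalent to bound $\norm{\Sc^{t}_i\tbS^{-1}_{m}\tbT_{J}\tbS^{-1}_{m}\mathbf v}$; since $\Sc^{t}_i$ and $\tbS^{-1}_{m}$ are both diagonal they commute, whence this equals $\norm{\tbS^{-1}_{m}(\Sc^{t}_i\tbT_{J}\Sc^{-t}_i)\tbS^{-1}_{m}\Sc^{t}_i\mathbf v}\le\norm{\tbS^{-1}_{m}(\Sc^{t}_i\tbT_{J}\Sc^{-t}_i)\tbS^{-1}_{m}}\,\norm{\Sc^{t}_i\mathbf v}$, and $\norm{\Sc^{t}_i\mathbf v}=\norm{\Sci{i}^{t}\pi^{(i)}(\mathbf v)}$. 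Using the explicit form \eqref{coefficients} of $\bT$ — in which every summand has at most two factors different from the identity — I expand $\Sc^{t}_i\tbT_{J}\Sc^{-t}_i$ as in \eqref{eq:opapprox_As_1}: the summands with the identity in the $i$-th slot contribute, via Lemma \ref{lem:tensor-scaling} applied to the two remaining nontrivial factors and diagonal dominance of $(a_{ij})$, precisely the bound governed by $\check C^{(i)}_\bA$ — here $\Sci{i}^{t}$ passes through the identity unchanged, so this is verbatim the estimate for $D_{1,\nu_1}$ in that proof; for the summands with a nontrivial $i$-th factor $\tilde\bT^{(i)}_n$, $n\in\{2,3,4\}$, one rewrites $\Sci{i}^{t}\tilde\bT^{(i)}_n\Sci{i}^{-t}$ in terms of $\Sci{i}^{t}\tilde\bA^{(i)}_n\Sci{i}^{-t}$, the growing one-sided scalings $\hatbS_i$ carried by $\tilde\bT^{(i)}_n$ (and an $\hatbS_j$ carried by the companion factor $\tilde\bT^{(j)}_4$ or $\tilde\bT^{(j)}_3$) being absorbed into the two surrounding copies of $\tbS^{-1}_{m}$ through $\norm{\hatbS_j\tbS^{-1}_{m}}\le1+\delta$ (uniform in $m$), which accounts for the overall factor $(1+\delta)^{2}$. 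Here one estimates
\[
  \norm{\Sci{i}^{t}\tilde\bA^{(i)}_n\Sci{i}^{-t}}\le\norm{\Sci{i}^{t}\bA^{(i)}_n\Sci{i}^{-t}}+\norm{\Sci{i}^{t}(\bA^{(i)}_n-\tilde\bA^{(i)}_n)\Sci{i}^{-t}}\le\xi+C_t\norm{\bA^{(i)}_n},
\]
where $\xi$ is the excess-regularity constant \eqref{excessxi} (note $\Sci{i}^{t}\bA^{(i)}_n\Sci{i}^{-t}$ is, up to a shift of exponent, one of the operators in \eqref{eq:coordwise_regularity}), and the second term is controlled by summing the Sobolev-stability bounds of Definition \ref{def:sobolev_compr} over the dyadic blocks of the adaptive compression \eqref{tildeTni}, the sum being dominated by $\norm{\beta(\bA^{(i)}_n)}_{\ell_1}$ independently of $J$ by the normalization \eqref{betascale}, which gives the uniformity in $\eta$. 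Summing the at most $R$ such contributions and noting that all remaining coordinatewise scalings in the tensor factors have entries $\le1$ yields the $\apply$-stability estimate.

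It then remains to close the induction. With $\gamma$ and $\bar C_\bbf$ as above, the one-step bound reads $\norm{\Sci{i}^{t}\pi^{(i)}(\bw_{k,j+1})}\le\gamma\norm{\Sci{i}^{t}\pi^{(i)}(\bw_{k,j})}+\bar C_\bbf$ for the inner steps, and $\norm{\Sci{i}^{t}\pi^{(i)}(\bw_{k+1,0})}\le\norm{\Sci{i}^{t}\pi^{(i)}(\bw_{k,j^{*}})}$ with $1\le j^{*}\le I$ for the stage transition. Since $\gamma>1$ and $\bar C_\bbf\ge0$, the function $m\mapsto\tfrac{\gamma^{m+1}-1}{\gamma-1}\bar C_\bbf$ is nondecreasing, so starting from $\norm{\Sci{i}^{t}\pi^{(i)}(\bw_{0,0})}=0$ an induction — within a stage on $j=0,\dots,I$ with exponent $kI+j$, and across stages from $(k,j^{*})$ to $(k+1,0)$ using $kI+j^{*}+1\le(k+1)I+1$ — establishes \eqref{w_reg_bound}.

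The main obstacle is the $\apply$-stability estimate of the second paragraph: one has to verify that threading the weight $\Sci{i}^{t}$ through the analysis of Theorem \ref{thm:apply}(ii) is harmless, i.e.\ that it commutes with every factor except the $i$-th operator factor, that on that factor the excess-regularity property \eqref{eq:coordwise_regularity} and the Sobolev stability of Definition \ref{def:sobolev_compr} upgrade the mere boundedness and compressibility of $\bA^{(i)}_n$ to exactly the weighted bounds needed, and that the extra powers of $\hatbS_i$ picked up in commuting $\Sci{i}^{t}$ past $\tilde\bT^{(i)}_n$ are precisely those absorbed by the surrounding $\tbS^{-1}_{m}$; keeping the accounting of the $(1+\delta)$-factors consistent with the stated $\gamma$ is the only genuinely delicate point, the rest being a routine transcription of the earlier proof with the weight in place.
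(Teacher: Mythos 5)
Your proposal is correct and follows essentially the same route as the paper's proof: reduce to the one-step recursion via Lemma \ref{lmm:crs_stab} and the triangle inequality for $\norm{\Sci{i}^{t}\pi^{(i)}(\cdot)}$, bound the $\rhs$ term by \eqref{eq:rhsreg}, and establish the weighted stability of $\apply$ by rerunning the decomposition of \eqref{eq:opapprox_As_1} with the weight $\Sci{i}^{t}$, using \eqref{excessxi} for $\bA^{(i)}_n$, Definition \ref{def:sobolev_compr} with \eqref{betascale} for $\bA^{(i)}_n-\tilde\bA^{(i)}_n$, and the $(1+\delta)$-relations between $\Sc$ and $\tbS_m$. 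Your bookkeeping of where the two factors of $(1+\delta)$ arise and your explicit treatment of the stage transition $(k,j^{*})\to(k+1,0)$ are minor presentational variants of the paper's argument, not a different method.
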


Note that under Assumptions \ref{ass:dim}, $\bar C_\bbf$ as well as the quantities arising in the definition of $\gamma$ are independent of $d$, except for $\check{C}^{(i)}_\bA$, which by \eqref{checkAi-bound} grows at most linearly in $d$.

\begin{proof}
Note that for each outer loop index $k$, its inner loop over $j$ can be summarized as
$$ 
 \bw_{k,j+1} = \hatCctr{\beta_2\eta_{k,j}} \hatPsvd{\beta_1\eta_{k,j}} \bigl[ (\id - \omega \tilde\bA_{k,j}) \bw_{k,j} + \omega \bbf_{k,j} \bigr] .
$$
Here, abbreviating $\eta:= \frac12 \eta_{k,j}$,  we recall that $\tilde\bA_{k,j} := \Sa{m(\eta;\bw_{k,j})}^{-1} \tilde{\bT}_{J(\eta)}  \Sa{m(\eta;\bw_{k,j})}^{-1}$, as in \eqref{weta-def}, and $\bbf_{k,j} := \rhs(\eta)$. Moreover,
  by step 1 in Algorithm \ref{alg:tensor_opeq_solve}, we have  $\norm{\Sci{i}^{t}\pi^{(i)}(\bu_0)} =0$, for each $i=1,\ldots,d$,
  and  \eqref{eq:rhsreg} implies $\norm{\Sci{i}^{t} \pi^{(i)}(\bbf_{k,j})} \leq C^{\rm reg}_\bbf \norm{\Sci{i}^{t} \pi^{(i)}(\bbf)}$. We shall repeatedly use that $\norm{\Sci{i}^{t} \pi^{(i)}(\bv)} = \norm{\Sc_i^{t} \bv}$ for any $\bv$.

Using Lemma \ref{lmm:crs_stab}, we obtain
\begin{align}
  \norm{\Sci{i}^{t} \pi^{(i)}(\bw_{k,j+1}) } 
      &= \norm{\Sci{i}^{t} \pi^{(i)} (\hatCctr{\beta_2\eta_{k,j}} \hatPsvd{\beta_1\eta_{k,j}}
           [ (\id - \omega \tilde\bA_{k,j}) \bw_{k,j} + \omega \bbf_{k,j} ] ) }\notag \\
    &\leq \norm{\Sci{i}^t \pi^{(i)}( \bw_{k,j})} +  \omega \norm{\Sci{i}^{t} \pi^{(i)}  ( \tilde\bA_{k,j} \bw_{k,j})} + \omega \norm{\Sci{i}^{t} \pi^{(i)}  ( \bbf_{k,j} ) } . \label{eq:highersobolev_estimate}
\end{align}
We define now $\tilde\bw_{k,j}:= \Sc\Sa{n}^{-1}\bw_{k,j}$ and argue, for  $\tilde\bA^{(i)}_n$ as in \eqref{eq:tilde1dscmatdef},     
in complete analogy to
  the estimates following \eqref{eq:opapprox_As_1} to conclude that
  \begin{multline*}    
  \pi^{(i)}_{\nu_i}( \tilde\bA_{k,j} \bw_{k,j})) \leq 
    \check C^{(i)}_\bA (1+\delta)^2 \pi^{(i)}_{\nu_i}(\bw_{k,j})  \\
      + C^{(i)}_\bA (1+\delta) \sum_{n=2}^R 
        \pi^{(i)}_{\nu_i} \bigl(\Sc_i^t [\id\otimes\cdots  \id\otimes \tilde\bA^{(i)}_n \otimes \id \cdots \otimes\id ]( \mathbf{D}_i  \tilde\bw_{k,j}) \bigr)\,,
\end{multline*}
where $\mathbf{D}_i = \Sc_i \Sc^{-1} $ for $n=2,3$ and $\mathbf{D}_1 = (\id\otimes\check\Sc_1) \Sc^{-1},\ldots, \mathbf{D}_d = (\check\Sc_d\otimes\id)\Sc^{-1} $ for $n=4$.
We now add and substract $\bA^{(i)}_n$ from \eqref{eq:1dscmatdef} in the last summands, apply $\Sci{i}^t$, sum over $\nu_i$, and use \eqref{excessxi} as well as Remark \eqref{rem:StildeS} to obtain
\begin{multline*}
  \norm{\Sci{i}^{t} \pi^{(i)}  ( \tilde\bA_{k,j} \bw_{k,j})}  
     \leq     (1+\delta)^2  \bigl( \check C^{(i)}_\bA \norm{\Sci{i}^t \pi^{(i)}(\bw_{k,j})} +   C^{(i)}_\bA  R \xi  \norm{\Sci{i}^{t} \pi^{(i)}  (  \bw_{k,j})}   \bigr)  \\
   +  C^{(i)}_\bA (1 + \delta)  \sum_{n=2}^R 
    \norm{ \Sc_i^t [\id\otimes\cdots  \id\otimes( \bA^{(i)}_n - \tilde \bA^{(i)}_n) \otimes \id \cdots \otimes\id ]( \mathbf{D}_i  \tilde\bw_{k,j})} \,.
\end{multline*}
By Definition \ref{def:sobolev_compr} and \eqref{tildeTni},
$$
 \norm{ \Sc_i^t [\id\otimes\cdots\id\otimes ( \bA^{(i)}_n - \tilde \bA^{(i)}_n) \otimes \id \cdots \otimes\id ] ( \mathbf{D}_i  \tilde\bw_{k,j})}
    \leq C_t  \norm{\beta(\bA^{(i)}_{n_i})}_{\ell_1}  \norm{\Sci{i}^{t} \pi^{(i)}  (  \tilde\bw_{k,j})} \,.
$$
Using in addition  \eqref{betascale}, we thus have
\begin{equation*}
  \norm{\Sci{i}^{t} \pi^{(i)} (\bw_{k,j+1}) } \leq \gamma \norm{\Sci{i}^{t} \pi^{(i)} (\bw_{k,j})} + \bar C_\bbf  \,. 
\end{equation*}
Using $j\leq I$ (see step \ref{alg:jchoice} in Algorithm \ref{alg:tensor_opeq_solve}), we arrive at \eqref{w_reg_bound}.
\end{proof}

\subsection{Proof of the Main Result}

In the following, we make an effort to track the dependence of arising constants on various parameters, in particular on $d$; this is necessarily more technical than what would be needed to present just the essence of the result, which lies mainly in the interplay of Theorem \ref{lmm:combined_coarsening}, Theorem \ref{thm:apply}, and Lemma \ref{lmm:lvlbound}. 

\begin{proof}[Proof of Theorem \ref{thm:complexity}]
Let $\varepsilon_k := 2^{-k} \varepsilon_0$.
Note that \eqref{eq:complexity_rank} and \eqref{eq:complexity_ranknorm} follow from \eqref{eq:combinedcoarsen_rankest} in Theorem \ref{lmm:combined_coarsening}, whereas  \eqref{eq:combinedcoarsen_suppest} yields \eqref{eq:complexity_supp} and \eqref{eq:complexity_sparsitynorm}.  
As a consequence of \eqref{eq:combinedcoarsen_suppest}, we also have
$$
  \sum_i \norm{\pi^{(i)}(\bw_{k,0})}_{\As} \leq C_1d^{1+\max\{1,s\}}  \sum_i \norm{\pi^{(i)}(\bu)}_{\As} \,,
$$
where $C_1$ is a constant independent of $d$. In what follows, newly introduced constants are always independent of $d$ unless stated otherwise.
By Theorem \ref{thm:apply}(ii), we have
\begin{equation}
\label{compl_applystab}
   \bignorm{\pi^{(i)}\bigl(\apply(\bw_{k,j}; \textstyle\frac12\displaystyle \eta_{k,j}\bigr))}_\As \leq C_2 d \bignorm{\pi^{(i)}(\bw_{k,j})}_\As \,.
\end{equation}
In this regard, note that $R$ and $\norm{\hat\alpha}_{\ell_1}$ are, by construction,   independent of $d$ and  that the same holds,
by \eqref{eq:maxsequences-0} combined with \eqref{aij}, for $C^{(i)}_\bA$.   Recall that $\check C^{(i)}_\bA$ grows at most linearly in $d$ by \eqref{checkAi-bound}.
Consequently, 
$$
   \norm{\pi^{(i)}(\bw_{k,j+1})}_\As \leq C_3 d \norm{\pi^{(i)}(\bw_{k,j})}_\As 
      + C_4 \norm{\pi^{(i)}(\bbf)}_\As,
$$
where we may assume without loss of generality that $C_3 d > 1$. Hence for all $k$ and $j$, we have
\begin{multline}
\label{eq:pigrowth}
   \sum_{i=1}^d \norm{\pi^{(i)}(\bw_{k,j})}_\As \leq (C_3d)^j C_1d^{1+\max\{1,s\}} \sum_{i=1}^d \norm{\pi^{(i)}(\bu)}_{\As} \\
       + C_4 \bigl( C_3d - 1 \bigr)^{-1} \bigl( (C_3d)^j - 1 \bigr)  \sum_{i=1}^d \norm{\pi^{(i)}(\bbf)}_{\As} \,.
\end{multline}

As a further consequence of \eqref{eq:combinedcoarsen_suppest} in Theorem \ref{lmm:combined_coarsening}, using $\kappa_1^{-1} \lesssim d$, we also know that
$$
  \sum_{i=1}^d \#\supp_i (\bw_{k,0})  \leq C_5 \,d^{1 + s^{-1}}\, (2^{-k}\varepsilon_0)^{-\frac1s}\Bigl(\sum_{i=1}^d \norm{\pi^{(i)}(\bu)}_{\As}\Bigr)^{\frac1s} \,.
$$
In view of steps 7 and 8 in Algorithm \ref{alg:tensor_opeq_solve},
we infer now from Theorem \ref{thm:apply}(i) and Assumptions \ref{ass:rhs}(\ref{ass:rhsapprox}) that
\begin{multline*}
   \sum_{i=1}^d \#\supp_{i }  (\bw_{k,j+1})  \leq \sum_{i=1}^d \#\supp_i (\bw_{k,j})   
   +  C_6 d \, \eta_{k,j}^{-\frac1s} \Bigl(\sum_{i=1}^d \norm{\pi^{(i)}(\bw_{k,j})}_{\As}\Bigr)^{\frac1s} \\
     + C^{\text{{\rm supp}}} \, d \, \eta_{k,j}^{-\frac1s} \Bigl(\sum_{i=1}^d \norm{\pi^{(i)}(\bbf)}_{\As}\Bigr)^{\frac1s} \,,
\end{multline*}
where, on account of Assumptions \ref{ass:dim}, $C^{\text{{\rm supp}}}$ is independent of $d$.  
The last summand in this bound results from \eqref{eq:tensor_apply_support}, using the same observations as in \eqref{compl_applystab}.  Thus, for any $k$ and $j$, we have
\begin{multline*}
   \sum_{i=1}^d \#\supp_i (\bw_{k,j})  \leq C_5 d^{1+s^{-1}} \, (2^{-k}\varepsilon_0)^{-\frac1s}\Bigl(\sum_{i=1}^d \norm{\pi^{(i)}(\bu)}_{\As}\Bigr)^{\frac1s} \\
     + \sum_{n = 0}^{j-1} ( \rho^{n+1} 2^{-k} \varepsilon_0)^{-\frac1s}  \Bigl[ C_6 d \Bigl(  \sum_{i=1}^d \norm{\pi^{(i)}(\bw_{k,n})}_{\As}\Bigr)^{\frac1s} + C^{\text{{\rm supp}}} d\,  \Bigl(\sum_{i=1}^d \norm{\pi^{(i)}(\bbf)}_{\As}\Bigr)^{\frac1s} \Bigr] \,.
\end{multline*}
Combining this   with \eqref{eq:pigrowth}, defining 
\begin{equation*}
   C_{\bu,\bbf} :=  \max\Bigl\{ \Bigl( \sum_{i=1}^d \norm{\pi^{(i)}(\bu)}_\As \Bigr)^\frac{1}{s} , 
            \Bigl( \sum_{i=1}^d \norm{\pi^{(i)}(\mathbf{f})}_\As \Bigr)^\frac{1}{s} 
                      \Bigr\} \,,
\end{equation*}
and recalling that $\eta_{k,j} = \rho^{j+1}2^{-k} \varepsilon_0$, we arrive at
\begin{equation}
\label{eq:wkjsuppbound}
  \sum_{i=1}^d  \#\supp_i (\bw_{k,j})  \leq   C_7 d^{p_1} %
  \, \bigl(C_3d\bigr)^{\frac{j}s}   \,C_{\bu,\bbf} \eta_{k,j}^{-\frac1s}\,,
\end{equation}
where $p_1:= \max\{2 + s^{-1}, 1 + 2s^{-1}\}$.

We are now in a position to invoke Lemma \ref{lmm:lvlbound}. Here the requirement that $\beta_2>0$ in Algorithm \ref{alg:tensor_opeq_solve} enters. Combining \eqref{eq:wkjsuppbound} with \eqref{w_reg_bound} for $i=1,\ldots,d$, and for each $\nu \in \supp_i(\bw_{k,j}) \subset \nabla$, we conclude that
\begin{equation}
\label{eq:lvlbd1}
  \abs{\nu}  \leq L_{k,j} := t^{-1} \log_2 \Biggl[ C_8d^{p_1}\, \eta_{k,j}^{-1}\, \gamma^{k I + j} \bar C_\bbf \sqrt{ \eta_{k,j}^{-\frac1s}\,  \bigl(C_3d\bigr)^{\frac{j}s} C_{\bu,\bbf} } \Biggr] \,.
\end{equation}
We rewrite this for convenience as 
\beqn
\label{convenience}
L_{k,j} =  t^{-1} \log_2 \Bigl[ C_9(d)\, C_3^{\frac{j}{2s}} d^{p_1+\frac{j}{2s}}\eta_{k,j}^{-1-\frac{j}{2s}}\gamma^{kI+j} \Bigr],
\eeqn
where $C_9(d) := C_8 \bar C_\bbf C_{\bu,\bbf}^{1/2}$, which may depend on $d$ via $C_{\bu,\bbf}$; note that
$C_{\bu,\bbf} \leq d^{\frac1s} \hat C_{\bu,\bbf}$ with 
$ \hat C_{\bu,\bbf} := \max_i\{ \norm{\pi^{(i)}(\bu)}_\As^{1/s}, \norm{\pi^{(i)}(\bbf)}_\As^{1/s}  \}$ which, by Assumptions \ref{ass:dim}, is independent of $d$.

In order to estimate the right hand side in \eref{convenience}, we need a suitable estimate for $\log_2 \gamma^{Ik}$, which contains the outer iteration index $k$.
We will relate this quantity to the current tolerance $\eta_{k,j}$. To this end, note that 
$$
\log_2 \gamma^k =  \bigl(\abs{\log_2 \eta_{k,j}} + j \abs{\log_2 \rho} + \abs{\log_2 \rho\varepsilon_0}\bigr) \log_2\gamma. 
$$
Hence the bound in \eqref{eq:lvlbd1} can be rewritten in the form
\begin{align*}
t L_{k,j} &\leq \log_2 C_9(d) + \frac{j}{2s} \log_2 C_3 + \Big(p_1+ \frac{j}{2s}\Big) \log_2 d + \Big(1+ \frac{j}{2s}\Big)\abs{\log_2 \eta_{j,k}} \\
&\qquad  + I \log_2 \gamma^k + j\log_2 \gamma\quad \\
&= \log_2 C_9(d) + \frac{j}{2s} \log_2 (C_3d) + p_1\log_2 d + \Big(1+ \frac{j}{2s} + I\log_2 \gamma\Big) \abs{\log_2 \eta_{j,k}} \\
&\qquad + (\log_2 \gamma) \big(j+jI \abs{\log_2 \rho} + I \abs{\log_2(\rho\varepsilon_0)} \big). %
\end{align*}
To proceed, recall that  by Assumptions \ref{ass:approximability} and \ref{ass:dim},
  $t$  and $\varepsilon_0$ are independent of $d$. Moreover,
by  Remark \ref{rem:damping}, $\ln\rho$ is bounded from above and below independently of $d$, see Remark \ref{dimdependence} for a further discussion of this point. Finally, we know that there exist constants $c,C$ such that
$$
j\leq I \leq c  \ln d,\quad \gamma \le C d \,.
$$
Hence, there exists a constant $C_{10}$ such that
\begin{equation}
\label{eq:lvlbd2}
  L_{k,j} \leq C_{10} \bigl((\ln d)^2  \abs{ \ln\eta_{k,j}} + (\ln d)^3 + \ln C_9(d)
   \bigr) \,.
\end{equation}
Here and in the following, for simplicity we consider without loss of generality  the case that $\ln d > 1$.

In the notation of Theorem \ref{thm:apply}(v), we have $L(\bw_{k,j}) \leq L_{k,j}$.
Furthermore, note that $\ln C_9(d) \leq s^{-1} \max\{1,\ln (C_8 \bar C_\bbf \hat C_{\bu,\bbf}^{1/2}) \} \ln d$.
From \eqref{eq:pigrowth}, \eqref{eq:lvlbd2}, and \eqref{metav}, we thus infer
$$
\hat m(\eta_{k,j}; \bw_{k,j}) \leq C_{11} \bigl( (\ln d)^2  \abs{ \ln\eta_{k,j}} + (\ln d)^3 
\bigr)  \,.
$$

Recall that the decay of best low-rank approximation errors is governed by the inverse $\gamma_\bu^{-1}$ of the growth sequence 
$\gamma_\bu(n)= e^{d_\bu n^{1/b_\bu}}$, see Remark \ref{rem:howtoread}.
Under Assumptions \ref{ass:approximability}\eqref{ass:uapprox}, \eqref{eq:combinedcoarsen_rankest} in Theorem \ref{lmm:combined_coarsening} then yields
$$   
\abs{\rank(\bw_{k,0})}_\infty \leq (d_\bu^{-1} \ln[ ( \kappa_1 \alpha)^{-1} \norm{\bu}_{\Acal_\Hcal(\ga_\bu)} \rho \eta_{k,0}^{-1}])^{b_\bu} \le C(\bu) (\abs{\ln \eta_{k,0} } + \ln d)^{b_\bu} \,,
$$
where we have used in the last step that $\kappa_1^{-1}\lesssim d$.
By Theorem \ref{thm:apply}(iii), setting $\bar R:= \max_\alpha R_\alpha$, which by Assumptions \ref{ass:dim} is bounded independently of $d$, we now obtain
$$ 
 \abs{\rank( \bw_{k,j+1})}_\infty \leq \bigl(\hat m(\eta_{k,j}; \bw_{k,j}) \bigr)^2 \bar R\,  \abs{\rank( \bw_{k,j})}_\infty  + C^{\text{{\rm rank}}}_\bbf \, \abs{\ln \eta_{k,j}}^{b_\bbf} \,.
$$
As a consequence, setting $b:= \max\{b_\bu,b_\bbf\}$, and using again that $I \leq c \ln d$ and hence 
$$
\abs{ \ln\eta_{k,I}} \leq \abs{\ln\eta_{k,0}} + c \ln d \,\abs{\ln \rho}, 
$$
we conclude that
\begin{align} 
\abs{\rank(\bw_{k,I})}_\infty  
  &  \leq C_{12}d^{ (\ln \bar R + 2 \ln C_{11}) c } \,\bigl( (\ln d)^2  (\abs{\ln\eta_{k,0}} + c \ln d \,\abs{\ln \rho}) + (\ln d)^3        
       \bigr)^{2I} \notag \\
     &\qquad  \times ( \abs{\ln\eta_{k,0}} + c \ln d \,\abs{\ln \rho} + \ln d )^{b}  \notag \\   
   & \leq C_{13}d^{ p_2 } (\ln d)^b \,\bigl( (\ln d)^2  \abs{ \ln\eta_{k,0}} + (\ln d)^3   \bigr)^{2I}  \abs{\ln \eta_{k,0}}^{b} \,,
         \label{eq:rankest_full}
\end{align}        
where $p_2 := (\ln \bar R + 2 \ln C_{11} + 2 \ln (1 + c\abs{\ln\rho})) c$. 

In view of Assumptions \ref{ass:approximability}\eqref{ass:rhsops} as well as Theorem \ref{thm:apply}(iv) and (v), the complexity of each inner loop in Algorithm \ref{alg:tensor_opeq_solve} is dominated by that of the hierarchical singular value decompositions used in $\recompress$ and $\coarsen$ (see Remark \ref{rem:CRcomplexity}). Therefore, it is for each $k$, $j$, in view of \eref{eq:wkjsuppbound}, bounded by
\begin{equation*}
  C_{14} \Bigl[  d \, \abs{\rank( \bw_{k,j})}_\infty^4 
  + \abs{\rank( \bw_{k,j})}_\infty^2  
  d^{p_1}  (C_3 d)^{\frac{j}s} C_{\bu,\bbf} \,\eta_{k,j}^{-\frac1s}   \Bigr] \,.
\end{equation*}
Likewise the number of operations for the outer loop with index $k$ is bounded by
\begin{equation*}
   C_{15} \Bigl[  d I \, \abs{\rank( \bw_{k,I})}_\infty^4 
     + \abs{\rank( \bw_{k,I})}_\infty^2  
    d^{p_1}   (C_3 d)^{\frac{I+1}s} C_{\bu,\bbf} \,\eta_{k,I}^{-\frac1s}   \Bigr] \,.
\end{equation*}
The total work for arriving at $\bu_k$ is thus bounded by 
\begin{equation}
\label{eq:opstotal1}
 C_{16} \Bigl[  d I \, \abs{\rank( \bw_{k-1,I})}_\infty^4 k
     + \abs{\rank( \bw_{k-1,I})}_\infty^2  
     d^{p_1}   (C_3 d)^{\frac{c\ln d+1}s} d^{\frac1s} \hat C_{\bu,\bbf}\, \eta_{k-1,I}^{-\frac1s}   \Bigr] \,.
\end{equation}

We need to express the above bounds  in terms of $\varepsilon_k$.
In this regard, note that $k = \log_2\varepsilon_0 - \log_2 \varepsilon_k$, $\eta_{k,0} = \rho \varepsilon_k$, and $\eta_{k-1,I} = \rho^{I+1} 2 \varepsilon_k$. The latter relation yields $\eta_{k-1,I}^{-\frac1s} \leq (2\rho)^{-\frac1s}  d^{c s^{-1} \abs{\ln \rho}} \varepsilon_k^{-\frac1s}$.
From \eqref{eq:rankest_full}, we now obtain first for the ranks
$$  
   \abs{\rank (\bw_{k,I})}_\infty \leq C_{17}d^{p_2 } \,(\ln d)^{b + 6 c \ln d} \abs{\ln \varepsilon_k}^{b + 2 c \ln d} \,.
$$   
Using this in \eqref{eq:opstotal1} gives the bound
$$
  \ops(\bu_{k}) \leq C_{18}\, (\ln d)^{1+b} d^{p_3}\, \,(\ln d)^{24 c \ln d} \,d^{c s^{-1}  \ln d} \, 
  \abs{\ln \varepsilon_k}^{2b + 4 c \ln d} \,\varepsilon_k^{-\frac1s} \,,
$$
where $p_3 := 1+2s^{-1}+cs^{-1}(\abs{\ln\rho}+ \ln C_3)+p_1+4 p_2$.
This completes the proof.
\end{proof}

\begin{remark}\label{dimdependence}
In the present case of a symmetric elliptic operator, an appropriate choice of $\omega$ yields $\abs{\ln \rho} \sim [\operatorname{cond}_2(\bA)]^{-1}$. As a consequence, the bound $I$ for the number of inner iterations scales linearly in $\operatorname{cond}_2(\bA)$.
A violation of our assumption of a $d$-independent bound on $\operatorname{cond}_2(\bA)$ made in Assumptions \ref{ass:dim} 
therefore has a considerable impact on the resulting complexity estimates. 
In particular, $\operatorname{cond}_2(A) \sim d^2$, which is the case in Example \ref{ex:tridiag}, would in fact lead to a complexity estimate with superexponential dependence on $d$.
\end{remark}

\section{Numerical Experiments}\label{sec:num-res}

\subsection{Basic Considerations}

There are two basic choices to be made in a practical realization of Algorithm \ref{alg:tensor_opeq_solve}: the dimension tree $\hdimtree{d}$ for the hierarchical tensor format, and the univariate wavelet basis $\{\psi_\nu \}_{\nu\in\nabla}$. For $\hdimtree{d}$, we use the simplest possible choice \eqref{eq:lindimtree}.

Concerning the choice of wavelets, the available options are limited by the restriction to orthonormal bases (cf.\ Proposition \ref{prop:cond}). A further issue is that, in view of the dependence of the ranks of the approximations of $\Sr^{-1}$ on the maximum active wavelet levels, the compressed application of the rescaled lower-dimensional components $\bA^{(i)}_n$ should increase these maximum levels as little as possible. By classical results on wavelet compression (see, e.g., \cite{Cohen:01}), the wavelets should therefore have high global regularity. In addition, it is desirable that the wavelets are piecewise polynomials. The resulting $\bA^{(i)}_n$ then have very favorable $s^*$-compressibility, exceeding, in particular, the order of the trial functions \cite{Stevenson:02}. 
For all  results presented below, we therefore use orthonormal, continuously differentiable, piecewise polynomial Donovan-Geronimo-Hardin multiwavelets \cite{DGH:99} of polynomial degree\footnote{Note that this is the lowest possible degree for the continuously differentible construction in \cite{DGH:99}.} 6 and approximation order 7. 

\subsection{Improving the Practical Efficiency of {\rm$\apply$}}
 
In a practical realization of the routine $\apply$  we have described above, additional care needs to be taken to keep the ranks arising in the evaluation as low as possible. We now describe a practical procedure that achieves this, retaining the guaranteed output error of the original procedure $\apply$.

We consider the evaluation of $\apply(\bv;\eta)$, where $\bv = \sum_{\kk{k}} a_{\kk{k}} \bigotimes_i \bU^{(i)}_{k_i}$ with $\mathbf{a}$ decomposed further in the hierarchical format. 
As one-dimensional scaling sequences, we choose $\omi{i}{\nu} := \sqrt{a_{ii}\bT^{(i)}_{2,\nu\nu}}$.
For each $i$ and $n_i=2,3,4$, we first determine the matrix entriy indices  $(\nu,\mu)$ required  for the 
approximations of $\bA^{(i)}_{n_i} \pi^{(i)}(\bv)$ with $J(\eta/2)$ as defined in \eqref{etaJ}, and precompute all corresponding $\tilde \bT^{(i)}_{n_i,\nu\mu}$. This gives the components of  $\tilde \bT_{J(\eta/2)} =  \sum_{\kk{n}} c_{\kk{n}} \bigotimes_i \tilde\bT^{(i)}_{n_i}$ such that $\Sr^{-1} \tilde\bT_{J(\eta/2)} \Sr^{-1}$ is a suitable approximation of $\bA$.
Similarly to \eqref{Tv}, we can now determine two separate values $T_0 := \argmin\{T': \supp\bv \subseteq \Lambda_{T'}\}$, $T_1 := \argmin \{ T' : \supp \tilde\bT \bv \subseteq \Lambda_{T'} \}$
and set $m_0 := M( c(\bv) (\eta/2) ; T_0 )$, $m_1 :=  M( c(\bv) (\eta/2) ; T_0 )$, with $M$ defined in \eqref{Nsc} and $c(\bv) \eta/2$ as in \eqref{etaJ}. According to Proposition \ref{prop:weta}, $\bw_{\eta/2} := \Sa{m_1}^{-1} \tilde \bT_{J(\eta/2)} \Sa{m_0}^{-1} \bv$ satisfies $\norm{\bA \bv - \bw_{\eta/2}} \leq \eta/2$. Instead of evaluating $\bw_{\eta/2}$ directly (which, from a practical perspective, could lead to prohibitively high ranks), it is advisable to control the ranks by
additional approximations, which amounts to computing a $\tilde\bw_{\eta/2}$ such that $\norm{\bw_{\eta/2} - \tilde\bw_{\eta/2}} \leq \eta/2$. We shall now describe how $\tilde\bw_{\eta/2}$, which is subsequently used as the output of $\apply(\bv;\eta)$, is obtained.

Recall from Section \ref{ssec:nearsep} that $\Sa{m_r}^{-1}$, $r = 0,1$, can be written in the form 
$$  
 \Sa{m_r}^{-1} = \sum_{\ell = 1}^{\hat m_r}  \Theta_\ell \,,\quad \Theta_\ell := \theta_\ell^{(1)} \otimes\cdots \otimes \theta_\ell^{(d)} \,, 
 $$
where $\hat m_r := 1 + n^+(\delta) + m_r$, and $\theta_\ell^{(i)} = \operatorname{diag} ( \tilde w_\ell^{1/d} e^{-\tilde\alpha_\ell \omi{i}{\nu}^2})_\nu$ with coefficients $\tilde w_\ell, \tilde\alpha_\ell > 0$ given in Theorem \ref{thm:expsum_relerr}.
Note that 
\begin{equation*}
\bw_{\eta/2}  = \sum_{\ell_0=1}^{\hat m_0} \sum_{\ell_1=1}^{\hat m_1}  \Theta_{\ell_1} \tilde\bT_{J(\eta/2)} \Theta_{\ell_0} \bv  \,, \end{equation*}
where the ranks of each summand $\Theta_{\ell_1} \tilde\bT_{J(\eta/2)} \Theta_{\ell_0} \bv$ are bounded by $\max_{\alpha\in \hdimtree{d}} R_\alpha \abs{\rank(\bv)}_\infty$. 

The additional approximations with total error at most $\eta/2$ used in assembling $\bw_{\eta/2}$, which lead to the final output $\tilde\bw_{\eta/2}$, are performed as follows. 
For each $i$ and $n_i, k_i$, we preassemble sparse matrices $\mathbf{W}^{(i)}_{n_i,k_i}$ with entries
$   \mathbf{W}^{(i)}_{n_i,k_i; \nu,\mu} := \tilde\bT^{(i)}_{n_i,\nu\mu} \bU^{(i)}_{k_i,\mu}  $,
and evaluate 
$$ 
   \tau_{\ell_0,\ell_1} :=   \Bignorm{\sum_{\kk{n}, \kk{k}} c_{\kk{n}} a_{\kk{k}} \bigotimes_i \theta^{(i)}_{\ell_1} \mathbf{W}^{(i)}_{n_i,k_i} (\theta^{(i)}_{\ell_0}  \chi_{\supp_i \bv} ) }  \,,
$$ 
where $\chi_{\supp_i \bv}$ denotes the characteristic function of $\supp_i \bv$.
For each $\ell_0$, $\ell_1$, the computation of $\tau_{\ell_0,\ell_1}$ involves the orthogonalization of a hierarchical tensor of relatively low hierarchical ranks.
We now determine a nondecreasing ordering $\hat\tau_q$, $q = 1, \ldots, \hat m_0 \hat m_1$, of these values, with  corresponding pairs
 $(\hat \ell_{0,q}, \hat \ell_{1,q})$ such that $\tau_{\hat \ell_{0,q}, \hat \ell_{1,q}} = \hat \tau_q$ for each $q$.

We first determine the largest $q_0$ such that 
$\sum_{q=1}^{q_0} \hat\tau_q \leq \frac\eta4$,
and discard the parts of the tensor corresponding to $(\hat \ell_{0,q}, \hat \ell_{1,q})$ for $q = 1,\ldots,q_0$. With $q_1 := q_0 + 1$, $q_2 :=  \hat m_0 \hat m_1$, it thus remains to approximate
$$   
 \sum_{q=q_1}^{q_2}   \sum_{\kk{n}, \kk{k}} c_{\kk{n}} a_{\kk{k}} \bigotimes_i \theta^{(i)}_{\hat\ell_{1,q}} \mathbf{W}^{(i)}_{n_i,k_i} \theta^{(i)}_{\hat\ell_{0,q}}  \,.
$$
Here our strategy is to sum these parts in the given order, and apply $\recompress(\cdot; \zeta_q)$ to the intermediate result after each summation; that is, $\zeta_q$ denotes the tolerance used for recompression after adding the term with index $q$. Various different strategies are possible for choosing these $\zeta_q$, with the constraint that $\sum_{q=q_1}^{q_2} \zeta_q \leq \frac\eta4$. Since we start the tensor summation with the smallest contributions, a natural approach for keeping ranks small is to always recompress with a tolerance proportional to an estimate of the relative size of the current intermediate result. This is accomplished by the choice
$$ 
    \zeta_q :=  \frac{\eta\,\sum_{p=q_1}^{q} \hat\tau_p}{4 \sum_{p=q_1}^{q_2} (q_2 + 1 - p) \hat\tau_p }  \,.
$$
Since more complicated choices of $\zeta_q$ (e.g.\ using additional a posteriori information) did not yield a further improvement in our numerical tests, the presented results are based on the above prescription.

It should be noted that this scheme with additional recompressions always preserves convergence, since the prescribed error tolerances for $\apply$ are preserved, but its effect on the computational complexity depends on the rank decrease achieved by the additional truncations.
This, however, is not clear a priori, but in practice the additional recompressions are observed to improve efficiency substantially.

\subsection{A High-Dimensional Poisson Problem}

As a first model example, we consider the Poisson problem $-\Delta u = 1$ on $(0,1)^d$ with homogeneous Dirichlet boundary conditions.
We refer to Example \ref{ex:laplace} concerning the hierarchical tensor representation of $\bT$ in this case. We are, in particular,  interested in 
assessing the $d$-dependence of the computational complexity for achieving a certain $H^1$-error bound. 

\begin{figure}[t]
\centering
\begin{tabular}{ccc}\hspace{-.6cm}
\includegraphics[width=4.2cm]{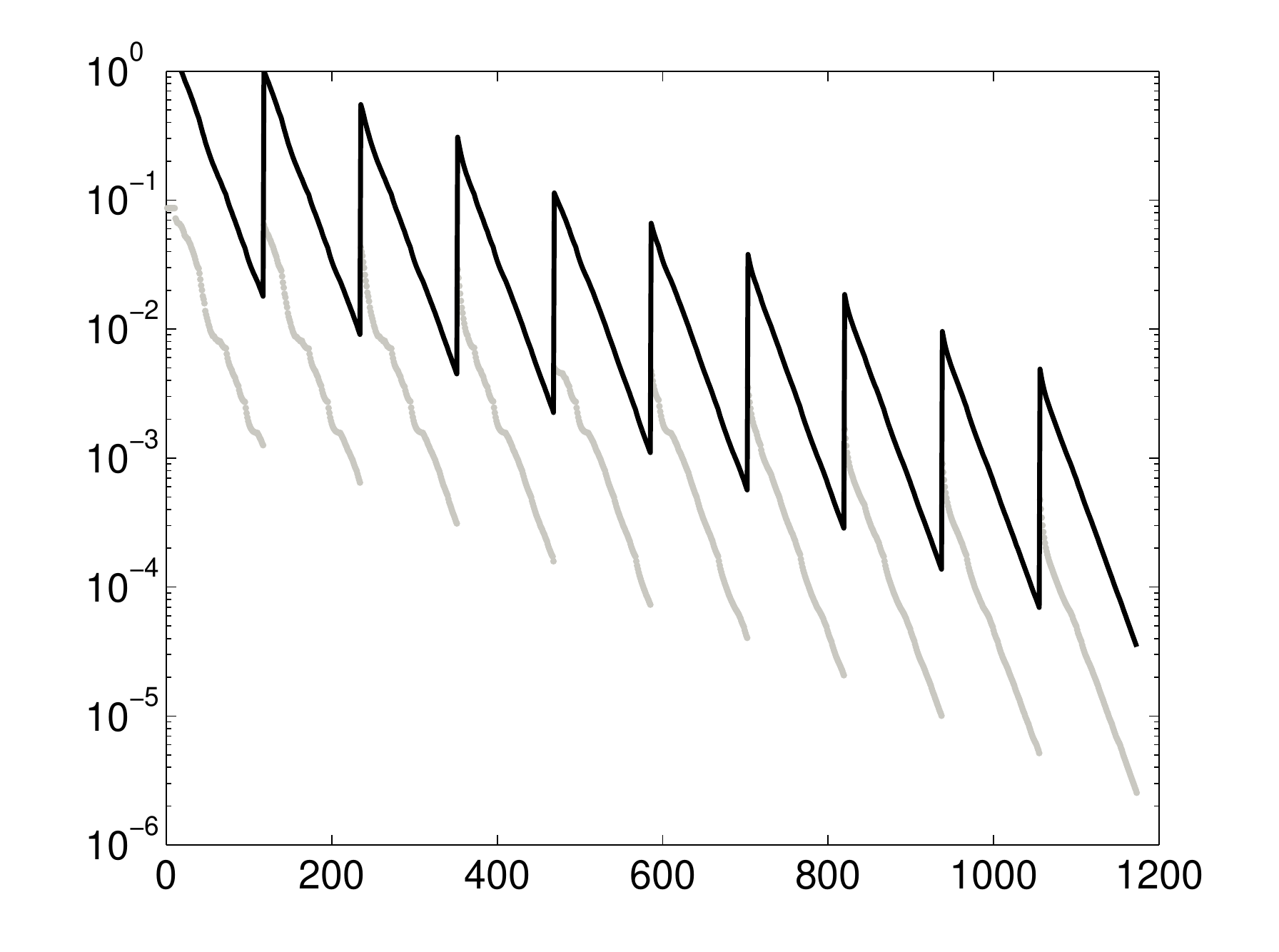} &
\includegraphics[width=4.2cm]{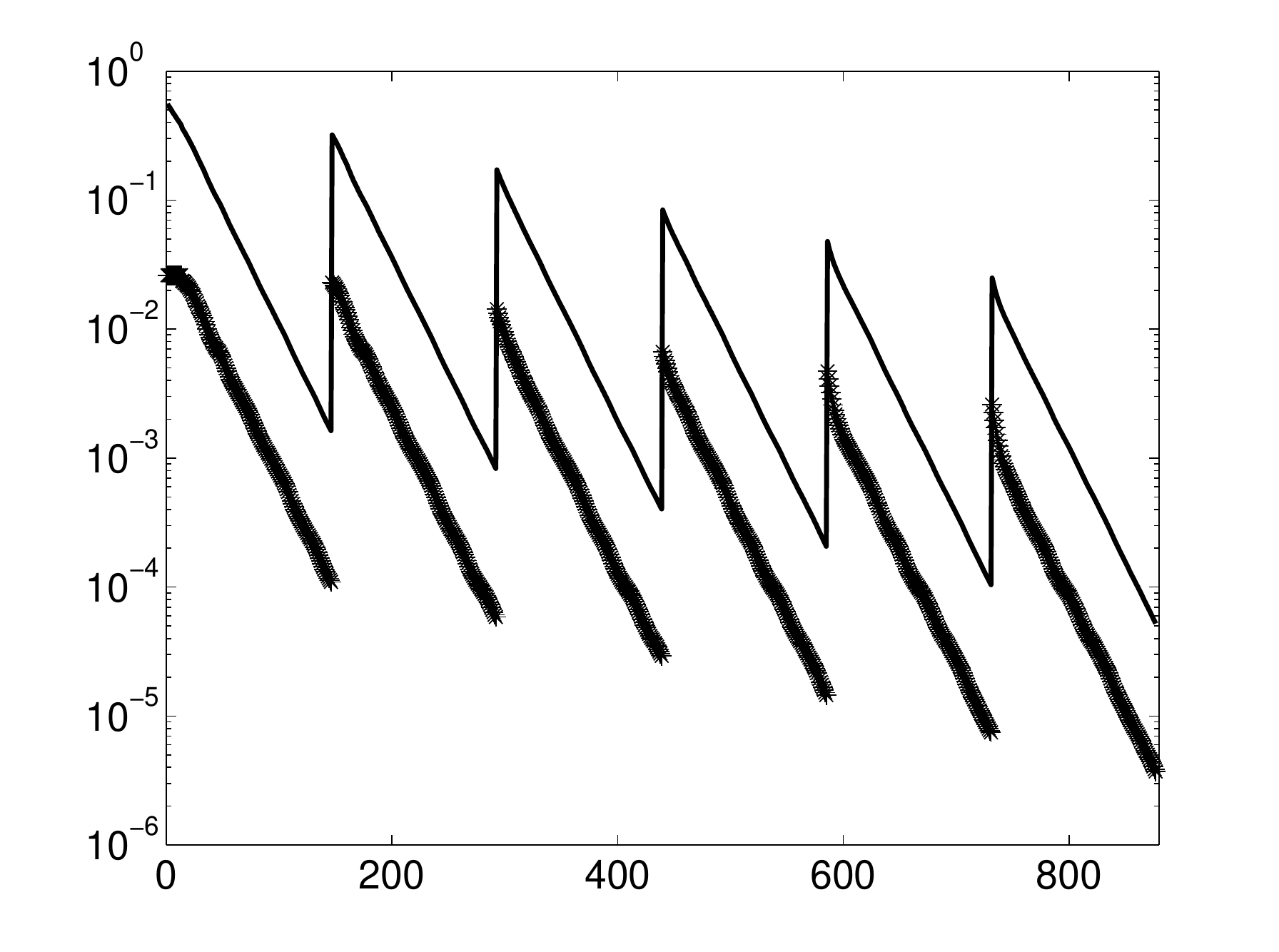} &
\includegraphics[width=4.2cm]{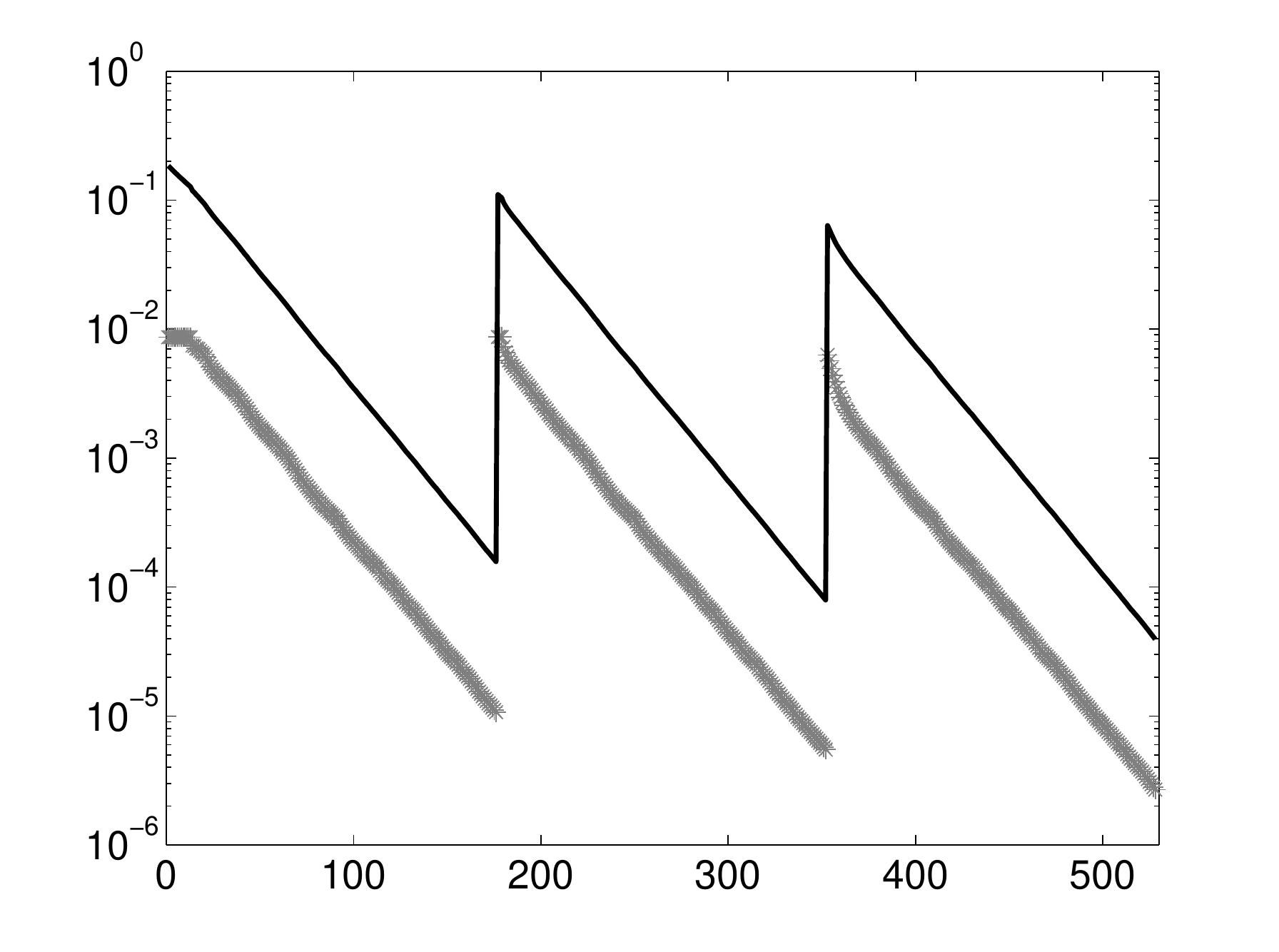}\hspace{-.5cm}
\end{tabular}
\caption{Norms of computed residual estimates (markers) and corresponding error bounds (lines),
in dependence on the total number of inner iterations (horizontal axis), for  $d=\textcolor{lightgray}{\bullet} 4,\ast 16,\textcolor{gray}{\ast} 64$.}
\label{fig:res}
\end{figure}
Figure \ref{fig:res} shows the evolution of the residuals and the corresponding estimates for the $H^1$-error in the course of the iterative scheme. Both residuals and errors behave as expected, with an intermittent increase due to the coarsening and recompression after each completed inner loop. As shown here for three exemplary  values of $d$, a consequence of the $d$-dependence of the choice of the parameter $\kappa_1$ required in our complexity estimates is that the number of iterations within each inner loop increases with $d$. Hence for larger $d$, smaller errors are reached within a lower total number of iterations, but these iterations become increasingly expensive, since the representation complexity of intermediate results is reduced less frequently by coarsening and recompression steps.

\begin{figure}[t]
\centering
\begin{tabular}{cc}\hspace{-1cm}
\includegraphics[width=6cm]{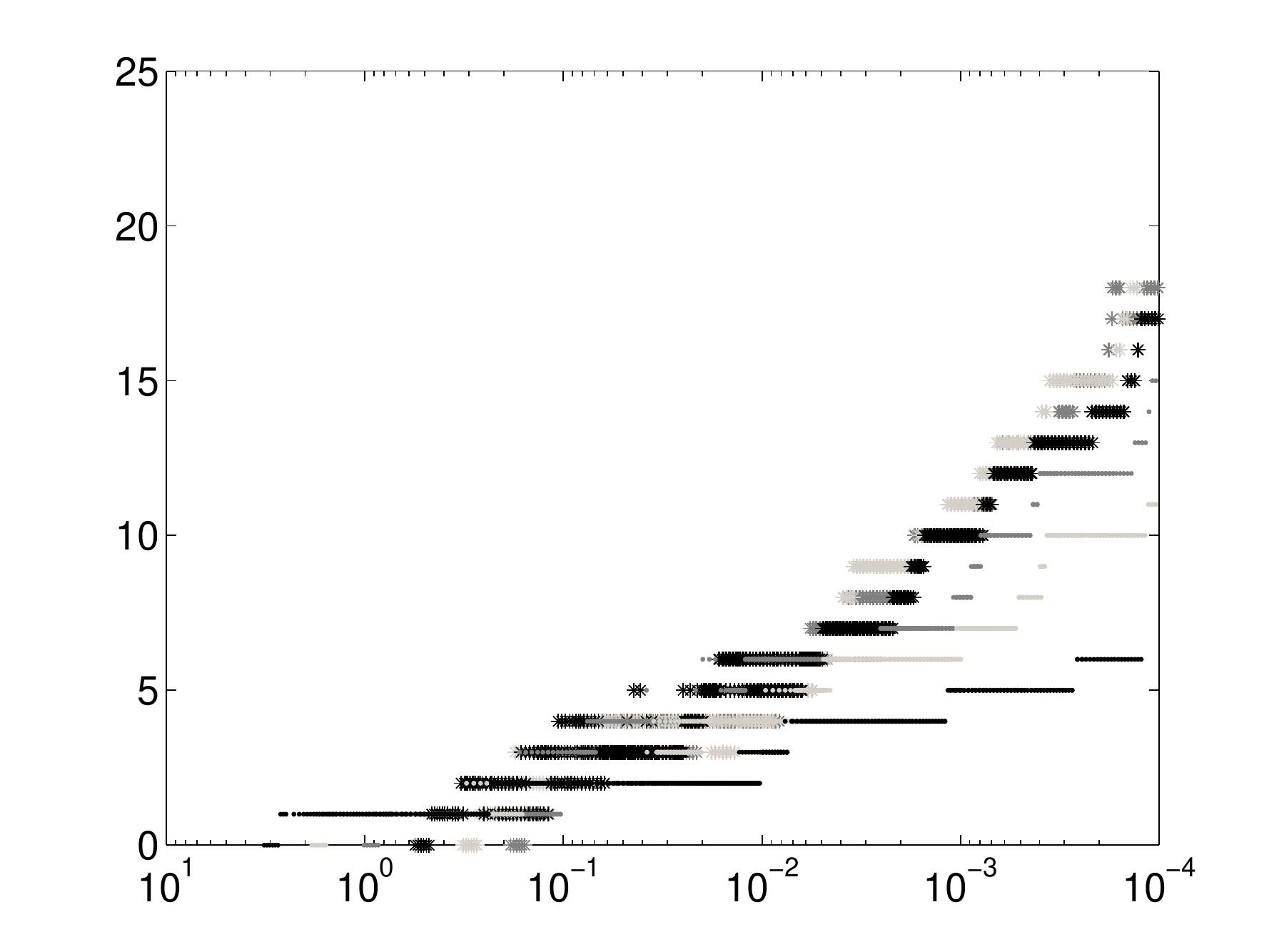} &
\includegraphics[width=6cm]{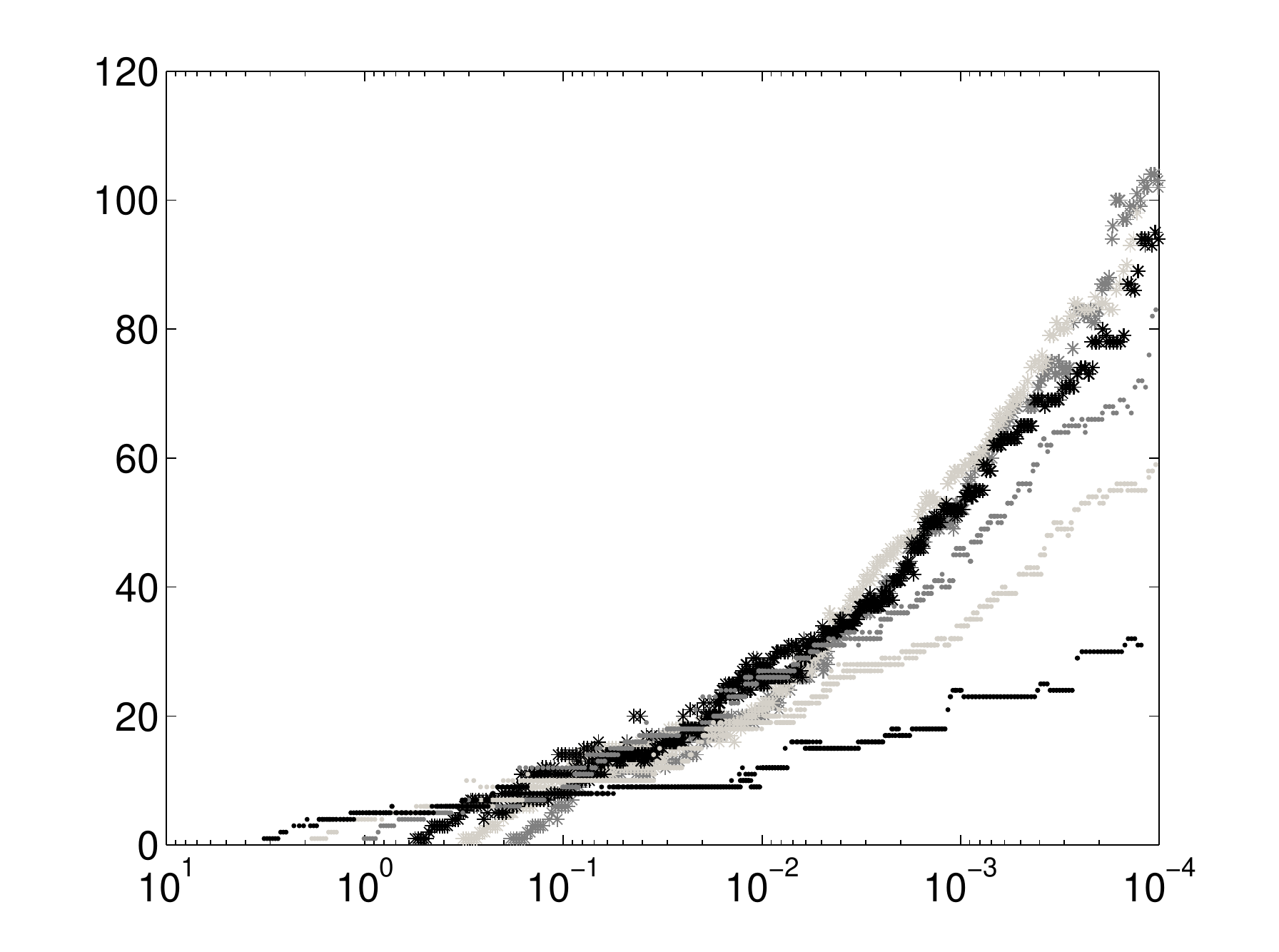}
\end{tabular}
\caption{$\abs{\rank(\bw_{k,j})}_\infty$ (left) and maximum ranks of all intermediates arising in the inner iteration steps (right),
in dependence on current estimate for $\norm{\bu - \bw_{k,j}}$ (horizontal axis), for  $d=\bullet 2,\textcolor{lightgray}{\bullet} 4,\textcolor{gray}{\bullet}8, \ast 16, \textcolor{lightgray}{\ast} 32, \textcolor{gray}{\ast} 64$.}
\label{fig:ranks}
\end{figure}

In Figure \ref{fig:ranks}, we compare the dependence of both the maximum ranks of the iterates and of the intermediate quantities arising in the computation on the $H^1$-error bound for different values of $d$. In view of Remark \ref{rem:CRcomplexity}, these ranks strongly influence the computational cost. We observe only a gradual increase of both types of ranks with decreasing $H^1$-error.
Furthermore, for relatively small values of $d$ we observe an increase of the required ranks with increasing $d$. This is to be expected on the one hand due to \eqref{eq:complexity_rank}, on the other hand as a consequence of the tighter error tolerances e.g.\ in $\apply$ that are required in higher dimensions. However, for larger dimensions such as $d=16,32,64$, the differences between maximum ranks observed at a certain error tolerance for different values of $d$ diminish.

\begin{figure}[t]
\centering
\includegraphics[width=8cm]{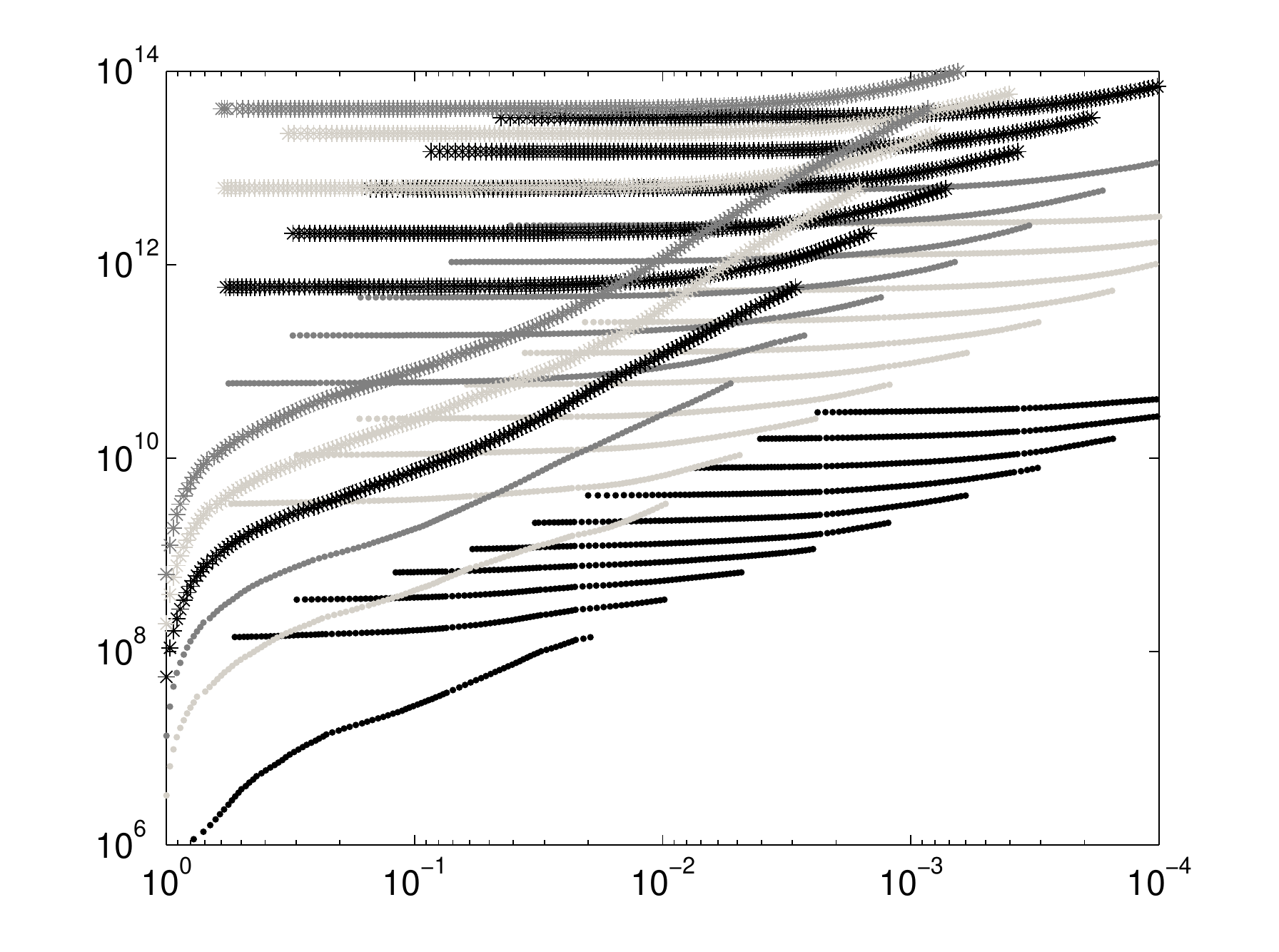}
\caption{Operation count in dependence on the error estimate reduction (horizontal axis), for $d=\bullet 2,\textcolor{lightgray}{\bullet} 4,\textcolor{gray}{\bullet}8, \ast 16, \textcolor{lightgray}{\ast} 32, \textcolor{gray}{\ast} 64$.}
\label{fig:ops}
\end{figure}
In Figure \ref{fig:ops}, the computed estimates for the operation counts\footnote{The given operation counts are obtained using standard estimates (see, e.g., \cite{Hackbusch:12}) for each performed linear algebra operation, and counting the handling of each matrix entry by quadrature (which is $\Ocal(1)$ in our setting) as a single operation. This simplified counting therefore differs from the true number of floating point operations by a certain fixed factor, but does reflect the asymptotic behaviour.}
required to arrive at a \emph{relative} error tolerance are compared for the same values of $d$. For this comparison we use the reduction with respect to the initial error estimate for comparison because, as can also be seen in Figure \ref{fig:ranks}, the norms of $\bbf$, $\bu$ as well as the corresponding initial errors decrease slightly with increasing $d$. For each $d$, similarly to Figure \ref{fig:res}, one observes a characteristic pattern caused by coarsening and recompression steps, where the iteration periodically returns to larger error tolerances.
It is to be noted in particular that the number of operations required for a certain error reduction exhibits a \emph{polynomial} growth in $d$. Thus the method in this case  performs substantially better in practice than  the theoretical complexity guarantees of Theorem \ref{thm:complexity}.

The results can also be compared to those given in \cite[Fig.\ 4]{Dijkema:09} for essentially the same problem\footnote{The only difference is that they impose homogeneous Neumann conditions on certain faces of $\partial(0,1)^d$, and homogeneous Dirichlet on the remaining ones, resulting in symmetry boundary conditions that yield the solution $\hat u|_{(0,1)^d}$, where $\hat u$ solves the  homogeneous Dirichlet problem $-\Delta \hat u = 1$ on $(-1,1)^d$. By a simple scaling argument, one verifies that this problem of approximating $\hat u$ on the single orthant $(0,1)^d$ of $(-1,1)^d$ is (up to a dimension-independent factor) exactly as difficult as the problem that we are considering.}, which are based on direct best $n$-term approximation in a $d$-dimensional tensor product multiwavelet basis. A comparison of the accomplished accuracies   indicates that such a sparse-grid type approximation becomes computationally intractable for large $d$.

\subsection{A Dirichlet Problem with Tridiagonal Diffusion Matrix}

One of the strengths of the proposed method is that, in contrast e.g.\ to the direct application of exponential sum approximations \cite{Grasedyck:04}, it can still be applied when $A$ does \emph{not} have a Laplace-like structure with each summand in the operator acting only on a single variable. For instance, such a structure is not present for $A$ given by \eqref{eq:example_operator} with the tridiagonal diffusion matrix considered in Example \ref{ex:tridiag}, which has values $2$ on the main diagonal and $-1$ on the secondary diagonals. Note that although our scheme can be applied also in this case, the problem does not satisfy the assumptions we have made in our complexity analysis. Specifically, as noted in Remark \ref{dimdependence}, we have $\operatorname{cond}_2(\bA) \sim  d^{2}$. In this sense this example sheds some light on  the role of our assumptions and possible effects of their violation. 
The issues encountered with tensor expansions in this problem are indicated by the following observation. Diagonalizing the diffusion matrix 
transforms the problem   to a rotated domain (which is no longer of product type), where the diffusion tensor becomes diagonal with largest entry uniformly bounded and smallest entry proportional to $d^{-2}$. As a consequence, we have to expect that in the original coordinates,  the solution exhibits
anisotropic structures  that are not aligned with the coordinate axes and become more pronounced with increasing $d$.

This is reflected in the numerical results, where both ranks (Figure \ref{fig:rankstri}) and computational complexity (Figure \ref{fig:opstri}) show a much more rapid increase than for the Poisson problem. Besides the larger approximation ranks, the efficiency of the scheme is also affected by the deterioration of the error reduction rate $\rho$ caused by the dimension-dependent condition number.
\begin{figure}[t]
\centering
\begin{tabular}{cc}\hspace{-.6cm}
\includegraphics[width=6cm]{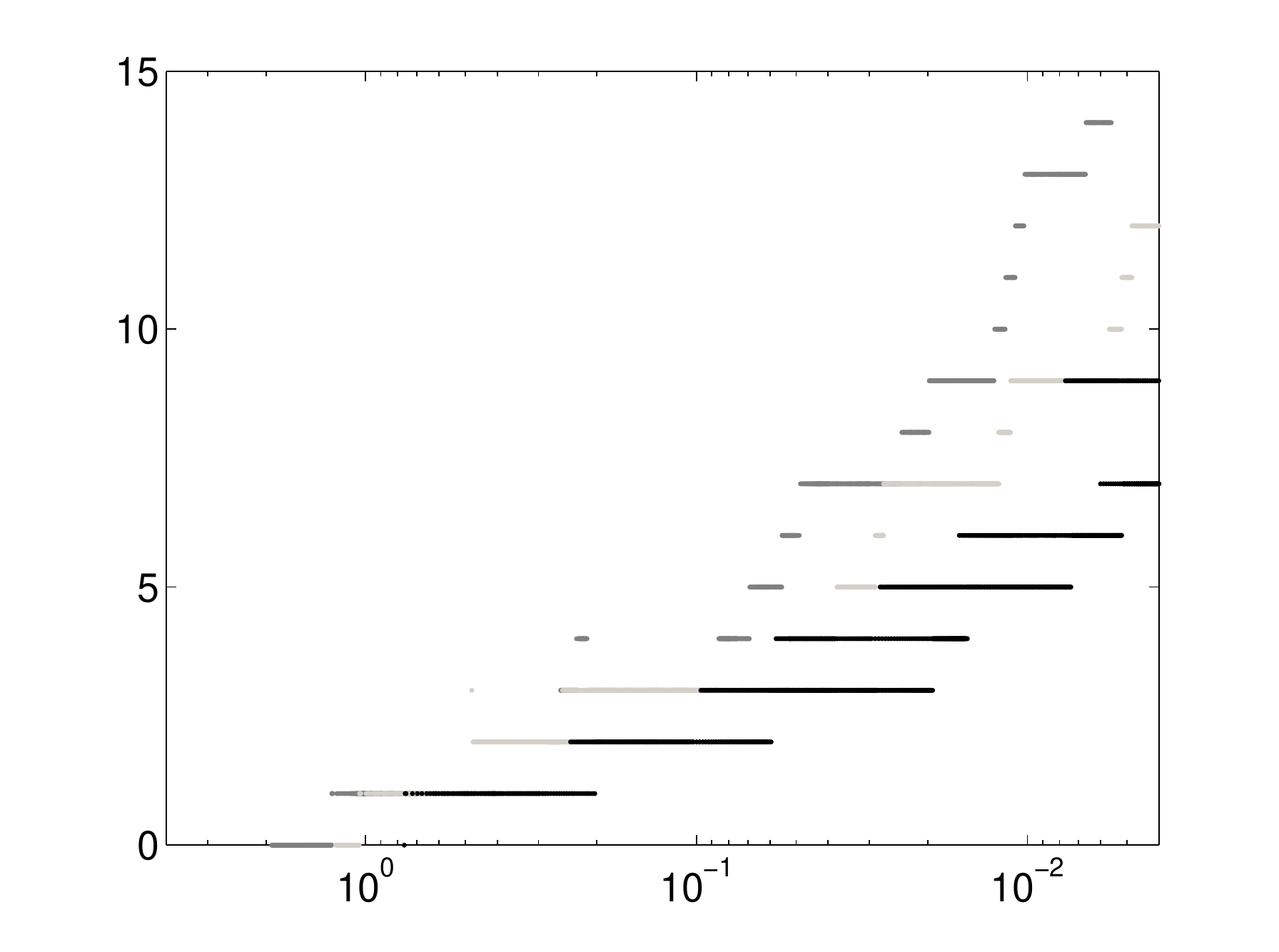} &
\includegraphics[width=6cm]{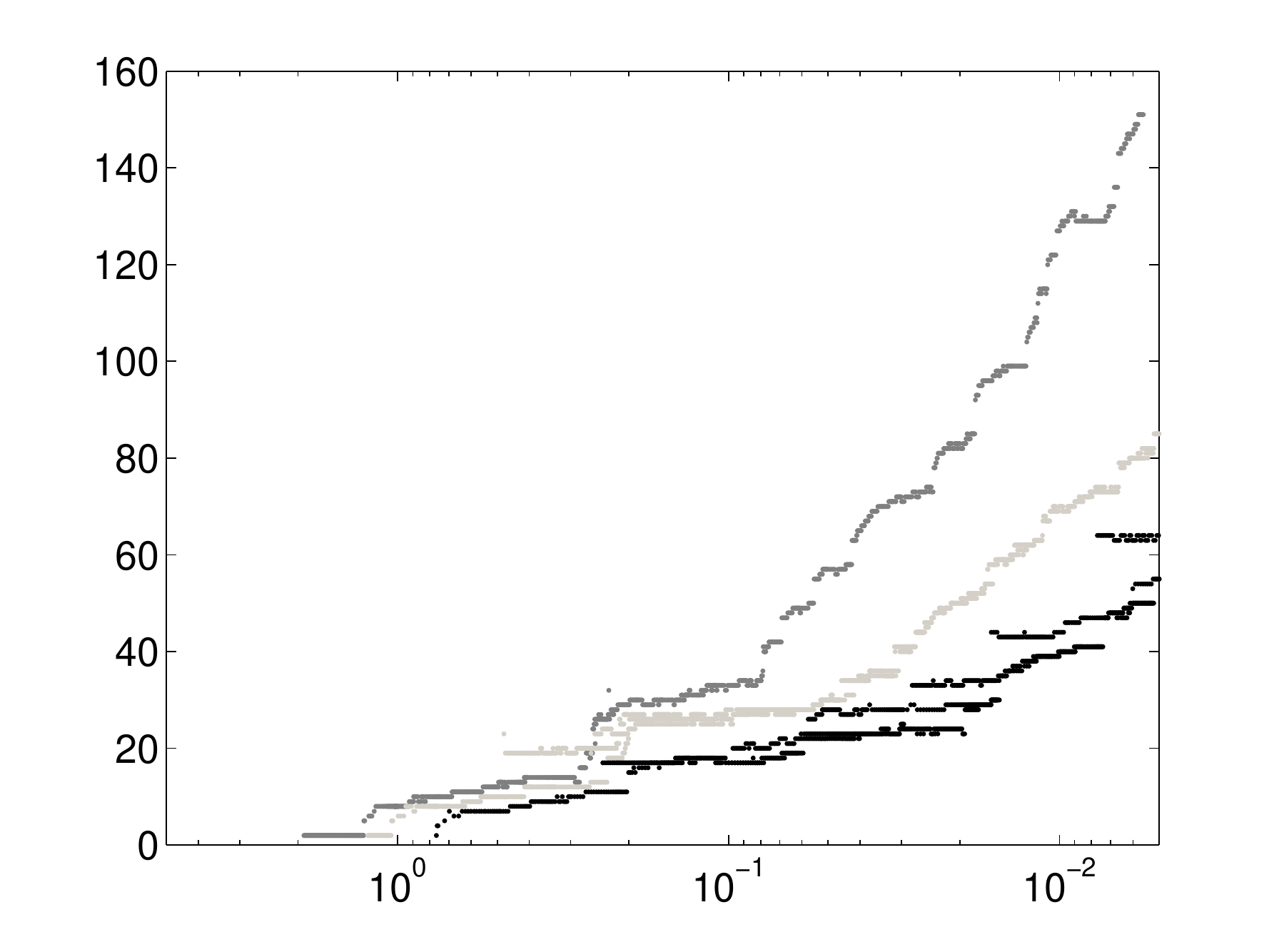}
\end{tabular}
\caption{Tridiagonal diffusion matrix: $\abs{\rank(\bw_{k,j})}_\infty$ (left) and maximum ranks of all intermediates arising in the inner iteration steps (right),
in dependence on current estimates for $\norm{\bu - \bw_{k,j}}$ (horizontal axis), for  $d=\bullet 2,\textcolor{lightgray}{\bullet} 3,\textcolor{gray}{\bullet}4$.}
\label{fig:rankstri}
\end{figure}
\begin{figure}[t]
\centering
\includegraphics[width=8cm]{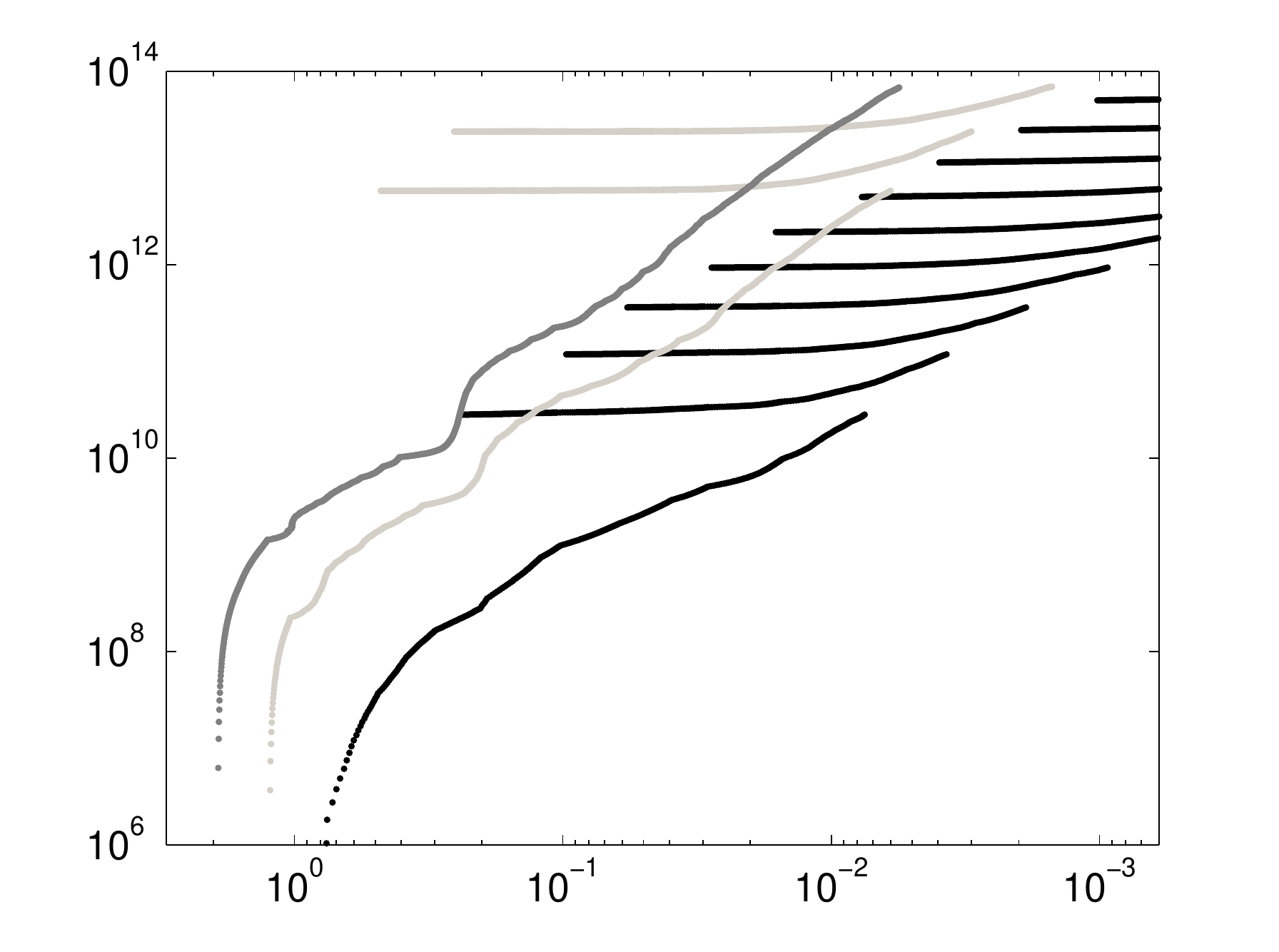}
\caption{Tridiagonal diffusion matrix: operation count in dependence on error estimate reduction (horizontal axis), for $d=\bullet 2,\textcolor{lightgray}{\bullet} 3,\textcolor{gray}{\bullet}4$.}
\label{fig:opstri}
\end{figure}

However, it also needs to be emphasized that the more rapid rank growth is {\em not} solely caused by the non-diagonal diffusion matrix coupling several variables. In fact, there exist other tridiagonal matrices, e.g. with $2$ on the main diagonal and $-\alpha$ with $\alpha \in (0,1)$ on the secondary diagonals, for which the condition number of $\bA$ remains $d$-independent.
A more detailed study of such further model cases will be done elsewhere.

\section{Conclusion}
We have constructed and analyzed an adaptive iterative algorithm for the approximate solution of second order
elliptic boundary value problems on high-dimensional product domains. The algorithm generates for any given target accuracy
$\varepsilon$ an approximation of {\em finite hierarchical rank} that meets the target accuracy with respect to the {\em energy norm},
which to our knowledge is the first result of this type. The analysis brings out several intrinsic obstructions, which originate from the fact
the energy norm is not a cross norm. As a consequence, using corresponding continuity properties to obtain a well-conditioned problem (e.g. by diagonal rescaling of wavelet coefficients as in our case, or by other types of preconditioning) destroys existing explicit low-rank structures. 
Nevertheless, it is shown that under certain benchmark assumptions of the solution, the scheme nearly reproduces minimal ranks and tensor representation sparsity,
without making use of any related a priori knowledge of these assumptions.
Our analysis carefully tracks the influence of the spatial dimension $d$ on the computational complexity. In particular, we have made an effort to formulate the benchmark assumptions in a way that keeps the problems for different spatial dimensions comparable.

The theoretical findings are illustrated and further quantified by numerical experiments for spatial dimensions up to $d=64$.
It can be seen that the actual performance is better than the theoretical predictions.
It should be emphasized that the scheme is {\em not} restricted to Poisson-type problems; however, when dealing with more general diffusion operators, the ranks  are seen to increase significantly faster with decreasing target accuracies.

For simplicity, we have considered in this work the perhaps conceptually simplest iterative form, a perturbed Richardson iteration for the 
infinite dimensional problem in $\ell_2$. Significant quantitative improvements are expected when using instead nested iterations of
adaptively refined Galerkin problems. This will be considered in forthcoming work.

\paragraph{Acknowledgements.} The authors would like to thank Kolja Brix for providing multiwavelet construction data used in the numerical experiments.

\begin{appendix}

\section{Proof of Proposition \ref{prop:cond}}\label{app:cond}

\begin{proof}
First note   that for the original operator $A$, we have
$$   \underline{\lambda}_a \langle (-\Delta )v, v\rangle \leq 
    \langle A v, v\rangle \leq \overline{\lambda}_a \langle ( -\Delta) v, v\rangle\,,\quad v\in \spH{1}_0(\Omega) \,.
$$
By our assumptions on $\{\Psi_\nu\}$, we have on the one hand
$\norm{\sum_{\nu\in\nabla^d} v_\nu \Psi_\nu}_{\spL{2}(\Omega)} = \norm{\bv}$
by $\spL{2}(\Omega)$-orthonormality, and on the other hand, we can now follow the lines of \cite[Section 2]{Dijkema:09} and
sum \eqref{deriv1cond} over $i$ to observe that,  by definition $\norm{\Sc \bv}^2 = \sum_i \norm{\Sc_i \bv}^2$, we obtain
$$
   \underline{\lambda}_1\, \norm{\Sc\bv}^2
    \leq \Bigl\langle (-\Delta) \Bigl( \sum_{\nu\in\nabla^d} v_\nu \Psi_\nu \Bigr), 
     \Bigl( \sum_{\nu\in\nabla^d} v_\nu  \Psi_\nu \Bigr)  \Bigr\rangle 
    \leq  \overline{\lambda}_1 \, \norm{\Sc\bv}^2.
$$ 
Consequently, one has
$$
   \underline{\lambda}_a\underline{\lambda}_1 \, \norm{\bv}^2
    \leq \Bigl\langle A \,\Bigl( \sum_{\nu\in\nabla^d} \omega^{-1}_\nu v_\nu \Psi_\nu \Bigr), 
     \Bigl( \sum_{\nu\in\nabla^d} \omega^{-1}_\nu v_\nu  \Psi_\nu \Bigr)  \Bigr\rangle 
    \leq \overline{\lambda}_a  \overline{\lambda}_1 \, \norm{\bv}^2,\quad \bv\in \spl{2}(\nabla^d).
$$ 
Since
$$  
\Bigl\langle A \,\Bigl( \sum_{\nu\in\nabla^d} \omega^{-1}_\nu v_\nu \Psi_\nu \Bigr), 
     \Bigl( \sum_{\nu\in\nabla^d} \omega^{-1}_\nu v_\nu  \Psi_\nu \Bigr)  \Bigr\rangle 
    = \langle \Sc^{-1} \bT \Sc^{-1} \bv, \bv \rangle   \,,
$$
we arrive at \eqref{cond1}.

As shown in \cite{Dijkema:09}, the dependence on $\overline{\lambda}_a / \underline{\lambda}_a$ can in fact be eliminated in the case of \emph{diagonal} $(a_{ij})$. In fact, if one chooses $\omi{i}{\nu_i} \sim \sqrt{a_{ii} } 2^{\abs{\nu_i}}$, \eqref{deriv1cond} is replaced by
\begin{equation*}
  \underline{\lambda}_1^{(i)} \norm{\Sc_i \bv}^2  \leq  
    a_{ii}\, \Bignorm{\sum_{\nu\in\nabla^d} v_\nu\, \partial_i \Psi_\nu }^2_{\spL{2}(\Omega)} 
      \leq \overline{\lambda}_1^{(i)} \norm{\Sc_i \bv}^2  \,,
\end{equation*}
which holds independently of the diagonal entries $a_{ii}$, and thus summation of these inequalities over $i$ directly yields 
\eqref{cond2} %
in this case.
\end{proof}

\section{Approximation of Right Hand Sides}\label{app:rhs}

As a supplementary discussion, we consider approximations of right hand sides $\bbf$ that satisfy Assumptions \ref{ass:rhs}. 
A first possible model to account for the computational work of providing such approximations is to assume that  $\mathbf{f}$
is in fact already given in a finite hierarchical format with finitely supported mode frames. Then the realization of $\rhs$
simply reduces to applying the reduction operators discussed in Theorem \ref{lmm:combined_coarsening} with appropriate
target tolerances.

As for a second, perhaps more realistic model,
recall that in the problem \eqref{final} under consideration, we have $\bbf = \Sr^{-1} \bg$.
A routine $\rhs$ for constructing an approximation can thus be obtained by combining independent approximations of $\bg$ and $\Sr^{-1}$. 
Assuming that we have sufficient knowledge of the coefficients $g_\nu = \langle \Psi_\nu, f\rangle$, we can use the decay of the coefficients of $\Sr^{-1}\bg$ and a known low-rank structure of $\bg$, combined with some excess regularity $f\in H^{-1+t}(\Omega)$, $t>0$, to find $\tilde n$ and $\tilde\bg$ such that $\norm{\Sr^{-1}\bg - \Sa{\tilde n}^{-1}\tilde\bg}$ is sufficiently small.
We first make this precise under fairly general assumptions in the following proposition, and subsequently give some examples for its application.

\begin{proposition}
\label{prop:rhs}
Assume that  the excess regularity assumptions \eqref{eq:coordwise_regularity}, \eqref{eq:coordwise_reg_f} of order $t>0$ hold,
and that  $\norm{\pi^{(i)}(\Sr^{-1}\bg)}_\As = \norm{\pi^{(i)}(\bbf)}_\As < \infty$. Moreover,  let $\bg$ have known low-rank structure in the following sense: given any finite $\Lambda = \Lambda^{(1)}\times\cdots\times  \Lambda^{(d)}\subset \nabla^d$, then for each $\varepsilon>0$, we have at our disposal a $\bg_\varepsilon$ such that 
\begin{equation}\label{proprhs_assumption}
\norm{\Sr^{-1}(\Restr{\Lambda}\bg - \bg_\varepsilon)} \leq\varepsilon\,, 
\quad
\pi^{(i)}_\nu(\Sr^{-1}\bg_\varepsilon) \le \hat C \pi^{(i)}_\nu (\bbf) \text{ for $\nu\in\Lambda^{(i)}$, $i=1,\ldots,d$},
\end{equation}
with an absolute constant $\hat C$, %
and $\abs{\rank(\bg_\varepsilon)}_\infty \leq C^{\text{{\rm rank}}}_\bg \abs{\ln\varepsilon}^{b_\bg}$ holds for some constants 
$C^{\text{{\rm rank}}}_\bg, b_\bg$, depending only on $\bg$.
Then there exists an absolute constant $C$ such that for any given $\eta > 0$, we can construct $\bbf_\eta$ satisfying  
\beqn
\label{firstclaim}
\norm{\bbf - \bbf_\eta} \leq \eta,\quad  \norm{\pi^{(i)}(\bbf_\eta)}_\As \le C \norm{\pi^{(i)}(\bbf)}_\As ,
\quad  \norm{\Sc_{i}^{t} \bbf_\eta} \leq C  \norm{\Sc_{i}^{t} \bbf},
\quad i=1,\ldots,d,
\eeqn
as well as 
\beqn
\label{secondclaim}
  \abs{\rank(\bbf_\eta)}_\infty \le C \bigl[ C_\bg   + \abs{\ln\eta} \bigr] \abs{\ln \eta}^{b_\bg}, 
  \quad
   \sum_{i=1}^d \#\supp_i \bbf_\eta  \le d C \eta^{-\frac1s} \Bigl( \sum_i \norm{\pi^{(i)}(\bbf)}_\As \Bigr)^{\frac1s} \,.
\eeqn
\end{proposition}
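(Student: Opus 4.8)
The plan is to construct $\bbf_\eta$ by first truncating $\bg$ to a finite index set, then replacing $\Sr^{-1}$ by a finite-rank exponential-sum approximation $\Sa{\tilde n}^{-1}$, and finally applying the combined coarsening/recompression of Theorem~\ref{lmm:combined_coarsening} to control ranks and supports. The three error contributions will each be allocated a fixed fraction of $\eta$.

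First I would split
\[
  \bbf - \bbf_\eta = \Sr^{-1}\bg - \Sa{\tilde n}^{-1}\tilde\bg_{\eta/3}
   = \Sr^{-1}(\bg - \Restr{\Lambda}\bg) + \Sr^{-1}(\Restr{\Lambda}\bg - \bg_{\eta/3}) + (\Sr^{-1} - \Sa{\tilde n}^{-1})\bg_{\eta/3},
\]
choosing $\Lambda = \Lambda(\Sr^{-1}\bg; N)$ of the form in Proposition~\ref{prp:tensor_coarsening_est}. Since $\pi^{(i)}(\Sr^{-1}\bg) = \pi^{(i)}(\bbf) \in \As$, Proposition~\ref{prp:tensor_coarsening_est} gives $\norm{\Sr^{-1}(\bg - \Restr{\Lambda}\bg)} = \norm{\Sr^{-1}\bg - \Restr{\Lambda}(\Sr^{-1}\bg)} \le \eta/3$ for $N \sim d\,\eta^{-1/s}(\sum_i \norm{\pi^{(i)}(\bbf)}_\As)^{1/s}$ (using that scaling by the diagonal $\Sr$ commutes with the coordinatewise restrictions), and moreover $\abs{\nu} \lesssim \abs{\ln\eta}$ for $\nu \in \Lambda^{(i)}$ via Lemma~\ref{lmm:lvlbound} applied with $\norm{\Sci{i}^t \pi^{(i)}(\Sr^{-1}\bg)} = \norm{\Sc_i^t\bbf}$ bounded by assumption. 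Then \eqref{proprhs_assumption} supplies $\bg_{\eta/3}$ with $\norm{\Sr^{-1}(\Restr{\Lambda}\bg - \bg_{\eta/3})} \le \eta/3$, with $\abs{\rank(\bg_{\eta/3})}_\infty \le C^{\text{rank}}_\bg\abs{\ln\eta}^{b_\bg}$ and controlled contractions. For the third term, because $\supp\bg_{\eta/3} \subseteq \Lambda$ has coordinatewise levels bounded by $\lesssim\abs{\ln\eta}$, the index set sits in some $\Lambda_T$ with $\ln T \lesssim \abs{\ln\eta}$, so by the first estimate in \eqref{tilde-diff} (equivalently Remark~\ref{rem:StildeS}) choosing $\tilde n = M(\eta'; T)$ with $\eta'$ proportional to $\eta/(3\norm{\Sc}\|\bg_{\eta/3}\|)$ yields $\norm{(\Sr^{-1}-\Sa{\tilde n}^{-1})\bg_{\eta/3}} \le \eta/3$; since $\Sa{\tilde n}^{-1}$ is a sum of $1 + n^+(\delta) + \tilde n \lesssim \abs{\ln\eta}$ separable terms, multiplying the ranks of $\bg_{\eta/3}$ gives $\abs{\rank(\Sa{\tilde n}^{-1}\bg_{\eta/3})}_\infty \lesssim \abs{\ln\eta}\,\abs{\ln\eta}^{b_\bg}$, i.e.\ the first bound in \eqref{secondclaim}.

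Next I would verify the stability claims. By \eqref{Ppieta}/\eqref{triangleeq} and the bounds $1-\delta \le \tilde\omega_{n,\nu}^{-1}\omega_\nu$, $\tilde\omega_\nu^{-1}\omega_\nu \le 1+\delta$ on the relevant index set (Remark~\ref{rem:StildeS}), together with the second part of \eqref{proprhs_assumption}, one gets $\pi^{(i)}_\nu(\Sa{\tilde n}^{-1}\bg_{\eta/3}) \le (1+\delta)(1-\delta)^{-1}\hat C\,\pi^{(i)}_\nu(\bbf)$ entrywise on $\Lambda^{(i)}$, whence $\norm{\pi^{(i)}(\Sa{\tilde n}^{-1}\bg_{\eta/3})}_\As \le C\norm{\pi^{(i)}(\bbf)}_\As$; the same pointwise comparison with the weight $\omi{i}{\nu}^t$ gives $\norm{\Sc_i^t(\Sa{\tilde n}^{-1}\bg_{\eta/3})} \le C\norm{\Sc_i^t\bbf}$. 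Finally, to obtain the support bound in \eqref{secondclaim} and preserve the $\As$- and excess-regularity bounds I would set $\bbf_\eta := \mathbf{w}_\eta$ from Theorem~\ref{lmm:combined_coarsening} applied with $\mathbf{v} = \Sa{\tilde n}^{-1}\bg_{\eta/3}$ and a suitable multiple of $\eta$; this uses $\mathbf{v}$ itself as the ``near-optimal'' reference (its contractions lie in $\As$ with the required norm bound, so the roles of $\mathbf{u}$ and $\mathbf{v}$ in that theorem collapse), and \eqref{eq:combinedcoarsen_suppest} gives the claimed $d\,\eta^{-1/s}$-type support bound while \eqref{eq:combinedcoarsen_rankest} and Lemma~\ref{lmm:crs_stab} keep ranks and $\norm{\Sc_i^t\cdot}$ under control. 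Adjusting the constants so all error pieces plus the coarsening error sum to $\eta$ completes the argument.

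The main obstacle will be bookkeeping the interdependence of the parameters: the truncation level $N$ (hence the maximal active level, hence $T$, hence $\tilde n$) depends on $\eta$, and $\tilde n$ in turn feeds back into $\|\bg_{\eta/3}\|$ which enters the choice of $\eta'$ in $M(\eta';T)$. I expect this to be benign because all these dependencies are logarithmic in $\eta$ and polynomial in $d$, so no circularity arises, but it requires care to state the constant $C$ in \eqref{firstclaim}--\eqref{secondclaim} as genuinely absolute (the $d$-independence of $\norm{\Sc_i^t\bbf}$, $\norm{\pi^{(i)}(\bbf)}_\As$ is what makes this work, cf.\ Assumptions~\ref{ass:dim}). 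A secondary subtlety is ensuring that discarding indices and rescaling by the diagonal operators genuinely commute in the contraction seminorms, which follows from \eqref{Ppieta} and the fact that $\Sr$, $\Sa{\tilde n}$, $\Restr{\Lambda}$ are all diagonal, but should be spelled out.
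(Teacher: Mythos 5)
Your overall strategy---truncate $\bg$, invoke the low-rank approximation $\bg_\varepsilon$, replace $\Sr^{-1}$ by $\Sa{\tilde n}^{-1}$ with $\tilde n$ determined by the maximal active level, and split the error into pieces---is the same as the paper's, and the rank, stability and support bookkeeping is essentially right. However, there is a genuine gap in the step where you bound the levels of the retained indices. You apply Lemma \ref{lmm:lvlbound} to $\Sr^{-1}\bg$, but that lemma explicitly requires $\#\supp \mathbf{p}=\sum_i\#\supp_i(\bv)<\infty$, and its conclusion contains the factor $\sqrt{\#\supp\mathbf{p}}$; its proof counts the discarded entries, which is impossible when the support is infinite. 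Since $\bbf=\Sr^{-1}\bg$ need not be finitely supported, the lemma does not apply, and the bound $\abs{\nu}\lesssim\abs{\ln\eta}$ on $\Lambda^{(i)}$---which is exactly what you need to fix $T$ and hence $\tilde n=M(\eta';T)$---is not established. The paper avoids this by reversing the order of the two truncations: it first uses the excess regularity \eqref{eq:coordwise_reg_f} \emph{directly} to get the tail estimate $\norm{\Sr^{-1}(\bg-\Restr{\Lambda_k}\bg)}\lesssim 2^{-tk}\norm{\Sc^t\bbf}$ over the level-$k$ cube $\Lambda_k$, which yields $k(\eta)\sim t^{-1}\abs{\ln\eta}$ without any support-counting, and only then chooses the $\As$-based product set $\tilde\Lambda\subseteq\Lambda_{k(\eta)}$, so the level bound is inherited by construction. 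Your argument is repairable by prepending exactly this level truncation, after which it collapses onto the paper's proof.

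Two smaller points. First, you never explicitly restrict $\bg_{\eta/3}$ to $\Lambda$ before applying $\Sa{\tilde n}^{-1}$; if $\bg_{\eta/3}$ carries support outside $\Lambda$ at high levels, the choice of $T$ (and thus $\tilde n$) is again uncontrolled. The paper's definition $\bbf_\eta:=\Sa{n(\eta)}^{-1}\Restr{\tilde\Lambda}\bg_{c_2\eta}$ makes this restriction explicit, and you should too. Second, your concluding application of Theorem \ref{lmm:combined_coarsening} is redundant once the restriction to the product set is built in: the support bound in \eqref{secondclaim} then follows directly from the choice of $\tilde\Lambda$, and the extra recompression is not needed (though it is harmless for the error and rank bounds).
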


\begin{proof}%
Note first that we may assume $\eta < \norm{\bbf}$, since otherwise $\bbf_\eta := 0$ satisfies our requirements.
We construct $\bbf_\eta$ with the asserted properties in several steps. First we exploit the excess regularity \eqref{eq:coordwise_reg_f} of order $t>0$.
In fact, choosing $\Lambda_k:= \{\nu\in\nabla^d \colon \max_i\abs{\nu_i} \leq k \}$ and defining
$\bg_k := \Restr{\Lambda_k}\bg$, we have, in view of \eref{omiscale}, for some  constant $C$ depending only on $t$,
\begin{eqnarray*}
\norm{\Sr^{-1}(\bg - \bg_k)}^2 %
&\le & 2^{-2kt } \sum_{\nu\notin\Lambda_k} 2^{2tk}(\Sr^{-1}\bg)_\nu^2
 \le C2^{-2kt} \sum_{\nu\notin\Lambda_k}
\omega_\nu^{2t} (\Sr^{-1}\bg)_\nu^2  \\
&\le &  C2^{-2tk } \norm{\Sc^t \bbf}^2.
\end{eqnarray*}
Thus, for any fixed $c_1 >0$, to be specified later,  %
we obtain
\beqn
\label{first}
 \norm{\Sr^{-1}(\bg - \bg_k)}\le c_1 \eta\quad\mbox{when}\quad k\ge k(\eta)= \lceil (t\ln 2)^{-1}\ln(c_1C\|\Sc^t\bbf\|/\eta)\rceil,
 \eeqn
and set $\bg^*:= \bg_{k(\eta)}$. Given $\bg^*$ we can find by assumption \eqref{proprhs_assumption} for any fixed $c_2 >0$ a $\bg_{c_2\eta}$ such that
\beqn
\label{lr_c2}
\|\Sr^{-1}(\bg^* - \bg_{c_2\eta})\|\le c_2\eta,\quad \rank(\bg_{c_2\eta})\lesssim \abs{\ln \eta}^{b_\bg},
\eeqn
with a constant that depends only on $\bg$ and $c_2$. Furthermore, since
$$
\|\pi^{(i)}(\Sr^{-1}\bg^*)\|_{\As}\le \|\pi^{(i)}(\Sr^{-1}\bg)\|_{\As} = \|\pi^{(i)}(\bbf)\|_{\As},\quad i=1,\ldots,d,
$$
we can find $\tilde\Lambda = \tilde\Lambda^{(1)}\times \cdots\times \tilde\Lambda^{(d)}$ with $\tilde\Lambda \subset \Lambda_{k(\eta)}$,  
 such that 
\beqn
\label{c3}
\norm{\Sr^{-1}(\Restr{\tilde\Lambda} \bg^* -\bg^*)} \leq c_3\eta,\quad %
  \sum_i \#\supp_i(\Restr{\tilde\Lambda} \bg^*)  \leq d \,C^{\frac1s} \eta^{-\frac1s} \Bigl( \sum_i \norm{\pi^{(i)}(\bbf)}_\As \Bigr)^{\frac1s} \,,
\eeqn
where $C$ depends only on $c_3$. Defining 
\beqn
\label{fdef}
\bbf_\eta := \Sa{n(\eta)}^{-1}\Restr{\tilde\Lambda}  \bg_{c_2\eta},
\eeqn
one has
\begin{align*}
\|\bbf - \bbf_\eta\| &= \|\Sr^{-1}\bg -  \Sa{n(\eta)}^{-1} \Restr{\tilde\Lambda} \bg_{c_2\eta}\| \\
&\le  \|\Sr^{-1}(\bg - \bg^*)\| + \|\Sr^{-1}(\bg^* - \Restr{\tilde\Lambda} \bg^*)\| +\|\Sr^{-1}\Restr{\tilde\Lambda} (\bg^*-\bg_{c_2\eta} )\|\nonumber\\
&\qquad +\| \Sr^{-1}\Restr{\tilde\Lambda}\bg_{c_2\eta} - \Sa{n(\eta)}^{-1} \Restr{\tilde\Lambda} \bg_{c_2\eta}\|\nonumber\\
&\le  (c_1 + c_3 + c_2)\eta + \|(\id - \Sr \Sa{n(\eta)}^{-1})\Sr^{-1}\Restr{\tilde\Lambda} \bg_{c_2\eta}\|.\nonumber\\
&\le  (c_1 + c_3 + c_2)\eta + \norm{(\id - \Sr \Sa{n(\eta)}^{-1})\Restr{\tilde\Lambda}} \bigl(\| \bbf\| + c_2 \eta),
\end{align*}
where we have used \eref{first}, \eref{lr_c2}, and \eref{c3}. We now fix $c_1=c_2=c_3 = \frac16$. In order to bound 
$\|(\id - \Sr \Sa{n(\eta)}^{-1})\Restr{\tilde\Lambda}\|$, we have to choose $n(\eta)$ large enough to apply \eref{tilde-diff}.
Specifically, we have to find a $T$ such that $\tilde\Lambda\subset \Lambda_T$. Recalling that $\tilde\Lambda \subset \Lambda_{k(\eta)}$,
the highest level occurring in $\tilde\Lambda$ is at most $k(\eta)=  \lceil (t\ln 2)^{-1}\ln(C\|\Sc^t\bbf\|/(6\eta))\rceil$. Hence, by \eref{omiscale},
for all $\nu\in \tilde\Lambda$ one has $\omega_\nu \leq C\sqrt{d}2^{k(\eta)}$, which by \eqref{first} means 
$
 \omega_\nu \leq \sqrt{d} (C \norm{\Sc_i^t   \bbf})^{\frac1t} \eta^{-\frac1t},%
$
where $C$ depends only on $t$. Thus $ \omega_\nu\leq \sqrt{d T } \,\hatomin \leq \sqrt{T} \omin$ holds if 
$$\textstyle\frac12 \displaystyle\ln T =  \ln [\hatomin^{-1}  (C \norm{\Sc_i^t \bbf})^{\frac1t}] + t^{-1} \abs{\ln \eta} .$$ 
Note that by \eqref{SStilde}, $\|(\id - \Sr \Sa{n(\eta)}^{-1})\Restr{\tilde\Lambda}\|\le
 (1-\delta)^{-1} \|(\id - \Sr \Sa{n(\eta)}^{-1})\Restr{\tilde\Lambda}\|$.
 In order to ensure that the latter expression is bounded by $\frac12 \eta/(\|\bbf\| + c_2\eta)$, on account of \eqref{Nsc},
 \eqref{tilde-diff}, and $\eta < \norm{\bbf}$, it suffices to choose
 $$  
n(\eta) \geq M\biggl ( \frac{(1-\delta) \eta}{2 (1 + c_2) \|\bbf\|} \; ; \; \hatomin^{-2} (C \norm{\Sc_i^t \Sr^{-1} \bg})^{\frac2t} \eta^{-\frac2t} \biggr),
 $$
with $M$ defined  in \eqref{Nsc}. In summary, we therefore conclude that with a constant $C=C(t,\delta)$, depending on $t,\delta$,
and a constant $C_\bg$, depending only on $\bg$, we may take
\beqn
\label{neta}
n(\eta) := \bigl\lfloor C(\delta, t) ( C_\bg   + \abs{\ln\eta} ) \bigr\rfloor 
\eeqn
to ensure that $\bbf_\eta$, defined in \eqref{fdef}, satisfies the first relation in \eqref{firstclaim}. 
The second and third relation in \eqref{firstclaim} follow with the second part of the assumption \eqref{proprhs_assumption} and with $(\Sr\Sa{n(\eta)}^{-1})_\nu \leq 1$ for all $\nu$ by \eqref{smaller}.
Since $\supp_i( \Sa{n(\eta)}^{-1}  \bg_{c_2\eta})
\subseteq \supp_i(\Restr{\Lambda^*}\bg)$, the second relation in \eref{secondclaim} follows from the second relation in \eref{c3}.
The first relation in \eref{secondclaim} is a consequence of \eref{neta} and the second equation in \eref{lr_c2}.
 \end{proof}

The assumptions of Proposition \ref{prop:rhs} cover several possible scenarios which  we outline next.
 
\begin{example}
Proposition \ref{prop:rhs} applies if $\rank(\bg) < \infty$ and $\norm{\pi^{(i)}(\Sr^{-1}\bg)}_\As < \infty$. This holds in particular if $f$ can be written in the form $f = \sum_{k=1}^r f^{(1)}_k \otimes \cdots \otimes f^{(d)}_k$ and the coefficients $\langle f^{(i)}_k, \psi_{\nu}\rangle$, $\nu\in\nabla$, have sufficient decay. In our numerical tests, we consider $f\equiv 1$, where this is the case, but the treatment of functionals with $f\notin \spL{2}$ is possible as well.
For instance, for functionals $f$ corresponding to inhomogeneous Neumann boundary data, if we prescribe constant values $c^{(i)}_0, c^{(i)}_1 \in\R$, $i=1,\ldots,d$, on the $2d$ faces of $(0,1)^d$ we obtain
$$  f = \sum_{i=1}^d \bigl( c^{(i)}_0 \operatorname{tr}_{\{x_i = 0\}}  +  c^{(i)}_1 \operatorname{tr}_{\{x_i = 1\}} \bigr)  \,. $$
Since each arising trace operator $\operatorname{tr}$ has the form of a point evaluation in a single variable tensorized with the identity in the remaining variables, the resulting coefficients $\bg$ can be represented with hierarchical rank 2 similarly to Example \ref{ex:laplace}. Non-constant Neumann boundary data can be treated similarly, provided that they have suitable tensor structure.
\end{example}

\begin{example}
If $f$ is such that the corresponding coefficients $\bg$ are not of finite rank, we additionally need some means to generate low-rank approximations on given finite sets of basis indices.
In principle, given a suitable index set $\Lambda$, if we can only evaluate the coefficients $\bg_\nu$ for $\nu\in\Lambda$, one could use $\mathcal{H}$SVD truncation of the resulting \emph{full} tensor $\bbf = \Sr^{-1}\bg$ on $\Lambda$ to directly construct $\bbf_\eta$ satisfying \eqref{firstclaim}, \eqref{secondclaim} (where the second inequality in \eqref{firstclaim} follows from \eqref{Ppieta}). Due to the costs of computationally constructing a $\mathcal{H}$SVD of a full tensor, this strategy is practically applicable only in the special situation that such a decomposition can be obtained more cheaply by some different (e.g.\ semi-analytical) means. 
In case that an $\mathcal{H}$SVD of $\bbf$ is not practically available, one may need to resort to more problem-specific low-rank approximations $\bg_\varepsilon$ that possibly do not have such orthogonality properties; for instance, for a number of important classes of functions, suitable approximations can be obtained by exponential sum expansions similarly to those considered for different purposes in Section \ref{ssec:nearsep}. In this case, one needs to ensure by construction of $\bg_\varepsilon$ that $\|\pi^{(i)}(\Sr^{-1}\bg_\varepsilon)\|_\As \le \hat C\|\pi^{(i)}(\bbf)\|_\As$ is satisfied, in other words, the low-rank approximation should not destroy the approximate sparsity of $\bg$. A sufficient condition for this to hold is that each entry $g_\nu$ for $\nu\in\Lambda$ is approximated with a bounded \emph{relative} error tolerance.
\end{example}

\begin{remark}
\label{rem:rhs_ops}
If the coefficients in the tensor representation of $\bg_\varepsilon$ in Proposition \ref{prop:rhs} can be produced directly at unit cost, for instance based on analytical knowledge of $f$,   the number of operations required to construct $\bbf_\eta$ can be estimated by
$$
  \ops(\rhs(\eta)) \lesssim d \bigl[ \bigl(C_\bg + \abs{\ln\eta}\bigr) \abs{\ln\eta}^{b_\bg}  \bigr]^3 
    + d\, \Bigl( \sum_i \norm{\pi^{(i)}(\bbf)}_\As \Bigr)^{\frac1s}  \bigl(C_\bg + \abs{\ln\eta}\bigr) \abs{\ln\eta}^{b_\bg} \eta^{-\frac1s}  \,.
$$
\end{remark}

\end{appendix}

\bibliography{bd_alr}

\end{document}